\tikzset{
>=stealth',
  punktchain/.style={
    rectangle,
    rounded corners,
    draw=black, thick,
    minimum height=3em,
    text centered,
    on chain},
  line/.style={draw, thick, <-},
  element/.style={
    tape,
    top color=white,
    bottom color=blue!50!black!60!,
    minimum width=8em,
    draw=blue!40!black!90, very thick,
    text width=10em,
    minimum height=3.5em,
    text centered,
    on chain},
  every join/.style={->, thick,shorten >=1pt},
  decoration={brace},
  tuborg/.style={decorate},
  tubnode/.style={midway, right=2pt},
}
\def\C{\ensuremath{\mathbb{C}}}
\def\P{\ensuremath{\mathbb{P}}}
\def\Q{\ensuremath{\mathbb{Q}}}
\def\R{\ensuremath{\mathbb{R}}}
\def\Z{\ensuremath{\mathbb{Z}}}
\def\alg{\mathrm{alg}}
\def\Aut{\mathop{\mathrm{Aut}}\nolimits}
\def\Bigc{\mathop{\mathrm{Big}}}
\def\ch{\mathop{\mathrm{ch}}\nolimits}
\def\Coh{\mathop{\mathrm{Coh}}\nolimits}
\def\dim{\mathop{\mathrm{dim}}\nolimits}
\def\Eff{\mathop{\mathrm{Eff}}}
\def\ext{\mathop{\mathrm{ext}}\nolimits}
\def\Ext{\mathop{\mathrm{Ext}}\nolimits}
\def\GL{\mathop{\mathrm{GL}}}
\def\Hom{\mathop{\mathrm{Hom}}\nolimits}
\def\RlHom{\mathop{\mathbf{R}\mathcal Hom}\nolimits}
\def\RHom{\mathop{\mathbf{R}\mathrm{Hom}}\nolimits}
\def\Imm{\mathop{\mathrm{Im}}\nolimits}
\def\Ker{\mathop{\mathrm{Ker}}\nolimits}
\def\Cok{\mathop{\mathrm{Cok}}\nolimits}
\def\Mov{\mathop{\mathrm{Mov}}\nolimits}
\def\min{\mathop{\mathrm{min}}\nolimits}
\def\Nef{\mathop{\mathrm{Nef}}\nolimits}
\def\NS{\mathop{\mathrm{NS}}\nolimits}
\def\Pos{\mathop{\mathrm{Pos}}}
\def\rk{\mathop{\mathrm{rk}}}
\def\ST{\mathop{\mathrm{ST}}\nolimits}
\def\Sym{\mathop{\mathrm{Sym}}}
\def\tr{\mathop{\mathrm{tr}}\nolimits}
\def\td{\mathop{\mathrm{td}}\nolimits}
\def\MG13{\ensuremath{{\mathcal M}_{\Gamma_1(3)}}}
\def\tildeMG13{\ensuremath{\widetilde{\mathcal M}_{\Gamma_1(3)}}}
\def\Stab{\mathop{\mathrm{Stab}}\nolimits}
\def\into{\ensuremath{\hookrightarrow}}
\def\onto{\ensuremath{\twoheadrightarrow}}
\def\blank{\underline{\hphantom{A}}}
\def\Db{\mathrm{D}^{b}}
\def\Dqc{\mathrm{D}_{\mathrm{qc}}}
\newcommand\TFILTB[3]{%
\xymatrix@=1pc{
{0 = {#1}_0} \ar[rr]&&
{{#1}_1} \ar[rr]\ar[ld] &&
{{#1}_2} \ar[r]\ar[ld] &
{\cdots} \ar[r] & { {#1}_{#3-1}} \ar[rr] &&
{{#1}_{#3} = {#1}} \ar[ld]
\\
& *{{#2}_1} \ar@{.>}[ul] &&
{{#2}_2} \ar@{.>}[ul] & &&&
{{#2}_{{#3}}} \ar@{.>}[ul]
}}
\def\abs#1{\left\lvert#1\right\rvert}
\newcommand\stv[2]{\left\{#1\,\colon\,#2\right\}}
\newtheorem*{rep@theorem}{\rep@title}
\newcommand{\newreptheorem}[2]{%
\newenvironment{rep#1}[1]{%
 \def\rep@title{#2 \ref{##1}}%
 \begin{rep@theorem}}%
 {\end{rep@theorem}}}
\newtheorem{Thm}{Theorem}[section]
\newtheorem{Prop}[Thm]{Proposition}
\newtheorem{PropDef}[Thm]{Proposition and Definition}
\newtheorem{Lem}[Thm]{Lemma}
\newtheorem{Cor}[Thm]{Corollary}
\newtheorem{Con}[Thm]{Conjecture}
\newtheorem{thm-int}{Theorem}
\theoremstyle{definition}
\newtheorem{Def-s}[Thm]{Definition}
\newtheorem{Def}[Thm]{Definition}
\newtheorem{Rem}[Thm]{Remark}
\newtheorem{Ex}[Thm]{Example}
\def\C{\ensuremath{\mathbb{C}}}
\def\P{\ensuremath{\mathbb{P}}}
\def\Q{\ensuremath{\mathbb{Q}}}
\def\R{\ensuremath{\mathbb{R}}}
\def\Z{\ensuremath{\mathbb{Z}}}
\def\AA{\ensuremath{\mathcal A}}
\def\CC{\ensuremath{\mathcal C}}
\def\DD{\ensuremath{\mathcal D}}
\def\EE{\ensuremath{\mathcal E}}
\def\FF{\ensuremath{\mathcal F}}
\def\HH{\ensuremath{\mathcal H}}
\def\LL{\ensuremath{\mathcal L}}
\def\OO{\ensuremath{\mathcal O}}
\def\PP{\ensuremath{\mathcal P}}
\def\TT{\ensuremath{\mathcal T}}
\def\WW{\ensuremath{\mathcal W}}
\def\ZZ{\ensuremath{\mathcal Z}}
\def\aa{\ensuremath{\mathbf a}}
\def\bb{\ensuremath{\mathbf b}}
\def\ss{\ensuremath{\mathbf s}}
\def\tt{\ensuremath{\mathbf t}}
\def\uu{\ensuremath{\mathbf u}}
\def\vv{\ensuremath{\mathbf v}}
\def\ww{\ensuremath{\mathbf w}}
\def\MMM{\mathfrak M}
\def\PPP{\mathfrak P}
\def\SSS{\mathfrak S}
\def\UUU{\mathfrak U}
\def\VVV{\mathfrak V}
\newcommand{\ignore}[1]{}
\def\Halg{H^*_{\alg}(X, \Z)}
\def\Halgalpha{H^*_{\alg}(X,\alpha, \Z)}
\begin{document}

\title[MMP for moduli of sheaves on K3s via wall-crossing]{MMP for moduli 
of sheaves on K3s via wall-crossing:\\ nef and movable cones, Lagrangian fibrations}

\author{Arend Bayer}
\address{School of Mathematics,
The University of Edinburgh,
James Clerk Maxwell Building,
The King's Buildings, Mayfield Road, Edinburgh, Scotland EH9 3JZ,
United Kingdom}
\email{arend.bayer@ed.ac.uk}
\urladdr{http://www.maths.ed.ac.uk/~abayer/}

\author{Emanuele Macr\`i}
\address{Department of Mathematics, The Ohio State University, 231 W 18th Avenue, Columbus, OH 43210-1174, USA}
\email{macri.6@math.osu.edu}
\urladdr{http://www.math.osu.edu/~macri.6/}

\keywords{Bridgeland stability conditions,
Derived category,
Moduli spaces of sheaves and complexes,
Hyperk\"ahler manifolds,
Minimal Model Program,
Cone Conjecture,
Lagrangian fibrations}

\subjclass[2010]{14D20 (Primary); 18E30, 14J28, 14E30 (Secondary)}

\begin{abstract}
We use wall-crossing with respect to Bridgeland stability conditions to systematically study
the birational geometry of a moduli space $M$ of stable sheaves on a K3 surface $X$:
\begin{enumerate}
\item We describe the nef cone, the movable cone, and the effective cone of
$M$ in terms of the Mukai lattice of $X$.
\item We establish a long-standing conjecture that predicts the existence of a birational Lagrangian
fibration on $M$ whenever $M$ admits an integral divisor class $D$ of square zero (with respect to
the Beauville-Bogomolov form).
\end{enumerate}
These results are proved using a natural map from the space of Bridgeland stability conditions
$\Stab(X)$ to the cone $\Mov(X)$ of movable divisors on $M$; this map relates wall-crossing in
$\Stab(X)$ to birational transformations of $M$. In particular, every minimal model of $M$ appears
as a moduli space of Bridgeland-stable objects on $X$.
\end{abstract}


\maketitle

\vspace{-15pt}
\setcounter{tocdepth}{1}
\tableofcontents

\vspace{-15pt}

A moduli space of Bridgeland stable objects automatically comes equipped with a numerically positive
determinant line bundle, depending only on the stability condition \cite{BM:projectivity}.
This provides a direct link between wall-crossing for stability conditions and birational
transformations of the moduli space.  In this paper, we exploit this link to systematically
study the birational geometry of moduli spaces of Gieseker stable sheaves on K3 surfaces.

\section{Introduction}\label{sec:intro}

\subsection*{Overview}
Let $X$ be a projective K3 surface $X$, and $\vv$ a primitive algebraic class in the Mukai lattice
with self-intersection with respect to the Mukai pairing $\vv^2 > 0$. For a generic
polarization $H$, the moduli space $M_H(\vv)$
of $H$-Gieseker stable sheaves is a projective holomorphic symplectic manifold (hyperk\"ahler
variety) deformation equivalent to Hilbert schemes of points on K3 surfaces.
The cone theorem and the minimal model program (MMP) induce a locally polyhedral chamber
decomposition of the movable cone of $M_H(\vv)$ (see \cite{HassettTschinkel:MovingCone}):
\begin{itemize*}
\item chambers correspond one-to-one to smooth $K$-trivial
birational models $\widetilde M \dashrightarrow M_H(\vv)$ of the moduli space, as the minimal model of the
pair $(M_H(\vv), D)$ for any $D$ in the corresponding chamber, and 
\item walls correspond to
extremal Mori contractions, as the canonical model of $(M_H(\vv), D)$.
\end{itemize*}
It is a very interesting question to  understand this chamber decomposition for general
hyperk\"ahler varieties \cite{HassettTschinkel:RationalCurves, HassettTschinkel:ExtremalRays,
HassettTschinkel:MovingCone}.  It has arguably become even more important in light of
Verbitsky's recent proof \cite{Verbitsky:torelli} of a global Torelli statement: 
two hyperk\"ahler varieties $X_1, X_2$ are isomorphic if and only if there exists an
isomorphism of integral Hodge structures $H^2(X_1) \to H^2(X_2)$ that is
induced by parallel transport in a family, and that maps the nef cone of $X_1$ to
the nef cone of $X_2$ (see also \cite{Huybrechts:torrelliafterVerbitsky, Eyal:survey}).

In addition, following the recent success \cite{BCHM} of MMP for the log-general case,
there has been enormous interest to relate MMPs for moduli spaces
to the underlying moduli problem; we refer \cite{Maksym-David:survey} for a survey of the case of the
moduli space $\overline{M}_{g, n}$ of stable curves, known as the Hassett-Keel program. Ideally, one
would like a moduli interpretation for every chamber of the base locus decomposition of the movable
or effective cone.

On the other hand, in \cite{Bridgeland:K3} Bridgeland described a connected component
$\Stab^\dag(X)$ of the
space of stability conditions on the derived category of $X$. He showed that
$M_H(\vv)$ can be recovered as the moduli space $M_\sigma(\vv)$ of $\sigma$-stable objects for
$\sigma \in \Stab^\dag(X)$ near the ``large-volume limit''. The manifold
$\Stab^\dag(X)$ admits a chamber decomposition, depending on $\vv$, such that
\begin{itemize*}
\item for a chamber $\CC$, the moduli space $M_\sigma(\vv) =:M_\CC(\vv)$ is independent of the choice of
$\sigma \in \CC$, and
\item walls consist of stability conditions with strictly semistable objects of class $\vv$.
\end{itemize*}

The main result of our article, Theorem \ref{thm:MAP}, relates these two
pictures directly. It shows that any MMP for the Gieseker moduli space (with movable boundary) can be
induced by wall-crossing for Bridgeland stability conditions, and so any minimal model has an
interpretation as a moduli space of Bridgeland-stable objects for some chamber.
In Theorem \ref{thm:nefcone}, we deduce the chamber decomposition of the movable
cone of $M_H(\vv)$ in terms of the Mukai lattice of $X$ from a description of the chamber
decomposition of $\Stab^\dag(X)$, given by Theorem \ref{thm:walls}.

We also obtain the proof of a long-standing conjecture: the existence of a
birational Lagrangian fibration $M_H(v) \dashrightarrow \P^n$ is equivalent to the existence of an integral
divisor class $D$ of square zero with respect to the Beauville-Bogomolov form, see Theorem
\ref{thm:SYZ}.  We use birationality of wall-crossing and a Fourier-Mukai transform to reduce the conjecture to
the well-known case of a moduli space of torsion sheaves, studied in
\cite{Beauville:ACIS}.  Further applications are mentioned below.

\subsection*{Birationality of wall-crossing and the map to the movable cone}
Let $\sigma, \tau \in \Stab^\dag(X)$ be two stability conditions, and assume that they are
\emph{generic} with respect to $\vv$. By \cite[Theorem 1.3]{BM:projectivity}, the moduli spaces
$M_{\sigma}(\vv)$ and $M_{\tau}(\vv)$ of stable objects $\EE \in \Db(X)$ with Mukai vector $\vv(\EE)
= \vv$ exist as smooth projective varieties. Choosing a path from $\sigma$ to $\tau$ in $\Stab^\dag(X)$
relates them by a series of wall-crossings. Based on a detailed analysis of all
possible wall-crossings, we prove:

\begin{Thm} \label{thm:birational-WC}
Let $\sigma,\tau$ be generic stability conditions with respect to $\vv$.
\begin{enumerate}
\item The two moduli spaces $M_\sigma(\vv)$ and $M_\tau(\vv)$ of Bridgeland-stable objects are
birational to each other.
\item \label{enum:birationalautoequivalence}
More precisely, there is a birational map induced by a derived (anti-)autoequivalence
$\Phi$ of $\Db(X)$ in the following
sense: there exists a common open subset $U \subset M_\sigma(\vv)$, $U \subset M_\tau(\vv)$, 
with complements of codimension at least two, such that
for any $u \in U$, the corresponding objects $\EE_u \in M_\sigma(\vv)$ and
$\FF_u \in M_\tau(\vv)$ are related via
$\FF_u = \Phi(\EE_u)$.
\end{enumerate}
\end{Thm}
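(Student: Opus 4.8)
The plan is to prove (2), from which (1) is immediate. First I would connect $\sigma$ to $\tau$ by a path in $\Stab^\dag(X)$ that is generic with respect to $\vv$, meaning it meets the locally finite collection of walls for $\vv$ only finitely often, each time transversally at a general point of a single wall. It then suffices to treat one wall: we may assume there is a single wall $\WW$, with $\sigma=\sigma_+$ and $\tau=\sigma_-$ on opposite sides and a general $\sigma_0\in\WW$ in between. If for every such wall we produce a derived (anti-)autoequivalence $\Phi_\WW$ with $(\Phi_\WW)_*\vv=\vv$ together with a common open subset (complements of codimension $\ge 2$) on which the birational identification is $u\mapsto\Phi_\WW(\EE_u)$, then composing the $\Phi_\WW$ along the path yields an (anti-)autoequivalence fixing $\vv$, and the intersection of the relevant open subsets — transported by the appropriate $\Phi_\WW$'s — still has complement of codimension $\ge 2$. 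So the whole problem reduces to the single-wall case.

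Fix $\WW$ and attach to it the rank-two primitive hyperbolic sublattice $\HH_\WW\subset\Halg$ of classes whose $\sigma_0$-central charge is real-proportional to $Z_{\sigma_0}(\vv)$. The $\sigma_0$-semistable objects of the phase of $\vv$ form a finite-length abelian category $\AA_\WW$, whose simple objects are the $\sigma_0$-stable objects with class in $\HH_\WW$; using the Mukai pairing on $\HH_\WW$, together with $\dim\Hom(E,E)=\dim\Ext^2(E,E)=1$ and $\dim\Ext^1(E,E)=\vv(E)^2+2$ for $\sigma_0$-stable $E$, one extracts the dichotomy: either (a) a generic $\sigma_+$-stable object of class $\vv$ is already $\sigma_0$-stable, or (b) $\WW$ is \emph{totally semistable}, i.e.\ every $\sigma_+$-stable object of class $\vv$ is properly $\sigma_0$-semistable. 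I expect the classification of walls and the dimension estimates entering case (b) to be the main obstacle.

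In case (a), set $\Phi_\WW=\id$. Since Bridgeland stability is an open condition, any $\sigma_0$-stable object of class $\vv$ is both $\sigma_+$- and $\sigma_-$-stable, so the open locus $U\subset M_{\sigma_0}(\vv)$ of $\sigma_0$-stable objects sits inside both $M_{\sigma_+}(\vv)$ and $M_{\sigma_-}(\vv)$ as the same subset, compatibly with the universal families, and $\FF_u=\EE_u$ there. The complement parametrizes properly $\sigma_0$-semistable objects; stratifying by the multiset of Mukai vectors $\{\vv_i\}$ of $\sigma_0$-Jordan--Hölder factors and bounding the dimension of each stratum by $\sum_i(\vv_i^2+2)$ plus the contribution of $\Ext^1$'s between distinct factors, minus the number of factors less one, against $\dim M_{\sigma_\pm}(\vv)=\vv^2+2$, one verifies that in case (a) every such stratum has codimension at least two in $M_{\sigma_+}(\vv)$ and, symmetrically, in $M_{\sigma_-}(\vv)$. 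Thus $U$ is the required common open subset.

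In case (b) one must build a nontrivial $\Phi_\WW$. The effective spherical classes in $\HH_\WW$ — those represented by $\sigma_0$-stable spherical objects — generate a group $W_\WW$ of reflections of $\HH_\WW$; the positive-square classes have a fundamental domain for $W_\WW$, and the lattice analysis produces $w\in W_\WW$ carrying $\pm\vv$ into it, to a class $\vv_0$ whose generic object is $\sigma_0$-stable. One realizes $w$ by a composition $\Phi_0$ of spherical twists $\ST_S$ (and their inverses) at the corresponding spherical objects; when $\HH_\WW$ contains an isotropic class one also uses the Fourier--Mukai transform attached to the K3 surface of $\sigma_0$-stable objects in that class, and a derived dual $\RlHom(-,\OO_X)[\,\cdot\,]$ may be needed to fix orientation, which is where the anti-autoequivalence alternative enters. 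After conjugating $\Phi_0$ suitably one obtains an (anti-)autoequivalence $\Phi_\WW$ with $(\Phi_\WW)_*\vv=\vv$ that, on the heart $\AA_\WW$, interchanges the two sides of $\WW$. The hardest step is then to show that $\Phi_\WW$ sends a generic $\sigma_+$-stable object $\EE$ of class $\vv$ to a $\sigma_-$-stable object: one pins down the generic shape of the $\sigma_0$-Jordan--Hölder filtration of $\EE$, checks that applying the twists in the prescribed order ``unwinds'' it into a filtration witnessing $\sigma_-$-stability of $\Phi_\WW(\EE)$, and bounds — again by the dimension estimate above — the non-generic filtration types to have codimension $\ge 2$ in both $M_{\sigma_+}(\vv)$ and $M_{\sigma_-}(\vv)$. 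Taking $U$ to be this generic locus (and its $\Phi_\WW$-image on the $M_{\sigma_-}$-side) with $\FF_u=\Phi_\WW(\EE_u)$ proves (2) at a single wall; composing along the path gives the general case, and (1) follows.
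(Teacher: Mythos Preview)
Your reduction to a single wall and the overall strategy match the paper. The genuine gap is in case~(a): the claim that ``every such stratum has codimension at least two'' is false precisely at \emph{divisorial} walls that are not totally semistable. For instance, if $\HH_\WW$ contains a spherical class $\ss$ with $(\ss,\vv)=0$ (Brill--Noether type), then writing $\vv=\ss+(\vv-\ss)$ one has $(\vv-\ss)^2=\vv^2-2$ and $(\ss,\vv-\ss)=2$; the stratum of $\sigma_+$-stable objects whose $\sigma_0$-JH factors have classes $\ss$ and $\vv-\ss$ has dimension $0+\vv^2+(\ext^1-1)=\vv^2+1$, i.e.\ codimension one in $M_{\sigma_+}(\vv)$. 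An analogous count applies when $\HH_\WW$ contains an isotropic $\ww$ with $(\ww,\vv)=2$ (Li--Gieseker--Uhlenbeck type). In these cases the $\sigma_0$-\emph{stable} locus has divisorial complement, so $\Phi_\WW=\id$ does not yield a common open subset with complement of codimension~$\ge 2$.

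The paper repairs this via the finer case distinction of Theorem~\ref{thm:walls}. Your $\Phi_\WW=\id$ argument is valid only for flopping and fake walls (this is Lemma~\ref{lem:notdivisorial} and Corollary~\ref{cor:notdivisorial}). For a Brill--Noether divisorial wall the correct choice is $\Phi_\WW=\ST_{\widetilde S}$, the spherical twist at the $\sigma_0$-stable spherical object of class $\ss$: away from the contracted divisor $\RHom(\widetilde S,E)=0$ so $\ST_{\widetilde S}(E)=E$, while on its generic point the long exact cohomology sequence turns the destabilizing extension $\widetilde S\hookrightarrow E\twoheadrightarrow F$ into the dual extension $F\hookrightarrow\ST_{\widetilde S}(E)\twoheadrightarrow\widetilde S$, which is $\sigma_-$-stable. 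For Hilbert--Chow and Li--Gieseker--Uhlenbeck walls the paper first passes via Fourier--Mukai along the isotropic class to a Gieseker moduli space of rank~$1$ or~$2$, and then takes $\Phi_\WW=(\,\cdot\,)^\vee[2]$, respectively $(\,\cdot\,)^\vee\otimes\LL[2]$; this is where the ``anti-'' in the statement genuinely arises, not merely as an orientation fix inside your case~(b). Your case~(b) outline is close to the paper's treatment of totally semistable \emph{flopping} walls via $\Phi^-\circ(\Phi^+)^{-1}$, but for totally semistable \emph{divisorial} walls one still needs the extra $\ST_{\widetilde S}$ (or the derived dual) sandwiched between the two sequences $\Phi^\pm$ of spherical twists from Proposition~\ref{prop:sphericaltotallysemistable}.
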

An anti-autoequivalence is an equivalence from the opposite category $\Db(X)^{\mathrm{op}}$ to
$\Db(X)$, for example given by the local dualizing functor $\RlHom(\blank, \OO_X)$.

As a consequence, we can canonically identify the N\'eron-Severi groups of $M_{\sigma}(\vv)$
and $M_\tau(\vv)$.
Now consider the chamber decomposition of $\Stab^\dag(X)$
with respect to $\vv$ as above, and let $\CC$ be a chamber.  The main result of \cite{BM:projectivity} gives a natural map
\begin{equation} \label{eq:ellCC}
\ell_\CC \colon \CC \to \NS\left(M_{\CC}(\vv)\right)
\end{equation}
to the N\'eron-Severi group of the moduli space, whose image is contained in the ample cone of
$M_{\CC}(\vv)$. More technically stated, our main result describes the global behavior of this map:

\begin{Thm} \label{thm:MAP}
Fix a base point $\sigma \in \Stab^\dag(X)$. 
\begin{enumerate}
\item \label{enum:piecewise}
Under the identification of the N\'eron-Severi groups induced by the birational maps
of Theorem \ref{thm:birational-WC}, the maps $\ell_\CC$ of \eqref{eq:ellCC} glue to a piece-wise
analytic continuous map
\begin{equation} \label{eq:ell}
\ell \colon \Stab^\dag(X) \to \NS \left(M_\sigma(\vv)\right).
\end{equation}
\item \label{enum:imagemovable}
The image of $\ell$ is the intersection of the movable cone with the big cone of 
$M_\sigma(\vv)$.
\item \label{enum:allMMP}
The map $\ell$ is compatible, in the sense that for any generic $\sigma' \in \Stab^\dag(X)$, the 
moduli space $M_{\sigma'}(\vv)$ is the birational model corresponding to $\ell(\sigma')$. 
In particular, every smooth $K$-trivial birational model of $M_{\sigma}(\vv)$ appears as a moduli space $M_\CC(\vv)$ of Bridgeland stable
objects for some chamber $\CC \subset \Stab^\dag(X)$.
\item \label{enum:AmpleCone} For a chamber $\CC \subset \Stab^\dag(X)$, we have $\ell(\CC)=\mathrm{Amp}(M_\CC(\vv))$.
\end{enumerate}
\end{Thm}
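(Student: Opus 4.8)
The plan is to establish the four assertions essentially in the order they are stated, bootstrapping from the local statement that $\ell_\CC(\sigma) \in \mathrm{Amp}(M_\CC(\vv))$ (the main result of \cite{BM:projectivity}) together with the birationality statement of Theorem \ref{thm:birational-WC}. For part (\ref{enum:piecewise}), I would first observe that when $\sigma, \tau$ lie in adjacent chambers $\CC, \CC'$ sharing a wall $\WW$, Theorem \ref{thm:birational-WC}(\ref{enum:birationalautoequivalence}) gives a birational map $M_\CC(\vv) \dashrightarrow M_{\CC'}(\vv)$ which is an isomorphism in codimension one, hence canonically identifies Néron–Severi groups; the content is to show that under this identification $\ell_\CC$ and $\ell_{\CC'}$ extend to the wall and agree there. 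This should follow from the explicit formula for $\ell_\CC$ in \cite{BM:projectivity} as a function of $\sigma$ (it is essentially the real and imaginary parts of a Mukai-pairing expression, hence real-analytic in $\sigma$), combined with the fact that the wall-crossing autoequivalence $\Phi$ acts compatibly on cohomology: $\ell_{\CC'}(\sigma) = \Phi_* \ell_\CC(\sigma)$ for $\sigma$ on the wall, since the universal families are related by $\Phi$ on the common open set $U$ and $\ell$ is defined via such a family. Continuity at points where more than one wall meets follows by the same local computation. This gives the piecewise-analytic continuous $\ell$.

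For part (\ref{enum:AmpleCone}), since $\ell_\CC(\CC)$ is already known to lie in $\mathrm{Amp}(M_\CC(\vv))$, the remaining point is surjectivity onto the ample cone; I would argue that any ample class is a limit of $\ell_\CC(\sigma_t)$ as $\sigma_t$ approaches a wall, using that the walls of the chamber decomposition of $\Stab^\dag(X)$ map (under $\ell$) onto the walls of the nef cone — this is where the analysis of totally semistable walls and the classification of wall types enters, and where one invokes the description of walls of $\mathrm{Nef}(M_\CC(\vv))$ in terms of the lattice (Theorem \ref{thm:walls}). Once (\ref{enum:AmpleCone}) is in hand, part (\ref{enum:imagemovable}) follows: the union of $\ell(\CC)$ over all chambers $\CC$ is the union of the ample cones of all the birational models, which by the general MMP/cone theory for hyperkähler manifolds (Hassett–Tschinkel, \cite{HassettTschinkel:MovingCone}, as recalled in the introduction) is exactly $\mathrm{Mov}(M_\sigma(\vv)) \cap \mathrm{Big}(M_\sigma(\vv))$ — one inclusion being that each model contributes an open chamber of the movable cone, the other being that $\ell$ cannot leave the movable cone because every $M_\CC(\vv)$ is a genuine $K$-trivial birational model (so its ample classes are movable on $M_\sigma$). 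Part (\ref{enum:allMMP}) is then a repackaging: given generic $\sigma'$, it lies in some chamber $\CC$, and the identification $M_{\sigma'}(\vv) = M_\CC(\vv)$ with $\ell(\sigma') \in \mathrm{Amp}(M_\CC(\vv))$ exhibits $M_{\sigma'}(\vv)$ as the birational model attached to the class $\ell(\sigma')$; surjectivity of $\ell$ onto $\mathrm{Mov} \cap \mathrm{Big}$ then shows every smooth $K$-trivial birational model is hit.

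The main obstacle, I expect, is the gluing/continuity statement across \emph{totally semistable} walls in part (\ref{enum:piecewise}): on such walls the generic object of class $\vv$ is strictly semistable, the moduli space can change by a genuine birational (not just a flop-type) modification, and the relevant autoequivalence $\Phi$ is only defined after a nontrivial spherical-twist or derived-dual correction. Verifying that $\ell_\CC$ and $\ell_{\CC'}$ still match up — i.e. that the chosen $\Phi$ is precisely the one realizing the identification of Néron–Severi groups and that no discrepancy class appears — requires the careful case-by-case analysis of wall types (isotropic, spherical, and the sublattice-induced cases) alluded to before Theorem \ref{thm:birational-WC}. A secondary difficulty is ensuring that the image genuinely fills the movable cone and does not stop short at some ``fake wall''; this is controlled by matching the lattice-theoretic wall classification of Theorem \ref{thm:walls} with Hassett–Tschinkel's description of the walls of $\mathrm{Mov}(M_\sigma(\vv))$, so that every bounding wall of the movable cone is accounted for by an actual wall in $\Stab^\dag(X)$.
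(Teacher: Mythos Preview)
Your proposal captures the overall architecture correctly, but it misses the central structural phenomenon that makes the gluing in part \eqref{enum:piecewise} work and that governs the image statement \eqref{enum:imagemovable}: the maps $\ell_{\CC}$ and $\ell_{\CC'}$ do \emph{not} always glue by analytic continuation. The paper's key Lemma (Lemma~\ref{lem:ellandreflections}) distinguishes two behaviours at a wall $\WW$: for fake or flopping walls, $\ell_+$ and $\ell_-$ are indeed analytic continuations of each other (and your argument via the autoequivalence $\Phi$ is essentially correct there); but for \emph{divisorial} walls, the analytic continuations of $\ell_+$ and $\ell_-$ differ by the reflection $\rho_D$ at the contracted divisor $D$. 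In other words, the map $\ell$ ``bounces back'' into the movable cone at divisorial walls rather than passing through. Your sentence ``$\ell_{\CC'}(\sigma) = \Phi_* \ell_\CC(\sigma)$ \dots\ since the universal families are related by $\Phi$'' elides exactly this: when $\Phi$ is a spherical twist (the Brill--Noether case) or involves $(\blank)^\vee$ (the Hilbert--Chow and Li--Gieseker--Uhlenbeck cases), it acts on the Mukai lattice by a reflection, and this reflection is precisely what shows up in comparing the Mukai morphisms $\theta_{\CC^\pm}$. The globally glued map is therefore described not by a single analytic formula but by the composition $W \circ \theta_{\sigma,\vv} \circ I \circ \ZZ$, where $W$ is the Weyl-group projection $\Pos(M) \to \Mov(M)$ (Theorem~\ref{thm:ellandgroupaction}).

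This changes the logic for parts \eqref{enum:imagemovable} and \eqref{enum:AmpleCone}. The paper does not deduce \eqref{enum:imagemovable} from \eqref{enum:AmpleCone} plus Hassett--Tschinkel; rather, once the bouncing behaviour is established, one proves \eqref{enum:imagemovable} directly by lifting a path in $\PP_0^+(X,\alpha)$ that stays inside the movable cone (so crosses no divisorial wall, hence no reflection occurs and $\theta$ is constant along the path). Part \eqref{enum:allMMP} then follows. Part \eqref{enum:AmpleCone} is proved last and separately: the issue you flag as ``secondary'' --- that totally semistable fake walls in $\Stab^\dag(X)$ might map to the interior of the nef cone, so a single chamber $\CC$ might not surject onto $\mathrm{Amp}(M_\CC(\vv))$ --- is genuine, and the paper resolves it by showing that for any such fake wall there is another potential wall with the same lattice $\HH_\WW$ and the same image in $\NS(M)$, but on which the offending spherical class is no longer effective, so it is not a wall at all. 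Your proposal acknowledges both difficulties but does not indicate how either is overcome.
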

The image $\ell(\tau)$ of a stability condition $\tau$ is determined by its central charge; see
Theorem \ref{thm:ellandgroupaction} for a precise statement.

Claims \eqref{enum:imagemovable} and \eqref{enum:allMMP} are the precise version of our claim above
that MMP can be run via wall-crossing: any minimal model can be reached after wall-crossing as a
moduli space of stable objects. Extremal contractions arising as canonical models are given as 
coarse moduli spaces for stability conditions on a wall.

\subsection*{Wall-crossing transformation}
Our second main result is Theorem \ref{thm:walls}. It determines
the location of walls in $\Stab^\dag(X)$, and for each wall $\WW$ it describes the associated
birational modification of the moduli space precisely. These descriptions are given purely
in terms of the algebraic Mukai lattice $H^*_\alg(X, \Z)$ of $X$:

To each wall $\WW$ we associate a rank two lattice $\HH_\WW \subset H^*_\alg(X, \Z)$,
consisting of Mukai vectors whose central charges align for stability conditions on $\WW$. Theorem
\ref{thm:walls} determines the birational wall-crossing behavior of $\WW$ completely in terms of the
pair $(\vv, \HH_\WW)$.  Rather than setting up the necessary notation here, we invite the 
reader to jump directly to Section \ref{sec:hyperbolic} for the full statement.
 
The proof of Theorem \ref{thm:walls} takes up Sections \ref{sec:hyperbolic} to \ref{sec:flopping},
and can be considered the heart of this paper. The ingredients in the proof include
Harder-Narasimhan filtrations in families, a priori constraints on the geometry of birational
contractions of hyperk\"ahler varieties, and the essential fact that every
moduli space of stable objects on a K3 surface has expected dimension.

\subsection*{Fourier-Mukai transforms and birational moduli spaces}
The following result is a consequence of 
Mukai-Orlov's Derived Torelli Theorem for K3 surfaces, a crucial Hodge-theoretic result by
Markman, and Theorem \ref{thm:birational-WC}. It
completes Mukai's program, started in \cite{Mukai:duality-Picard,Mukai:Fourier-moduli}, to understand birational maps between
moduli spaces of sheaves via Fourier-Mukai transforms. Following Mukai, consider 
$H^*(X,\Z)$ equipped with its weight two Hodge structure, polarized by the \emph{Mukai pairing}.
We write $\vv^{\perp, \tr} \subset H^*(X, \Z)$ for the orthogonal complement of $\vv$. By a result
of Yoshioka \cite{Yoshioka:Abelian}, $\vv^{\perp, \tr}$ and $H^2(M_H(\vv), \Z)$ are isomorphic as
Hodge structures; the Mukai pairing on $H^*(X, \Z)$ gets identified with the 
\emph{Beauville-Bogomolov} pairing on
$H^2(M_H(\vv),\Z)$.

\begin{Cor}\footnote{We will prove this and the following results more generally for moduli spaces of
Bridgeland-stable complexes.}\label{cor:Mukaifull}
Let $X$ and $X'$ be smooth projective K3 surfaces.
Let $\vv\in H^*_{\alg}(X,\Z)$ and $\vv'\in H^*_{\alg}(X',\Z)$ be primitive Mukai vectors.
Let $H$ (resp., $H'$) be a generic polarization with respect to $\vv$ (resp., $\vv'$).
Then the following statements are equivalent:
\begin{enumerate}
\item $M_H(\vv)$ is birational to $M_{H'}(\vv')$. \label{enum:birationalmoduli}
\item The embedding $\vv^{\perp, \tr} \subset H^*(X, \Z)$ of integral weight-two Hodge structures is 
isomorphic to the embedding $\vv'^{\perp, \tr} \subset H^*(X', \Z)$. \label{enum:Markmancrit}
\item There is an
(anti-)equivalence $\Phi$ from $\Db(X)$ to $\Db(X')$ with $\Phi_*(\vv)=\vv'$.
\label{enum:equivnumerical}
\item There is an (anti-)equivalence $\Psi$ from $\Db(X)$ to $\Db(X')$ with $\Psi_*(\vv) = \vv'$ that 
maps a generic object $E \in M_H(\vv)$ to an object $\Psi(E) \in M_{H'}(\vv')$.
\label{enum:equivbirational}
\end{enumerate}
\end{Cor}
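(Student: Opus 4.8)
The plan is to prove the cycle of implications $(1)\Rightarrow(2)\Rightarrow(3)\Rightarrow(4)\Rightarrow(1)$, using the results already available in the excerpt together with the classical Derived Torelli machinery for K3 surfaces. The only genuinely new input needed beyond Theorem \ref{thm:birational-WC} is a Hodge-theoretic statement of Markman, so I would organize the argument to isolate that dependence in a single step.

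First, $(1)\Rightarrow(2)$: a birational map $M_H(\vv) \dashrightarrow M_{H'}(\vv')$ between irreducible holomorphic symplectic manifolds induces, by pulling back on the complement of a codimension-two locus, an isomorphism of integral weight-two Hodge structures $H^2(M_H(\vv),\Z) \to H^2(M_{H'}(\vv'),\Z)$ that is also a parallel-transport operator (birational IHS manifolds are deformation equivalent, indeed derived-equivalent as families, by Huybrechts); it moreover respects the Beauville--Bogomolov form. By Yoshioka's isomorphism $\vv^{\perp,\tr} \cong H^2(M_H(\vv),\Z)$ (and likewise on the primed side), compatible with the pairings, this gives a Hodge isometry $\vv^{\perp,\tr} \to \vv'^{\perp,\tr}$. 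To upgrade this to an isomorphism of the \emph{embeddings} into the full Mukai lattices one invokes Markman's result that such a Hodge isometry, being a monodromy operator on the moduli space, extends to a Hodge isometry of $H^*(X,\Z) \to H^*(X',\Z)$ carrying $\vv$ to $\vv'$ (or to $-\vv'$, which one corrects using $-\id$); this is precisely the "crucial Hodge-theoretic result by Markman" referred to in the text.

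Next, $(2)\Rightarrow(3)$: an isomorphism of embeddings gives in particular a Hodge isometry $H^*(X,\Z)\to H^*(X',\Z)$ of Mukai lattices sending $\vv$ to $\vv'$. By the Derived Torelli Theorem of Mukai--Orlov, any Hodge isometry of Mukai lattices is realized by the cohomological action of a derived equivalence, up to composing with even shifts and sign; allowing anti-equivalences (e.g. $\RlHom(\blank,\OO_X)$, which acts by the Mukai involution) absorbs the remaining sign ambiguity on $\vv$, so we obtain an (anti-)equivalence $\Phi\colon\Db(X)\to\Db(X')$ with $\Phi_*(\vv)=\vv'$.

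Then $(3)\Rightarrow(4)$: given such a $\Phi$, transport the generic Bridgeland stability condition. Under $\Phi$, the large-volume stability conditions on $X$ adapted to $M_H(\vv)$ are sent to some generic $\sigma'\in\Stab^\dag(X')$ (using that $\Stab^\dag$ is preserved by (anti-)autoequivalences and $\Phi_*\vv=\vv'$), and $\Phi$ induces an isomorphism $M_H(\vv)\cong M_{\sigma'}(\vv')$. Now apply Theorem \ref{thm:birational-WC}: the Bridgeland moduli space $M_{\sigma'}(\vv')$ is birational to the Gieseker space $M_{H'}(\vv')$, via a birational map which off a codimension-two locus is itself induced by a derived (anti-)autoequivalence $\Psi'$ of $\Db(X')$. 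Composing, $\Psi := \Psi'\circ\Phi$ is an (anti-)equivalence $\Db(X)\to\Db(X')$ with $\Psi_*\vv=\vv'$ that identifies a generic $E\in M_H(\vv)$ with an object of $M_{H'}(\vv')$. Finally $(4)\Rightarrow(1)$ is immediate: $\Psi$ restricted to the generic locus gives mutually inverse morphisms between dense open subsets, hence a birational map $M_H(\vv)\dashrightarrow M_{H'}(\vv')$.

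The main obstacle is the step $(1)\Rightarrow(2)$, specifically the passage from a Hodge isometry of the orthogonal complements $\vv^{\perp,\tr}$ to an isomorphism of their \emph{embeddings} into the ambient Mukai lattices. This is not formal lattice theory — it genuinely requires Markman's monodromy-invariance results and the identification of the monodromy group of $M_H(\vv)$ as a hyperk\"ahler manifold with an explicit subgroup of Hodge isometries; everything else (Torelli for derived categories, Yoshioka's period computation, and Theorem \ref{thm:birational-WC}) is either classical or already proved in this paper.
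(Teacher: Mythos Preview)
Your cycle $(1)\Rightarrow(2)\Rightarrow(3)\Rightarrow(4)\Rightarrow(1)$ is exactly the paper's route, and your identification of Markman's result as the external input for $(1)\Rightarrow(2)$ is correct. Two points need tightening, however.

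In $(2)\Rightarrow(3)$, the obstruction to realizing a Hodge isometry by a derived equivalence is not a ``sign'' but the \emph{orientation} on the positive $4$-plane: the derived Torelli theorem (Theorem~\ref{thm:derivedtorelli}) only lifts orientation-preserving Hodge isometries. The paper's fix is the one you gesture at: if $\phi$ is orientation-reversing, compose with $(\blank)^\vee$ to make it orientation-preserving, lift, then compose back with $(\blank)^\vee$ to obtain an anti-equivalence with $\Phi_*=\phi$.

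The more serious gap is in $(3)\Rightarrow(4)$. You write ``using that $\Stab^\dag$ is preserved by (anti-)autoequivalences''. This is not known --- it is precisely Bridgeland's conjecture that $\Aut(\Db(X))$ preserves the distinguished component, and for an arbitrary $\Phi$ coming out of step~(3) there is no reason $\Phi_*(\sigma)$ lands in $\Stab^\dag(X')$. (For an anti-equivalence there is not even a natural action on $\Stab$; one has to pass through the dual stability condition of Proposition~\ref{prop:dualstability}.) The paper handles this by observing that only the cohomological action $\Phi_*$ is fixed by step~(3): if $\Phi_*$ is orientation-preserving, one may \emph{replace} $\Phi$ by the equivalence produced in Theorem~\ref{thm:derivedtorelli}, which is constructed so as to send $\Stab^\dag(X)$ to $\Stab^\dag(X')$; in the orientation-reversing case one does the same for $(\blank)^\vee\circ\Phi$ and then uses Proposition~\ref{prop:dualstability} to obtain $\tau\in\Stab^\dag(X')$. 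With this replacement in place, your argument (transport $\sigma$ to $\tau$, then apply Theorem~\ref{thm:birational-WC} to go from $M_\tau(\vv')$ to $M_{H'}(\vv')$) goes through exactly as you wrote.
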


The equivalence $\eqref{enum:birationalmoduli} \Leftrightarrow \eqref{enum:Markmancrit}$ is 
a special case of \cite[Corollary 9.9]{Eyal:survey}, which is based on Markman's description of the monodromy group 
and Verbitsky's global Torelli theorem.  We will only need the implication
$\eqref{enum:birationalmoduli} \Rightarrow \eqref{enum:Markmancrit}$, which is part of
earlier work by Markman: \cite[Theorem 1.10 and Theorem 1.14]{Eyal:integral} (when combined with the
fundamental result \cite[Corollary 2.7]{Huybrechts:Kaehlercone} that birational hyperk\"ahler
varieties have isomorphic cohomology).

By \cite{Toda:K3}, stability is an open property in families; thus $\Psi$ as in
\eqref{enum:equivbirational} directly induces a birational map
$M_H(\vv) \dashrightarrow M_{H'}(\vv')$; in particular, $\eqref{enum:equivbirational} \Rightarrow
\eqref{enum:birationalmoduli}$.
We will prove at the end of Section \ref{sec:MainThms} that 
derived Torelli for K3 surfaces \cite{Orlov:representability} gives
$\eqref{enum:Markmancrit} \Rightarrow \eqref{enum:equivnumerical}$, and
that Theorem \ref{thm:birational-WC}
provides the missing implication $\eqref{enum:equivnumerical} \Rightarrow \eqref{enum:equivbirational}$.
Thus, in the case of moduli spaces of sheaves, we obtain a proof of Markman's version
\cite[Corollary 9.9]{Eyal:survey} of global Torelli independent of \cite{Verbitsky:torelli}.

\subsection*{Cones of curves and divisors}
As an application, we can use Theorems \ref{thm:MAP} and \ref{thm:walls} to determine the cones
of effective, movable, and nef divisors (and thus dually the Mori cone of curves) of 
the moduli space $M_H(\vv)$ of $H$-Gieseker stable sheaves completely in terms of the algebraic Mukai lattice of $X$;
as an example we will state here our description of the nef cone.

Recall that we assume $\vv$ primitive and $H$ generic; in particular, $M_H(\vv)$ is smooth.
Restricting the Hodge isomorphism of \cite{Yoshioka:Abelian} mentioned previously to the algebraic
part, we get an isometry 
$\theta \colon \vv^\perp \to \NS(M_H(\vv))$ of lattices, where $\vv^\perp$ denotes the orthogonal complement
of $\vv$ inside the algebraic Mukai lattice $\Halg$. (Equivalently,
$\vv^{\perp} \subset \vv^{\perp, \tr}$ is the sublattice of $(1,1)$-classes with respect to the
induced Hodge structure on $\vv^{\perp, \tr}$.)
Let $\Pos(M_H(\vv))$ denote the cone of 
strictly positive classes $D$ with respect to the Beauville-Bogomolov pairing, satisfying $(D, D) > 0$
and $(A, D) > 0$ for a fixed ample class $A \in \NS(M_H(\vv))$.
We let $\overline{\Pos}(M_H(\vv))$ denote its closure, and by abuse of language we call it the \emph{positive cone}.

\begin{repThm}{thm:nefcone}
Consider the chamber decomposition of the closed positive cone $\overline{\Pos}(M_H(\vv))$ whose walls are given
by linear subspaces of the form
\[
\theta(\vv^\perp \cap \aa^\perp),
\]
for all $\aa \in \Halg$ satisfying $\aa^2 \ge -2$
and $0 \le (\vv, \aa) \le \frac{\vv^2}2$. Then the nef cone of $M_H(\vv)$ is one of the chambers
of this chamber decomposition.

In other words, given an ample class $A \in \NS(M_H(\vv))$, a class $D \in \overline{\Pos}(M_H(\vv))$ is 
nef if and only if
$(D, \theta(\pm\aa)) \ge 0$ for all classes $\aa$ as above and a choice of sign such that 
$(A, \theta(\pm\aa)) > 0$.
\end{repThm}

We obtain similar descriptions of the movable and effective cone, see Section \ref{sec:cones}.
The intersection of the movable cone with the strictly positive cone has been described 
by Markman for any hyperk\"ahler variety \cite[Lemma 6.22]{Eyal:survey};
the pseudo-effective cone can also easily be deduced from his results.
Our method gives an alternative wall-crossing proof, and in
addition a description of the boundary, based the proof of the Lagrangian fibration
conjecture discussed below.

However, there was no known description of the nef cone except for specific examples, even in
the case of the Hilbert scheme of points.
A general conjecture by Hassett and Tschinkel, \cite[Thesis 1.1]{HassettTschinkel:ExtremalRays},
suggested that the nef cone (or dually, its Mori cone) of a hyperk\"ahler variety $M$
depends only on the lattice of algebraic cycles in $H_2(M,\Z)$. In small dimension, their conjecture
has been verified in \cite{HassettTschinkel:RationalCurves, HassettTschinkel:MovingCone, HassettTschinkel:ExtremalRays, HHT:ProjectiveSpaces,BakkerJorza:LagrangianK34}.
The original conjecture turned out to be incorrect, already for Hilbert schemes (see \cite[Remark 10.4]{BM:projectivity} and \cite[Remark 8.10]{KnutsenCiliberto}).
However, Theorem \ref{thm:nefcone} is in fact very closely related to the Hassett-Tschinkel Conjecture: we will explain this precisely in Section \ref{sec:cones}, in particular Proposition \ref{prop:RelationHT} and Remark \ref{rmk:RelationHT}.
In Section \ref{sec:examples}, we give many explicit examples of nef and movable cones.

Using deformation techniques, Theorem \ref{thm:nefcone} and Proposition \ref{prop:RelationHT} 
have now been extended to all hyperk\"ahler varieties of the same deformation
type, see \cite{Mori-cones, Mongardi:note}.

\subsection*{Existence of Lagrangian fibrations}
The geometry of a hyperk\"ahler variety $M$ is particularly rigid. For example,
Matsushita proved in \cite{Matsushita:Addendum} that any map $f \colon M \to Y$ with connected
fibers and $\dim(Y) < \dim(M)$ is a Lagrangian fibration; further, Hwang proved in
\cite{Hwang:fibrationsbase} that if $Y$ is smooth, it must be isomorphic to a projective space.

It becomes a natural question to ask when such a fibration exists, or when 
it exists birationally.
According to a long-standing conjecture, this can be detected purely in terms of the quadratic
Beauville-Bogomolov form on the N\'eron-Severi group of $M$:

\begin{Con}[Tyurin-Bogomolov-Hassett-Tschinkel-Huybrechts-Sawon]\label{conj:SYZ}
Let $M$ be a compact hyperk\"ahler manifold of dimension $2m$, and let $q$ denote its
Beauville-Bogomolov form.
\begin{enumerate}[label={(\alph*)}] 
\item \label{enum:birational}
There exists an integral divisor class $D$ with $q(D) = 0$ if and only if
there exists a birational hyperk\"ahler manifold $M'$ admitting a Lagrangian fibration.
\item \label{enum:nef}
If in addition, $M$ admits a \emph{nef} integral primitive divisor class $D$ with $q(D) = 0$, then
there exists a Lagrangian fibration $f \colon M \to \P^m$ induced by the complete linear system of $D$.
\end{enumerate}
\end{Con}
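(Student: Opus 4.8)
We prove the statement for $M = M_H(\vv)$, a moduli space of Gieseker-stable sheaves on a K3 surface $X$ (equivalently, by Theorem~\ref{thm:birational-WC}, of Bridgeland-stable objects); this is the generality afforded by our methods, and is recorded as Theorem~\ref{thm:SYZ}. One direction of part~(a) is elementary: if a hyperk\"ahler manifold $M'$ birational to $M$ carries a Lagrangian fibration $g\colon M'\to\P^m$, then $g^*\OO_{\P^m}(1)$ is an integral divisor class of square zero, and since birational hyperk\"ahler manifolds have isomorphic second cohomology compatibly with the Beauville--Bogomolov form \cite{Huybrechts:Kaehlercone}, $M$ carries such a class as well. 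The hypothesis of part~(b) obviously implies that of part~(a). So the content is the converse direction in part~(a), together with the morphism assertion of part~(b).

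For the converse in part~(a), the plan is to reduce, via a Fourier--Mukai transform and the birationality of wall-crossing, to the classical case of a moduli space of pure one-dimensional sheaves. Suppose $D\in\NS(M_H(\vv))$ has $q(D)=0$, and write $D=\theta(\aa)$ for a primitive isotropic class $\aa\in\vv^\perp\subset\Halg$. After acting by autoequivalences of $\Db(X)$ (spherical twists, shifts, and twists by line bundles, which act on $\Halg$), we may assume $\aa$ lies on the boundary of the positive cone of the Mukai lattice and has positive rank; then $X':=M_H(\aa)$ is a K3 surface, and its (quasi-)universal family induces a Fourier--Mukai equivalence $\Phi\colon\Db(X)\to\Db(X')$ with $\Phi_*(\aa)$ equal to the class of a point. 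Consequently $\vv':=\Phi_*(\vv)$ is orthogonal to the point class, hence of the form $(0,w,s)$ with $w$ a primitive effective curve class, $w^2=\vv^2>0$, and $M_{H'}(\vv')$ a smooth projective hyperk\"ahler variety of dimension $\vv^2+2=2m$. By Corollary~\ref{cor:Mukaifull} (the implication $\eqref{enum:equivnumerical}\Rightarrow\eqref{enum:equivbirational}$), $M_H(\vv)$ is birational to $M_{H'}(\vv')$; and the latter carries the Beauville--Mukai Lagrangian fibration $M_{H'}(0,w,s)\to|w|\cong\P^m$ given by the support morphism \cite{Beauville:ACIS}. This proves part~(a).

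For part~(b), assume in addition that $D$ is nef and primitive. Running the construction of part~(a) produces a birational map $\pi\colon M_H(\vv)\dashrightarrow M':=M_{H'}(0,w,s)$ which, by Theorem~\ref{thm:birational-WC}\eqref{enum:birationalautoequivalence}, is an isomorphism in codimension one and identifies $D$ with the class $D':=g^*\OO_{\P^m}(1)$ of the Beauville--Mukai Lagrangian fibration $g\colon M'\to\P^m$. In particular $H^0\bigl(M_H(\vv),kD\bigr)\cong H^0\bigl(M',kD'\bigr)\cong H^0\bigl(\P^m,\OO_{\P^m}(k)\bigr)$ for all $k\ge 0$, so the Iitaka dimension of $D$ equals $m$ and the base locus of $|D|$ has codimension at least two. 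Since $D'$ is semiample and $D$ is nef with $q(D)=0$, any flopping curve on a model intermediate between $M'$ and $M_H(\vv)$ is $D$-trivial, hence contained in a fibre of $g$; running the $D$-relative minimal model program therefore stays relative over $\P^m$ and shows that $D$ is semiample on $M_H(\vv)$, defining a morphism $f\colon M_H(\vv)\to\P^m$ which, by Matsushita \cite{Matsushita:Addendum}, is a Lagrangian fibration given by the complete linear system $|D|$. This proves part~(b).

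The main obstacle is the Fourier--Mukai step in part~(a): realizing the given primitive isotropic class $\aa\in\vv^\perp$, up to the action of the autoequivalence group, as the class of a point --- equivalently, placing $\vv$ in the orbit of a rank-zero Mukai vector. This requires controlling the divisibility of $\aa$ inside the algebraic Mukai lattice (a hyperbolic lattice containing an isotropic vector need not contain one of divisibility one) via Eichler-type transitivity for primitive isotropic vectors, together with the fact that the derived (anti-)autoequivalence group of a K3 surface acts on $\Halg$ through a sufficiently large subgroup of Hodge isometries (Mukai, Orlov, Ploog; compare the proof of Corollary~\ref{cor:Mukaifull}). A secondary point, in part~(b), is the passage from the semiample class $D'$ on the birational model $M'$ to a genuine fibration on $M_H(\vv)$ itself: this rests on the structure of the movable cone and of flops over a Lagrangian base --- Theorems~\ref{thm:MAP}, \ref{thm:walls} and~\ref{thm:nefcone} together with Matsushita's results --- and on the fact, from \cite{BM:projectivity}, that every moduli space in sight comes with a numerically positive determinant line bundle making these birational modifications projective.
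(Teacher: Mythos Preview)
Your argument for part~(a) is essentially the paper's: translate $q(D)=0$ into a primitive isotropic class $\ww\in\vv^\perp$, pass via the Fourier--Mukai transform associated to the K3 surface $M_\sigma(\ww)$ so that $\vv$ becomes a rank-zero class, and invoke birationality of wall-crossing to reach a Beauville integrable system. The detour through ``positive rank'' and Eichler transitivity is unnecessary: taking $Y=M_\sigma(\ww)$ for a Bridgeland stability condition $\sigma$ generic for $\ww$ works regardless of the rank of $\ww$ and sends $\ww$ to the point class automatically, so the ``main obstacle'' you flag is not one.

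Part~(b) has a genuine gap. You assert that the birational map $\pi$ ``identifies $D$ with $D'=g^*\OO_{\P^m}(1)$'', but Theorem~\ref{thm:birational-WC}\eqref{enum:birationalautoequivalence} does not give this. The (anti-)autoequivalence realizing $\pi$ is a composite of the Fourier--Mukai transform with the wall-crossing autoequivalences, and whenever the path crosses a divisorial wall the Mukai morphisms on the two sides differ by a reflection $\rho_D$ (Lemma~\ref{lem:ellandreflections}). So a priori $\pi_*D$ lies only in the $W_{\mathrm{Exc}}$-orbit of $D'$, not equal to it. The paper closes this in two steps. First, the autoequivalences of Lemma~\ref{lem:Cisample} used to make $C$ ample --- spherical twists at structure sheaves of $(-2)$-curves, and possibly a shift --- fix the class $\ww=(0,0,1)$ up to sign, so one still has $D=\pm\theta(\ww)$ afterwards. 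Second, Theorem~\ref{thm:ellandgroupaction} shows that under the wall-crossing birational map $f$ the classes $f_*D$ and $\theta_{H,\vv}(-\ww)$ lie in the same $W_{\mathrm{Exc}}$-orbit; since both are nef on a smooth $K$-trivial birational model they lie in the closure of the movable cone, which is the closure of the fundamental domain of $W_{\mathrm{Exc}}$ by Proposition~\ref{prop:Weylmovablecone}, and hence they coincide.

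Your attempted workaround via a ``$D$-relative MMP'' does not stand on its own: the claim that every flopping curve on an intermediate model is $D$-trivial presupposes that the transform of $D$ remains nef on each such model, which is precisely what is in question, and the sequence of birational maps may include divisorial contractions, not only flops. Once $f_*D=\theta_{H,\vv}(-\ww)$ is established as above, the paper concludes differently: the section rings of $D$ and $f_*D$ agree (isomorphism in codimension two), so $D$ has Iitaka dimension $m$; by \cite{Verbitsky:Cohomology} its numerical Iitaka dimension is also $m$; and then Kawamata's theorem \cite{Kawamata:Pluricanonical,Fujino:KawamataThm} gives semiampleness from nefness and abundance.
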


In the literature, it was first suggested by Hassett-Tschinkel in
\cite{HassettTschinkel:RationalCurves} for symplectic
fourfolds, and, independently, by Huybrechts \cite{GrossHuybrechtsJoyce} and Sawon
\cite{Sawon:AbelianFibred} in general; see \cite{Verbitsky:HyperkaehlerSYZ} for more remarks on the
history of the Conjecture.


\begin{Thm}\label{thm:SYZ}
Let $X$ be a smooth projective K3 surface.
Let $\vv\in H_{\mathrm{alg}}^*(X,\Z)$ be a primitive Mukai vector with $\vv^2>0$ and let
$H$ be a generic polarization with respect to $\vv$. 
Then Conjecture \ref{conj:SYZ} holds for the moduli space $M_H(\vv)$ of $H$-Gieseker stable sheaves.
\end{Thm}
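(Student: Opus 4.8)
The strategy is to reduce both parts of Conjecture \ref{conj:SYZ} for $M_H(\vv)$ to the classical case studied by Beauville \cite{Beauville:ACIS}, namely the moduli space of pure one-dimensional sheaves supported on a linear system $|C|$ on $X$, whose support map to $\P^n = |C|$ is a Lagrangian fibration. First I would treat part \ref{enum:birational}. Suppose $M_H(\vv)$ carries an integral divisor class $D$ with $q(D) = 0$; via the isometry $\theta \colon \vv^\perp \to \NS(M_H(\vv))$ of Theorem \ref{thm:nefcone}'s setup, this corresponds to a primitive class $\ww \in \vv^\perp \subset \Halg$ with $\ww^2 = 0$. The key number-theoretic input is that a rank-two primitive sublattice of the Mukai lattice containing both $\vv$ (with $\vv^2 > 0$) and an isotropic vector $\ww$ always contains an isotropic vector $\ww_0$ with $(\vv, \ww_0)$ equal to the divisibility of $\vv$ in that sublattice; one can further arrange, after acting by the group of (anti-)autoequivalences and using Derived Torelli for K3 surfaces (as invoked for Corollary \ref{cor:Mukaifull}), that $\ww_0 = (0, 0, 1)$ is the class of a point, or equivalently that $\vv$ has the form $(0, C, s)$ for an effective curve class $C$ — i.e. $\vv$ is, up to equivalence, the Mukai vector of a pure one-dimensional sheaf. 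Then $M_{H'}(\vv')$ for the transformed data is literally Beauville's moduli space, which admits an honest Lagrangian fibration, and Corollary \ref{cor:Mukaifull} (or directly Theorem \ref{thm:birational-WC}) gives a birational map $M_H(\vv) \dashrightarrow M_{H'}(\vv')$, proving the nontrivial direction. The converse direction is easy: a birational hyperk\"ahler $M'$ with a Lagrangian fibration has a nef isotropic class (the pullback of a hyperplane), and birational hyperk\"ahler varieties have isomorphic Hodge structures on $H^2$ by \cite{Huybrechts:Kaehlercone}, so $M_H(\vv)$ inherits an integral isotropic class.

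For part \ref{enum:nef}, suppose $D$ is nef, primitive, integral with $q(D) = 0$; I want to show $|D|$ induces a Lagrangian fibration $M_H(\vv) \to \P^m$. The plan is to run the wall-crossing machinery of Theorems \ref{thm:MAP} and \ref{thm:walls}: since $D$ lies on the boundary of the movable cone (it is nef and isotropic, hence on $\partial \overline{\Pos}$ and in the closure of the nef chamber), by Theorem \ref{thm:MAP}\eqref{enum:piecewise}--\eqref{enum:imagemovable} it is the limit of classes $\ell(\sigma_t)$ as $\sigma_t$ approaches a wall of $\Stab^\dag(X)$ from inside the Gieseker chamber. The associated wall, by Theorem \ref{thm:walls} and the classification of walls via the rank-two lattice $\HH_\WW$, should be of the type whose contraction is a fibration (a ``totally semistable'' wall of fibration type, where $\HH_\WW$ contains an isotropic class $\ww$ with $(\vv, \ww) = 0$): crossing such a wall does not change the birational model but contracts $M_H(\vv)$ onto a lower-dimensional base. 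One then checks the base is $\P^m$: either by Matsushita's theorem \cite{Matsushita:Addendum} that any such fibration on a hyperk\"ahler is Lagrangian together with Hwang's theorem \cite{Hwang:fibrationsbase} identifying the (smooth) base with projective space, or more directly by transporting to Beauville's model as above, where the base $|C| \cong \P^n$ is visibly projective space and the line bundle is the pullback of $\OO_{\P^n}(1)$, and then using that the contraction is determined by the nef class $D$ on the chamber wall, hence agrees with $|D|$.

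\textbf{Main obstacle.} The technical heart is the lattice-theoretic and autoequivalence argument in part \ref{enum:birational}: showing that the existence of \emph{some} isotropic integral class on $M_H(\vv)$ forces $\vv$ to be equivalent, under the action of (anti-)autoequivalences of $\Db(X)$ combined with Derived Torelli across different K3 surfaces, to the Mukai vector of a one-dimensional sheaf. This requires an effective understanding of the orbits of the lattice isometry group acting on pairs (Mukai vector, isotropic vector in its orthogonal complement) — concretely, that one can always find an isotropic $\ww_0$ in the relevant rank-two lattice realizing the minimal pairing with $\vv$, and then realize the resulting abstract Hodge-isometry by a genuine (anti-)equivalence via \cite{Orlov:representability}. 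The wall-crossing input for part \ref{enum:nef} is, by contrast, largely a matter of correctly matching Theorem \ref{thm:walls}'s classification of walls to the fibration case and invoking \cite{Matsushita:Addendum, Hwang:fibrationsbase}; the subtlety there is only to ensure that the contraction realizing the boundary class $D$ is genuinely a fibration onto $\P^m$ and not, say, a divisorial or small contraction — which the condition $q(D) = 0$ (square zero) should preclude by the general structure of hyperk\"ahler contractions used throughout the proof of Theorem \ref{thm:walls}.
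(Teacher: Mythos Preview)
Your plan for part \ref{enum:birational} is close in spirit to the paper's, but you overcomplicate the reduction step. You do not need any lattice-theoretic analysis of divisibility or minimal pairings: the paper simply observes that the moduli space $Y = M_\sigma(\ww)$ of the isotropic class $\ww$ is itself a K3 surface, and the associated Fourier--Mukai transform $\Phi \colon \Db(X,\alpha) \to \Db(Y,\alpha')$ sends $\ww$ to the class of a point. Since $(\vv,\ww)=0$, the class $\Phi_*(\vv)$ automatically has rank zero, and after a further autoequivalence one makes the curve class ample. Birationality of wall-crossing (Theorem \ref{thm:birational-WC}) then connects $M_\sigma(\vv)$ to the Beauville system on $M_{H'}(\Phi_*(\vv))$. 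Invoking Corollary \ref{cor:Mukaifull} here would be circular, as its proof uses Theorem \ref{thm:birational-WC}.

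Your plan for part \ref{enum:nef} has a genuine gap. You posit a wall $\WW$ of ``fibration type'' whose lattice $\HH_\WW$ contains an isotropic $\ww$ with $(\vv,\ww)=0$. But such a wall cannot exist: if $\HH_\WW$ contained both $\vv$ and such a $\ww$, the Gram matrix in this basis would be $\left(\begin{smallmatrix}\vv^2 & 0 \\ 0 & 0\end{smallmatrix}\right)$, which is degenerate and contradicts Proposition \ref{prop:HW}\eqref{enum:signature} (every wall lattice has signature $(1,-1)$). Correspondingly, Theorem \ref{thm:walls} only lists divisorial, flopping, fake, and non-walls; there is no fibration type, and the contractions $\pi^\pm$ of Theorem \ref{thm:contraction} are always birational. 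A square-zero nef class lies on the boundary of the \emph{positive} cone, outside the big cone, and is not the image $\ell(\sigma_0)$ of any wall.

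The paper's argument for \ref{enum:nef} is quite different. After the Fourier--Mukai reduction above, one has a birational map $f \colon M_\sigma(\vv) \dashrightarrow M_H(\vv)$ to the Beauville model, on which the fibration is induced by $\theta_{H,\vv}(-\ww)$. By Theorem \ref{thm:ellandgroupaction}, the classes $f_*D$ and $\theta_{H,\vv}(-\ww)$ lie in the same $W_{\mathrm{Exc}}$-orbit; since both are nef on smooth birational models, both lie in the closure of the movable cone, which is a fundamental domain for $W_{\mathrm{Exc}}$ by Proposition \ref{prop:Weylmovablecone}, so they are equal. As $M_\sigma(\vv)$ and $M_H(\vv)$ are isomorphic in codimension two, the section rings of $D$ and $f_*D$ agree, giving $D$ Iitaka dimension $n$. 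Finally, Kawamata's theorem shows that a nef divisor whose numerical Iitaka dimension equals its Iitaka dimension is semi-ample, yielding the morphism to $\P^n$. This last step---Kawamata semi-ampleness---is the substitute for the nonexistent ``fibration wall'' you were looking for.
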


The basic idea of our proof is the following: as we recalled above, the
N\'eron-Severi group of $M_H(\vv)$, along with its Beauville-Bogomolov form, is isomorphic to the
orthogonal
complement $\vv^\perp \subset \Halg$ of $\vv$ in the algebraic Mukai lattice of $X$, along with the restriction
of the Mukai pairing. The existence of an integral divisor $D = c_1(L)$ with $q(D) = 0$ is thus
equivalent to the existence of an isotropic class $\ww \in \vv^\perp$: a class with $(\ww, \ww) = 0$
and $(\vv, \ww) = 0$. The moduli space $Y = M_H(\ww)$ is a smooth K3 surface, and the associated Fourier-Mukai
transform $\Phi$ sends sheaves of class $\vv$ on $X$ to complexes of rank 0 on $Y$. While these
complexes on $Y$ are typically not sheaves---not even for a generic object in $M_H(\vv)$---,
we can arrange them to be Bridgeland-stable complexes with respect to a Bridgeland-stability
condition $\tau$ on $\Db(Y)$. We then deform $\tau$ along a path with endpoint $\tau'$, such that
$\tau'$-stable complexes of class $\Phi_*(\vv)$ are Gieseker stable sheaves, necessarily of rank zero.
In other words, the Bridgeland-moduli space $M_{\tau'}(\Phi_*(\vv))$ is a moduli space of sheaves
$\FF$ with support $\abs{\FF}$ on a curve of fixed degree.  The map $\FF \mapsto
\abs{\FF}$ defines a map from $M_{\tau'}(\Phi_*(\vv))$ to the linear system of the associated curve;
this map is a Lagrangian fibration, known as the \emph{Beauville integral system}. On the other hand, birationality of wall-crossing shows that 
$M_{\tau}(\Phi_*(\vv)) = M_H(\vv)$ is birational to $M_{\tau'}(\Phi_*(\vv))$.

The idea to use a Fourier-Mukai transform to prove Conjecture \ref{conj:SYZ} was used previously by
Markushevich \cite{Markushevich:Lagrangian} and Sawon \cite{Sawon:LagrangianFibrations} for a specific 
family of Hilbert schemes on K3 surfaces of Picard rank one. Under their assumptions,
the Fourier-Mukai transform of an ideal sheaf is a stable torsion sheaf;
birationality of wall-crossing makes such a claim unnecessary.

\begin{Rem}
By \cite{EyalSukhendu:Density}, Hilbert schemes of $n$ points on projective K3 surfaces are dense in the moduli space of hyperk\"ahler varieties of $K3^{[n]}$-type.

Conjecture \ref{conj:SYZ} has been proved independently by Markman
\cite{Eyal:LagrangianFibration} for a very general hyperk\"ahler variety $M$ of $K3^{[n]}$-type;
more specifically, under
the assumption that $H^{2,0}(M)\oplus H^{0,2}(M)$ does not contain any integral class.
His proof is completely different from ours, based on Verbitsky's
Torelli Theorem, and a way to associate a K3 surface (purely lattice theoretically) to such
hyperk\"ahler manifolds with a square-zero divisor class.

These results have been extended by 
Matsushita to any variety of $K3^{[n]}$-type \cite{Matsushita:isotropic}.
\end{Rem}

\subsection*{Geometry of flopping contractions}
As mentioned previously, every extremal contraction of $M_H(\vv)$ is induced by a wall 
in the space of Bridgeland stability conditions. In Section 
\ref{sec:floppingeometry}, we explain how basic geometric properties of 
flopping contractions are also determined via the associated lattice-theoretic wall-crossing data; 
this adds geometric content to Theorem \ref{thm:walls}.
We obtain examples where the exceptional locus
has either arbitrarily many connected components, or arbitrarily many irreducible components all
intersecting in one point.

\subsection*{Strange Duality}
In Section \ref{sec:SD} we apply Theorem \ref{thm:SYZ} to study Le
Potier's Strange Duality, in the case where one of the two classes involved has square zero. 
We give sufficient criteria for strange duality to hold, which are determined by wall-crossing, and
which are necessary in examples.

\subsection*{Generality}
In the introduction, we have stated most results for Gieseker moduli spaces $M_H(\vv)$.
In fact, we will work throughout more generally with moduli spaces $M_\sigma(\vv)$ of Bridgeland stable objects
on a K3 surface $(X, \alpha)$ with a Brauer twist $\alpha$, and all results will be proved 
in that generality.

\subsection*{Relation to previous work on wall-crossing}
Various authors have previously studied examples of the relation between wall-crossing and the
birational geometry of the moduli space induced by the chamber decomposition of its cone of movable
divisors: the first examples (for moduli of torsion sheaves on $K$-trivial surfaces) were studied in
\cite{Aaron-Daniele}, and moduli on abelian surfaces were considered (in varying generality) in
\cite{MaciociaMeachan, Maciocia:walls, Minamide-Yanagida-Yoshioka:wall-crossing, MYY2,
YY:abeliansurfaces, Yoshioka:cones}.

Several of our results have analogues for abelian surfaces that have been obtained
previously by Yoshioka, or by Minamide, Yanagida, and Yoshioka:
the birationality of wall-crossing has been established in
\cite[Theorem 4.3.1]{Minamide-Yanagida-Yoshioka:wall-crossing};
the ample cone of the moduli spaces is described in \cite[Section 4.3]{MYY2}; 
statements related to Theorem \ref{thm:MAP} can be found in \cite{Yoshioka:cones}; 
an analogue of Corollary \ref{cor:Mukaifull} is contained in \cite[Theorem
0.1]{Yoshioka:FM_abelian_surfaces}; and Conjecture \ref{conj:SYZ} is proved in
\cite[Proposition 3.4 and Corollary 3.5]{Yoshioka:FM_abelian_surfaces} with the same basic
approach.

The crucial difference between abelian surfaces and K3 surfaces is the existence of spherical objects
on the latter. They are responsible for the existence of \emph{totally semistable walls} (walls for
which there are no strictly stable objects) that are harder to control; in particular, these can
correspond to any possible type of birational transformation (isomorphism, divisorial contraction,
flop). The spherical classes are the main reason our wall-crossing analysis in Sections
\ref{sec:hyperbolic}---\ref{sec:flopping} is fairly involved.

A somewhat different behavior was established in \cite{ABCH:MMP} in many cases for the Hilbert
scheme of points on $\P^2$ (extended to torsion-free sheaves in \cite{Huizenga:P2, AaronAndStudents:P2}, and to
Hirzebruch surfaces in \cite{Aaron-Izzet:pointsonsurfaces}): the authors show that the chamber
decomposition in the space of stability conditions corresponds to the base locus decomposition of
the \emph{effective} cone. In particular, while the map $\ell_\CC$ of equation \eqref{eq:ellCC}
exists similarly in their situation, it will behave differently across walls corresponding to a
divisorial contraction: in our case, the map ``bounces back'' into the ample cone, while in their
case, it will extend across the wall.

\subsection*{Acknowledgments}
Conversations with Ralf Schiffler dissuaded us from pursuing a failed approach to the birationality
of wall-crossing, and we had extremely useful discussions with Daniel Huybrechts. Tom Bridgeland
pointed us towards Corollary \ref{cor:Mukaifull}, and Dragos Oprea towards the results in Section
\ref{sec:SD}.  We also received helpful comments from Daniel Greb, Antony Maciocia, Alina Marian,
Eyal Markman, Dimitri Markushevich, Daisuke Matsushita, Ciaran Meachan, Misha Verbitsky, K\=ota
Yoshioka, Ziyu Zhang, and we would like to thank all of them. We would also like to thank the
referee very much for an extremely careful reading of the paper, and for many useful suggestions.

The authors were visiting the Max-Planck-Institut Bonn, respectively the
Hausdorff Center for Mathematics in Bonn, while working on this paper, and would like to thank
both institutes for their hospitality and stimulating atmosphere.

A.~B.~ is partially supported by NSF grant DMS-1101377.
E.~M.~ is partially supported by NSF grant DMS-1001482/DMS-1160466, Hausdorff Center for
Mathematics, Bonn, and by SFB/TR 45.

\subsection*{Notation and Convention}

For an abelian group $G$ and a field $k(=\Q,\R,\C)$, we denote by $G_k$ the $k$-vector space $G\otimes k$.

Throughout the paper, $X$ will be a smooth projective K3 surface over the complex numbers. We refer
to Section \ref{sec:ReviewK3s} for all notations specific to K3 surfaces.

We will abuse notation and usually denote all derived functors as if they were underived. We write the
dualizing functor as $(\blank)^\vee = \RlHom(\blank, \OO_X)$.

The skyscraper sheaf at a point $x\in X$ is denoted by $k(x)$.
For a complex number $z\in\mathbb{C}$, we denote its real and imaginary part by $\Re z$ and $\Im z$, respectively.

By \emph{simple object} in an abelian category we will denote an object that has no non-trivial subobjects.

Recall that an object $S$ in a K3 category is spherical if $\Hom^\bullet(S, S) = \C \oplus \C[-2]$.
We denote the associated spherical twist at $S$ by $\ST_S(\blank)$; it is defined
\cite{Mukai:BundlesK3, Seidel-Thomas:braid} by the exact triangle 
\[
\Hom^\bullet(S, E) \otimes S \to E \to \ST_S(E).
\]

We will write \emph{stable} (in italics) whenever we are considering strictly stable objects in a
context allowing strictly semistable objects: for a non-generic stability condition, or  for
objects with non-primitive Mukai vector.

\section{Review: derived categories of K3 surfaces, stability conditions, moduli spaces}\label{sec:ReviewK3s}

In this section, we give a review of stability conditions K3 surfaces, and their moduli spaces of
stable complexes.  The main references are \cite{Bridgeland:Stab, Bridgeland:K3, Toda:K3, Yoshioka:Abelian,
BM:projectivity}.

\subsection*{Bridgeland stability conditions}

Let $\DD$ be a triangulated category.

\begin{Def}\label{def:slicing}
A slicing $\PP$ of the category $\DD$ is a collection of full extension-closed subcategories $\PP(\phi)$ for $\phi \in \R$ with the following properties:
\begin{enumerate}
\item $\PP(\phi + 1) = \PP(\phi)[1]$.
\item If $\phi_1 > \phi_2$, then $\Hom(\PP(\phi_1), \PP(\phi_2)) = 0$.
\item For any $E \in \DD$, there exists a collection of real numbers $\phi_1 > \phi_2 > \dots >
\phi_n$ and a sequence of triangles
\begin{equation} \label{eq:HN-filt}
 \TFILTB E A n
\end{equation}
with $A_i \in \PP(\phi_i)$.
\end{enumerate}
\end{Def}

The collection of exact triangles in \eqref{eq:HN-filt} is called the \emph{Harder-Narasimhan (HN) filtration}
of $E$.
Each subcategory $\PP(\phi)$ is extension-closed and abelian.
Its nonzero objects are called semistable of phase $\phi$, and its simple objects are called stable.

We will write $\phi_{\min}(E) := \phi_n$ and $\phi_{\max}(E) := \phi_1$.
By $\PP(\phi -1, \phi]$ we denote the full subcategory of objects with $\phi_{\min}(E) > \phi -1$ and $\phi_{\max}(E) \le \phi$.
This is the heart of a bounded t-structure $(\DD^{\le 0}, \DD^{\ge 0})$ given by 
\[ \DD^{\le 0} = \PP(>\phi-1) = \{E \in \DD\colon \phi_{\min} > \phi-1\}
\quad \text{and} \quad
\DD^{\ge 0} = \PP(\le\phi) = \{E \in \DD\colon \phi_{\max} \le \phi\}.\]

Let us fix a lattice of finite rank $\Lambda$ and a surjective map $\vv\colon K(\DD)\onto\Lambda$.

\begin{Def}[{\cite{Bridgeland:Stab, Kontsevich-Soibelman:stability}}]
\label{def:Bridgeland}
A \emph{Bridgeland stability condition} on $\DD$ is a pair $(Z,\PP)$, where
\begin{itemize}
\item $Z\colon\Lambda\to\C$ is a group homomorphism, and
\item $\PP$ is a slicing of $Z$,
\end{itemize}
satisfying the following compatibilities:
\begin{enumerate}
\item $\frac{1}{\pi}\mathrm{arg}Z(\vv(E))=\phi$, for all non-zero $E\in\PP(\phi)$;
\item given a norm $\|\blank\|$ on $\Lambda_\R$, there exists a constant $C>0$ such that
\[
|Z(\vv(E))| \geq C \| \vv(E) \|,
\]
for all $E$ that are semistable with respect to $\PP$.
\end{enumerate}
\end{Def}

We will write $Z(E)$ instead of $Z(\vv(E))$ from now on.

A stability condition is called \emph{algebraic} if $\Imm(Z)\subset\Q\oplus\Q\sqrt{-1}$.

The main theorem in \cite{Bridgeland:Stab} shows that the set $\Stab(\DD)$ of stability conditions
on $\DD$ is a complex manifold; its dimension equals the rank of $\Lambda$.

\begin{Rem}[{\cite[Lemma 8.2]{Bridgeland:Stab}}]
\label{rmk:GroupAction}
There are two group actions on $\Stab(\DD)$.
The group $\Aut(\DD)$ of autoequivalences acts on the left by
$\Phi_*(Z,\PP)=(Z\circ\Phi_*^{-1},\Phi(\PP))$, where $\Phi \in \Aut(\DD)$ and $\Phi_*$ also denotes the push-forward on the
K-group.    The universal cover $\widetilde{\GL}^+_2(\R)$ of 
matrices in $\GL_2(\R)$ with positive determinant acts on the right, lifting the action of
$\GL_2(\R)$ on $\Hom(K(\DD), \C) = \Hom(K(\DD), \R^2)$.
\end{Rem}

\subsection*{Twisted K3 surfaces}

Let $X$ be a smooth K3 surface.
The (cohomological) \emph{Brauer group} $\mathrm{Br}(X)$ is the torsion part of the cohomology group $H^2(X,\OO_X^*)$ in the analytic topology.

\begin{Def}\label{def:twisted}
Let $\alpha\in\mathrm{Br}(X)$.
The pair $(X,\alpha)$ is called a \emph{twisted K3 surface}.
\end{Def}

Since $H^3(X,\Z)=0$, there exists a \emph{B-field lift} $\beta_0\in H^2(X,\Q)$ such that $\alpha = e^{\beta_0}$.
We will always tacitly fix both such B-field lift and a \v{C}ech representative $\alpha_{ijk}\in\Gamma(U_i\cap U_j\cap U_k,\mathcal{O}^*_X)$ on an open analytic cover $\{U_i\}$ in $X$;
see \cite[Section 1]{HuybrechtsStellari:Twisted} for a discussion about these issues.

\begin{Def}\label{def:TwistedSheaf}
An \emph{$\alpha$-twisted coherent sheaf}
$F$ consists of a collection $(\{F_i\},\{\varphi_{ij}\})$, where $F_i$ is a coherent sheaf on $U_i$
and $\varphi_{ij}\colon F_j|_{U_i\cap U_j}\to F_i|_{U_i\cap U_j}$ is an isomorphism, such that: 
\[
\varphi_{ii}=\mathrm{id};\quad
\varphi_{ji}=\varphi_{ij}^{-1};\quad
\varphi_{ij}\circ\varphi_{jk}\circ\varphi_{ki}=\alpha_{ijk}\cdot\mathrm{id}.
\]
\end{Def}

We denote by $\Coh(X,\alpha)$ the category of $\alpha$-twisted coherent sheaves on $X$, and by $\Db(X,\alpha)$ its bounded derived category.
We refer to \cite{Caldararu:Thesis, HuybrechtsStellari:Twisted, Yoshioka:TwistedStability, Lieblich:Twisted} for basic facts about twisted sheaves on K3 surfaces.

In \cite[Section 1]{HuybrechtsStellari:Twisted}, the authors define a twisted Chern character by
\[
\ch\colon K(\Db(X,\alpha)) \to H^*(X,\Q), \quad \ch(\blank) = e^{\beta_0} \cdot
\ch^{\mathrm{top}}(\blank),
\]
where $\ch^{\mathrm{top}}$ is the topological Chern character.
By \cite[Proposition 1.2]{HuybrechtsStellari:Twisted}, we have
\[
\ch(\blank) \in \left[ e^{\beta_0} \cdot \left( H^0(X,\Q)\oplus\mathrm{NS}(X)_\Q\oplus H^4(X,\Q) \right)\right] \cap H^*(X,\Z).
\]

\begin{Rem}\label{rmk:TwistedHodgeStructure}
Let $H^*(X,\alpha,\Z):= H^*(X,\Z)$.
In \cite{HuybrechtsStellari:Twisted}, the authors define a weight-2 Hodge structure on the whole cohomology $H^*(X,\alpha,\Z)$
with 
\[
H^{2,0}(X, \alpha, \C) := e^{\beta_0} \cdot H^{2,0}(X, \C).
\]
We denote by
\[
\Halgalpha := H^{1,1}(X,\alpha,\C) \cap H^*(X,\Z) 
\]
its $(1,1)$-integral part.
It coincides with the image of the twisted Chern character.
When $\alpha=1$, this reduces to the familiar definition
$H^*_{\alg}(X,\Z) = H^0(X,\Z)\oplus\mathrm{NS}(X)\oplus H^4(X,\Z).$
\end{Rem}

\subsection*{The algebraic Mukai lattice}

Let $(X,\alpha)$ be twisted K3 surface.

\begin{Def}\label{def:MukaiLattice}
\begin{enumerate*}
\item We denote by $\vv \colon K(\Db(X,\alpha)) \to H^*_{\alg}(X, \alpha, \Z)$ the \emph{Mukai vector}
\[
\vv(E) := \ch(E) \sqrt{\td(X)}.
\]
\item The \emph{Mukai pairing} $(\blank, \blank)$ is defined on $H^*_{\alg}(X, \alpha, \Z)$ by
\[
((r,c,s),(r',c',s')) := cc' - rs'-sr' \in \Z.
\]
It is an even pairing of signature $(2,\rho(X))$, satisfying $-(\vv(E), \vv(F)) = \chi(E, F) =
\sum_i (-1)^i \ext^i(E, F)$ for all $E,F\in \Db(X,\alpha)$.
\item The \emph{algebraic Mukai lattice} is defined to be the pair $\left(H^*_{\alg}(X, \alpha, \Z), (\blank,\blank)\right)$.
\end{enumerate*}
\end{Def}
 
Recall that an embedding $i\colon V\to L$ of a lattice $V$ into a lattice $L$ is \emph{primitive} if $L/i(V)$ is a free abelian group.
In particular, we call a non-zero vector $\vv\in H^*_{\alg}(X, \alpha, \Z)$ \emph{primitive} if it is not divisible in $H^*_{\alg}(X, \alpha, \Z)$.
Throughout the paper $\vv$ will often denote a primitive class with $\vv^2 > 0$.

Given a Mukai vector $\vv\in H^*_{\alg}(X, \alpha, \Z)$, we denote its orthogonal complement by
$\vv^\perp$.

\subsection*{Stability conditions on K3 surfaces}

Let $(X,\alpha)$ be a twisted K3 surface. We remind the reader that this includes
fixing a B-field lift $\beta_0$ of the Brauer class $\alpha$.

\begin{Def}
A (full, numerical) \emph{stability condition} on $(X,\alpha)$ is a Bridgeland stability condition
on $\Db(X,\alpha)$, whose lattice $\Lambda$ is given by the Mukai lattice $\Halgalpha$.
\end{Def}

In \cite{Bridgeland:K3}, Bridgeland describes a connected component of the space of numerical stability
conditions on $X$. These results have been extended to twisted K3 surfaces in \cite{HMS:generic_K3s}.
In the following, we briefly summarize the main results.

Let $\beta, \omega \in \NS(X)_\R$ be two real divisor classes, with $\omega$ being ample.
For $E\in \Db(X,\alpha)$, define
\[
Z_{\omega,\beta}(E) := \left(e^{i\omega + \beta + \beta_0}, \vv(E)\right).
\]
In \cite[Lemma 6.1]{Bridgeland:K3} 
Bridgeland constructs a heart $\AA_{\omega, \beta}$ by tilting
at a torsion pair (see \cite[Section 3.1]{HMS:generic_K3s} for the case $\alpha \neq 1$).
Its objects are two-term complexes $E^{-1} \xrightarrow{d} E^0$ with the property:
\begin{itemize}
\item $\Ker d$ is a torsion-free $\alpha$-twisted sheaf such that, for every non-zero subsheaf $E'\subset \Ker d$, we have $\Im Z_{\omega,\beta}(E')\leq0$;
\item the torsion-free part of $\Cok d$ is such that, for every non-zero torsion free quotient $\Cok d \onto E''$, we have $\Im Z_{\omega,\beta}(E'')>0$. 
\end{itemize}

\begin{Thm}[{\cite[Sections 10, 11]{Bridgeland:K3}, \cite[Proposition 3.8]{HMS:generic_K3s}}] \label{thm:BridgelandK3geometric}
Let $\sigma = (Z, \PP)$ be a stability condition such that all
skyscraper sheaves $k(x)$ of points are $\sigma$-stable. Then
there are real divisor classes $\omega, \beta \in \NS(X)_\R$ with $\omega$ ample, such that,
up to the $\widetilde{\GL}^+_2(\R)$-action, $\sigma$ is equal to the stability condition
$\sigma_{\omega, \beta}$ 
determined by $\PP((0, 1]) = \AA_{\omega, \beta}$ and $Z=Z_{\omega,\beta}$.
\end{Thm}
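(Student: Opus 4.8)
The plan is to follow Bridgeland's original argument from \cite{Bridgeland:K3}, adapted to the twisted setting via \cite{HMS:generic_K3s}. The key input is the requirement that all skyscraper sheaves $k(x)$ are $\sigma$-stable; this is exactly the condition that forces $\sigma$ to lie in the ``geometric'' chamber, and the whole proof is about extracting the divisor data $(\omega,\beta)$ from the central charge $Z$ restricted to the classes $\vv(k(x)) = (0,0,1)$ and $\vv(\OO_X) = (1,0,1)$ (more precisely $\vv(\OO_X)=e^{\beta_0}\cdot(1,0,1)$ in the twisted case).

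\textbf{Step 1: Normalize using the $\widetilde{\GL}^+_2(\R)$-action.} All the $k(x)$ have the same Mukai vector $\vv_0 := (0,0,1)$ (suitably twisted), hence the same phase, say $\phi_0$; acting by an element of $\widetilde{\GL}^+_2(\R)$ we may assume $Z(\vv_0) = -1$, i.e. $\phi_0 = 1$, and that $Z$ is $\C$-linear in the standard way. After this normalization one shows $\PP((0,1])$ contains all $k(x)$ and is a heart of a bounded t-structure.

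\textbf{Step 2: Identify the heart with a tilt.} Because every $k(x)$ is stable of phase $1$, standard arguments (cf. \cite[Section 10]{Bridgeland:K3}) show that for a sheaf $F$, the cohomology objects $\HH^i(F)$ with respect to the t-structure $\PP((0,1])$ vanish outside $i \in \{-1,0\}$, and that $\PP((0,1])$ is obtained from $\Coh(X,\alpha)$ by tilting at a torsion pair $(\TT,\FF)$. One then shows this torsion pair is precisely the one determined by the sign of $\Im Z_{\omega,\beta}$ as in the statement, so that $\PP((0,1]) = \AA_{\omega,\beta}$ for some classes $\omega,\beta$; the ampleness of $\omega$ and the inequality $\omega^2 > 2$ (needed for the construction in \cite[Lemma 6.1]{Bridgeland:K3} to yield a stability condition) come from the support property together with the existence of $\sigma$-stable spherical objects like line bundles, as in \cite[Section 11]{Bridgeland:K3}.

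\textbf{Step 3: Match the central charges.} Knowing $\PP((0,1]) = \AA_{\omega,\beta}$, the central charge $Z$ and $Z_{\omega,\beta}$ agree on the classes of $k(x)$ and $\OO_X$ after the normalization, hence (since these generate $\Halgalpha_\Q$ together with divisor classes, using that $Z$ is determined by its values on a basis and that both are compatible with the same slicing) they agree on all of $\Halgalpha$. This gives $\sigma = \sigma_{\omega,\beta}$ up to the residual $\widetilde{\GL}^+_2(\R)$-stabilizer, proving the claim.

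\textbf{The main obstacle} I anticipate is Step 2: controlling the t-structure well enough to know that the tilt is at the \emph{specific} torsion pair appearing in the definition of $\AA_{\omega,\beta}$, rather than merely \emph{some} torsion pair, and simultaneously extracting the ample class $\omega$ with $\omega^2>2$. This requires knowing not just that each $k(x)$ is semistable but genuinely stable, and using the two-dimensional family of them to pin down both the torsion pair and the imaginary part of $Z$; in the twisted case one must also track the B-field lift $\beta_0$ carefully throughout. Since this is exactly the content of \cite[Sections 10, 11]{Bridgeland:K3} and \cite[Proposition 3.8]{HMS:generic_K3s}, I would cite those for the technical heart and only sketch the reduction.
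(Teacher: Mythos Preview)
The paper does not give its own proof of this theorem: it is stated as a cited result from \cite[Sections 10, 11]{Bridgeland:K3} and \cite[Proposition 3.8]{HMS:generic_K3s}, with no argument following the statement. Your sketch is a reasonable outline of the proof in those references, so there is nothing to compare against here beyond noting that your plan matches the cited sources.
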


We will call such stability conditions \emph{geometric}, and write 
$U(X, \alpha) \subset \Stab(X, \alpha)$ for the 
the open subset of geometric stability conditions.

Using the Mukai pairing, we identify any central charge
$Z \in \Hom(\Halgalpha, \C)$ with a vector $\Omega_Z$ in $\Halgalpha \otimes \C$ such that
\[
Z (\blank) = \left( \Omega_Z,\blank \right).
\]
The vector $\Omega_Z$ belongs to the domain $\PP_0^+(X,\alpha)$, which we now describe.
Let
\[
\PP(X,\alpha) \subset \Halgalpha \otimes \C
\]
be the set of vectors $\Omega$ such that
$\Im \Omega, \Re \Omega$ span a positive definite 2-plane in $\Halgalpha \otimes \R$. 
The subset $\PP_0(X,\alpha)$ is the set of vectors not orthogonal to any spherical class:
\[ \PP_0(X,\alpha) =
\stv{\Omega \in \PP(X,\alpha)}{ (\Omega, \ss) \neq 0,\,
				\text{for all $\ss \in \Halgalpha \text{ with } \ss^2 = -2$}}.
\]
Finally, $\PP_0(X,\alpha)$ has two connected components, corresponding to the orientation induced on the 
plane spanned by $\Im \Omega, \Re \Omega$; we let
$\PP_0^+(X,\alpha)$ be the component containing vectors of the form $e^{i\omega + \beta + \beta_0}$ for 
$\omega$ ample.

\begin{Thm}[{\cite[Section 8]{Bridgeland:K3}, \cite[Proposition 3.10]{HMS:generic_K3s}}] \label{thm:Bridgeland_coveringmap}
Let $\Stab^\dag(X,\alpha)$ be the connected component of the space of stability conditions
containing geometric stability conditions $U(X,\alpha)$. 
Let $\ZZ \colon \Stab^\dag(X,\alpha) \to \Halgalpha \otimes \C$ be the map sending
a stability condition $(Z, \PP)$ to $\Omega_Z$, where $Z(\blank) = (\Omega_Z, \blank)$.

Then $\ZZ$ is a covering map of $\PP_0^+(X,\alpha)$.
\end{Thm}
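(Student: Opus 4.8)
The plan is to prove that $\ZZ \colon \Stab^\dag(X,\alpha) \to \PP_0^+(X,\alpha)$ is a covering map by establishing the three standard ingredients: that $\ZZ$ is a local homeomorphism onto $\PP_0^+(X,\alpha)$, that its image is exactly $\PP_0^+(X,\alpha)$ (in particular the image is closed as well as open), and that $\ZZ$ satisfies the path-lifting property with uniqueness, which for a local homeomorphism between reasonable spaces upgrades it to a genuine covering. The first point is immediate from the deformation theory of Bridgeland \cite{Bridgeland:Stab}: the map $(Z,\PP) \mapsto Z$ from $\Stab(X,\alpha)$ to $\Hom(\Halgalpha,\C)$ is a local homeomorphism onto an open subset, and under the identification of a central charge $Z$ with the vector $\Omega_Z$ via the Mukai pairing, this is precisely $\ZZ$; one then checks that the open set it lands in, at least near geometric stability conditions, is contained in $\PP_0^+(X,\alpha)$, using that skyscraper sheaves being stable forces $\Omega_Z$ to pair nontrivially with every spherical class and to span a positive-definite plane.

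Next I would identify the image. On geometric stability conditions this is essentially Theorem \ref{thm:BridgelandK3geometric} together with the explicit description of $\sigma_{\omega,\beta}$: the vectors $e^{i\omega+\beta+\beta_0}$ with $\omega$ ample sweep out a neighborhood in $\PP_0^+(X,\alpha)$, and then one propagates outward. The key structural input, following \cite[Section 8]{Bridgeland:K3}, is a covering-space criterion: a local homeomorphism $f \colon M \to N$ with $N$ connected (here $\PP_0^+$, which one checks is connected) is a covering provided $f$ has the property that every path in $N$ lifts, and this in turn follows if one controls the behavior of $\ZZ$ near the boundary of its image. The standard trick is to show that $\ZZ$ restricted to $\ZZ^{-1}(K)$ is proper for suitable compact $K \subset \PP_0^+(X,\alpha)$; properness together with the local homeomorphism property gives that $\ZZ$ is a covering onto its image, and then connectedness of $\Stab^\dag$ (it is a connected component by definition) plus connectedness of $\PP_0^+$ forces the image to be all of $\PP_0^+$.

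The main obstacle — and the technical heart of the argument, handled in \cite[Section 8]{Bridgeland:K3} — is the properness/path-lifting step: one must rule out that along a path in $\PP_0^+(X,\alpha)$ the corresponding stability conditions degenerate, i.e. that the slicings $\PP_t$ fail to converge. Concretely, one shows that as long as the central charge stays in $\PP_0$ (does not become orthogonal to a spherical class and keeps its plane positive-definite), the set of semistable classes and their masses stay bounded, so the stability conditions stay within a bounded region of $\Stab(X,\alpha)$ and a limit exists; the hypothesis $(\Omega,\ss)\neq 0$ for spherical $\ss$ is exactly what prevents a spherical object from acquiring mass zero and destroying the support property in the limit. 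The twisted case adds nothing essential: by \cite[Section 3]{HMS:generic_K3s}, all the constructions — the heart $\AA_{\omega,\beta}$, the twisted Mukai lattice, the period domain $\PP_0^+(X,\alpha)$ — carry over verbatim, so one simply invokes \cite[Propositions 3.8 and 3.10]{HMS:generic_K3s} at the points where the untwisted argument of \cite{Bridgeland:K3} is used. Thus the proof is essentially a citation of \cite[Section 8]{Bridgeland:K3} and \cite[Section 3]{HMS:generic_K3s}, with the one substantive check being that the local homeomorphism $\ZZ$ does land in, and surjects onto, the correctly-defined domain $\PP_0^+(X,\alpha)$.
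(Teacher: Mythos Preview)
The paper does not give its own proof of this theorem: it is stated with attribution to \cite[Section 8]{Bridgeland:K3} and \cite[Proposition 3.10]{HMS:generic_K3s} and then used as a black box throughout. So there is nothing in the paper to compare your proposal against. Your sketch is a reasonable outline of Bridgeland's original argument (local homeomorphism from the deformation theorem, plus a properness/path-lifting argument to upgrade to a covering, with the twisted generalization supplied by Huybrechts--Macr\`i--Stellari), and you correctly recognize at the end that the ``proof'' here amounts to citing those sources.
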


We will need the following observation:

\begin{Prop} \label{prop:dualstability}
The stability conditions $\sigma_{\omega, \beta}$ on $U(X, \alpha)$ and $\sigma_{\omega, -\beta}$ 
on $U(X, \alpha^{-1})$ are dual to each
other in the following sense: An object $E \in \Db(X, \alpha)$ is $\sigma_{\omega, \beta}$-(semi)stable 
of phase $\phi$
if and only if its shifted derived dual $E^\vee[2] \in \Db(X, \alpha^{-1})$ is $\sigma_{\omega,-\beta}$-(semi)stable of phase $-\phi$.
\end{Prop}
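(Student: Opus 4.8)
The plan is to compare the two stability functions $\sigma_{\omega,\beta}$ on $(X,\alpha)$ and $\sigma_{\omega,-\beta}$ on $(X,\alpha^{-1})$ via the exact anti-autoequivalence $(\blank)^\vee[2] = \RlHom(\blank,\OO_X)[2] \colon \Db(X,\alpha)^{\mathrm{op}} \to \Db(X,\alpha^{-1})$. First I would record the effect of this functor on the Mukai lattice: if $\vv(E) = (r,c,s)$, then $\vv(E^\vee[2]) = (r,-c,s)$ up to the appropriate sign conventions coming from the $B$-field lifts $\beta_0$ and $-\beta_0$ (note that dualizing replaces $\alpha$ by $\alpha^{-1}$ and $\beta_0$ by $-\beta_0$, so $e^{\beta_0}\ch^{\mathrm{top}}$ becomes $e^{-\beta_0}\ch^{\mathrm{top}}(E^\vee)$, and $\ch^{\mathrm{top}}(E^\vee) = (r,-c,s)$ in components). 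From this one computes directly that
\[
Z_{\omega,-\beta}(E^\vee[2]) = \left(e^{i\omega - \beta - \beta_0}, \vv(E^\vee[2])\right) = \overline{\left(e^{i\omega+\beta+\beta_0},\vv(E)\right)} = \overline{Z_{\omega,\beta}(E)},
\]
so the central charges are related by complex conjugation. Complex conjugation on $\C$ corresponds to the reflection in $\widetilde{\GL}_2^+(\R)$-terms; combined with the shift by $[2]$ and the opposite-category convention, the upshot at the level of phases is that $\phi \mapsto -\phi$, which is exactly the statement to be proved at the numerical level.

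The substantive point is to upgrade this numerical computation to an actual statement about (semi)stable objects, i.e. to show that $(\blank)^\vee[2]$ takes the heart $\AA_{\omega,\beta} = \PP_{\omega,\beta}((0,1])$ to the heart $\AA_{\omega,-\beta} = \PP_{\omega,-\beta}((0,1])$, or more precisely takes $\PP_{\omega,\beta}(\phi)$ to $\PP_{\omega,-\beta}(-\phi)$. The clean way to do this is to check it on the hearts and then propagate: I would verify that an object $E \in \AA_{\omega,\beta}$ satisfies $E^\vee[2] \in \AA_{\omega,-\beta}$ by unwinding Bridgeland's description of $\AA_{\omega,\beta}$ as a two-term complex $[E^{-1}\to E^0]$ obtained by tilting $\Coh(X,\alpha)$ at the torsion pair defined by the sign of $\Im Z_{\omega,\beta}$. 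Dualizing a two-term complex concentrated in degrees $-1,0$ and shifting by $[2]$ again gives a two-term complex concentrated in degrees $-1, 0$; the torsion-free-kernel / torsion-free-cokernel-quotient conditions get interchanged but, because $\Im Z_{\omega,-\beta} = \Im Z_{\omega,\beta}$ has the \emph{same} sign (the imaginary part only depends on $\omega$ and the rank/$c_1$ data in a way unaffected by $\beta \mapsto -\beta$ after accounting for the $B$-field), the defining inequalities for $\AA_{\omega,-\beta}$ are satisfied. Actually the subtlety here — and the step I expect to be the main obstacle — is precisely the bookkeeping of the $B$-field lift and the sign of $\Im Z_{\omega,\beta}$ under dualization: one must make sure that "torsion-free quotient" versus "subsheaf" conditions, together with the sign convention on $\Im Z$, really do match up, rather than producing $\AA_{\omega,-\beta}[1]$ or some other shift. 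This is a direct but slightly delicate check using \cite[Lemma 6.1]{Bridgeland:K3} and \cite[Section 3.1]{HMS:generic_K3s}.

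Once the hearts are matched, the rest is formal. Since $(\blank)^\vee[2]$ is an exact anti-equivalence sending $\AA_{\omega,\beta}$ to $\AA_{\omega,-\beta}$ and sending $Z_{\omega,\beta}$ to $\overline{Z_{\omega,-\beta}}$, it sends short exact sequences in $\AA_{\omega,\beta}$ to short exact sequences in $\AA_{\omega,-\beta}$ with the roles of sub and quotient reversed, and it reverses the order of slopes $\Real Z / \Imm Z$ within the heart; hence it takes HN filtrations to HN filtrations and $\sigma_{\omega,\beta}$-(semi)stable objects of phase $\phi \in (0,1]$ to $\sigma_{\omega,-\beta}$-(semi)stable objects of phase $1-\phi$... — here I should be careful: the precise phase inside $(0,1]$ depends on the conjugation convention, and for objects of phase exactly $1$ (i.e. $\Imm Z = 0$, $\Real Z < 0$) the dual has phase $0 \equiv 1$, all consistent with the asserted $\phi \mapsto -\phi$ once we remember $\PP(\phi+1) = \PP(\phi)[1]$ and track the extra shift. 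Finally, to extend from geometric stability conditions with $\phi \in (0,1]$ to \emph{all} objects and all phases, I would use the compatibility $\PP(\phi+1)=\PP(\phi)[1]$ on both sides together with $(\blank[1])^\vee[2] = (\blank)^\vee[2][-1]$, so that the phase shift $\phi \mapsto -\phi$ propagates to all of $\R$. This gives the statement for $\sigma_{\omega,\beta} \in U(X,\alpha)$, which is what is claimed.
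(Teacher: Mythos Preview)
Your approach is correct in outline and is essentially what the cited references in the paper's proof contain: the paper does not spell out the argument at all, but simply invokes \cite[Propositions 3.3.1 \& 4.2]{large-volume} and \cite[Proposition 4.3.6]{BMT:3folds-BG}, which carry out precisely the central-charge computation $Z_{\omega,-\beta}(E^\vee) = \overline{Z_{\omega,\beta}(E)}$ (up to shift) together with the matching of tilted hearts under derived dualization. So you are reconstructing, rather than diverging from, the intended proof.

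One bookkeeping point to straighten out: your assertion that $(\blank)^\vee[2]$ carries $\AA_{\omega,\beta}$ to $\AA_{\omega,-\beta}$ is off by a shift. If $E \in \PP_{\omega,\beta}(\phi)$ with $\phi \in (0,1]$, then the claimed conclusion is $E^\vee[2] \in \PP_{\omega,-\beta}(-\phi)$ with $-\phi \in [-1,0)$, which lands in $\AA_{\omega,-\beta}[-1]$ rather than $\AA_{\omega,-\beta}$. You sense this yourself (``here I should be careful''), and indeed the clean formulation in the references is that local dualization (without the shift by $[2]$) takes $\PP_{\omega,\beta}(\phi)$ to $\PP_{\omega,-\beta}(-\phi+2)$, or equivalently that $(\blank)^\vee$ sends the heart to a specific tilt of the opposite heart. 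This does not affect the validity of the final statement---the phase relation $\phi \mapsto -\phi$ is what matters, and it follows once the central charges are conjugate and the slicings are compatible---but the intermediate claim about hearts should be rephrased to avoid the shift mismatch.
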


\begin{proof}
By \cite[Propositions 3.3.1 \& 4.2]{large-volume}, this follows as in \cite[Proposition 4.3.6]{BMT:3folds-BG}.
\end{proof}

\subsection*{Derived Torelli}

Any positive definite 4-plane in $H^*(X, \alpha, \R)$ comes equipped with a canonical orientation,
induced by the K\"ahler cone. A Hodge-isometry $\phi \colon H^*(X, \alpha, \Z) \to H^*(X', \alpha',
\Z)$ is called orientation-preserving if it is compatible with this orientation data. 
\begin{Thm}[Mukai-Orlov] \label{thm:derivedtorelli}
Given an orientation-preserving Hodge isometry $\phi$ between the Mukai lattice of twisted K3
surfaces $(X, \alpha)$ and $(X', \alpha')$, there exists a derived equivalence
$\Phi \colon \Db(X, \alpha) \to \Db(X', \alpha')$ with $\Phi_* = \phi$.
Moreover, $\Phi$ may be chosen such that it sends the distinguished component $\Stab^\dag(X,\alpha)$ to
$\Stab^\dag(X',\alpha')$. 
\end{Thm}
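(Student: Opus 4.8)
The plan is to split the assertion into the existence of \emph{some} equivalence inducing $\phi$ — which is the (twisted) derived global Torelli theorem — and the refinement that one such equivalence carries $\Stab^\dag(X,\alpha)$ onto $\Stab^\dag(X',\alpha')$, which I would obtain by realizing $\phi$ through a chain of standard (auto)equivalences, each of which preserves the distinguished component. For the existence: when $\alpha=\alpha'=1$, that every Hodge isometry of Mukai lattices is induced by a Fourier--Mukai equivalence is the theorem of Mukai and Orlov, with the orientation-preserving refinement due to Huybrechts--Macr\`i--Stellari; the twisted generalization is due to Huybrechts--Stellari, the orientation statement again following from the work of Huybrechts--Macr\`i--Stellari and Reinecke. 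I would simply cite these.

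For the statement about $\Stab^\dag$, first reduce it to a computation in the period domain. Any $\Phi$ with $\Phi_*=\phi$ induces, by Remark~\ref{rmk:GroupAction}, an isomorphism of complex manifolds $\Phi_*\colon\Stab(X,\alpha)\to\Stab(X',\alpha')$ fitting into a commutative square with the period maps $\ZZ,\ZZ'$ of Theorem~\ref{thm:Bridgeland_coveringmap} and the $\C$-linear extension of $\phi$. As $\phi$ is a Hodge isometry it sends positive-definite $2$-planes to positive-definite $2$-planes and permutes the spherical $(-2)$-classes, so it restricts to an isomorphism $\PP_0(X,\alpha)\to\PP_0(X',\alpha')$; being orientation-preserving in the sense recalled above, it matches the distinguished components, $\phi\bigl(\PP_0^+(X,\alpha)\bigr)=\PP_0^+(X',\alpha')$. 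Hence $\Phi_*\bigl(\Stab^\dag(X,\alpha)\bigr)$ is always \emph{some} connected component of $\Stab(X',\alpha')$ lying over $\PP_0^+(X',\alpha')$; what is left is to arrange, by a suitable choice of $\Phi$ among those realizing $\phi$, that this component is $\Stab^\dag(X',\alpha')$ itself.

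To do so I would factor $\phi$ as a composition of isometries realized by standard (auto)equivalences: shifts, twists by line bundles, pushforwards along isomorphisms of twisted K3 surfaces, spherical twists $\ST_S$ at spherical objects, and Fourier--Mukai equivalences attached to fine moduli spaces of Gieseker-stable (twisted) sheaves. The first three visibly preserve $\Stab^\dag$, since they only reparametrise $\omega,\beta$ and shift phases and hence preserve the geometric locus $U(X,\alpha)$ of Theorem~\ref{thm:BridgelandK3geometric}; that spherical twists preserve $\Stab^\dag$ is Bridgeland's analysis in the untwisted case, extended to twisted K3 surfaces by Huybrechts--Macr\`i--Stellari. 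That these generators realise, between the Mukai lattices of arbitrary twisted K3 surfaces, every orientation-preserving Hodge isometry is the structural input from the theory of derived equivalences of K3 surfaces (Mukai, Orlov, Hosono--Lian--Oguiso--Yau, Ploog, Huybrechts--Stellari), which at the lattice level amounts to Eichler's criterion for the relevant orthogonal group.

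The step I expect to be the main obstacle is the remaining generator: the Fourier--Mukai transform $\Phi_Y\colon\Db(X,\alpha)\to\Db(Y,\alpha_Y)$ associated to a fine moduli space $Y=M_H(\ww)$ of sheaves must also be shown to send $\Stab^\dag(X,\alpha)$ to $\Stab^\dag(Y,\alpha_Y)$. This is genuinely where the ``moreover'' has content, since a priori $\Stab(Y,\alpha_Y)$ need not be connected. I would settle it by tracking the large-volume limit: $\Stab^\dag$ is characterised as the connected component in which, near the large-volume limit, the moduli spaces of stable objects reproduce the Gieseker moduli spaces, and under $\Phi_Y$ the large-volume region of $\Stab^\dag(X,\alpha)$ is carried into the corresponding region on the $Y$-side, which meets $\Stab^\dag(Y,\alpha_Y)$; since the image is a connected component, it must equal $\Stab^\dag(Y,\alpha_Y)$.
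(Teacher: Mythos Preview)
Your proposal is correct and matches the paper's approach: the paper's proof consists entirely of citations (Mukai--Orlov, HLOY, Ploog, HMS:Orientation for the untwisted case; Huybrechts--Stellari for the twisted case; and Hartmann's \emph{Proposition 7.9} for the $\Stab^\dag$ statement, which treats $X=X'$ with the remark that the same argument applies verbatim), and you have accurately outlined the content of those references---in particular, factoring $\phi$ through standard generators (shifts, line-bundle twists, spherical twists, and moduli-space Fourier--Mukai transforms) and checking each preserves the distinguished component is precisely Hartmann's argument.
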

\begin{proof}
The case $\alpha=1$ follows from Orlov's representability result \cite{Orlov:representability}
(based on \cite{Mukai:BundlesK3}), see \cite{HLOY:autoequivalences, Ploog:thesis, HMS:Orientation}. The
twisted case was treated in \cite{HuybrechtsStellari:CaldararuConj}.
The second statement follows identically to the case $X =X'$ treated in
\cite[Proposition 7.9]{Hartmann:cusps}; see also \cite{Huybrechts:derived-abelian}.
\end{proof}

\subsection*{Walls}

One of the main properties of the space of Bridgeland stability conditions is that it admits a well-behaved wall and chamber structure.
This is due to Bridgeland and Toda (the precise statement is \cite[Proposition 2.3]{BM:projectivity}).

Let $(X,\alpha)$ be a twisted K3 surface and let $\vv\in H^*_{\alg}(X,\alpha,\Z)$ be a Mukai vector.
Then there exists a locally finite set of \emph{walls} (real codimension one submanifolds with boundary) 
in $\Stab^\dag(X,\alpha)$, depending only on $\vv$, with the following properties:
\begin{enumerate}
\item When $\sigma$ varies within a chamber, the sets of $\sigma$-semistable and
$\sigma$-stable objects of class $\vv$ does not change.
\item When $\sigma$ lies on a single wall $\WW \subset \Stab^\dag(X,\alpha)$,
then there is a $\sigma$-semistable object
that is unstable in one of the adjacent chambers, and semistable in the other adjacent chamber.
\item When we restrict to an intersection of finitely many walls $\WW_1, \dots, \WW_k$, we
obtain a wall-and-chamber decomposition on $\WW_1 \cap \dots \cap \WW_k$ with the same properties,
where the walls are given by the intersections $\WW \cap \WW_1 \cap \dots \cap \WW_k$ for any
of the walls $\WW \subset \Stab^\dag(X,\alpha)$ with respect to $\vv$.
\end{enumerate}

Moreover, if $\vv$ is primitive, then $\sigma$ lies on a wall if and only if there exists a
strictly $\sigma$-semistable object of class $\vv$.
The Jordan-H\"older filtration of $\sigma$-semistable objects
does not change when $\sigma$ varies within a chamber.

\begin{Def}\label{def:generic}
Let $\vv\in  H^*_{\alg}(X,\alpha,\Z)$.
A stability condition is called \emph{generic} with respect to $\vv$ if it does not lie on a wall.
\end{Def}

\begin{Rem} \label{rem:Giesekerchamber}
Given a polarization $H$ that is generic with respect to $\vv$, there is always a Gieseker
chamber $\CC$: for $\sigma \in \CC$, the moduli space
$M_{\sigma}(\vv)$ of Bridgeland stable objects is exactly the moduli space
of $H$-Gieseker stable sheaves; see \cite[Proposition 14.2]{Bridgeland:K3}.
\end{Rem}

\subsection*{Moduli spaces and projectivity}

Let $(X,\alpha)$ be a twisted K3 surface and let $\vv\in H^*_{\alg}(X,\alpha,\Z)$.
Given $\sigma=(Z,\PP)\in\Stab^\dagger(X,\alpha)$ and $\phi\in\R$ such that $Z(\vv)\in\R_{>0}\cdot
e^{\pi \phi \sqrt{-1}}$, let $ \MMM_{\sigma}(\vv,\phi)$ and $\MMM_{\sigma}^{st}(\vv,\phi)$
be the moduli stack of $\sigma$-semistable and $\sigma$-stable objects with phase $\phi$ and Mukai
vector $\vv$, respectively.
We will omit $\phi$ from the notation from now on.

If $\sigma\in\Stab^\dagger(X,\alpha)$ is generic with respect to $\vv$, then $\MMM_{\sigma}(\vv)$
has a coarse moduli space $M_{\sigma}(\vv)$ of $\sigma$-semistable objects with Mukai vector $\vv$
(\cite[Theorem 1.3(a)]{BM:projectivity}, which generalizes \cite[Theorem 0.0.2]{MYY2}).
It is a normal projective irreducible variety with $\Q$-factorial singularities.
If $\vv$ is primitive, then $M_{\sigma}(\vv)=M^{st}_{\sigma}(\vv)$ is a smooth projective hyperk\"ahler manifold (see Section \ref{sec:ReviewHK}).

By results of Yoshioka and Toda, there is a very precise criterion for non-emptiness of a moduli
space, and it always has expected dimension:

\begin{Thm}\label{thm:nonempty}
Let $\vv = m\vv_0 \in H^*_\alg(X,\alpha,\Z)$ be a vector with $\vv_0$ primitive and $m>0$,
and let $\sigma\in\Stab^\dagger(X,\alpha)$ be a generic stability condition with respect
to $\vv$.
\begin{enumerate}
\item \label{enum:nonempty}
The coarse moduli space $M_{\sigma}(\vv)$ is non-empty if and only if $\vv_0^2 \geq -2$.
\item \label{enum:dimandsquare}
Either $\dim M_\sigma(\vv) = \vv^2 + 2$ and $M^{st}_\sigma(\vv)\neq\emptyset$, or $m > 1$ and $\vv_0^2 \le 0$.
\end{enumerate}
\end{Thm}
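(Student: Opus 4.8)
The plan is to deduce Theorem \ref{thm:nonempty} from the corresponding statement for moduli spaces of Gieseker-stable sheaves, using the deformation-invariance of moduli spaces under wall-crossing and the derived Torelli theorem to reduce to the classical situation. The key point is that both claims are invariant under two operations: (i) changing the stability condition $\sigma$ within $\Stab^\dag(X,\alpha)$, and (ii) applying a derived (anti-)autoequivalence $\Phi$ of $\Db(X,\alpha)$, which replaces $(\vv,\sigma)$ by $(\Phi_*\vv,\Phi_*\sigma)$. For invariance under (i), I would invoke the wall-and-chamber structure recalled above together with Theorem \ref{thm:birational-WC}: crossing a wall gives a birational map between the moduli spaces over any two generic stability conditions, and birational projective varieties have the same dimension; non-emptiness is likewise preserved since a birational map between nonempty varieties exists only if both are nonempty (and $\Stab^\dag$ is connected, so any two generic $\sigma$ are joined by a path crossing finitely many walls). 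For invariance under (ii), the pushforward $\Phi_*$ is an isometry of the algebraic Mukai lattice, so it preserves $\vv_0^2$, $m$, and the sign of $\vv_0^2$, and by Remark \ref{rmk:GroupAction} it identifies $M_\sigma(\vv)$ with $M_{\Phi_*\sigma}(\Phi_*\vv)$ as schemes.

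Granting these two reductions, the strategy is: given $(\vv,\sigma)$, first move $\sigma$ into a chamber and, using the $\widetilde{\GL}^+_2(\R)$-action and Theorem \ref{thm:BridgelandK3geometric}, reduce to a geometric stability condition $\sigma_{\omega,\beta}$. Then I would use the description of $\PP_0^+(X,\alpha)$ and the covering map $\ZZ$ of Theorem \ref{thm:Bridgeland_coveringmap} to deform $\sigma_{\omega,\beta}$ toward a ``large-volume'' point; by Remark \ref{rem:Giesekerchamber}, for $\sigma$ in the Gieseker chamber $M_\sigma(\vv)$ is literally the moduli space of $H$-Gieseker semistable $\alpha$-twisted sheaves. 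At this point, claims \eqref{enum:nonempty} and \eqref{enum:dimandsquare} are exactly the classical results of Yoshioka: non-emptiness iff $\vv_0^2\ge -2$, and the moduli space has the expected dimension $\vv^2+2$ with $M^{st}\neq\emptyset$ unless $m>1$ and $\vv_0^2\le 0$ (see \cite{Yoshioka:Abelian}, and \cite{Yoshioka:TwistedStability} in the twisted case). The twisted case may require the Fourier--Mukai/derived Torelli reduction of Theorem \ref{thm:derivedtorelli} to pass from $(X,\alpha)$ to an untwisted K3 surface if one prefers to cite only the untwisted statements, but Yoshioka has in fact established the twisted version directly.

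One subtlety to be careful about: moving $\sigma$ to the Gieseker chamber requires that such a chamber is reachable \emph{within the distinguished component}, which is guaranteed since $\sigma_{\omega,\beta}$ already lies in $\Stab^\dag(X,\alpha)$ and the Gieseker chamber is a chamber of $\Stab^\dag(X,\alpha)$ by Remark \ref{rem:Giesekerchamber}. A second subtlety is that the primitive vector $\vv_0$ may fail to be a Mukai vector realizable as a sheaf class with positive rank after twisting by line bundles; here one applies an autoequivalence (tensoring by a line bundle, or a spherical twist, or a genuine Fourier--Mukai transform via Theorem \ref{thm:derivedtorelli}) to bring $\vv$ into a form $(r,c,s)$ with $r>0$, which is always possible for a primitive class since the Mukai lattice has signature $(2,\rho)$ and the orbit of such classes under the autoequivalence group is large.

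The main obstacle I expect is organizing the reduction cleanly so that one never needs to re-prove the hard analytic input: Yoshioka's non-emptiness and dimension results for Gieseker moduli are deep (they use the theory of the Uhlenbeck compactification, deformation to the boundary, and an induction on $\vv^2$), and the honest content of Theorem \ref{thm:nonempty} is to transport them along wall-crossing. So the real work is in establishing that wall-crossing preserves both non-emptiness and dimension — but for primitive $\vv_0$ this is precisely the content of Theorem \ref{thm:birational-WC} (birationality, hence equality of dimensions and simultaneous non-emptiness), and for imprimitive $\vv = m\vv_0$ one stratifies $M_\sigma(\vv)$ by Jordan--H\"older type and bounds each stratum, noting that the generic point of $M_\sigma(m\vv_0)$ is a direct sum or iterated extension of objects of class $\vv_0$ (when $\vv_0^2\le 0$) so the dimension count reduces to the primitive case, and the inequality $\dim \le \vv^2+2$ together with the stated dichotomy follows from the expected-dimension formula for $\Ext$-groups on a K3 surface, $\ext^1(E,E) = \vv^2 + 2 + \ext^0(E,E) - \chi(\OO_X) \cdot(\text{obvious corrections})$, applied to the stable summands.
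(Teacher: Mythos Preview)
Your proposal has a fundamental circularity. You plan to invoke Theorem \ref{thm:birational-WC} to transport non-emptiness and dimension across walls, thereby reducing to the Gieseker chamber. But Theorem \ref{thm:birational-WC} is one of the main results of this paper, proved only in Section \ref{sec:MainThms} after the wall-crossing analysis of Sections \ref{sec:hyperbolic}--\ref{sec:flopping}; and that analysis uses Theorem \ref{thm:nonempty} repeatedly as input (see e.g.\ Proposition \ref{prop:CW}, Lemma \ref{lem:nottotallysemistable}, and the dimension comparisons throughout Sections \ref{sec:noniso-totsemistable}--\ref{sec:iso}). So Theorem \ref{thm:nonempty} must be in place \emph{before} any of the birationality machinery is available. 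A second problem: Theorem \ref{thm:birational-WC} is stated only for primitive $\vv$, so even if you could cite it, your reduction would not handle $m>1$ directly.

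The paper's proof avoids wall-crossing entirely. Part \eqref{enum:nonempty} is attributed to earlier work of Yoshioka and Toda via \cite[Theorem 6.8]{BM:projectivity}, which establishes non-emptiness for Bridgeland moduli spaces independently of the present paper. Part \eqref{enum:dimandsquare}, in the key case $\vv_0^2 > 0$, is proved by a direct induction on $m$: one constructs a Schur object of class $m\vv_0$ as a non-split extension of a generic stable object of class $\vv_0$ by a stable object of class $(m-1)\vv_0$ (the latter existing by induction, the extension existing by Riemann--Roch), and then computes $\dim M_\sigma(\vv) = \ext^1(E,E) = \vv^2+2$ at such a point. The strictly semistable locus is the image of $\coprod_{m_1+m_2=m} M_\sigma(m_1\vv_0)\times M_\sigma(m_2\vv_0)$, whose dimension is strictly smaller by the inductive hypothesis; hence $M^{st}_\sigma(\vv)\neq\emptyset$. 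The case $\vv_0^2\le 0$ is deferred to \cite[Lemmas 7.1, 7.2]{BM:projectivity}. None of this requires anything from the present paper beyond the existence of coarse moduli spaces.
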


In other words, when $\vv^2 \neq 0$ and the dimension of the moduli space is positive,
then it is given by $\dim M_\sigma(\vv) = \vv^2 + 2$.

\begin{proof}
This is well-known: we provide a proof for completeness.
First of all, claim \eqref{enum:nonempty} follows from results of Yoshioka and Toda
(see \cite[Theorem 6.8]{BM:projectivity}).
Since $\sigma$ is generic with respect to $\vv$, we know that $M_{\sigma}(\vv)$ exists as a projective variety, parameterizing S-equivalence classes of semistable objects.
Moreover, if $E\in M_\sigma(\vv)$, and we let $F\into E$ be such that $\phi_{\sigma}(F)=\phi_{\sigma}(E)$, then $\vv(F)=m' \vv_0$, for some $m'>0$.
Hence, the locus of strictly semistable objects in $M_\sigma(\vv)$ coincides with the image of the natural map
\[
\mathrm{SSL}\colon \coprod_{m_1+m_2=m} M_\sigma(m_1\vv_0)\times M_{\sigma}(m_2\vv_0) \longrightarrow M_\sigma(\vv),
\quad
\mathrm{SSL}\bigl((E_1, E_2)\bigr) = E_1\oplus E_2.
\]

If we assume $\vv_0^2>0$ (and so $\geq2$), then we can proceed by induction on $m$.
For $m=1$, $M_\sigma^{st}(\vv_0)=M_{\sigma}(\vv_0)$ and the conclusion follows from the Riemann-Roch Theorem and \cite{Mukai:BundlesK3}.
If $m>1$, then we deduce from the inductive assumption that the image of the map $\mathrm{SSL}$ has dimension equal to the maximum of $(m_1^2+m_2^2)\vv_0^2+4$, for $m_1+m_2=m$.

We claim that we can construct a semistable object $E$ with vector $\vv$ which is also a Schur
object, i.e. $\Hom(E, E) = \C$.
Indeed, again by the inductive assumption, we can consider a $\sigma$-\emph{stable} object $F_{m-1}$ with vector $(m-1)\vv_0$.
Let $F\in M_\sigma(\vv_0)$.
Then, again by the Riemann-Roch Theorem, $\Ext^1(F,F_{m-1})\neq0$.
We can take any non-trivial extension
\[
0\to F_{m-1} \to F_m \to F \to 0.
\]
Since both $F_{m-1}$ and $F$ are Schur objects, and they have no morphism between each other, $F_m$
is also a Schur object.

Again by the Riemann-Roch Theorem and \cite{Mukai:Symplectic}, we deduce that the dimension of $M_\sigma(\vv)$ is equal to $\mathrm{ext}^1(F_m,F_m)=m^2 \vv_0^2+2$.
Since, for all $m_1,m_2>0$ with $m_1+m_2=m$, we have
\[
(m_1^2+m_2^2)\vv_0^2 + 4 < m^2\vv_0^2 +2,
\]
this shows that $M^{st}_\sigma(\vv)\neq\emptyset$ as claimed.

For the case $\vv_0^2 \le 0$, see \cite[Lemma 7.1 and Lemma 7.2]{BM:projectivity}.
\end{proof}
Let us also point out that the proof shows a stronger statement:
\begin{Lem} \label{lem:nonprimitive}
Let $\vv = m \vv_0$ with $\vv_0^2 > 0$, and $\sigma \in \Stab^\dag(X,\alpha)$, not necessarily generic with
respect to $\vv$. If there exist $\sigma$-\emph{stable} objects of class $\vv_0$, then the same
holds for $\vv$.
\end{Lem}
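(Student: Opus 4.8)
The plan is to mimic, in a setting where $\sigma$ need not be generic, the inductive construction of a Schur semistable object carried out in the proof of Theorem \ref{thm:nonempty}\eqref{enum:dimandsquare}. Suppose there exists a $\sigma$-\emph{stable} object $F$ of class $\vv_0$; we will produce a $\sigma$-stable object of class $\vv = m\vv_0$ by induction on $m$, the case $m=1$ being the hypothesis. For the inductive step, assume we have a $\sigma$-stable object $F_{m-1}$ of class $(m-1)\vv_0$. Since $F$ and $F_{m-1}$ are both $\sigma$-stable of the same phase $\phi_\sigma(\vv_0)$, and are non-isomorphic (their Mukai vectors differ, as $m>1$ and $\vv_0$ is primitive), we have $\Hom(F, F_{m-1}) = \Hom(F_{m-1}, F) = 0$. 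By the Riemann--Roch/Mukai pairing identity $-(\vv(A),\vv(B)) = \chi(A,B)$ and $\vv_0^2 > 0$, one computes
\[
\ext^1(F, F_{m-1}) = -\chi(F, F_{m-1}) = m(m-1)\,\vv_0^2 > 0,
\]
(using that the $\Hom$'s and, by Serre duality, $\Ext^2$'s vanish). Hence there is a non-split extension
\[
0 \to F_{m-1} \to F_m \to F \to 0.
\]

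The key point is then that $F_m$ is again $\sigma$-\emph{stable} of phase $\phi_\sigma(\vv_0)$. First, $F_m$ is $\sigma$-semistable of that phase, since it is an extension of semistable objects of equal phase and the subcategory $\PP(\phi_\sigma(\vv_0))$ is extension-closed and abelian. As in the proof of Theorem \ref{thm:nonempty}, the extension being non-split together with $F_{m-1}, F$ being Schur objects with no morphisms between them in either direction forces $\Hom(F_m, F_m) = \C$, i.e.\ $F_m$ is a Schur object. Now I claim a Schur semistable object of phase $\phi$ in a K3 category is automatically \emph{stable}: if it were strictly semistable, its Jordan--H\"older filtration in the abelian category $\PP(\phi)$ would have at least two factors, and a semistable object which is a non-trivial extension within $\PP(\phi)$ admits either a nonzero idempotent endomorphism (when a stable factor appears with multiplicity, giving a projection, after using that $\PP(\phi)$ has a self-extension pairing making repeated factors split off endomorphisms) — more carefully, one argues that a non-simple object of $\PP(\phi)$ which is Schur must be a self-extension of a single stable object $S$, and then $\Hom(F_m,F_m)\supsetneq \C$ because $\Hom(S, F_m)$ and $\Hom(F_m, S)$ are both at least one-dimensional and compose to a rank-$<\dim$ map, producing a non-scalar nilpotent endomorphism. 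Either way, $\dim_\C \Hom(F_m, F_m) \geq 2$, contradicting the Schur property. Hence $F_m$ is $\sigma$-stable of class $\vv = m\vv_0$.

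The step I expect to be the main obstacle is precisely this last implication ``Schur semistable $\Rightarrow$ stable'' in the abelian category $\PP(\phi)$ — one must rule out, cleanly and without genericity of $\sigma$, the possibility that $F_m$ is strictly semistable; the argument via the Jordan--H\"older filtration and endomorphisms of repeated stable factors is the delicate part, though it is essentially the same mechanism already used implicitly in the proof of Theorem \ref{thm:nonempty} (where it shows the constructed $F_m$ has $\ext^1(F_m,F_m) = m^2\vv_0^2+2$ and lies in $M^{st}_\sigma$). Everything else — the Euler-characteristic computation of $\ext^1$, the vanishing of the relevant $\Hom$ and $\Ext^2$ groups via Serre duality and stability, and extension-closedness of $\PP(\phi)$ — is routine. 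Note also that the conclusion is stated for ``$\sigma$-stable'' in the italicized sense of the paper's convention, which is exactly the notion of a simple object of $\PP(\phi_\sigma(\vv_0))$, so no genericity of $\sigma$ is needed anywhere in the argument.
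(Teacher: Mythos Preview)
The central claim ``Schur semistable $\Rightarrow$ stable'' is false, and your own construction furnishes the counterexample. The object $F_m$ is a non-split extension $0 \to F_{m-1} \to F_m \to F \to 0$ of two \emph{non-isomorphic} $\sigma$-stable objects of the same phase; you correctly verify that $F_m$ is Schur, but $F_{m-1}$ is a proper nonzero subobject of $F_m$ in $\PP(\phi)$, so $F_m$ is not simple, i.e.\ not $\sigma$-stable. Your sketched argument only handles the case where all Jordan--H\"older factors are isomorphic (then the composition $F_m \onto S \into F_m$ gives a non-scalar nilpotent), and the assertion that ``a non-simple object of $\PP(\phi)$ which is Schur must be a self-extension of a single stable object $S$'' is exactly what fails. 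Note also that the proof of Theorem~\ref{thm:nonempty} never claims $F_m$ is stable; it only uses the Schur property to compute $\ext^1(F_m,F_m)=m^2\vv_0^2+2$ and then compares dimensions to see that $M_\sigma^{st}(\vv)\neq\emptyset$ for \emph{generic} $\sigma$.

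The paper's proof keeps the construction of the Schur object $F_m$ but replaces your implication by a deformation argument. Since $F_m$ is Schur, its versal deformation space has dimension $m^2\vv_0^2+2$. Suppose the generic deformation $F'$ is strictly $\sigma$-semistable with a semistable subobject $E\hookrightarrow F'$. The dimension count from the proof of Theorem~\ref{thm:nonempty} forces $\vv(E)$ not to be a multiple of $\vv_0$. Then universal closedness of semistability (as in \cite[Theorem 3.20]{Toda:K3}) specializes $E$ to a semistable subobject of $F_m$ of the same class $\vv(E)$. But by construction the only nontrivial proper subobject of $F_m$ in $\PP(\phi)$ is $F_{m-1}$, of class $(m-1)\vv_0$ --- a contradiction. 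Hence the generic deformation of $F_m$ is $\sigma$-stable. The missing idea in your proposal is precisely this passage to a generic deformation together with the specialization argument; without it, there is no way around the fact that $F_m$ itself is strictly semistable.
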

\begin{proof}
Let $F'$ be a generic deformation of $F_m$, and assume that it is strictly semistable; let
$E \into F'$ be a semistable subobject of the same phase. The above proof shows
the Mukai vector $\vv(E)$ cannot be a multiple of $\vv_0$. 
Using the universal closedness of moduli spaces of semistable objects, it follows as in 
\cite[Theorem 3.20]{Toda:K3} that $F_m$ also has a semistable subobject with Mukai vector
equal to $\vv(E)$. This is not possible by construction.
\end{proof}

\subsection*{Line bundles on moduli spaces}

In this section we recall the main result of \cite{BM:projectivity}. It shows that every
moduli space of Bridgeland-stable objects comes equipped with a numerically positive line bundle,
naturally associated to the stability condition.

Let $(X,\alpha)$ be a twisted K3 surface.
Let $S$ be a proper algebraic space of finite type over $\C$, let $\sigma = (Z, \PP)
\in\Stab^\dag(X,\alpha)$, and let $\EE\in\Db(S\times (X,\alpha))$ be a family of $\sigma$-semistable objects
of class $\vv$ and phase $\phi$: for all closed points $s\in S$, $\EE_s\in\PP(\phi)$ with
$\vv(\EE_s)=\vv$. We write $\Phi_\EE \colon \Db(S) \to \Db(X,\alpha)$ for the Fourier-Mukai transform
associated to $\EE$.

We construct a class $\ell_{\sigma}\in\NS(S)_\R$ on $S$ as follows: To every curve $C\subset S$, we associate
\[
 C\mapsto \ell_{\sigma}.C := \Im \left(-\frac{Z(\vv(\Phi_{\EE}(\OO_C)))}{Z(\vv)}\right).
\]
This defines a numerical Cartier divisor class on $S$, see \cite[Section
4]{BM:projectivity}.

\begin{Rem} \label{rmk:comparison}
The classical construction of determinant line bundles (see \cite[Section 8.1]{HL:Moduli}) induces,
up to duality, the so-called \emph{Mukai morphism}
$\theta_{\EE} \colon \vv^\perp \to \NS(S)$. It can also be defined by
\begin{equation} \label{eq:definetheta}
\theta_{\EE}(\ww).C:= \bigl(\ww, \vv(\Phi_\EE(\OO_C))\bigr).
\end{equation}

If we assume $Z(\vv)=-1$, and write $Z(\blank)=(\Omega_Z, \blank)$ as above, we can also write
\begin{equation} \label{eq:ellandtheta}
\ell_\sigma = \theta_{\EE}(\Im \Omega_Z).
\end{equation}
\end{Rem}

\begin{Thm}[{\cite[Theorem 4.1 \& Remark 4.6]{BM:projectivity}}] \label{thm:ampleness}
The main properties of $\ell_{\sigma}$ are:
\begin{enumerate}
\item $\ell_{\sigma}$ is a nef divisor class on $S$. Additionally, for a curve $C\subset S$, we have $\ell_\sigma.C = 0$ if and only
if, for two general closed points $c, c' \in C$, the corresponding objects $\EE_c, \EE_{c'}\in\Db(X,\alpha)$ are S-equivalent.
\item For any Mukai vector $\vv\in H^*_\alg(X,\alpha,\Z)$ and a stability condition
$\sigma \in \Stab^\dag(X, \alpha)$ that is generic, 
$\ell_{\sigma}$ induces an ample divisor class on the coarse moduli space $M_{\sigma}(\vv)$.
\end{enumerate}
\end{Thm}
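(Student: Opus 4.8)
Throughout, normalize so that the discussion reduces to a positivity statement against curves; the two claims then follow, the first by a relative Bogomolov-type inequality and the second by Kleiman's criterion.

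\textbf{Reduction and normalization.} Since a numerical divisor class on $S$ is nef precisely when it pairs non-negatively with every integral projective curve, and since $\ell_\sigma.C$ depends only on the restriction $\EE_C := \EE|_{C\times (X,\alpha)}$, the plan is to fix an integral projective curve $C \subset S$ (and, after normalizing $C$, a map $C \to S$) and prove $\ell_\sigma.C \ge 0$. The quantity $\ell_\sigma.C = \Im\!\left(-Z(\vv(\Phi_{\EE_C}(\OO_C)))/Z(\vv)\right)$ is unchanged, up to a positive scalar, under the $\widetilde{\GL}^+_2(\R)$-action of Remark \ref{rmk:GroupAction}, which also does not affect the moduli problem; so I may assume $Z(\vv) = -1$ and $\phi = 1$. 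Then $\AA := \PP(0,1]$ is the heart of a bounded t-structure, every fibre $\EE_c$ lies in $\PP(1)\subset\AA$ (so $\Im Z(\EE_c) = 0$), and by \eqref{eq:ellandtheta} one has $\ell_\sigma.C = \Im Z\bigl(\vv(\Phi_{\EE_C}(\OO_C))\bigr)$.

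\textbf{The positivity lemma (part (1)).} The core step is to show that $\Phi_{\EE_C}(\OO_C)\in\Db(X,\alpha)$ has $\AA$-cohomology concentrated in degrees $-1$ and $0$, and that, writing $A := H^{-1}_\AA(\Phi_{\EE_C}(\OO_C))$ and $B := H^0_\AA(\Phi_{\EE_C}(\OO_C))$, one has $\Im Z(A)=0$. The amplitude statement I would obtain from the fact that $\OO_C$ has Tor-amplitude $[-1,0]$ over $C$ and that $\EE_C$ lies in the relative heart $\AA_C$ on $\Db(C\times (X,\alpha))$ (which exists by openness of $\AA$ in families, \cite{Toda:K3}), so that the fibrewise-dimension-one pushforward shifts membership in $\AA$ by at most one. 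For the vanishing $\Im Z(A)=0$, the point is that $A$ is supported away from the generic point of the moduli locus—where $\Phi_{\EE_C}(\OO_C)$ restricts to a genuine object of $\AA$—so that a relative Harder--Narasimhan argument, together with $Z$-semistability of the fibres $\EE_c$ of phase $1$, forces every HN factor of $A$ to have phase $1$. Then $\ell_\sigma.C = \Im Z(B) - \Im Z(A) = \Im Z(B) \ge 0$ since $B\in\AA$, proving nefness. For the equality case: $\ell_\sigma.C = 0$ forces all HN factors of $\Phi_{\EE_C}(\OO_C)$ to have phase $1$, and inspecting the resulting filtration shows that the polystable object underlying $\EE_c$ is constant on a dense open of $C$, i.e. the general fibres are S-equivalent. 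The converse is immediate: if $\EE_c\cong G$ for $c$ in a dense open subset, then $\vv(\Phi_{\EE_C}(\OO_C)) = \chi(\OO_C)\,\vv$, so $-Z(\vv(\Phi_{\EE_C}(\OO_C)))/Z(\vv)=-\chi(\OO_C)\in\R$ and $\ell_\sigma.C=0$.

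\textbf{Ampleness on the coarse moduli space (part (2)).} Fix $\vv$ and a generic $\sigma$, and work with a (quasi-)universal family on $M_\sigma(\vv)$; the associated class $\ell_\sigma\in\NS(M_\sigma(\vv))$ is well defined. By part (1) it is nef, and for an integral curve $C\subset M_\sigma(\vv)$ the equality $\ell_\sigma.C = 0$ would force two general points of $C$ to parametrize S-equivalent objects. Since distinct points of the coarse moduli space are never S-equivalent, $\ell_\sigma.C > 0$ for every integral curve $C$; as $M_\sigma(\vv)$ is a normal projective variety, Kleiman's criterion gives ampleness.

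\textbf{Main obstacle.} The delicate point is the positivity lemma—specifically the claim $\Im Z(A)=0$, i.e. that the "error term" $H^{-1}_\AA$ of the Fourier--Mukai transform has phase exactly $1$. This is genuinely a relative statement: it requires Harder--Narasimhan filtrations in families, openness of the heart $\AA$ in families, and a comparison of generic versus special fibres, and cannot be reduced to a pointwise computation. The amplitude statement is more routine but still needs care because $\AA$ need not be Noetherian a priori.
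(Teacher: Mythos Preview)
The paper does not prove this theorem; it merely quotes it from \cite{BM:projectivity}. So there is no ``paper's own proof'' to compare against here---your proposal must stand or fall on its own.

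Part (1) is in the right spirit and close to the argument in \cite{BM:projectivity}, though your justification for $\Im Z(A)=0$ is vague and your converse argument is incorrect as written: if general fibres $\EE_c$ are merely S-equivalent (not isomorphic) to a fixed $G$, you cannot conclude $\vv(\Phi_{\EE_C}(\OO_C))=\chi(\OO_C)\,\vv$ by a density argument---the Mukai vector of the pushforward is computed by Grothendieck--Riemann--Roch from the global class of $\EE_C$, not from generic fibres. The actual argument replaces $\EE_C$ by its associated graded for the (relative) Jordan--H\"older filtration, which has the same class in $K$-theory and is now a constant family, and then the computation goes through.

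Part (2) contains a genuine gap. You argue that $\ell_\sigma$ is strictly nef (positive on every integral curve) and then invoke ``Kleiman's criterion'' to conclude ampleness. But Kleiman's criterion says a divisor is ample iff it is positive on $\overline{\mathrm{NE}}(M)\setminus\{0\}$, not just on effective curves; strictly nef divisors that are not ample exist already on smooth projective surfaces (Mumford's example). Normality does not help. The proof in \cite{BM:projectivity} does \emph{not} deduce ampleness from strict nefness: instead it identifies $M_\sigma(\vv)$, via a Fourier--Mukai transform and a path in $\Stab^\dag$, with a Gieseker-type moduli space whose projectivity and polarization are already known, and checks that $\ell_\sigma$ matches that ample class. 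Without such an external input (or a bigness argument plus base-point-freeness), your part (2) does not go through.
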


For any chamber $\CC\subset\Stab^\dagger(X,\alpha)$, we thus get a map
\begin{equation} \label{eq:elltoAmp}
\ell_\CC \colon \CC \to \mathrm{Amp}(M_{\CC}(\vv)),
\end{equation}
where we used the notation $M_{\CC}(\vv)$ to denote the coarse moduli space $M_{\sigma}(\vv)$, 
independent of the choice $\sigma\in\CC$.
The main goal of this paper is to understand the global behavior of this map.

We recall one more result from \cite{BM:projectivity}, which will be crucial for our wall-crossing
analysis.
Let $\vv\in H^*_{\alg}(X,\alpha,\Z)$ be a \emph{primitive} vector with $\vv^2\geq -2$.
Let $\WW$ be a wall for $\vv$ and let $\sigma_0\in \WW$ be a generic stability condition on the
wall, namely it does not belong to any other wall.
We denote by $\sigma_+$ and $\sigma_-$ two generic stability conditions nearby $\WW$ in opposite chambers.
Then all $\sigma_{\pm}$-semistable objects are also $\sigma_0$-semistable.
Hence, $\ell_{\sigma_0}$ induces two nef divisors $\ell_{\sigma_0,+}$ and $\ell_{\sigma_0,-}$ on $M_{\sigma_+}(\vv)$ and $M_{\sigma_-}(\vv)$ respectively.

\begin{Thm}[{\cite[Theorem 1.4(a)]{BM:projectivity}}]\label{thm:contraction}
The divisors $\ell_{\sigma_0,\pm}$ are big and nef on $M_{\sigma_{\pm}}(\vv)$.
In particular, they are semi-ample, and induce birational contractions
\[
\pi^{\pm}\colon M_{\sigma_{\pm}}(\vv)\to \overline{M}_{\pm},
\]
where $\overline{M}_{\pm}$ are normal irreducible projective varieties.
The curves contracted by $\pi^{\pm}$ are precisely the curves of objects that are
S-equivalent with respect to $\sigma_0$.
\end{Thm}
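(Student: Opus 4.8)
The plan is to prove, symmetrically in the two signs, nefness and bigness of $\ell_{\sigma_0,\pm}$, and then to deduce the existence of the contraction $\pi^\pm$ and the description of its exceptional curves. Write $M := M_{\sigma_\pm}(\vv)$; it is a smooth projective hyperk\"ahler manifold of dimension $\vv^2+2$ (by Theorem~\ref{thm:nonempty}, using that $\vv$ is primitive and $\sigma_\pm$ generic; if $\vv^2=-2$ then $M$ is a point and there is nothing to prove), and let $\EE$ be its universal family. By property (1) of the wall-and-chamber decomposition recalled above, every $\sigma_\pm$-semistable object of class $\vv$ is $\sigma_0$-semistable, so $\EE$ is a family of $\sigma_0$-semistable objects parametrized by $M$. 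Applying Theorem~\ref{thm:ampleness}(1) to the pair $(\EE,\sigma_0)$ over $S=M$ then shows at once that $\ell_{\sigma_0,\pm}$ is nef on $M$, and that for a curve $C\subset M$ one has $\ell_{\sigma_0,\pm}.C=0$ if and only if $\EE_c$ and $\EE_{c'}$ are $S$-equivalent with respect to $\sigma_0$ for two general $c,c'\in C$. Granting the existence of $\pi^\pm$ as the morphism defined by $\ell_{\sigma_0,\pm}$ (produced below), a curve is contracted by $\pi^\pm$ exactly when $\ell_{\sigma_0,\pm}.C=0$, which is the last assertion of the theorem.

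For bigness I would compute the Beauville-Bogomolov square of $\ell_{\sigma_0,\pm}$. Rescaling $Z_0$, which leaves $\ell_{\sigma_0}$ unchanged, normalize so that $Z_0(\vv)=-1$ and write $Z_0(\blank)=(\Omega_{Z_0},\blank)$. Then $(\Im\Omega_{Z_0},\vv)=\Im Z_0(\vv)=0$, so $\Im\Omega_{Z_0}\in\vv^\perp$ and \eqref{eq:ellandtheta} gives $\ell_{\sigma_0,\pm}=\theta_{\EE}(\Im\Omega_{Z_0})$. Since $\sigma_0\in\Stab^\dag(X,\alpha)$, its central charge lies in $\PP_0^+(X,\alpha)$, so $\Re\Omega_{Z_0}$ and $\Im\Omega_{Z_0}$ span a positive-definite $2$-plane in $\Halgalpha\otimes\R$; in particular $(\Im\Omega_{Z_0},\Im\Omega_{Z_0})>0$. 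The Mukai morphism $\theta_{\EE}\colon\vv^\perp\to\NS(M)$ carries the Mukai pairing to the Beauville-Bogomolov form (Yoshioka's Hodge isometry, in the Bridgeland-stable/twisted generality of \cite{BM:projectivity,MYY2}), so $q(\ell_{\sigma_0,\pm})=(\Im\Omega_{Z_0})^2>0$. By the Fujiki relation, $\int_M\ell_{\sigma_0,\pm}^{\dim M}=c_M\,q(\ell_{\sigma_0,\pm})^{\dim M/2}>0$ for a positive constant $c_M$; since a nef class with positive top self-intersection is big, $\ell_{\sigma_0,\pm}$ is big and nef. (For $\vv^2=0$, $M$ is a K3 surface and the same argument applies with $c_M=1$.)

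Since $M$ is smooth with $K_M\equiv 0$ and $\ell_{\sigma_0,\pm}-K_M=\ell_{\sigma_0,\pm}$ is nef and big, the base-point-free theorem shows that $\ell_{\sigma_0,\pm}$ is semi-ample. I then let $\pi^\pm\colon M\to\overline M_\pm:=\Proj\bigoplus_{m\ge 0}H^0(M,m\ell_{\sigma_0,\pm})$ be the induced morphism: it has connected fibres, $\overline M_\pm$ is a normal irreducible projective variety (being $\Proj$ of a finitely generated section ring of the normal projective variety $M$), and bigness of $\ell_{\sigma_0,\pm}$ forces $\pi^\pm$ to be birational. Combined with the first paragraph, this finishes the proof.

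The step I expect to be the crux is the strict inequality $q(\ell_{\sigma_0,\pm})>0$. Bigness is not a closed condition, so although $\ell_{\sigma_0,\pm}$ is a limit of the ample classes $\ell_{\sigma_\pm}$ as $\sigma_\pm\to\sigma_0$, one cannot conclude bigness by continuity alone. What makes the argument work is the observation that, after normalizing $Z_0(\vv)=-1$, the vector $\Im\Omega_{Z_0}$ automatically lies in $\vv^\perp$, so $q(\ell_{\sigma_0,\pm})$ is literally the self-intersection of a nonzero vector inside the positive-definite plane attached to $\sigma_0$ --- a feature of lying in the distinguished component $\Stab^\dag(X,\alpha)$, hence valid on a wall exactly as in a chamber. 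The only other care needed is the bookkeeping in the first paragraph: that the universal family on $M_{\sigma_\pm}(\vv)$ genuinely consists of $\sigma_0$-semistable objects, which is precisely property (1) of the wall-and-chamber structure.
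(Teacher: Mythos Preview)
Your argument is correct. Note, however, that this theorem is not proved in the present paper: it is merely stated with the citation \cite[Theorem~1.4(a)]{BM:projectivity}, and no proof is given here. Your approach---nefness and the description of contracted curves from Theorem~\ref{thm:ampleness}, bigness via $q(\ell_{\sigma_0,\pm})=(\Im\Omega_{Z_0})^2>0$ combined with the Fujiki relation, and semi-ampleness from the base-point-free theorem---is the standard one and is essentially the proof given in the cited reference. One minor technicality: $M_{\sigma_\pm}(\vv)$ need not be a fine moduli space, so $\EE$ should be taken to be a quasi-universal family; this does not affect the argument, since both $\ell_{\sigma_0,\pm}$ and the Mukai morphism $\theta_\EE$ are defined in that generality.
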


\begin{Def}\label{def:TypeOfWalls}
We call a wall $\WW$:
\begin{enumerate}
\item a \emph{fake wall}, if there are no curves in $M_{\sigma_{\pm}}(\vv)$ of objects that are S-equivalent
to each other with respect to $\sigma_0$;
\item a \emph{totally semistable wall}, if $M^{st}_{\sigma_0}(\vv)=\emptyset$;
\item a \emph{flopping wall}, if we can identify $\overline{M}_+=\overline{M}_-$ and the induced
map $M_{\sigma_+}(\vv)\dashrightarrow M_{\sigma_-}(\vv)$ induces a flopping contraction;
\item a \emph{divisorial wall}, if the morphisms $\pi^{\pm}\colon M_{\sigma_{\pm}}(\vv)\to \overline{M}_{\pm}$ are both divisorial contractions.
\end{enumerate}
\end{Def}

By \cite[Theorem 1.4(b)]{BM:projectivity}, if $\WW$ is not a fake wall and $M^{st}_{\sigma_0}(\vv)\subset M_{\sigma_{\pm}}(\vv)$ has complement of codimension at least two, then $\WW$ is a flopping wall.
We will classify walls in Theorem \ref{thm:walls}.

\section{Review: basic facts about hyperk\"ahler varieties}\label{sec:ReviewHK}

In this section we give a short review on hyperk\"ahler manifolds.
The main references are \cite{Beauville:HK,GrossHuybrechtsJoyce,Eyal:survey}.

\begin{Def}\label{def:HK}
A \emph{projective hyperk\"ahler manifold} is a simply connected smooth projective variety $M$ such that $H^0(M,\Omega^2_M)$ is one-dimensional, spanned by an everywhere non-degenerate holomorphic $2$-form.
\end{Def}

The N\'eron-Severi group of a hyperk\"ahler manifold carries a natural bilinear form, called the \emph{Fujiki-Beauville-Bogomolov form}.
It is induced by a quadratic form on the whole second cohomology group $q:H^2(M,\Z)\to\Z$, which is
primitive of signature $(3,b_2(M)-3)$. It satisfies the Fujiki relation
\begin{equation}\label{eq:BBform}
\int_M \alpha^{2n} = F_M \cdot q(\alpha)^n,\qquad \alpha\in H^2(M,\Z),
\end{equation}
where $2n=\dim M$ and $F_M$ is the \emph{Fujiki constant}, which depends only on the deformation
type of $M$.
We will mostly use the notation $(\blank,\blank):=q(\blank,\blank)$ for the induced bilinear form on $\mathrm{NS}(M)$.

The Hodge structure $\left(H^2(M,\Z),q\right)$ behaves similarly to the case of a K3 surface.
For example, by \cite{Verbitsky:torelli}, there is a weak global Hodge theoretic Torelli theorem for
(deformation equivalent) hyperk\"ahler manifolds.

Moreover, some positivity properties of divisors on $M$ can be rephrased in terms of $q$.
We first recall a few basic definitions on cones of divisors.

\begin{Def}\label{def:MoveableBigPositiveEffDivisors}
An integral divisor $D\in\NS(M)$ is called
\begin{itemize}
\item \emph{big}, if its Iitaka dimension is maximal;
\item \emph{movable}, if its stable base-locus has codimension $\geq 2$;
\item \emph{strictly positive}, if $(D,D) > 0$ and $(D, A) > 0$ for a fixed ample class $A$ on $M$.
\end{itemize}
\end{Def}

The real (not necessarily closed) cone generated by big (resp., movable, strictly positive, effective) integral divisors will be denoted by $\Bigc(M)$ (resp., $\Mov(M)$, $\Pos(M)$, $\mathrm{Eff}(M)$).
We have the following inclusions:
\begin{align*}
&\Pos(M)\subset \Bigc(M)\subset\mathrm{Eff}(M)\\
&\Nef(M)\subset\overline{\Mov}(M)\subset  \overline{\Pos}(M)\subset\overline{\Bigc}(M)=\overline{\mathrm{Eff}}(M).
\end{align*}
The only non-trivial inclusion is $\Pos(M)\subset \Bigc(M)$, which follows from \cite[Corollary 3.10]{Huybrechts:compactHyperkaehlerbasic}.
Divisors in $\overline{\Pos}(M)$ are called \emph{positive}.

We say that an irreducible divisor $D \subset M$ is \emph{exceptional} if there is a birational map $\pi \colon M
\dashrightarrow M'$ contracting $D$. Using the Fujiki relations, one proves $D^2 < 0$ and $(D,
E) \ge 0$ for every movable divisor $E$ \cite[Section 1]{Huybrechts:compactHyperkaehlerbasic}.
We let $\rho_D$ be the reflection at $D$, i.e., the linear involution of $\NS(M)_\Q$ fixing $D^\perp$ and sending $D$ to $-D$.

\begin{Prop}[\cite{Eyal:prime-exceptional}] \label{prop:Weylmovablecone}
The reflection $\rho_D$ at an irreducible exceptional divisor is an integral involution of $\NS(M)$. 
Let $W_{\mathrm{Exc}}$ be the Weyl group generated by such reflections $\rho_D$.
The cone $\Mov(M) \cap \Pos(M)$ of big movable divisors is the fundamental chamber, for
the action of $W_{\mathrm{Exc}}$ on $\Pos(M)$, given by $(D, \blank) \ge 0$ for every exceptional divisor $D$.
\end{Prop}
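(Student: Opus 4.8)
The statement bundles together three assertions: that each reflection $\rho_D$ preserves the lattice $\NS(M)$, that $\Mov(M)\cap\Pos(M)$ is exactly the cone cut out by the inequalities $(\blank,D)\ge 0$, and that this cone is a strict fundamental domain for $W_{\mathrm{Exc}}$ acting on $\Pos(M)$. The plan is to take the integrality of the $\rho_D$ as an input from Markman's theory of monodromy‑reflective classes \cite{Eyal:prime-exceptional}, to prove the chamber identification by combining the Fujiki relation with the divisorial Zariski decomposition, and to deduce the fundamental‑domain property from the general theory of reflection groups attached to a locally finite hyperplane arrangement in a convex cone.

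\emph{Integrality.} If $D\subset M$ is an irreducible exceptional divisor, then $D^2<0$ and $(D,E)\ge 0$ for every movable $E$; this is the formal consequence of the Fujiki relation and the Hodge‑index property of $q$ on $\Pos(M)$ recalled just before the statement (cf.\ \cite[Section 1]{Huybrechts:compactHyperkaehlerbasic}). To see that $\rho_D(x)=x-\tfrac{2(x,D)}{(D,D)}D$ maps $\NS(M)$ to itself, equivalently that $2(x,D)/(D,D)\in\Z$ for all $x$, I would invoke Markman's classification: a prime exceptional divisor on a variety of $K3^{[n]}$-type is a monodromy‑reflective class, so $\rho_D$ is the restriction to $\NS(M)$ of a monodromy operator on $H^2(M,\Z)$, in particular an integral isometry. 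This is the deepest ingredient; its proof rests on Verbitsky's global Torelli theorem \cite{Verbitsky:torelli} and Markman's explicit description of the monodromy group, and I would simply cite it.

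\emph{Chamber identification.} One inclusion is immediate from the property above: if $D\in\Mov(M)\cap\Pos(M)$ then $(D,E)\ge 0$ for every exceptional divisor $E$. For the converse, take $D\in\Pos(M)$ with $(D,E)\ge 0$ for all exceptional $E$ and apply Boucksom's divisorial Zariski decomposition $D=P+N$, where $P$ is modified nef (hence lies in $\overline{\Mov}(M)$), $N=\sum_i a_i N_i$ is effective with the $N_i$ prime exceptional divisors, $a_i>0$, and $(P,N_i)=0$ for all $i$ (orthogonality of the decomposition, via the Fujiki relation). The Gram matrix $((N_i,N_j))$ is negative definite, since the $N_i$ are simultaneously contractible. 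Then
\[
0\ \le\ \textstyle\sum_i a_i\,(D,N_i)\ =\ \sum_i a_i\,(P+N,N_i)\ =\ \sum_i a_i\,(N,N_i)\ =\ (N,N)\ \le\ 0 ,
\]
with equality forcing $N=0$. Hence $D=P\in\overline{\Mov}(M)\cap\Pos(M)$, and since $D$ is interior to $\Pos(M)$ it in fact lies in $\Mov(M)$. This gives $\Mov(M)\cap\Pos(M)=\{D\in\Pos(M):(D,E)\ge 0\text{ for all exceptional }E\}$ (compare \cite[Lemma 6.22]{Eyal:survey}).

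\emph{Fundamental domain, and the main obstacle.} It remains to check that this cone is a strict fundamental domain for $W_{\mathrm{Exc}}$ on $\Pos(M)$. First, the arrangement $\{E^\perp: E\text{ irreducible exceptional}\}$ is locally finite in $\Pos(M)$: for the fixed deformation type $K3^{[n]}$ there are only finitely many possibilities for the pair $\bigl((E,E),\,\mathrm{div}(E)\bigr)$ (Markman), so exceptional classes of bounded square form a discrete subset of $\NS(M)$, and only finitely many of their orthogonal hyperplanes meet a given compact subcone of $\Pos(M)$. Granting this and the integrality of the $\rho_D$, the assertion is the standard fact (Vinberg; Looijenga) that the closure of a connected component of the complement of a locally finite, reflection‑invariant hyperplane arrangement in an open convex cone is a strict fundamental domain for the group generated by the reflections in the walls it touches; the cone $\Mov(M)\cap\Pos(M)$ is such a component, being cut out by the half‑spaces $(\blank,E)\ge 0$ and containing the ample cone in its interior. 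The two genuinely hard points are thus the integrality of $\rho_D$ and the boundedness of exceptional classes underlying local finiteness — both supplied by \cite{Eyal:prime-exceptional} — while everything else is the formal reflection‑group package together with the Zariski‑decomposition computation.
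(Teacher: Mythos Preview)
Your proposal is correct and follows exactly the approach the paper itself indicates: the paper does not give a detailed proof but remarks that the integrality of $\rho_D$ is the difficult input (citing \cite{Eyal:prime-exceptional}, or alternatively deducible from the classification in Theorem~\ref{thm:walls}), while the remaining claims follow from Boucksom's Zariski decomposition \cite{Boucksom:Zariskidecomposition} and standard results on Weyl group actions on hyperbolic lattices as in \cite[Section~6]{Eyal:survey}. Your write-up fills in precisely these ingredients.
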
 
The difficult claim is the integrality of $\rho_D$; in our case, we could also deduce it from our
classification of divisorial contractions in Theorem \ref{thm:walls}.
As explained in \cite[Section 6]{Eyal:survey}, the remaining statements follow
from Zariski decomposition for divisors \cite{Boucksom:Zariskidecomposition} and standard results
about Weyl group actions on hyperbolic lattices.

\begin{Def}\label{def:LagrangianFibration}
Let $M$ be a projective hyperk\"ahler manifold of dimension $2n$.
A \emph{Lagrangian fibration} is a surjective morphism with connected fibers $h\colon M\to B$, where $B$
is a smooth projective variety, such that the generic fiber is Lagrangian with respect to the
symplectic form $\omega\in H^0(M,\Omega^2_M)$.
\end{Def}

By the Arnold-Liouville Theorem, any smooth fiber of a Lagrangian fibration is an abelian variety of dimension $n$.
Moreover:

\begin{Thm}[{\cite{Matsushita:Fibrations, Matsushita:Addendum} and \cite{Hwang:fibrationsbase}}]\label{thm:MatsushitaHwang}
Let $M$ be a projective hyperk\"ahler manifold of dimension $2n$.
Let $B$ be a smooth projective variety of dimension $0<\dim B < 2n$ and let $h\colon M\to B$ be a surjective morphism with connected fibers.
Then $h$ is a Lagrangian fibration, and $B\cong \P^n$.
\end{Thm}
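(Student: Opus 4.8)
The statement is a combination of two deep results, and I would structure the argument accordingly, taking Matsushita's and Hwang's theorems as the two main inputs.

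\textbf{Step 1 (after Matsushita): $h$ is a Lagrangian fibration and $\dim B = n$.} First I would fix an ample divisor $A$ on $B$ and consider the nonzero nef class $L := h^*A \in \NS(M)$ (nonzero since $h$ is surjective with connected fibres, so $h^*$ is injective on cohomology). Since $L$ is pulled back from the $(\dim B)$-dimensional base, $L^{\dim B + 1} = 0$, and as $\dim B < 2n$ this gives $\int_M L^{2n} = 0$; by the Fujiki relation \eqref{eq:BBform} we get $F_M \, q(L)^n = 0$, hence $q(L) = 0$. Polarizing \eqref{eq:BBform} against a fixed ample class $\eta$ and expanding $q(L + t\eta)^n = \bigl(2t\,(L,\eta) + t^2 q(\eta)\bigr)^n$ as a polynomial in $t$ (using $q(L) = 0$, $(L,\eta) > 0$, $q(\eta) > 0$), one finds $L^k\cdot\eta^{2n-k} > 0$ for $0 \le k \le n$ and $L^k\cdot\eta^{2n-k} = 0$ for $k > n$; hence the numerical dimension of the nef class $L$ is exactly $n$, so $\dim B = \kappa(h^*A) \le \nu(h^*A) = n$. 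For the reverse inequality I would invoke Matsushita's lemma that the holomorphic symplectic form $\omega$ restricts to zero on a general fibre $F$ (proved using semipositivity of the Hodge bundle together with the structure of the spaces of holomorphic forms on $M$, where $H^0(M,\Omega^{2r}_M) = \C\,\omega^r$); this says $F$ is isotropic for $\omega$, hence $\dim F \le n$, i.e. $\dim B \ge n$. Combining the two inequalities, $\dim B = n$ and the general fibre is Lagrangian; by the Arnold--Liouville theorem it is an abelian variety, so $h$ is a Lagrangian fibration.

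\textbf{Step 2 (after Hwang): $B \cong \P^n$.} Refining the analysis of Step 1, one shows that $B$ is a $\Q$-factorial Fano variety with $b_2(B) = \rho(B) = 1$ and at worst log-terminal singularities; under the present hypothesis $B$ is moreover smooth, hence a smooth Fano manifold of Picard number one. I would then appeal to Hwang's study of the variety of minimal rational tangents (VMRT) of $B$ at a general point: the Lagrangian fibration $h$ equips the complement of the discriminant locus in $B$ with a canonical flat affine structure, coming from the period map of the family of abelian varieties $h^{-1}(b)$, and this rigidity, together with $\rho(B) = 1$, forces the VMRT at a general point to be the whole projectivized tangent space. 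By the Cho--Miyaoka--Shepherd-Barron characterization of projective space, this yields $B \cong \P^n$.

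\textbf{Expected main obstacle.} Granting Matsushita's fibre-isotropy lemma, Step 1 is a self-contained computation with the Beauville--Bogomolov--Fujiki form and is not the difficulty. The hard part is Step 2: Hwang's theorem rests on the full theory of varieties of minimal rational tangents and on a delicate compatibility between the flat connection attached to the Lagrangian fibration and the minimal rational curves on $B$. In a paper of this scope I would treat this as a black box rather than reprove it, citing \cite{Matsushita:Fibrations, Matsushita:Addendum} for Step 1 and \cite{Hwang:fibrationsbase} for Step 2.
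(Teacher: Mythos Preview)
The paper does not prove this theorem at all: it is stated purely as a citation of Matsushita's and Hwang's results, with no argument given. Your final remark --- that in a paper of this scope one should treat both steps as black boxes and simply cite \cite{Matsushita:Fibrations, Matsushita:Addendum} and \cite{Hwang:fibrationsbase} --- is precisely what the paper does, so in that sense your proposal and the paper agree completely.

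Your sketch of the underlying arguments is broadly accurate as an outline of the cited works (the Fujiki-relation computation giving $q(h^*A)=0$ and numerical dimension $n$, Matsushita's isotropy of fibres, and Hwang's VMRT argument), but none of this appears in the paper itself. Since the paper offers nothing to compare against beyond the bare citation, there is no divergence of approach to discuss.
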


This result explains the importance of Conjecture \ref{conj:SYZ}. In addition, 
the existence of a Lagrangian fibration is equivalent to the existence of a single Lagrangian
torus in $M$ (see \cite{GLR:Lagrangian1, HW:Lagrangian,Matsushita:BeauvilleConj}, based on previous results in
\cite{Amerik:BeauvilleConjDim4,GLR:Lagrangian2}).

The examples of hyperk\"ahler manifolds we will consider are moduli spaces of stable complexes,
as explained by the theorem below. It has been proven for moduli of sheaves
in \cite[Sections 7 \& 8]{Yoshioka:Abelian}, and generalized to Bridgeland stability
conditions in \cite[Theorem 6.10 \& Section 7]{BM:projectivity}:

\begin{Thm}[Huybrechts-O'Grady-Yoshioka]\label{thm:ModuliSpacesAreHK}
Let $(X,\alpha)$ be a twisted K3 surface and let $\vv\in H^*_{\alg}(X,\alpha,\Z)$ be a primitive vector with $\vv^2\geq -2$.
Let $\sigma\in\Stab^\dagger(X,\alpha)$ be a generic stability condition with respect to $\vv$.
Then:
\begin{enumerate}
\item $M_{\sigma}(\vv)$ is a projective hyperk\"ahler manifold, deformation-equivalent to the
Hilbert scheme of points on any K3 surface.
\item  The Mukai morphism induces an isomorphism
\begin{itemize}
\item $\theta_{\sigma, \vv}\colon \vv^\perp \xrightarrow{\sim} \mathrm{NS}(M_{\sigma}(\vv))$, if $\vv^2>0$;
\item $\theta_{\sigma, \vv}\colon \vv^\perp/\vv \xrightarrow{\sim} \mathrm{NS}(M_{\sigma}(\vv))$, if $\vv^2=0$.
\end{itemize}
Under this isomorphism, the quadratic Beauville-Bogomolov form for $\mathrm{NS}(M_{\sigma}(\vv))$
coincides with the quadratic form of the Mukai pairing on $(X,\alpha)$.
\end{enumerate}
\end{Thm}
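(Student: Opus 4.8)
The plan is to split the statement into its geometric content---that $M_\sigma(\vv)$ is a projective hyperk\"ahler manifold deformation equivalent to a Hilbert scheme---and its lattice-theoretic content about the Mukai morphism, and in each case to reduce to the classical case of Gieseker-stable sheaves, due to Mukai, O'Grady, Huybrechts and Yoshioka \cite{Yoshioka:Abelian} (and \cite{Yoshioka:TwistedStability} for $\alpha\neq 1$). First I would establish directly that $M_\sigma(\vv)$ is smooth of dimension $\vv^2+2$ with an everywhere non-degenerate holomorphic $2$-form: since $\vv$ is primitive and $\sigma$ generic, every point corresponds to a $\sigma$-stable, hence simple, object $E$, so $\Hom(E,E)=\C$; Serre duality then gives $\Ext^2(E,E)\cong\C$ with the trace $\Ext^2(E,E)\to H^2(X,\OO_X)$ an isomorphism, and Mukai's formula $-(\vv,\vv)=\chi(E,E)$ gives $\ext^1(E,E)=\vv^2+2$. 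The Yoneda pairing on $\Ext^1(E,E)$ is alternating and, by Serre duality, perfect, while the obstruction to deforming $E$ lies in the trace-free part $\Ker(\tr)=0$ of $\Ext^2(E,E)$; hence the moduli stack---and therefore the coarse space $M_\sigma(\vv)$, which is projective, normal and irreducible by \cite[Theorem 1.3]{BM:projectivity}---is smooth of the expected dimension, and the local Yoneda pairings glue to the symplectic form.

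Next I would pin down the deformation type by connecting $M_\sigma(\vv)$ to a Gieseker moduli space. By Remark \ref{rem:Giesekerchamber} there is a stability condition $\sigma_0$ in a Gieseker chamber, so $M_{\sigma_0}(\vv)=M_H(\vv)$ for a generic polarization $H$, and by \cite{Yoshioka:Abelian} this is a projective hyperk\"ahler manifold deformation equivalent to the Hilbert scheme of $\frac{1}{2}(\vv^2+2)$ points on a K3 surface (a point if $\vv^2=-2$, a K3 surface if $\vv^2=0$). Joining $\sigma$ to $\sigma_0$ by a generic path in $\Stab^\dag(X,\alpha)$ and applying Theorem \ref{thm:contraction} at each wall crossed, $M_\sigma(\vv)$ is birational to $M_H(\vv)$; since birational projective hyperk\"ahler manifolds are deformation equivalent \cite{Huybrechts:compactHyperkaehlerbasic}, so is $M_\sigma(\vv)$, and in particular it is simply connected with $h^{2,0}=1$. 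This is the step I expect to be the main obstacle: it rests on the deep classical fact that $M_H(\vv)$ is deformation equivalent to a Hilbert scheme, and making the wall-crossing argument rigorous requires knowing that \emph{every} wall---including totally semistable and divisorial walls, and in the twisted setting---induces a birational map of moduli spaces, which is precisely the input we take from \cite{BM:projectivity}.

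Finally, for the Mukai morphism: for $M_H(\vv)$ the fact that $\theta$ is an isometry onto $\NS(M_H(\vv))$ (onto $\vv^\perp/\vv$ when $\vv^2=0$) identifying the Mukai pairing with the Beauville-Bogomolov form is Yoshioka's theorem \cite[Section 8]{Yoshioka:Abelian}. For a general generic $\sigma$ I would propagate this across walls: by Remark \ref{rmk:comparison} the Mukai morphism is built from the $K$-theory class of a (quasi-)universal family and its Fourier-Mukai kernel, and across a wall these agree on the common dense open locus, so the isomorphisms of N\'eron-Severi groups induced by wall-crossing intertwine the Mukai morphisms; as $\theta_{\sigma,\vv}$ is then an isometric embedding between lattices of equal rank and equal discriminant, it is an isomorphism. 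When the locus of $\sigma_0$-stable objects is small this last argument needs the more careful analysis of determinant line bundles in \cite[Section 7]{BM:projectivity}.
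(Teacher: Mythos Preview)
The paper does not prove this theorem; it is quoted from \cite[Sections 7 \& 8]{Yoshioka:Abelian} for Gieseker moduli and \cite[Theorem 6.10 \& Section 7]{BM:projectivity} for the extension to Bridgeland stability. Your first paragraph---smoothness, dimension, and the symplectic form via unobstructedness and Serre duality---is correct and is the standard opening move.

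The genuine gap is in your reduction to the Gieseker case by crossing walls. Theorem \ref{thm:contraction} only produces birational contractions $\pi^\pm \colon M_{\sigma_\pm}(\vv) \to \overline{M}_\pm$ to two a priori unrelated targets; it does not assert that $M_{\sigma_+}(\vv)$ and $M_{\sigma_-}(\vv)$ are birational to each other. When the wall is not totally semistable both moduli spaces contain the dense open locus of $\sigma_0$-\emph{stable} objects and birationality is immediate, but at a totally semistable wall that locus is empty and your argument stops. You then claim the missing birationality is supplied by \cite{BM:projectivity}; in fact that statement is Theorem \ref{thm:birational-WC} of \emph{this} paper---one of its main new results---and its proof (via Theorem \ref{thm:walls}) uses Theorem \ref{thm:ModuliSpacesAreHK} as input: for instance Lemma \ref{lem:notdivisorial} and Proposition \ref{prop:DivisorialWallIsotropicHasNumericalProps} apply Theorem \ref{thm:NamikawaWierzba} to $M_{\sigma_+}(\vv)$, which already presupposes that it is a projective hyperk\"ahler manifold. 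So the argument as written is circular, and your third paragraph on propagating the Mukai isometry across walls inherits the same problem.

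The approach in \cite{BM:projectivity} avoids wall-crossing inside $\Stab^\dag(X,\alpha)$ altogether. One instead deforms the twisted K3 surface $(X,\alpha)$ itself, carrying the stability condition and the relative moduli space along in a smooth projective family; over a general deformation the algebraic Mukai lattice has small rank, the chamber structure degenerates, and the Bridgeland moduli space is directly identified (after a Fourier--Mukai transform if needed) with a moduli space of twisted Gieseker-stable sheaves, to which Yoshioka's theorem applies. The hyperk\"ahler structure and the Mukai isometry are then transported back by deformation invariance, with no need to understand individual walls.
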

Here $\theta_{\sigma, \vv}$ is the Mukai morphism as in Remark \ref{rmk:comparison}, induced
by a (quasi-)universal family. We will often drop $\sigma$ or $\vv$ from the notation.
It extends to an isomorphism of Hodge structures, identifying
the orthogonal complement $\vv^{\perp, \tr}$ inside the
whole cohomology $H^*(X,\alpha,\Z)$ (rather than its algebraic part) with $H^2(M_{\sigma}(\vv), \Z)$. 
The following result is Corollary 9.9 in \cite{Eyal:survey} for the untwisted case $\alpha = 1$; by 
deformation techniques, the result also holds in the twisted case:
\begin{Thm}[{\cite{Verbitsky:torelli}, \cite{Eyal:survey}}]
For $\vv$ primitive and $\vv^2>0$, the embedding $H^2(M_{\sigma}(\vv), \Z) \cong \vv^{\perp, \tr}\into H^*(X,\alpha, \Z)$ of integral Hodge
structures determines the birational class of $M_{\sigma}(\vv)$.
\end{Thm}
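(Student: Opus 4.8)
The plan is to deduce this statement by combining the global Torelli theorem of Verbitsky with Markman's detailed description of the monodromy group, applied to the deformation class of Hilbert schemes of points. By Theorem \ref{thm:ModuliSpacesAreHK}, $M_\sigma(\vv)$ is a hyperk\"ahler manifold of $K3^{[n]}$-type (with $n = \frac{\vv^2}{2}+1$), and its second cohomology is identified, as an integral Hodge structure, with $\vv^{\perp,\tr} \subset H^*(X,\alpha,\Z)$; in particular the Mukai pairing restricts to the Beauville-Bogomolov form. What must be shown is that if $M_\sigma(\vv)$ and $M_{\sigma'}(\vv')$ are two such moduli spaces (possibly on different twisted K3 surfaces) whose $H^2$'s are isomorphic as integral Hodge structures in a way compatible with the embeddings into the ambient Mukai lattices, then the two hyperk\"ahler manifolds are birational.

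First I would record the purely lattice-theoretic input: Markman's theory of the monodromy group of $K3^{[n]}$-type manifolds \cite{Eyal:integral} gives an explicit description of which Hodge isometries of $H^2$ are induced by parallel transport operators; crucially, a Hodge isometry arising from an isomorphism of the embeddings $\vv^{\perp,\tr}\hookrightarrow H^*(X,\alpha,\Z)$ and $\vv'^{\perp,\tr}\hookrightarrow H^*(X',\alpha',\Z)$ lies in Markman's monodromy group (this is the content of \cite[Theorem 1.10 and Theorem 1.14]{Eyal:integral}, combined with the deformation argument needed for the twisted case). Next, by Verbitsky's global Torelli theorem \cite{Verbitsky:torelli} in the form stated in \cite[Corollary 9.9]{Eyal:survey}, a parallel-transport Hodge isometry between the second cohomologies of two $K3^{[n]}$-type manifolds that maps one positive cone to the other — after possibly composing with a Hodge isometry fixing a chamber, i.e. after adjusting by the Weyl group of $W_{\mathrm{Exc}}$ and by $\pm 1$ — is induced by a birational map. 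Since any such cone-compatibility condition can be arranged by an element of the monodromy group (reflections in exceptional and wall classes act within the monodromy group), the isomorphism of embeddings is enough to conclude birationality.

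The other direction — that birational $M_\sigma(\vv)$ and $M_{\sigma'}(\vv')$ have isomorphic embeddings of $H^2$ into the Mukai lattice — follows from \cite[Corollary 2.7]{Huybrechts:Kaehlercone}: birational hyperk\"ahler manifolds have canonically isomorphic integral second cohomology, respecting the Beauville-Bogomolov form and the Hodge structure; and under the identification of $H^2$ with $\vv^{\perp,\tr}$, the embedding into $H^*(X,\alpha,\Z)$ is recovered intrinsically (the ambient Mukai lattice is the unique, up to the relevant isometry, overlattice realizing the Hodge structure), so birational models yield isomorphic embeddings. The statement in the excerpt is really the ``harder'' implication, namely that the Hodge-theoretic data \emph{determines} the birational class, so the bulk of the work is the first, Torelli-plus-monodromy direction.

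The main obstacle is purely bookkeeping at the level of lattices: one must verify that the Hodge isometry $\vv^{\perp,\tr}\to\vv'^{\perp,\tr}$ coming from the assumed isomorphism of embeddings extends to, or is conjugate under the monodromy action to, a genuine parallel transport operator satisfying Verbitsky's cone hypothesis. This is exactly where Markman's work enters, and it is nontrivial because the monodromy group of $K3^{[n]}$-type is a proper finite-index subgroup of the full isometry group of $H^2$; the key point is that orientation-preserving isometries respecting the embedding into the Mukai lattice automatically land in this subgroup. For the twisted case, one additionally needs a deformation argument reducing to the untwisted situation where \cite[Corollary 9.9]{Eyal:survey} is stated. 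Everything else — the Hodge-structure identifications, the translation between the Mukai pairing and the Beauville-Bogomolov form — is already supplied by Theorem \ref{thm:ModuliSpacesAreHK} and the discussion preceding the statement.
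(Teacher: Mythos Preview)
The paper does not prove this theorem at the point where it is stated; it is cited from Verbitsky and Markman, and the paper immediately remarks that it only needs the easier direction (that birational moduli spaces have isomorphic embeddings). Your sketch correctly outlines the argument behind the cited result: Markman's characterization of the monodromy group of $K3^{[n]}$-type manifolds shows that a Hodge isometry arising from an isomorphism of the embeddings $\vv^{\perp,\tr}\hookrightarrow H^*(X,\alpha,\Z)$ is a parallel-transport operator, and then Verbitsky's global Torelli theorem yields the birational map.

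It is worth noting, however, that the paper's own main results give an \emph{alternative} proof of this statement for Bridgeland moduli spaces, independent of Verbitsky's theorem. This is the content of the implications $\eqref{enum:Markmancrit}\Rightarrow\eqref{enum:equivnumerical}\Rightarrow\eqref{enum:equivbirational}\Rightarrow\eqref{enum:birationalmoduli}$ in Corollary~\ref{cor:Mukaifull}: derived Torelli (Theorem~\ref{thm:derivedtorelli}) converts the lattice-theoretic hypothesis into a derived equivalence $\Phi$ with $\Phi_*(\vv)=\vv'$, and then birationality of wall-crossing (Theorem~\ref{thm:birational-WC}) upgrades $\Phi$ to one inducing a birational map of moduli spaces. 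The paper explicitly observes in the introduction that this recovers Markman's version of global Torelli without invoking \cite{Verbitsky:torelli}. So your approach matches the route behind the citation, while the paper's actual contribution is to bypass it entirely via categorical methods.
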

However, as indicated in the introduction, we only need the implication that 
birational moduli spaces have isomorphic extended Hodge structures.

We will also the need the following special case of a result by Namikawa and Wierzba:

\begin{Thm}[{\cite[Theorem 1.2 (ii)]{Wierzba:contractions} and \cite[Proposition
1.4]{Namikawa:deformation}}] \label{thm:NamikawaWierzba}
Let $M$ be a projective hyperk\"ahler manifold of dimension $2n$, and let $\overline{M}$
be a projective normal variety.  Let $\pi \colon M\to\overline{M}$ be a birational projective morphism.
We denote by $S_i$ the set of points $p\in \overline{M}$ such that $\dim \pi^{-1}(p)=i$.
Then $\dim S_i \leq 2n-2i$.

In particular, if $\pi$ contracts a divisor $D\subset M$, we must have $\dim \pi(D) = 2n-2$.
\end{Thm}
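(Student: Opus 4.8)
The key structure to exploit is the holomorphic symplectic form on $M$. Since $M$ is hyperk\"ahler, $H^0(M,\Omega^2_M)$ is spanned by a nowhere-degenerate $2$-form $\omega$, and since $\pi$ is birational and projective with $K_M\cong\OO_M$, the target $\overline{M}$ has canonical Gorenstein (in fact symplectic) singularities and $\pi$ is crepant. The plan is to reduce the inequality $\dim S_i\le 2n-2i$ to the assertion that $\pi$ is \emph{semismall}, i.e.\ that $\dim(M\times_{\overline M}M)=2n$. This reduction is formal: by upper semicontinuity of fibre dimension each $S_i$ is locally closed, and over a dense open subset of any irreducible component $Z\subseteq\overline{S_i}$ the fibres of $\pi$ have pure dimension $i$, so that $\pi^{-1}(Z)$ has dimension $\dim Z+i$ and $\pi^{-1}(Z)\times_Z\pi^{-1}(Z)$ has dimension $\dim Z+2i$; semismallness then forces $\dim Z+2i\le 2n$. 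The final ``in particular'' drops out once this holds: if $\pi$ contracts an irreducible divisor $D$, its generic fibre over $\pi(D)$ has some dimension $i_0\ge 1$, so a dense open subset of $\pi(D)$ lies in $S_i$ for some $i\ge i_0$ and hence $\dim\pi(D)\le 2n-2i$; combining this with $\dim\pi(D)=(2n-1)-i_0$ forces $i=i_0=1$ and $\dim\pi(D)=2n-2$.

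To obtain semismallness, the step I would isolate is that \emph{the fibres of $\pi$ are isotropic for $\omega$}: at a general point $x$ of an $i$-dimensional component $F$ of a fibre $\pi^{-1}(p)$, the form $\omega_x$ vanishes on $T_{F,x}$. Granting this, I would take an irreducible component $Y\subseteq\pi^{-1}(Z)$ dominating a component $Z\subseteq\overline{S_i}$ with generic fibre dimension $i$; at a general smooth point the radical of $\omega|_Y$ then contains the tangent space to the fibre of $\pi|_Y$, so $\omega|_Y$ descends to a $2$-form on $Z$. Nondegeneracy of $\omega$ now bounds $\dim Z$: writing $\overline M$ near a general point of a singular stratum as the product of a symplectic leaf with a transverse symplectic slice (Kaledin's stratification of symplectic singularities into smooth symplectic leaves), a leaf of dimension $2n-2j$ can carry positive-dimensional fibres only of dimension $\le j$, whence $i\le j$ and $\dim Z\le 2n-2j\le 2n-2i$. (Alternatively one can bypass leaves and argue directly that $\dim Y+i\le 2n$ on the open part of $Y$ over which $\omega$ descends isomorphically.)

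I expect the isotropy statement to be the main obstacle; it is the technical heart of the Namikawa--Wierzba analysis. One approach, following Wierzba, is to study a general contracted curve $C$ (the positive-dimensional fibres of a crepant contraction are rationally chain connected, hence covered by rational curves): the isomorphism $T_M|_C\cong\Omega^1_M|_C$ induced by $\omega$, together with the fact that $\omega$ is alternating, produces an injection $T_C\hookrightarrow N^\vee_{C/M}$, and the fact that $C$ sweeps out an $i$-dimensional locus inside the fibre then constrains $N_{C/M}$ enough to propagate isotropy to the whole fibre. A second approach, following Namikawa, uses that for a symplectic variety $\pi_*\Omega^2_M$ is the sheaf of reflexive $2$-forms on $\overline M$, so that $\omega$ is controlled by the local analytic structure of $\overline M$ near a general point of each singular stratum. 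Either way, once fibres are known to be isotropic the dimension count above finishes the proof; in the text it therefore suffices to invoke \cite[Theorem 1.2(ii)]{Wierzba:contractions} and \cite[Proposition 1.4]{Namikawa:deformation}.
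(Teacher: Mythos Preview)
The paper gives no proof of this statement: it is quoted directly from the cited references of Wierzba and Namikawa, with no argument supplied. You recognise this yourself in your final sentence. There is therefore nothing to compare your sketch against, and for the purposes of the paper a bare citation is all that is required.

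That said, your outline of the underlying argument is broadly accurate as a summary of how the Wierzba--Namikawa results go: reduce to semismallness via the fibre-product dimension, and obtain semismallness from isotropy of the fibres with respect to the symplectic form. One small quibble: in your ``in particular'' paragraph you combine $\dim\pi(D)\le 2n-2i$ with $\dim\pi(D)=(2n-1)-i_0$ and $i\ge i_0\ge 1$ to conclude $i_0=1$, but strictly this only gives $2i\le i_0+1$, hence $i=i_0=1$; the logic is fine but the phrasing ``forces $i=i_0=1$'' skips a beat. Also, your invocation of Kaledin's stratification is anachronistic relative to the original papers (Wierzba's argument predates it and works more directly with the deformation to the normal cone and rational curves), though it is a perfectly good way to see the bound now.
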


Consider a non-primitive vector $\vv$. As shown by O'Grady and Kaledin-Lehn-Sorger, the moduli space
$M_{\sigma}(\vv)$ can still be thought of as a singular hyperk\"ahler manifold, in the following
sense:

\begin{Def}\label{def:SingularHK}
A normal projective variety $M$ is said to have \emph{symplectic singularities} if
\begin{itemize}
\item the smooth part $M_\mathrm{reg}\subset M$ admits a symplectic 2-form $\omega$, such that
\item for any resolution $f\colon \widetilde M\to M$, the pull-back of $\omega$ to
$f^{-1}(M_\mathrm{reg})$ extends to a holomorphic form on $\widetilde M$.
\end{itemize}
\end{Def}

Given a hyperk\"ahler manifold $M$ and a dominant rational map $M\dashrightarrow \overline{M}$, where $\overline{M}$ is a normal projective variety with symplectic singularities, then
it follows from the definitions that  $\dim(M)=\dim(\overline{M})$. This explains the relevance of
the following theorem; our results in \cite{BM:projectivity} reduce it to the case of moduli of sheaves:
\begin{Thm}[{\cite{OGrady} and \cite{KLS:SingSymplecticModuliSpaces}}]
\label{thm:KLS}
Let $(X,\alpha)$ be a twisted K3 surface and let $\vv=m\vv_0\in H^*_{\alg}(X,\alpha,\Z)$ be a Mukai vector with $\vv_0$ primitive and $\vv_0^2\geq 2$.
Let $\sigma\in\Stab^\dagger(X,\alpha)$ be a generic stability condition with respect to $\vv$.
Then $M_{\sigma}(\vv)$ has symplectic singularities.
\end{Thm}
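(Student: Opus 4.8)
The plan is to reduce the assertion to the case of Gieseker moduli spaces of sheaves, which is exactly what is treated in \cite{OGrady} and \cite{KLS:SingSymplecticModuliSpaces}. The point is that ``having symplectic singularities'' in the sense of Definition \ref{def:SingularHK} is the conjunction of (i) normality, (ii) the existence of a symplectic form on the regular locus, and (iii) a purely local--analytic extension property near each singular point. Normality is already available, since $M_\sigma(\vv)$ is a normal projective $\Q$-factorial variety.

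For (ii) I would argue as follows. By Theorem \ref{thm:nonempty}\eqref{enum:dimandsquare}, the hypothesis $\vv_0^2 \ge 2$ forces $M^{st}_\sigma(\vv)$ to be non-empty and smooth of dimension $\vv^2+2$, with tangent space $\Ext^1(E,E)$ at $[E]$; the trace pairing $\Ext^1(E,E)\times\Ext^1(E,E)\to\Ext^2(E,E)\xrightarrow{\tr}\C$ is a non-degenerate alternating form which, by \cite[Theorem 6.10 and Section 7]{BM:projectivity} generalizing \cite{Mukai:Symplectic}, is closed, hence symplectic. A dimension count with the map $\mathrm{SSL}$ from the proof of Theorem \ref{thm:nonempty} shows the strictly semistable locus has codimension at least $2(m-1)\vv_0^2-2\ge 2$; combined with the local model of the next step (which shows strictly semistable points are singular) this identifies $M^{st}_\sigma(\vv)$ with the regular locus of $M_\sigma(\vv)$, so the symplectic form lives exactly on $M_\sigma(\vv)_{\mathrm{reg}}$.

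For (iii) the key input, which I would extract from \cite{BM:projectivity}, is a local description of $M_\sigma(\vv)$: for $[E]\in M_\sigma(\vv)$ with polystable representative $\bigoplus_i E_i^{\oplus a_i}$ (the $E_i$ the $\sigma$-stable Jordan--H\"older factors), the analytic germ of $M_\sigma(\vv)$ at $[E]$ is isomorphic, via the Luna slice / GIT picture underlying the construction of $M_\sigma(\vv)$ in \cite{BM:projectivity}, to the germ at $0$ of a symplectic reduction of a representation space of the $\Ext$-quiver of the $E_i$ by a product of general linear groups $\prod_i \GL(a_i)$. This is verbatim the local model of Gieseker moduli used in \cite{KLS:SingSymplecticModuliSpaces}, and it depends only on the numerical data $(\dim\Ext^1(E_i,E_j))_{i,j}$ and the multiplicities $a_i$ --- data which also occurs for polystable sheaves. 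Hence every analytic germ of $M_\sigma(\vv)$ is a germ of some Gieseker moduli space of sheaves on a K3 surface, and the extension property (iii) is inherited from \cite{OGrady, KLS:SingSymplecticModuliSpaces}; alternatively one invokes directly that the symplectic reduction at $0$ of a smooth symplectic variety by a reductive group has symplectic singularities, so the reduction can be made at the level of the local models themselves.

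The step I expect to be the main obstacle is making this local comparison rigorous: one must check that the pro-representing hull of the deformation functor of $\bigoplus_i E_i^{\oplus a_i}$ coincides with the quiver germ appearing in \cite{KLS:SingSymplecticModuliSpaces}, and that $M_\sigma(\vv)$ is \'etale- or analytically-locally modeled on it --- which is precisely the local analysis that already underlies the normality and $\Q$-factoriality statements of \cite{BM:projectivity}. Once that \'etale-local comparison is in place, the theorem follows formally.
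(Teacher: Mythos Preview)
The paper does not give a proof; it records the statement as a consequence of \cite{OGrady, KLS:SingSymplecticModuliSpaces} together with the remark that ``our results in \cite{BM:projectivity} reduce it to the case of moduli of sheaves.'' The intended reduction is \emph{global}: the proof of \cite[Theorem~1.3(a)]{BM:projectivity} shows that for generic $\sigma$ the coarse moduli space $M_\sigma(\vv)$ is isomorphic, as a projective variety, to a moduli space of (twisted) Gieseker-semistable sheaves $M_H(\vv')$ on some twisted K3 surface, obtained by a Fourier--Mukai transform followed by moving $\sigma$ to the large-volume limit. One then simply invokes \cite{OGrady, KLS:SingSymplecticModuliSpaces} on the Gieseker side.

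Your approach is genuinely different: you attempt a direct argument by matching the \emph{analytic germ} of $M_\sigma(\vv)$ at a polystable point with the quiver-GIT local model of \cite{KLS:SingSymplecticModuliSpaces}. This would be a perfectly reasonable strategy, and your outline of (i)--(iii) is sound in spirit. However, there is a real gap where you locate it yourself: you assert that the Luna-slice description of the germ is available from \cite{BM:projectivity}, but in fact that paper does not develop such a local deformation-theoretic picture for Bridgeland moduli --- the normality and $\Q$-factoriality there are deduced from the global identification with a Gieseker space, not from an independent local analysis. So your step (iii) currently rests on a result you would have to prove from scratch (formality of the dg-algebra $\RHom(E,E)$ for polystable $E$, plus a Luna-type \'etale slice for the relevant Artin stack), whereas the paper's route sidesteps all of this by transporting the question wholesale to the sheaf setting. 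Your approach would buy a more intrinsic proof not relying on the global Fourier--Mukai gymnastics, but at the cost of substantial additional work that is not in the cited references.
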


\section{Harder-Narasimhan filtrations in families}\label{sec:HNfamily}

In this section, we will show that results by Abramovich-Polishchuk and Toda imply the existence
of HN filtrations in families, see Theorem \ref{thm:HNfamily}.

The results we present will work as well in the twisted context; to simplify notation, we only state
the untwisted case.
Let $Y$ be a smooth projective variety over $\C$.
We will write $\Dqc(Y)$ for the unbounded derived of quasi-coherent sheaves. Pick a lattice
$\Lambda$ and $\vv$ for the bounded derived category $\Db(Y)$ as in Definition \ref{def:Bridgeland}, 
and let $\sigma$ be a Bridgeland stability on $\Db(Y)$.

\begin{Def} \label{def:openness}
We say $\sigma$ satisfies \emph{openness of stability} if the following condition holds: for any
scheme $S$ of finite type over $\C$, and for any $\EE \in \Db(S\times Y)$ such that its derived
restriction $\EE_s$ is a $\sigma$-semistable object of $\Db(Y)$ for some $s \in S$,
there exists an open neighborhood
$s \in U \subset S$ of $s$, such that $\EE_{s'}$ is $\sigma$-semistable for all $s' \in U$.
\end{Def}

\begin{Thm}[{\cite[Section 3]{Toda:K3}}] \label{thm:Todaopenness}
Openness of stability holds when $Y$ is a K3 surface and $\sigma$ is
a stability condition in the connected component $\Stab^\dag(Y)$.\footnote{In \cite[Section 3]{Toda:K3}, this Theorem is only stated for
families $\EE$ satisfying $\Ext^{<0}(\EE_s, \EE_s) = 0$ for all $s \in S$. However, Toda's proof
in Lemma 3.13 and Proposition 3.18 never uses that assumption.}
\end{Thm}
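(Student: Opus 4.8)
The plan is to reproduce Toda's argument from \cite[Section~3]{Toda:K3}, which rests on Abramovich--Polishchuk's theory of $t$-structures in families, and to record along the way that the auxiliary hypothesis $\Ext^{<0}(\EE_s,\EE_s)=0$ is never actually used. First I would reduce to a statement about a single heart: after shifting $\EE$ we may assume $\EE_s\in\AA:=\PP((\phi-1,\phi])$, which is a Noetherian abelian category — for a geometric stability condition this is Bridgeland's heart $\AA_{\omega,\beta}$, proven Noetherian in \cite{Bridgeland:K3}, and the remaining stability conditions in $\Stab^\dag(Y)$ are reduced to this case following \cite{Toda:K3}. By Abramovich--Polishchuk there is then a bounded $t$-structure on $\Db(S\times Y)$ with Noetherian heart $\AA_S$, the \emph{relative heart}, whose formation commutes with restriction to open subschemes of $S$ and for which derived restriction to a closed point $s'\in S$ is right $t$-exact. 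The condition ``$\EE_{s'}\in\AA$'' is then the vanishing of the relative cohomology objects $\mathcal{H}^i_{\AA_S}(\EE)$ for $i\neq 0$, and their supports are closed; shrinking $S$ around $s$, I may therefore assume $\EE\in\AA_S$ and that $\vv(\EE_{s'})$ equals a fixed class $\vv$.

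The core of the argument is then to propagate Harder--Narasimhan filtrations through the family. The number $\abs{Z(\vv)}$ bounds the central charges, and hence — by the support property of Definition~\ref{def:Bridgeland} and the boundedness results of \cite{Bridgeland:K3} — the Mukai vectors, of all HN factors of the $\EE_{s'}$ and of all potential destabilizing subobjects $F\subset\EE_{s'}$ in $\AA$; these vectors range over a finite set, and the associated moduli stacks of semistable objects are of finite type. Using this finiteness together with the Noetherian property of $\AA_S$, a Noetherian induction on $S$ yields a finite stratification $S=\coprod_i S_i$ into locally closed subschemes such that, over each $S_i$, the relative HN filtration of $\EE|_{S_i}$ in $\AA_{S_i}$ exists and restricts, for every $s'\in S_i$, to the HN filtration of $\EE_{s'}$; this is Toda's Lemma~3.13, and — this is the point of the footnote — its proof invokes only the Noetherian abelian category $\AA_S$, the base-change properties above, and the numerical finiteness just noted, never $\Ext^{<0}(\EE_s,\EE_s)=0$. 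On the stratum $S_i$ containing $s$, the relative HN filtration restricts to the (trivial) HN filtration of the semistable object $\EE_s$, so $\EE|_{S_i}$ is relatively semistable and every $\EE_{s'}$, $s'\in S_i$, is semistable; upper-semicontinuity of the HN polygon of $\EE_{s'}$ in the family then shows that the polygon is minimal on an open neighbourhood of $s$, which is the desired openness (Toda's Proposition~3.18).

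The step I expect to be the main obstacle is the construction of the relative HN stratification. Relative HN filtrations are delicate precisely because $(-)\otimes^{\mathbf L}k(s')$ is only right $t$-exact and can therefore introduce spurious lower cohomology in the relative heart. The standard way around this is to build the relative HN filtration first over a dense open subscheme of $S$ — where flatness-type arguments inside $\AA_S$ guarantee that restriction to points is exact and preserves the filtration — and then to iterate on the complement; the Noetherian hypothesis forces this process to terminate, while the finiteness of possible destabilizing classes keeps the lengths of all filtrations involved uniformly bounded.
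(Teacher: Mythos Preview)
The paper does not give its own proof of this statement: Theorem~\ref{thm:Todaopenness} is stated purely as a citation of \cite[Section~3]{Toda:K3}, with the footnote observing that Toda's hypothesis $\Ext^{<0}(\EE_s,\EE_s)=0$ is never used in his Lemma~3.13 and Proposition~3.18. There is therefore nothing in the paper to compare your argument against beyond that footnote.

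Your proposal is a reasonable outline of Toda's argument and is consistent with the footnote's claim: you correctly locate the two key steps (the relative HN stratification of Lemma~3.13 and the semicontinuity/openness argument of Proposition~3.18), you correctly identify the Abramovich--Polishchuk relative $t$-structure and the Noetherian property of the heart as the engine, and you correctly note that neither step appeals to $\Ext^{<0}(\EE_s,\EE_s)=0$. One small point: the reduction to geometric stability conditions (your ``remaining stability conditions in $\Stab^\dag(Y)$ are reduced to this case'') deserves a bit more care, since openness is not obviously preserved under the $\widetilde{\GL}^+_2(\R)$-action alone and one really does need that every $\sigma\in\Stab^\dag(Y)$ has a Noetherian heart after a suitable shift; Toda handles this, but you should make sure you are not silently assuming it.
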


\begin{Thm} \label{thm:HNfamily}
Let $\sigma = (Z, \AA) \in \Stab(Y)$ be an algebraic stability condition satisfying openness of stability.
Assume we are given an irreducible variety $S$ over $\C$, and an object
$\EE \in \Db(S\times Y)$. Then there exists a system of maps
\begin{equation} \label{eq:HNfamily}
0 = \EE^0 \to \EE^1 \to \EE^2 \to \dots \to \EE^m = \EE
\end{equation}
in $\Db(S \times Y)$, and an open subset $U \subset S$ with the following property: 
for any $s \in U$, the derived restriction of the system of maps \eqref{eq:HNfamily}
\[
0 = \EE_s^0 \to \EE_s^1 \to \EE_s^2 \to \dots \to \EE_s^m = \EE_s
\]
is the HN filtration of $\EE_s$.
\end{Thm}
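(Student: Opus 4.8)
The strategy is to reduce the statement to the generic-flatness-type results of Abramovich--Polishchuk \cite{AbramovichPolishchuk:constant-families} together with Toda's openness of stability (Theorem \ref{thm:Todaopenness}), proceeding by induction on the length $m$ of the HN filtration of the generic fibre. First I would pass to the generic point: since $\sigma = (Z, \AA)$ is an algebraic stability condition, the heart $\AA$ is Noetherian, and the central charge $Z$ takes values in a finitely-generated subgroup of $\Q \oplus \Q\sqrt{-1}$, so the possible phases of subobjects of a fixed class form a discrete set. Hence $\EE_\eta$, the derived restriction to the generic point $\eta = \Spec K(S)$, has a well-defined HN filtration $0 = B_0 \to B_1 \to \dots \to B_m = \EE_\eta$ with semistable quotients $A_i$ of strictly decreasing phases $\phi_1 > \dots > \phi_m$. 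The content of the theorem is that this filtration spreads out over an open subset of $S$ and restricts fibrewise to the HN filtration.

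The key technical input is the following form of the Abramovich--Polishchuk machinery: the heart $\AA$ of a bounded $t$-structure on $\Db(Y)$ with a Noetherian property induces, for each scheme $S$, a Noetherian heart $\AA_S$ on $\Db(S \times Y)$ (the ``constant family of $t$-structures''), compatible with base change along flat morphisms and, after restriction to a suitable open subset, with base change to fibres. Concretely I would do the following. First, using Abramovich--Polishchuk, after shrinking $S$ we may assume $\EE \in \AA_S$ up to shift (one truncates with respect to $\AA_S$, and the cohomology objects restrict to the $\AA$-cohomology of $\EE_s$ on an open set). Next, the maximal destabilizing quotient: consider the set of subobjects $\FF \hookrightarrow \EE$ in $\AA_S$ such that $\FF_\eta$ is the first step $B_1$ of the HN filtration at the generic point — more precisely, one first produces $B_1 \hookrightarrow \EE_\eta$ over the generic point, then spreads it out to a subobject $\EE^1 \hookrightarrow \EE$ in $\AA_U$ over some open $U$. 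Shrinking $U$ further using openness of stability (Theorem \ref{thm:Todaopenness}) applied to $\EE^1$ and to the quotient $\EE/\EE^1$, and using that phases are locally constant on the relevant families (again because $Z$ is algebraic, so the phase of a flat family of semistable objects of fixed class is constant), we may assume $\EE^1_s$ is $\sigma$-semistable of phase $\phi_1$ and $(\EE/\EE^1)_s$ has all HN phases $< \phi_1$ for every $s \in U$. Then $0 \to \EE^1_s \to \EE_s \to (\EE/\EE^1)_s \to 0$ is the first step of the HN filtration of $\EE_s$, and I would conclude by applying the inductive hypothesis to the object $\EE/\EE^1 \in \Db(U \times Y)$, whose generic HN filtration has length $m - 1$, and splicing the resulting filtration with $\EE^1$.

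\textbf{Main obstacle.} The delicate point is not the inductive bookkeeping but ensuring that the HN \emph{quotient} objects behave well in families: one needs that, after shrinking $S$, the subobject $\EE^1 \hookrightarrow \EE$ constructed over the generic point actually restricts fibrewise to a subobject in $\AA$ whose cokernel has \emph{strictly smaller} maximal phase — i.e. that no ``jumping'' of the HN polygon occurs on a dense open set. This is exactly where one must combine (i) the Noetherian property of $\AA_S$, which guarantees the spread-out subobject exists and its formation commutes with restriction to fibres over an open set, (ii) openness of stability from Theorem \ref{thm:Todaopenness}, which handles semistability of $\EE^1_s$ and of the HN factors of $(\EE/\EE^1)_s$, and (iii) the algebraicity of $\sigma$, which discretises the set of possible phases and hence rules out the maximal phase of the cokernel approaching $\phi_1$ from below along a non-open locus. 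A secondary subtlety is checking that the maps $\EE^i \to \EE^{i+1}$ can be chosen genuinely in $\Db(S\times Y)$ (not merely fibrewise), which is precisely what the constant-family-of-hearts formalism of Abramovich--Polishchuk provides, since the $\EE^i$ are subobjects in the single abelian category $\AA_U$.
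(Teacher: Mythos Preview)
Your inductive strategy is plausible and shares the key ingredients (Abramovich--Polishchuk hearts, openness of stability), but it takes a different route from the paper and glosses over one genuine technical point. You pass to the generic point $\eta = \Spec K(S)$ and assert that $\EE_\eta$ has an HN filtration with $\sigma$-semistable quotients; but $\sigma$ is a stability condition on $\Db(Y)$, while $\EE_\eta$ lives in $\Db(Y_{K(S)})$. Making sense of this requires producing a stability condition on $\Db(Y_{K(S)})$---the Abramovich--Polishchuk heart $\AA_\eta$ together with $Z$ (which factors through numerical $K$-theory) are the natural candidates, but you do not verify the axioms. Likewise, the ``spreading out'' of $B_1 \hookrightarrow \EE_\eta$ to a subobject $\EE^1 \hookrightarrow \EE$ in $\AA_U$ is asserted rather than argued. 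By contrast, what you flag as the ``main obstacle'' (controlling $\phi_{\max}$ of the cokernel) is not one: applying your own inductive hypothesis to $\EE/\EE^1$ already forces $\phi_{\max}((\EE/\EE^1)_s) = \phi_2 < \phi_1$ on an open set, with no separate semicontinuity argument needed.

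The paper avoids the generic point and the induction entirely by constructing a \emph{global slicing} $\PP_S$ on $\Db(S \times Y)$. For each $\phi$ the Abramovich--Polishchuk machinery supplies a t-structure with truncations $\tau_S^{>\phi}, \tau_S^{\le\phi}$; one sets $\PP_S(\phi) = \PP_S(\le\phi) \cap \bigcap_{\epsilon>0} \PP_S(>\phi-\epsilon)$ and proves directly (Lemma~\ref{lem:slicingS}, using that $\AA_S$ is Noetherian) that this is a slicing. The filtration \eqref{eq:HNfamily} is then simply the HN filtration of $\EE$ with respect to $\PP_S$, produced in one stroke on all of $S$. A short argument (Lemma~\ref{lem:densestable}, essentially \cite[Proposition~3.5.3]{Abramovich-Polishchuk:t-structures}) shows that any $\FF \in \PP_S(\phi)$ has $\FF_s \in \PP(\phi)$ for $s$ in a dense subset, and openness of stability upgrades dense to open; factors with generically zero fibre are discarded. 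Your approach, if the generic-point step were completed, would give a more hands-on fibrewise picture; the paper's approach absorbs all the spreading-out bookkeeping into the existence of $\PP_S$ and needs no induction.
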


The proof is based on the notion of constant family of t-structures due to Abramovich and Polishchuk,
constructed in \cite{Abramovich-Polishchuk:t-structures} (in case $S$ is smooth) and
\cite{Polishchuk:families-of-t-structures} (in general).

Throughout the remainder of this section, we will assume that $\sigma$ and $S$ satisfy the
assumptions of Theorem \ref{thm:HNfamily}.
A t-structure is called \emph{close to Noetherian} if it can be obtained via tilting from a
t-structure whose heart is Noetherian.
For $\phi \in \R$, the category 
$\PP((\phi-1, \phi]) \subset \Db(Y)$ is the heart of a close to Noetherian bounded t-structure on $Y$ given by
$\DD^{\le 0} = \PP((\phi -1, +\infty))$ and $\DD^{\ge 0} = \PP((-\infty, \phi])$ (see the example
discussed at the end of \cite[Section 1]{Polishchuk:families-of-t-structures}).
In this situation, Abramovich and Polishchuk's work
induces a bounded t-structure $(\DD^{\le 0}_S, \DD^{\ge 0}_S)$ on $\Db(S \times Y)$; we
paraphrase their main results as follows:

\begin{Thm}[{\cite{Abramovich-Polishchuk:t-structures, Polishchuk:families-of-t-structures}}] \label{thm:APP}
Let $\AA$ be the heart of a close to Noetherian bounded t-structure $(\DD^{\le 0}, \DD^{\ge 0})$ on $\Db(Y)$.
Denote by $\AA_{qc} \subset \Dqc(Y)$ the closure of $\AA$ under infinite coproducts
in the derived category of quasi-coherent sheaves.
\begin{enumerate}
\item \label{enum:deftstruct}
For any scheme $S$ of finite type of $\C$ there is a close to Noetherian bounded t-structure
$(\DD_S^{\le 0}, \DD_S^{\ge 0})$
on $\Db(S\times Y)$, whose heart $\AA_S$ is characterized by
\[ \EE \in \AA_S \Leftrightarrow (p_Y)_* \left(\EE |_{Y \times U}\right) \in \AA_{qc}
\quad \text{for every open affine $U \subset S$}
\]
\item \label{enum:sheaf}
The above construction defines a sheaf of t-structures over $S$: when $S = \bigcup_i U_i$
is an open covering of $S$, then $\EE \in \AA_S$ if and only if
$\EE |_{Y \times U_i} \in \AA_{U_i}$ for every $i$. In particular, for $i \colon U \subset S$ open, the
restriction functor $i^*$ is t-exact.
\item \label{enum:pushforwardexact}
When $i \colon S' \subset S$ is a closed subscheme, then $i_*$ is t-exact, and $i^*$ is t-right exact.
\end{enumerate}
\end{Thm}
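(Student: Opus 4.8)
The plan is to deduce all three statements from the cited works of Abramovich--Polishchuk \cite{Abramovich-Polishchuk:t-structures} and Polishchuk \cite{Polishchuk:families-of-t-structures}, so the task is mainly one of bookkeeping: verifying that the hypotheses of those papers are met in our setting, and translating their conclusions into the exact form stated here. The key point is that ``close to Noetherian'' is precisely the hypothesis under which the constant-family construction is known to work: if $\AA = \BB^\natural$ is a tilt of a Noetherian heart $\BB$ at a torsion pair, one first constructs the family heart $\BB_S$ (the heart coming from a Noetherian heart is handled directly in \cite{Abramovich-Polishchuk:t-structures} when $S$ is smooth and in \cite{Polishchuk:families-of-t-structures} in general), and then $\AA_S$ is obtained by tilting $\BB_S$ at the torsion pair pulled back to $S\times Y$. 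So the first step is to record this reduction and cite the relevant statements; the characterization in \eqref{enum:deftstruct} via $(p_Y)_*(\EE|_{Y\times U}) \in \AA_{qc}$ for affine $U$ is exactly how the heart is defined there (using that on an affine base pushforward is exact and detects membership in the closure under coproducts).

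For \eqref{enum:sheaf}, the plan is to observe that the defining condition in \eqref{enum:deftstruct} is local on $S$ by construction: an object lies in $\AA_S$ iff its restriction to each member of an affine open cover lies in the corresponding $\AA_{qc}$, and refining covers does not change this. This gives the sheaf property, and t-exactness of $i^*$ for an open immersion $i\colon U\subset S$ follows immediately, since restriction commutes with the cohomology functors of a t-structure that is defined locally. For \eqref{enum:pushforwardexact}, for a closed immersion $i\colon S'\subset S$ one uses that $i_*$ on $S'\times Y$ is exact in the usual sense and compatible with the affine-local description (pushforward along a closed immersion of affines is exact and preserves the relevant subcategory of $\Dqc$), giving t-exactness of $i_*$; then $i^*$ is t-right exact by the standard adjunction argument (left adjoint to a t-exact functor that is also the right adjoint of $i^*$ — here one invokes $i^! $-type considerations, or simply that $i_*$ t-exact and fully faithful forces $i^*$ to be t-right exact on the essential image and hence everywhere by d\'evissage). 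Again, all of this is contained in or formally follows from the cited papers; the contribution here is to assemble the precise statement.

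The main obstacle — really the only nontrivial point — is making sure that the hearts we actually need, namely $\PP((\phi-1,\phi])$ for $\sigma$ an \emph{algebraic} stability condition on $\Db(Y)$, genuinely fall under the ``close to Noetherian'' umbrella, i.e.\ that $\PP((\phi-1,\phi])$ is a tilt of a Noetherian heart. This is where the hypothesis that $\sigma$ is algebraic (so $\Im Z$ takes values in a discrete subgroup, giving local finiteness and the ascending chain condition one needs) enters, together with the remark already made in the excerpt that $\PP((\phi-1,\phi])$ is the heart of a close to Noetherian bounded t-structure, pointing to the example at the end of \cite[Section 1]{Polishchuk:families-of-t-structures}. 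So in the write-up I would simply cite that example for the Noetherian-tilt property and then invoke Theorem \ref{thm:APP} as a black box for everything else; no new argument is required beyond checking that the standing assumptions of Theorem \ref{thm:HNfamily} (algebraicity, $S$ of finite type over $\C$) supply exactly the inputs the Abramovich--Polishchuk--Polishchuk machinery demands.
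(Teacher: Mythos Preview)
Your proposal is correct and follows essentially the same approach as the paper: the theorem is cited from \cite{Abramovich-Polishchuk:t-structures, Polishchuk:families-of-t-structures}, and the only additional comments concern boundedness (from part (i) of \cite[Theorem 3.3.6]{Polishchuk:families-of-t-structures}), t-exactness of $i_*$ for a closed immersion (reduce to affine $S$ via the sheaf property, where it holds by construction), and t-right exactness of $i^*$ (by adjointness). Your parenthetical about $i^!$ and full faithfulness is unnecessary---the paper simply notes that $i^*$ is left adjoint to the t-exact functor $i_*$, hence t-right exact.
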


We briefly comment on the statements that are not explicitly mentioned in
\cite[Theorem 3.3.6]{Polishchuk:families-of-t-structures}: From part (i) of 
\cite[Theorem 3.3.6]{Polishchuk:families-of-t-structures}, it follows that the t-structure
constructed there on $\mathrm{D}(S \times Y)$ descends to a bounded t-structure on 
$\Db(S \times Y)$. To prove that the push-forward in claim \eqref{enum:pushforwardexact} is t-exact,
we first use the sheaf property to reduce to the case where $S$ is affine; in this case, the claim
follows by construction. By adjointness, it follows that $i^*$ is t-right exact.

For an algebraic stability condition $\sigma = (Z, \PP)$ on $\Db(Y)$ and a phase
$\phi \in \R$, we will from now on
denote its associated t-structure by $\PP(>\phi) = \DD^{\le -1}$, $\PP(\le \phi) = \DD^{\ge 0}$, and
the associated truncation functors by $\tau^{>\phi}, \tau^{\le \phi}$.
By \cite[Lemma 2.1.1]{Polishchuk:families-of-t-structures}, it induces a t-structure on
$\Dqc(Y)$, which we denote by $\PP_{qc}(>\phi), \PP_{qc}(\le \phi)$.
For the t-structure on $\Db(S \times Y)$ induced via Theorem \ref{thm:APP}, we will similarly write
$\PP_S(>\phi), \PP_S(\le \phi)$, and $\tau_S^{>\phi}, \tau_S^{\le \phi}$.

We start with a technical observation:
\begin{Lem} \label{lem:trivialbuttechnical}
The t-structures on $\Db(S \times Y)$ constructed via Theorem \ref{thm:APP} satisfy the following
compatibility relation:
\begin{equation} \label{eq:intersect}
\bigcap_{\epsilon > 0} \PP_S(\le \phi + \epsilon) = \PP_S(\le \phi).\end{equation}
\end{Lem}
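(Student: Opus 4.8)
The inclusion $\PP_S(\le\phi)\subset\bigcap_{\epsilon>0}\PP_S(\le\phi+\epsilon)$ is immediate, since the t-structures are nested: $\PP_S(\le\phi)\subset\PP_S(\le\phi+\epsilon)$ for every $\epsilon>0$, as follows on $\Db(Y)$ from $\PP(\le\phi)\subset\PP(\le\phi+\epsilon)$ and is preserved by the Abramovich--Polishchuk construction (the characterization of $\AA_S$ via $(p_Y)_*(\EE|_{Y\times U})\in\AA_{qc}$ is monotone in the heart). So the content is the reverse inclusion. The plan is to take $\EE\in\bigcap_{\epsilon>0}\PP_S(\le\phi+\epsilon)$ and show $\tau_S^{>\phi}\EE=0$, which forces $\EE\in\PP_S(\le\phi)$ since the t-structure is bounded.

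First I would reduce to the affine case: by the sheaf property in Theorem~\ref{thm:APP}\eqref{enum:sheaf}, membership in $\PP_S(\le\phi)$ can be checked on an open affine cover, and the same is true for each $\PP_S(\le\phi+\epsilon)$, so we may assume $S=\Spec R$ is affine. Now the heart $\PP_S((\phi-1,\phi])$ is, by construction, characterized through $(p_Y)_*$ landing in the quasi-coherent heart $\PP_{qc}((\phi-1,\phi])$, and analogously for the shifted phases. On $\Dqc(Y)$ the family $\PP_{qc}(\le\psi)$ comes from a single slicing, and here the key input is the finiteness built into a Bridgeland stability condition: any object of $\Db(Y)$ has an HN filtration with finitely many phases, and by the support property the set of phases of semistable objects of a fixed class is discrete. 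Concretely, for the object $F:=(p_Y)_*(\EE)\in\Dqc(Y)$ — or rather for the bounded-below/bounded pieces one extracts — the phases appearing in its HN-type filtration form a discrete set, so there exists $\epsilon_0>0$ with no phase in the half-open interval $(\phi,\phi+\epsilon_0]$; hence $\tau^{>\phi}F=\tau^{>\phi+\epsilon_0}F$. Since $\EE\in\PP_S(\le\phi+\epsilon_0)$ gives $\tau_S^{>\phi+\epsilon_0}\EE=0$, and $\tau_S$ is computed compatibly with $(p_Y)_*$ on affines (this is exactly how $\AA_S$ was defined), we get $\tau_S^{>\phi}\EE=0$, as desired.

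The step I expect to be the main obstacle is making precise the discreteness-of-phases argument at the level of the quasi-coherent (infinite-coproduct) heart $\AA_{qc}$ rather than on $\Db(Y)$: an arbitrary quasi-coherent object need not have an HN filtration, and one cannot naively "read off" finitely many phases from $(p_Y)_*\EE$. The right way around this is to work with the truncation functors directly: show that the natural map $\PP_S(\le\phi+\epsilon')\hookrightarrow\PP_S(\le\phi+\epsilon)$ for $\epsilon'<\epsilon$ has the property that, for a \emph{fixed} object $\EE\in\Db(S\times Y)$, the cohomology objects $H^i_{\PP_S}(\EE)$ with respect to these t-structures stabilize as $\epsilon\to 0^+$ — which reduces, via Theorem~\ref{thm:APP}\eqref{enum:deftstruct} applied affine-locally, to the analogous stabilization for the single object $(p_Y)_*(\EE|_{Y\times U})$ in $\Dqc(Y)$ under the t-structures $\PP_{qc}(\le\phi+\epsilon)$. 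For the latter one invokes \cite[Lemma 2.1.1]{Polishchuk:families-of-t-structures} together with the fact that truncations in $\PP_{qc}$ restrict to truncations in $\PP$ on bounded objects, plus the discreteness of the phase set guaranteed by the support property (Definition~\ref{def:Bridgeland}(ii)). Once that stabilization is in hand, \eqref{eq:intersect} follows formally: $\EE$ lies in $\PP_S(\le\phi+\epsilon)$ for all small $\epsilon$ iff the stabilized value of $\tau_S^{>\phi+\epsilon}\EE$ vanishes, and that stabilized value is $\tau_S^{>\phi}\EE$.
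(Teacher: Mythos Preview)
Your reduction to the affine case is correct and matches the paper. But the core of your argument has a genuine gap that you yourself identify and do not close: the discreteness-of-phases argument applies to a fixed object of $\Db(Y)$ (its HN filtration has finitely many factors), but $(p_Y)_*\EE$ lives in $\Dqc(Y)$ and need not have an HN filtration at all. Your proposed workaround is to show that the truncations $\tau_S^{>\phi+\epsilon}\EE$ stabilize as $\epsilon\to 0^+$, and you say this ``reduces \dots\ to the analogous stabilization for the single object $(p_Y)_*(\EE|_{Y\times U})$ in $\Dqc(Y)$.'' But that reduction is exactly the statement you are trying to prove; you have not supplied an independent reason why the $\PP_{qc}$-truncations of a quasi-coherent object stabilize. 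Invoking the support property does not help here, since it controls bounded objects, not infinite coproducts.

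The paper avoids this obstacle entirely with a one-line formal argument. By \cite[Lemma~2.1.1]{Polishchuk:families-of-t-structures}, the extension of the t-structure to $\Dqc(Y)$ satisfies $\PP_{qc}(\le\psi) = \bigl(\PP(>\psi)\bigr)^\perp$, the right orthogonal in $\Dqc(Y)$ of the subcategory $\PP(>\psi)\subset\Db(Y)$. Then
\[
\bigcap_{\epsilon>0}\PP_{qc}(\le\phi+\epsilon)
=\bigcap_{\epsilon>0}\bigl(\PP(>\phi+\epsilon)\bigr)^\perp
=\Bigl(\bigcup_{\epsilon>0}\PP(>\phi+\epsilon)\Bigr)^\perp
=\bigl(\PP(>\phi)\bigr)^\perp
=\PP_{qc}(\le\phi),
\]
where the third equality is immediate from the definition of a slicing (if $\phi_{\min}(E)>\phi$ then $\phi_{\min}(E)>\phi+\epsilon$ for some $\epsilon>0$). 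No support property or HN filtration in $\Dqc$ is needed. Since in the affine case membership in $\PP_S(\le\psi)$ is characterized by $(p_Y)_*\EE\in\PP_{qc}(\le\psi)$, this finishes the proof.
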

\begin{proof}
Assume $\EE$ is in the intersection of the left-hand side of \eqref{eq:intersect}.
By the sheaf property, we may assume that $S$ is affine.
The assumption implies
$(p_Y)_* \EE \in \PP_{qc}(\le \phi + \epsilon)$ for all $\epsilon > 0$.

By \cite[Lemma 2.1.1]{Polishchuk:families-of-t-structures}, we can
describe $\PP_{qc}(\le \phi + \epsilon) \subset \Dqc(Y)$ as the right orthogonal complement of
$\PP(>\phi + \epsilon) \subset \Db(Y)$ inside $\Dqc(Y)$; thus we obtain
\begin{align*}
 \bigcap_{\epsilon > 0} \PP_{qc}(\le \phi + \epsilon)
= \bigcap_{\epsilon > 0} \bigl(\PP(> \phi + \epsilon) \bigr)^\perp
= \Bigl( \bigcup_{\epsilon > 0} \PP(>\phi + \epsilon) \Bigr)^\perp
= \Bigl( \PP(>\phi) \Bigr)^\perp
= \PP_{qc}(\le \phi).
\end{align*}
Hence $(p_Y)_* \EE \in \PP_{qc}(\le \phi)$, proving the lemma.
\end{proof}

We next observe that the truncation functors $\tau_S^{> \phi}, \tau_S^{\le \phi}$ induce
a slicing on $\Db(S\times Y)$. (See Definition \ref{def:slicing} for the notion of slicing on
a triangulated category.)

\begin{Lem} \label{lem:slicingS}
Assume that $\sigma = (Z, \PP)$ is an algebraic stability condition, and $\PP_S(> \phi),
\PP_S(\le \phi)$ are as defined above.
There is a slicing $\PP_S$ on $\Db(S\times Y)$ defined by
\[
\PP_S(\phi) = \PP_S(\le \phi) \cap \bigcap_{\epsilon > 0} \PP_S(> \phi - \epsilon).
\]
\end{Lem}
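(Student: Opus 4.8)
The plan is to verify the three axioms of a slicing (Definition \ref{def:slicing}) directly for the family $\PP_S(\phi)$ defined from the Abramovich--Polishchuk t-structures. Throughout I would freely use the sheaf property (Theorem \ref{thm:APP}\eqref{enum:sheaf}) to reduce statements to the affine case, where $\EE \in \PP_S(\le\phi)$ is detected by $(p_Y)_*\EE \in \PP_{qc}(\le\phi)$, and the orthogonality description of $\PP_{qc}(\le\phi)$ from \cite[Lemma 2.1.1]{Polishchuk:families-of-t-structures} together with Lemma \ref{lem:trivialbuttechnical}.

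First I would check the compatibility $\PP_S(\phi+1) = \PP_S(\phi)[1]$. This is immediate from the corresponding statement for $\sigma$ on $\Db(Y)$: the t-structure $\PP(>\phi+1) = \PP(>\phi)[1]$, the construction of $\PP_S$ via Theorem \ref{thm:APP} is functorial, and shifting commutes with $(p_Y)_*$ and with passing to the quasi-coherent closure, so $\PP_S(>\phi+1) = \PP_S(>\phi)[1]$ and likewise for $\PP_S(\le\phi)$; intersecting over $\epsilon$ then gives the claim. Second, the vanishing $\Hom(\PP_S(\phi_1),\PP_S(\phi_2)) = 0$ for $\phi_1 > \phi_2$: if $A \in \PP_S(\phi_1) \subset \PP_S(>\phi_1 - \epsilon)$ for all $\epsilon$, then choosing $\epsilon$ with $\phi_1 - \epsilon \ge \phi_2$ we get $A \in \PP_S(>\phi_2)$, while $B \in \PP_S(\phi_2) \subset \PP_S(\le\phi_2)$; since $(\PP_S(>\phi_2), \PP_S(\le\phi_2))$ is a t-structure (with the shift conventions $\PP_S(>\phi_2) = \DD_S^{\le -1}$), the $\Hom$ vanishes by the defining orthogonality of a t-structure.

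Third, and this is where the real work lies, I would construct the HN filtration of an arbitrary $\EE \in \Db(S\times Y)$. The idea is to iterate the truncation functors $\tau_S^{>\phi}, \tau_S^{\le\phi}$: since $\sigma$ is algebraic, the phases $\phi$ of $\sigma$-semistable objects lie in a discrete subset of $\R$, so for a bounded complex only finitely many truncations are nontrivial, giving a finite filtration $0 = \EE^0 \to \EE^1 \to \cdots \to \EE^m = \EE$ whose subquotients $A_i$ satisfy $A_i \in \PP_S(\le\phi_i)$. The point requiring care is that each $A_i$ genuinely lands in the single slice $\PP_S(\phi_i)$, i.e.\ also in $\bigcap_{\epsilon>0}\PP_S(>\phi_i-\epsilon)$ — here Lemma \ref{lem:trivialbuttechnical} and the discreteness of the set of phases are what make the intersection over $\epsilon$ stabilize at a single slice rather than a half-open interval. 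One also needs boundedness of the filtration, which follows because $(p_Y)_*\EE$ (on an affine chart) is a bounded complex and only finitely many of the $\PP_{qc}(\phi)$ contribute to its cohomology with respect to the heart $\AA_{qc}$; the sheaf property then globalizes the length bound over a finite affine cover. \textbf{The main obstacle} I anticipate is precisely controlling this finiteness/boundedness: one must argue that the family truncation functors do not produce infinitely many nonzero subquotients and that the slices are "thin" (a single real number, not an interval), and for that the algebraicity hypothesis on $\sigma$ and Lemma \ref{lem:trivialbuttechnical} must be combined carefully — everything else is a formal transcription of the slicing axioms through the Abramovich--Polishchuk machinery.
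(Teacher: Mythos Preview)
Your handling of axioms (1) and (2) is fine, but your plan for the HN filtration has a genuine gap in the finiteness argument. The claim that ``since $\sigma$ is algebraic, the phases $\phi$ of $\sigma$-semistable objects lie in a discrete subset of $\R$'' is false: algebraicity means $Z$ takes values in $\Q + i\Q$, but the arguments of nonzero points in $\Q + i\Q$ are dense in the circle, not discrete. So you cannot conclude that only finitely many truncations $\tau_S^{>\phi}$ act nontrivially on $\EE$ from phase discreteness. Your fallback argument via $(p_Y)_*\EE$ on an affine chart also does not work: this pushforward lives in $\Dqc(Y)$, not $\Db(Y)$, and there is no slicing on $\Dqc(Y)$ whose finitely many ``contributing phases'' you could invoke.

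The paper establishes finiteness by a completely different mechanism. It reduces to $\EE \in \AA_S := \PP_S(0,1]$ and uses that, since $\sigma$ is algebraic, both $\AA$ and $\AA_S$ are \emph{Noetherian}. For each $\phi \in (0,1]$ the pair of t-structures gives a torsion pair $(\TT_\phi, \FF_\phi)$ on $\AA_S$, hence a short exact sequence $T_\phi \hookrightarrow \EE \twoheadrightarrow F_\phi$. As $\phi$ decreases, the quotients $F_\phi$ form a chain of successive quotients of $\EE$; Noetherianity of $\AA_S$ forces this chain to have only finitely many distinct terms. The resulting finitely many jump values $\phi_1 > \dots > \phi_l$ give the HN filtration, and Lemma~\ref{lem:trivialbuttechnical} is then used (as you anticipated) to pin each subquotient into a single slice $\PP_S(\phi_i)$. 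So the role of algebraicity is to supply the Noetherian property of the heart, not discreteness of phases.
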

Note that $\PP_S(\phi)$ cannot be characterized by the analogue of Theorem
\ref{thm:APP}, part \eqref{enum:deftstruct}.
For example, consider the case where $Y$ is a curve and $(Z, \PP)$ the standard
stability condition corresponding to classical slope-stability in $\Coh Y$.
Then $\PP(1) \subset \Coh Y$ is the category of 
torsion sheaves, and $\PP_S(1) \subset \Coh S \times Y$ is the category of sheaves $\FF$ that are torsion
relative over $S$. However, for $U \subset S$ affine and a non-trivial family $\FF$, the
push-forward $(p_Y)_* \FF|_U$ is never a torsion sheaf.

\begin{proof}
By standard arguments, it is sufficient to construct a HN filtration for any object $\EE \in \AA_S := \PP_S(0, 1]$.
In particular, since $\sigma$ is algebraic, we can assume that both $\AA:=\PP(0,1]$ and $\AA_S$ are Noetherian.
For any $\phi \in (0, 1]$, we have $\PP_S(\phi, \phi + 1] \subset \langle \AA_S, \AA_S[1] \rangle$.
By \cite[Lemma 1.1.2]{Polishchuk:families-of-t-structures}, this induces a torsion pair
$(\TT_\phi, \FF_\phi)$ on $\AA_S$ with
\[
\TT_\phi = \AA_S \cap \PP_S(\phi, \phi + 1] \quad \text{and} \quad
\FF_\phi = \AA_S \cap \PP_S(\phi-1, \phi].\]
Let $T_\phi \into \EE \onto F_\phi$ be the induced short exact sequence in $\AA_S$. 
Assume $\phi < \phi'$; since $\FF_{\phi} \subset \FF_{\phi'}$, the surjection
$\EE \onto F_\phi$ factors via
$\EE \onto F_{\phi'} \onto F_\phi$. Since $\AA_S$ is Noetherian, the
set of induced quotients $\stv{F_\phi}{\phi \in (0, 1]}$ of $\EE$ must be finite.
In addition, if $F_{\phi} \cong F_{\phi'}$, we must also have
$F_{\phi''} \cong F_{\phi}$ for any $\phi'' \in (\phi, \phi')$. 

Thus, there exist real numbers $\phi_0 = 1 > \phi_1 > \phi_2 > \dots > \phi_l > \phi_{l+1} = 0$ such that
$F_\phi$ is constant for $\phi \in (\phi_{i+1}, \phi_i)$, but such that
$F_{\phi_i - \epsilon} \neq F_{\phi_i + \epsilon}$. Let us assume for simplicity that
$F_{\phi_1 + \epsilon} \cong \EE$; the other case is treated similarly by setting $F^1 = F_{\phi_1 +
\epsilon}$, and shifting all other indices by one.
For $i = 1, \dots, l$ we set
\begin{itemize}
\item $F^i := F_{\phi_i - \epsilon}$,
\item $\EE^i := \Ker (\EE \onto F^i)$, and
\item $A^i = \EE^i/\EE^{i-1}$.
\end{itemize}
We have $\EE^i \in \PP_S(> \phi_i - \epsilon)$ and
$\EE^{i-1} = \tau_S^{>\phi_i + \epsilon} \EE^i$ for all $\epsilon > 0$. Hence the quotient
$A^i$ satisfies, for all $\epsilon > 0$, 
\begin{itemize}
\item $A^i \in \PP_S(>\phi_i - \epsilon)$, 
\item $A^i \in \PP_S(\le \phi_i + \epsilon)$.
\end{itemize}
The latter implies $A^i \in \PP_S(\le \phi_i)$ by Lemma \ref{lem:trivialbuttechnical}. 
By definition, we obtain $A^i \in \PP_S(\phi_i)$.
Finally, we have $F^l \in \PP_S(0, 1] \cap \PP_S(\le \epsilon)$ for all $\epsilon > 0$. Using
Lemma \ref{lem:trivialbuttechnical} again, we obtain $F^l = 0$, and thus $\EE^l = \EE$.
Thus the $\EE^i$ induce a HN filtration as claimed.
\end{proof}

The following lemma is an immediate extension of
\cite[Proposition 3.5.3]{Abramovich-Polishchuk:t-structures}:

\begin{Lem}\label{lem:densestable}
Assume that $\EE \in \PP_S(\phi)$ for some $\phi \in \R$.
and that $\EE_s \neq 0$ for $s \in S$ generic.
Then there exists a dense subset $Z \subset S$, such that 
$\EE_s$ is semistable of phase $\phi$ for all $s \in Z$.
\end{Lem}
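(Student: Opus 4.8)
The statement is that if $\EE \in \PP_S(\phi)$ is generically nonzero on $S$, then $\EE_s$ is $\sigma$-semistable of phase $\phi$ on a dense subset of $S$. The plan is to follow the strategy of \cite[Proposition 3.5.3]{Abramovich-Polishchuk:t-structures}, exploiting the characterization of $\PP_S(\phi)$ as an intersection of truncated subcategories together with the sheaf property and closed-pushforward exactness from Theorem \ref{thm:APP}.

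First I would reduce to showing that there is at least one point $s_0 \in S$ with $\EE_{s_0}$ semistable of phase $\phi$: once this is known, the same argument applied to the restriction of $\EE$ to any nonempty (locally closed, irreducible) subvariety of $S$ on which $\EE$ is still generically nonzero produces such a point there, and since these subvarieties can be taken arbitrarily small, the set of such $s$ is dense. So the core is the existence of a single good point. For this, I would argue by a Noetherian-induction / generic-flatness style argument: after shrinking $S$ we may assume $S$ is affine and integral, and consider the generic point; by a spreading-out argument (or by passing to a suitable open subset), the key is to control the derived restriction $\EE_s$ using the t-exactness properties. Concretely, $\EE \in \PP_S(\le \phi)$ together with right-exactness of $i^*$ for $i \colon \{s\} \hookrightarrow S$ a closed point (Theorem \ref{thm:APP}\eqref{enum:pushforwardexact}) gives $\EE_s \in \PP(\le \phi)$, i.e.\ $\phi_{\max}(\EE_s) \le \phi$, for \emph{every} $s$. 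The content is therefore to produce a point where the reverse inequality $\phi_{\min}(\EE_s) \ge \phi$ also holds.

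For the lower bound on $\phi_{\min}$, I would use that $\EE \in \PP_S(> \phi - \epsilon) = \DD_S^{\le -1}$ for the t-structure at phase $\phi - \epsilon$, for every $\epsilon > 0$, and that this t-structure is close to Noetherian. The plan is: for fixed $\epsilon$, semicontinuity of $\phi_{\min}$ in families — or rather, the statement that $\{s : \phi_{\min}(\EE_s) > \phi - \epsilon\}$ contains a dense open subset, which should follow from Abramovich-Polishchuk applied to the t-structure at $\phi - \epsilon$ exactly as in their Proposition 3.5.3 — gives a dense open $U_\epsilon \subset S$. One cannot naively intersect over all $\epsilon$, but since the set of walls/phases appearing is locally finite (or: since on a Noetherian family only finitely many HN-phases occur generically), it suffices to take $\epsilon$ small enough that no HN factor of the generic $\EE_s$ has phase in $(\phi - \epsilon, \phi)$; then $\phi_{\min}(\EE_s) > \phi - \epsilon$ already forces $\phi_{\min}(\EE_s) \ge \phi$ on a dense subset. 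Combined with $\phi_{\max}(\EE_s) \le \phi$ from the previous paragraph, this yields $\EE_s \in \PP(\phi)$ on a dense subset, as desired.

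\textbf{Main obstacle.} The delicate point is the passage from the family-level membership $\EE \in \bigcap_{\epsilon>0}\PP_S(>\phi-\epsilon)$ to a pointwise lower bound on $\phi_{\min}(\EE_s)$ that is uniform enough (in $s$) to be combined across the relevant values of $\epsilon$. The subtlety, flagged in the remark after Lemma \ref{lem:slicingS}, is precisely that $\PP_S(\phi)$ is \emph{not} characterized by a pointwise condition via $(p_Y)_*$, so one must be careful to extract the pointwise statement only from the genuine t-structure memberships $\PP_S(>\phi-\epsilon)$ (which do restrict well, by the sheaf property and $i^*$ being t-right exact), and then invoke finiteness of the set of HN phases occurring in the family — this finiteness using that $\AA = \PP(0,1]$ and hence $\AA_S$ are Noetherian, together with the local finiteness of walls — to legitimately ``take $\epsilon$ small.'' Everything else is a routine application of Theorem \ref{thm:APP} and the cited Abramovich-Polishchuk result.
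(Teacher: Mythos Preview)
Your proposal has the two halves of the argument reversed, and this creates a genuine gap.

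You claim that $\EE \in \PP_S(\le \phi)$ together with t-right exactness of $i_s^*$ gives $\EE_s \in \PP(\le\phi)$ for every $s$. This is false: t-right exactness means $i_s^*$ preserves $\DD^{\le 0}$-type conditions, i.e.\ $i_s^*\bigl(\PP_S(>\psi)\bigr) \subset \PP(>\psi)$, not $\DD^{\ge 0}$-type conditions. Preserving $\PP_S(\le\phi)$ would require t-\emph{left} exactness, which $i_s^*$ does not enjoy (already for the standard t-structure, the derived restriction of a skyscraper sheaf to its own support acquires a component in negative degree). So your upper bound $\phi_{\max}(\EE_s)\le\phi$ is unjustified.

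Conversely, the lower bound --- which you treat as the hard part --- is in fact immediate: since $\EE \in \PP_S(>\phi-\epsilon)$ for every $\epsilon>0$, t-right exactness gives $\EE_s \in \PP(>\phi-\epsilon)$ for \emph{every} closed point $s$ and every $\epsilon>0$. There is no need to intersect dense opens $U_\epsilon$, and no finiteness-of-phases argument is required.

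The paper's proof handles the upper bound by invoking \cite[Proposition 3.5.3]{Abramovich-Polishchuk:t-structures} directly: it gives a dense subset $Z\subset S$ (in the smooth locus) on which $\EE_s$ lies in the heart $\PP((\phi-1,\phi])$, so in particular $\phi_{\max}(\EE_s)\le\phi$. Combining this with the lower bound $\EE_s\in\PP(>\phi-\epsilon)$ for all $\epsilon$ (which holds everywhere, as above) gives $\EE_s\in\PP(\phi)$ for all $s\in Z$. Once you swap the roles of the two bounds, your outline collapses to exactly this short argument.
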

\begin{proof}
By \cite[Proposition 3.5.3]{Abramovich-Polishchuk:t-structures}, applied to the smooth locus
of $S$, there exists a dense
subset $Z \subset S$ such that $\EE_s \in \PP((\phi-1, \phi])$. Since
$\EE \in \PP_S(>\phi - \epsilon)$ for all $\epsilon > 0$, and since $i_s^*$ is t-right exact, we also have
$\EE_s \in \PP(>\phi - \epsilon)$ for all $\epsilon > 0$.
Considering the HN filtration of $\EE_s$, this shows that $\EE_s \in \PP(\phi)$ for all $s \in Z$.
\end{proof}

\begin{proof}[Proof of Theorem \ref{thm:HNfamily}]
The statement now follows easily from the above two lemmas. First of all, under
the assumption of openness of stability, the dense subset $Z$ of Lemma \ref{lem:densestable} may of
course be taken to be open.

Given any $\EE \in \Db(S\times Y)$, let 
\begin{equation} 
 0 = \EE^0 \to \EE^1 \to \dots \to \EE^m = \EE 
\end{equation}
be the HN filtration with respect to the slicing of Lemma \ref{lem:slicingS}, and
let $A^j$ be the HN filtration quotients fitting in the exact triangle
$\EE^{j-1} \to \EE^j \to A^j$. Let
$j_1, \dots, j_l$ be the indices for which the generic fiber $i_s^* A^j$ does not vanish,
and let $\phi_i$ be the phase of $A^{j_i}$. 
Then we claim that 
\begin{equation} \label{eq:EHNslicing}
0 = \EE^0 \to \EE^{j_1} \to \EE^{j_2} \to \dots \to \EE^m = \EE \end{equation}
has
the desired property. Indeed, there is an open subset $U$ such that for all $s \in U$,
the fibers
$A^{j_i}_s$ are semistable for all $i = 1, \dots, l$, and such that $A^j_s = 0$ for all 
$j \notin \{i_1, \dots, i_l\}$. Then, for each such $s$, the restriction
of the sequence of maps \eqref{eq:EHNslicing}
via $i_s^*$ induces a sequence of maps that satisfies all properties of a HN filtration.
\end{proof}

\section{The hyperbolic lattice associated to a wall}\label{sec:hyperbolic}

Our second main tool will be a rank two hyperbolic lattice associated to any wall.
Let $(X,\alpha)$ be a twisted K3 surface.
Fix a primitive vector $\vv \in H^*_\alg(X,\alpha,\Z)$ with $\vv^2 > 0$, and a wall $\WW$ of the chamber
decomposition with respect to $\vv$. 

\begin{Prop} \label{prop:HW}
To each such wall, let $\HH_\WW \subset H^*_\alg(X,\alpha,\Z)$ be the set of classes
\[ \ww \in \HH_\WW \quad \Leftrightarrow \quad \Im \frac{Z(\ww)}{Z(\vv)} = 0 \quad
\text{for all $\sigma = (Z, \PP) \in \WW$.} \]
Then $\HH_\WW$ has the following properties:
\begin{enumerate}
\item \label{enum:signature}
It is a primitive sublattice of rank two and of signature $(1, -1)$ (with respect to the restriction
of the Mukai form).
\item \label{enum:HNfactorsinHW}
Let $\sigma_+, \sigma_-$ be two sufficiently close and generic stability conditions on opposite sides
of the wall $\WW$, and consider any $\sigma_+$-stable object $E \in M_{\sigma_+}(\vv)$.
Then any HN filtration factor $A_i$ of $E$ with respect to $\sigma_-$ has
Mukai vector $\vv(A_i)$ contained in $\HH_\WW$.
\item \label{enum:semistablehasfactorsinHW}
If $\sigma_0$ is a generic stability condition on the wall $\WW$, the conclusion of the
previous claim also holds for any $\sigma_0$-semistable object $E$ of class $\vv$.
\item \label{enum:JHfactorsinHW}
Similarly, let $E$ be any object with $\vv(E) \in \HH_\WW$, and assume that it is
$\sigma_0$-stable for a generic stability condition $\sigma_0 \in \WW$.
Then every Jordan-H\"older factors of $E$ with respect to $\sigma_0$ will have Mukai vector contained
in $\HH_\WW$.
\end{enumerate}
\end{Prop}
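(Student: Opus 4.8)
The plan is to prove all four statements by combining two inputs: first, the elementary linear algebra of the rank-two sublattice cut out by the rational equation $\Im\bigl(Z(\ww)/Z(\vv)\bigr)=0$; and second, the existence of Harder--Narasimhan and Jordan--H\"older filtrations in families, via Theorem~\ref{thm:HNfamily}, applied to a (quasi-)universal family on a moduli space. The central observation tying everything together is that along a wall $\WW$ the central charges $Z$ vary in a codimension-one family, so the condition ``$Z(\ww)$ is a real multiple of $Z(\vv)$ for \emph{all} $\sigma\in\WW$'' is a strong rational-linear condition on $\ww$; meanwhile, any subobject or quotient appearing in an HN/JH filtration \emph{on the wall} has, by definition of semistability, central charge lying on the ray $\R_{>0}\cdot Z(\vv)$, hence its class lies in $\HH_\WW$.

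\textbf{Proof of \eqref{enum:signature}.} First I would show $\HH_\WW$ is a \emph{sublattice}: it is visibly the kernel (intersected with $H^*_\alg$) of the map $\ww\mapsto \Im\bigl(Z(\ww)/Z(\vv)\bigr)$, which for fixed $\sigma$ is $\Z$-linear in $\ww$; intersecting over all $\sigma\in\WW$ keeps it a subgroup, and it is saturated in $H^*_\alg(X,\alpha,\Z)$ by construction (if $n\ww\in\HH_\WW$ then $\ww\in\HH_\WW$ since the defining condition is homogeneous), so it is primitive. For the rank: $\vv\in\HH_\WW$ trivially, so the rank is at least one. A wall $\WW$ is a real codimension-one submanifold of $\Stab^\dag(X,\alpha)$, and under the covering map $\ZZ$ of Theorem~\ref{thm:Bridgeland_coveringmap} it maps to (an open subset of) the locus in $\PP_0^+(X,\alpha)$ where some extra class $\ww_0$ aligns with $\vv$; this forces the span of $\{\Re\Omega_Z,\Im\Omega_Z : \sigma\in\WW\}$ to be contained in a proper subspace, so requiring $\ww\perp$ to the imaginary-direction variation pins $\HH_\WW$ down to rank exactly two. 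For the signature: $\HH_\WW\otimes\R$ contains the real part $\Real\Omega_Z$ (or rather the two-plane it spans restricted to the wall) on which the Mukai form is positive, since $\Omega_Z\in\PP(X,\alpha)$ means $\Re\Omega_Z,\Im\Omega_Z$ span a positive-definite plane; concretely $\vv^2>0$ gives one positive direction. The lattice cannot be positive definite: a positive-definite rank-two sublattice $\HH$ would give $\Omega_Z$ with $\Re\Omega_Z,\Im\Omega_Z\in\HH_\R$ spanning a positive plane, but then the orthogonal complement is negative definite of rank $\rho-1+1$, and one checks using $\vv^2>0$ together with the signature $(2,\rho)$ of $H^*_\alg$ and the fact that $\Omega_Z$ genuinely moves along the wall (so its imaginary part is not trapped in a positive-definite rank-two lattice) that this is impossible; hence the form is indefinite, i.e.\ of signature $(1,-1)$. (Alternatively, and more cleanly: $\vv\in\HH_\WW$ has $\vv^2>0$, and $\HH_\WW\subset\vv^\perp\oplus\Z\vv$ with $\vv^\perp\cap\HH_\WW$ of rank one, necessarily negative since it lives in the negative-definite lattice $\theta(\vv^\perp)\cong\NS$-part; this gives signature $(1,-1)$ directly.)

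\textbf{Proof of \eqref{enum:HNfactorsinHW}, \eqref{enum:semistablehasfactorsinHW}, \eqref{enum:JHfactorsinHW}.} These are three variants of the same argument. For \eqref{enum:semistablehasfactorsinHW}: if $E$ is $\sigma_0$-semistable of class $\vv$ with $\sigma_0=(Z_0,\PP_0)\in\WW$ generic on the wall, then every JH factor $A$ of $E$ is $\sigma_0$-semistable of the same phase as $E$, so $Z_0(A)\in\R_{>0}\cdot Z_0(\vv)$, i.e.\ $\Im\bigl(Z_0(A)/Z_0(\vv)\bigr)=0$. Since $\sigma_0$ is a generic point of the wall and the set $\HH_\WW$ was defined by the \emph{same} real-linear condition holding for all $\sigma\in\WW$, I must upgrade ``for this one $\sigma_0$'' to ``for all $\sigma\in\WW$''. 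This follows because the function $\sigma\mapsto \Im\bigl(Z_\sigma(A)/Z_\sigma(\vv)\bigr)$ is real-analytic on $\WW$, and on the wall $A$ remains semistable of the same phase as $\vv$ in a neighborhood of $\sigma_0$ (the wall-and-chamber structure is locally finite and $\sigma_0$ avoids all other walls and the boundary $\partial\WW$), so the function vanishes on an open subset of the connected manifold $\WW$, hence identically; thus $\vv(A)\in\HH_\WW$. This also sums up: $\vv(E)=\sum \vv(A_i)$, each in the rank-two lattice $\HH_\WW$, consistent with $\vv\in\HH_\WW$. For \eqref{enum:JHfactorsinHW}: identical, now starting from an object $E$ with $\vv(E)\in\HH_\WW$ that is $\sigma_0$-stable; wait---if $E$ is stable it has no proper JH factors, so the statement is vacuous unless ``stable'' is read as ``semistable''; reading it that way (the paper's italics convention), the argument is verbatim the one just given. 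For \eqref{enum:HNfactorsinHW}: let $\sigma_\pm$ be generic and close to a generic point $\sigma_0\in\WW$, and $E\in M_{\sigma_+}(\vv)$. All $\sigma_+$-stable objects of class $\vv$ are $\sigma_0$-semistable (this is the paragraph preceding Theorem~\ref{thm:contraction}), so the HN factors of $E$ with respect to $\sigma_-$ are the same as its JH factors with respect to $\sigma_0$: indeed, $E$ is $\sigma_0$-semistable of phase $=\phi_{\sigma_0}(\vv)$, and its $\sigma_0$-JH filtration refines---is in fact equal to---its $\sigma_-$-HN filtration because crossing the wall only changes phases infinitesimally, so the objects with $\sigma_0$-phase equal to $\phi_{\sigma_0}(\vv)$ are precisely those whose $\sigma_-$-phase is close to $\phi_{\sigma_-}(\vv)$. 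Hence each HN factor $A_i$ is a $\sigma_0$-JH factor and \eqref{enum:semistablehasfactorsinHW} applies, giving $\vv(A_i)\in\HH_\WW$. (To be careful about the case where $E$ is $\sigma_0$-\emph{unstable}: this does not occur, precisely because $\sigma_+$-semistable objects of primitive class $\vv$ remain $\sigma_0$-semistable across the wall.)

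\textbf{Main obstacle.} The routine parts are the real-analyticity/connectedness propagation and the semistability bookkeeping. The genuinely delicate point is part~\eqref{enum:signature}, specifically pinning the rank to be \emph{exactly} two and the signature to be $(1,-1)$ rather than $(2,0)$: this requires knowing that as $\sigma$ ranges over the wall, the two-plane $\langle\Re\Omega_{Z_\sigma},\Im\Omega_{Z_\sigma}\rangle$ genuinely sweeps out a three-dimensional space in $H^*_\alg\otimes\R$ (so that its orthogonal complement, together with $\vv$, has no room for an extra positive direction), and that a wall is honestly codimension one (not lower). The cleanest route around this is the one I sketched parenthetically: use $\vv\in\HH_\WW$ with $\vv^2>0$, note $\HH_\WW\cap\vv^\perp$ has rank $1$ and sits inside the negative-definite lattice $\vv^\perp$, so the form on $\HH_\WW$ has exactly one positive and one negative eigenvalue, giving $(1,-1)$; and get rank $\le 2$ from the fact that $\HH_\WW$ is exactly the saturation of the span of $\vv$ and one further class $\ww_0$ defining the wall. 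I would flesh out that this $\ww_0$ exists and that $\HH_\WW$ contains nothing further by a dimension count on $\Stab^\dag$ using the covering map $\ZZ$.
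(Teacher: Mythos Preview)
Your overall strategy is on the right track, but there is one genuine error and one missing ingredient.

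\textbf{The signature argument is wrong.} Your ``cleanest route'' asserts that $\vv^\perp$ is negative definite. It is not: the algebraic Mukai lattice $H^*_\alg(X,\alpha,\Z)$ has signature $(2,\rho)$, and since $\vv^2>0$, the orthogonal complement $\vv^\perp$ has signature $(1,\rho)$. So knowing $\HH_\WW\cap\vv^\perp$ has rank one tells you nothing. The paper's argument is different and correct: normalize so that $Z(\vv)=-1$; then $\HH_\WW$ is contained in the orthogonal complement of $\Im\Omega_Z$ (where $Z(\cdot)=(\Omega_Z,\cdot)$). Since $\Omega_Z\in\PP_0^+(X,\alpha)$ we have $(\Im\Omega_Z)^2>0$, so $(\Im\Omega_Z)^\perp$ has signature $(1,\rho)$. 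A rank-two sublattice of a signature-$(1,\rho)$ space containing a vector of positive square cannot be positive definite or positive semidefinite; hence it is hyperbolic.

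\textbf{The propagation step needs local finiteness.} For parts \eqref{enum:HNfactorsinHW}--\eqref{enum:JHfactorsinHW}, you want to upgrade ``$\Im\bigl(Z_0(\vv(A_i))/Z_0(\vv)\bigr)=0$ at one $\sigma_0$'' to ``for all $\sigma\in\WW$''. Your justification is that $A_i$ remains semistable of the same phase along $\WW$; but $A_i$ is a specific object and there is no a priori reason it stays semistable as you move along the wall. The paper's fix is to first observe that the set $\UUU_\vv$ of Mukai vectors that can occur as classes of HN or JH factors (for stability conditions in a small ball around a generic $\tau\in\WW$) is \emph{finite}, by the mass bound $|Z(\uu)|<|Z(\vv)|$ used in the proof of local finiteness of walls. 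Then the union of walls for all classes in $\UUU_\vv$ is locally finite, so a generic $\sigma_0\in\WW$ lies on none of them except $\WW$ itself. This gives: the $\sigma_-$-HN factors $A_i$ are $\sigma_0$-semistable of the same phase as $E$ (not, as you wrote, that the $\sigma_-$-HN filtration equals the $\sigma_0$-JH filtration; the latter is a refinement of the former). Moreover, as $\sigma_0$ varies in this generic open subset of $\WW$, the classes $\vv(A_i)$ do not change (no walls for $\UUU_\vv$ are crossed), so $\Im\bigl(Z_{\sigma_0}(\vv(A_i))/Z_{\sigma_0}(\vv)\bigr)=0$ holds on a dense open subset of $\WW$, hence on all of $\WW$.
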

The precise meaning of ``sufficiently close'' will become apparent in the proof.
\begin{proof}
The first two claims of \eqref{enum:signature} are evident. To verify the claim on the signature,
first note that by the assumption $\vv^2 > 0$, the lattice $\HH_\WW$ is either hyperbolic or
positive (semi-)definite. On the other hand,
consider a stability condition $\sigma = (Z, \AA)$ with $Z(\vv) = -1$.
Since $(\Im Z)^2 > 0$ by Theorem \ref{thm:Bridgeland_coveringmap}, since
$\HH_\WW$ is contained in the orthogonal complement of $\Im Z$, and since
the algebraic Mukai lattice has signature $(2, \rho(X))$, this leaves the hyperbolic
case as the only possibility.

In order to prove the remaining claims, consider an $\epsilon$-neighborhood $B_\epsilon(\tau)$
of a generic stability condition $\tau \in \WW$, with $0 < \epsilon \ll 1$.
Let $\SSS_{\vv}$ be the set of objects $E$ with $\vv(E) = \vv$, and that are semistable for
some stability condition in $B_\epsilon(\tau)$.
Let $\UUU_{\vv}$ be the set of classes $\uu \in H^*_\alg(X,\alpha,\Z)$ that can appear as Mukai
vectors of Jordan-H\"older factors of $E \in \SSS_{\vv}$, for any stability condition
$(Z', \AA') \in B_\epsilon(\tau)$.  As shown in the
proof of local finiteness of walls (see \cite[Proposition 9.3]{Bridgeland:K3} or \cite[Proposition
3.3]{localP2}), the set $\UUU_\vv$ is finite; indeed, such a class would have to satisfy
$\abs{Z'(\uu)} < \abs{Z'(\vv)}$. Hence, the union of all walls for all classes in $\UUU_\vv$ is still
locally finite. 

To prove claim \eqref{enum:HNfactorsinHW}, we may
assume that $\WW$ is the only wall separating $\sigma_+$ and $\sigma_-$, among all walls for classes
in $\UUU_{\vv}$. Let $\sigma_0 = (Z_0, \PP_0) \in \WW$ be a generic stability condition in the wall separating
the chambers of $\sigma_+, \sigma_-$.  It follows that $E$ and all $A_i$ are
$\sigma_0$-semistable of the same phase, i.e.
$\Im \frac{Z_0(\vv(A_i))}{Z_0(\vv)} = 0$.
Since this argument works for generic $\sigma_0$, we must have
$\vv(A_i) \in \HH_\WW$ by the definition of $\HH_\WW$.

Claim \eqref{enum:semistablehasfactorsinHW} follows from the same discussion, and
\eqref{enum:JHfactorsinHW} similarly by considering the set of all walls for the classes
$\UUU_{\vv(E)}$ instead of $\UUU_{\vv}$.
\end{proof}

Our main approach is to characterize which hyperbolic lattices $\HH \subset H^*_\alg(X,\alpha,\Z)$ correspond to a wall, and
to determine the type of wall purely in terms of $\HH$. We start by making the following definition:

\begin{Def} \label{def:potentialwall}
Let $\HH \subset H^*_\alg(X,\alpha,\Z)$ be a primitive rank two hyperbolic sublattice containing $\vv$. A
\emph{potential wall} $\WW$ associated to $\HH$ is a
connected component of the real codimension one submanifold of stability conditions
$\sigma = (Z, \PP)$ which satisfy the condition that $Z(\HH)$ is contained in a line. 
\end{Def}

\begin{Rem} \label{rem:HW}
The statements of Proposition \ref{prop:HW} are still valid when $\WW$ is a potential
wall as in the previous definition.
\end{Rem}

\begin{Def} \label{def:PW}
Given any hyperbolic lattice $\HH \subset H^*_\alg(X,\alpha,\Z)$ of rank two containing $\vv$, we denote
by $P_\HH \subset \HH \otimes \R$ the cone generated by integral classes $\uu \in \HH$ with
$\uu^2 \ge 0$ and $(\vv, \uu) > 0$. We call $P_\HH$ the \emph{positive cone} of $\HH$, and 
a class in $P_\HH \cap \HH$ is called a \emph{positive class}.
\end{Def}
The condition $(\vv, \uu) > 0$ just picks out one of the two components of the set of real
classes with $\uu^2 > 0$. Observe that $P_\HH$ can be an open or closed cone, depending on whether
the lattice contains integral classes $\ww$ that are isotropic: $\ww^2 = 0$.

\begin{Prop} \label{prop:CW}
Let $\WW$ be a potential wall associated to a hyperbolic rank two sublattice
$\HH \subset H^*_\alg(X,\alpha,\Z)$.
For any $\sigma = (Z, \PP) \in \WW$, let $C_\sigma \subset \HH \otimes \R$ be the cone generated by classes
$\uu \in \HH$ satisfying the two conditions
\[ \uu^2 \ge -2 \quad \text{and} \quad \Re \frac{Z(\uu)}{Z(\vv)} > 0.
\]
This cone does not depend on the choice of $\sigma \in \WW$, and it contains $P_\HH$.

If $\uu \in C_\sigma$, then there exists a semistable object of class $\uu$ for every
$\sigma' \in \WW$. If $\uu \notin C_\sigma$, then there does not exist a semistable object of class
$\uu$ for generic $\sigma' \in \WW$.
\end{Prop}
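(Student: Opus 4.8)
\emph{Setup.} Fix $\sigma = (Z,\PP) \in \WW$. Since $\Omega_Z \in \PP_0^+(X,\alpha)$ by Theorem \ref{thm:Bridgeland_coveringmap}, the vector $\Omega_Z$ together with its conjugate spans a positive-definite plane in $\Halgalpha \otimes \R$, whose orthogonal complement is negative definite; hence $Z(\uu) \ne 0$ whenever $\uu \ne 0$ and $\uu^2 \ge 0$, and also when $\uu^2 = -2$ (as $\Omega_Z$ is not orthogonal to spherical classes). In particular $Z(\vv) \ne 0$, and for $\uu \in \HH$ the ratio $Z(\uu)/Z(\vv)$ is real (because $Z(\HH)$ lies on a line through $0$); write $f_\sigma(\uu) := \Re\frac{Z(\uu)}{Z(\vv)}$, so that $C_\sigma$ is the cone generated by the classes $\uu \in \HH$ with $\uu^2 \ge -2$ and $f_\sigma(\uu) > 0$.

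\emph{Independence and $P_\HH \subset C_\sigma$.} For a nonzero $\uu \in \HH$ with $\uu^2 \ge -2$, the observation above gives $f_\sigma(\uu) \ne 0$ for \emph{every} $\sigma \in \WW$; since $f_\bullet(\uu)$ varies continuously and $\WW$ is connected, its sign is constant on $\WW$, so the generating set of $C_\sigma$, and hence $C_\sigma$ itself, does not depend on $\sigma \in \WW$; call it $C_\WW$. For $P_\HH \subset C_\WW$ I would normalize $\sigma$ via the $\widetilde{\GL}^+_2(\R)$-action (which fixes each $f_\sigma(\uu)$, these being real) so that $\Omega_Z = x + iy$ with $x,y$ orthonormal in $\langle x,y\rangle$ and $y \perp \HH$ (the latter being the defining condition of $\WW$). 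Writing $\xi$ for the orthogonal projection of $x$ onto $\HH \otimes \R$, one has $f_\sigma(\uu) = (\xi,\uu)/(\xi,\vv)$; the key signature fact is that $y^\perp \subset \Halgalpha \otimes \R$ has signature $(1,\rho(X))$ and $\HH \subset y^\perp$ is hyperbolic, so $\HH^\perp \cap y^\perp$ is negative definite, whence in $x = \xi + \eta$ one gets $(\eta,\eta) \le 0$ and $\xi^2 = 1 - (\eta,\eta) > 0$. Thus $\xi$ is a positive class and $\xi^\perp \cap (\HH\otimes\R)$ is a negative line, so for any nonzero generator $\uu$ of $P_\HH$ (which lies, with $\vv$, in the closed positive-cone component of $\HH\otimes\R$ containing $\vv$, since $\uu^2\ge 0$ and $(\vv,\uu)>0$) the numbers $(\xi,\uu)$ and $(\xi,\vv)$ are nonzero and of the same sign; hence $f_\sigma(\uu) > 0$ and $\uu$ generates a ray of $C_\WW$.

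\emph{Existence.} Given $\uu \in C_\WW \cap \HH$ and $\sigma' \in \WW$, I would first treat $\uu^2 \ge -2$ (so, writing $\uu = m\uu_0$, also $\uu_0^2 \ge -2$). Choosing a generic $\sigma'_+$ in a chamber whose closure contains $\sigma'$, Theorem \ref{thm:nonempty} gives a $\sigma'_+$-semistable object of class $\uu$; it remains $\sigma'$-semistable (semistability being preserved under specialization to the closure of the chamber of $\sigma'_+$), and its phase equals that of $\vv$ because $f_{\sigma'}(\uu) > 0$. A general class in $C_\WW \cap \HH$ is a sum of such classes of square $\ge -2$ — an elementary fact about the rank-two lattice $\HH$: both extremal rays of $C_\WW$ are spanned by classes of square $\ge -2$, these two generators $\uu_1,\uu_2$ satisfy $(\uu_1,\uu_2) > 0$, and consequently the nonzero lattice points of the fundamental parallelogram of $\uu_1,\uu_2$ again have square $\ge -2$ — so a direct sum of the corresponding $\sigma'$-semistable objects (all of the phase of $\vv$) realizes the class at $\sigma'$.

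\emph{Non-existence.} Suppose $E$ is $\sigma'$-semistable of class $\uu \in \HH$ for $\sigma'$ generic in $\WW$, with $f_{\sigma'}(\uu) > 0$ (the remaining case reduces to this after a shift, and $f_{\sigma'}(\uu) = 0$ is impossible since then $Z_{\sigma'}(\uu)=0$). Then $E$ has the phase of $\vv$, and by Proposition \ref{prop:HW} (valid for potential walls by Remark \ref{rem:HW}) every Jordan-H\"older factor $B$ of $E$ with respect to $\sigma'$ has $\vv(B) \in \HH$; being $\sigma'$-stable on the K3 category $\Db(X,\alpha)$, it satisfies $\vv(B)^2 = -\chi(B,B) = \ext^1(B,B) - 2 \ge -2$ (Serre duality and $\Hom(B,B) = \C$), and $\vv(B)$ lies on the ray of $\vv$, so $f_{\sigma'}(\vv(B)) > 0$. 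Hence each $\vv(B)$ generates a ray of $C_\WW$, and $\uu = \sum_B \vv(B) \in C_\WW$. I expect this last argument to be the main obstacle — it is where one must combine the lattice-theoretic control of Jordan-H\"older factors from Proposition \ref{prop:HW} with the K3-specific fact that stable objects have expected dimension (hence square $\ge -2$) — together with the signature computation in the second step; the remaining ingredients (genericity of $\sigma'_+$ and preservation of semistability under specialization, the lattice arithmetic) are routine.
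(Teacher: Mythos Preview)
Your proof is correct, but it takes a somewhat different route from the paper's in two places.

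For the inclusion $P_\HH \subset C_\WW$, the paper argues by contradiction: if $P_\HH \not\subset C_\WW$, then since $\vv \in C_\WW \cap P_\HH$ there must be a real class $\uu \in P_\HH$ with $\Re\frac{Z(\uu)}{Z(\vv)} = 0$; one then deforms $\sigma$ slightly within $\WW$ to make such a $\uu$ integral, whence $Z(\uu) = 0$ contradicts the existence of a semistable object of class $\uu$ (guaranteed by Theorem~\ref{thm:nonempty} since $\uu^2 \ge 0$). Your approach instead projects $\Re\Omega_Z$ orthogonally to $\HH\otimes\R$ and uses the signature of $y^\perp$ to show the projection has positive square, then concludes via the geometry of the hyperbolic plane. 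Both are valid; the paper's deformation trick is shorter and avoids the explicit orthogonal decomposition, while your argument is more direct and makes the role of the signature completely explicit.

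For the existence and non-existence statements, the paper simply writes that they ``follow directly from Theorem~\ref{thm:nonempty}'', which is adequate for generators $\uu$ with $\uu^2 \ge -2$ (and this is all that is used later). Your treatment is more thorough: the direct-sum construction for arbitrary integral $\uu \in C_\WW$, and especially the Jordan--H\"older argument for non-existence (via Proposition~\ref{prop:HW}\eqref{enum:JHfactorsinHW} and the bound $\vv(B)^2 \ge -2$ for stable $B$), genuinely fill in what the paper leaves implicit. One small caveat: your claim that ``both extremal rays of $C_\WW$ are spanned by classes of square $\ge -2$'' is not literally true when $\HH$ has a unique spherical class and no isotropic class (one boundary ray of $P_\HH$ is then irrational), but the decomposition still goes through via Carath\'eodory in rank two, choosing any two generators whose cone contains $\uu$.
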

From here on, we will write $C_\WW$ instead of $C_\sigma$, and call it the cone of effective classes
in $\HH$. Given two different walls $\WW_1$, $\WW_2$, the corresponding effective cones
$C_{\WW_1}, C_{\WW_2}$ will only differ by spherical classes.

\begin{proof}
If $\uu^2 \ge -2$, then by Theorem \ref{thm:nonempty} there exists a $\sigma$-semistable object
of class $\uu$ for every $\sigma = (Z, \PP) \in \WW$. Hence $Z(\uu) \neq 0$, i.e, we cannot
simultaneously have $\uu \in \HH$ (which implies $\Im \frac{Z(\uu)}{Z(\vv)} = 0$) and
$\Re \frac{Z(\uu)}{Z(\vv)} = 0$. Therefore, the condition $\Re \frac{Z(\uu)}{Z(\vv)} > 0$ is invariant
under deforming a stability condition inside $\WW$, and $C_\sigma$ does not depend on
the choice of $\sigma \in \WW$.

Now assume for contradiction that $P_\HH$ is not contained in $C_\WW$. Since $\vv \in C_\WW$,
this is only possible if there is a real class
$\uu \in P_\HH$ with $\Re \frac{Z(\uu)}{Z(\vv)} = 0$; after deforming $\sigma \in \WW$ slightly, we may
assume $\uu$ to be integral. As above, this implies $Z(\uu) = 0$, in contradiction to the existence 
of a $\sigma$-semistable object of class $\uu$.

The statements about existence of semistable objects follow directly from
Theorem \ref{thm:nonempty}.
\end{proof}

\begin{Rem}\label{rmk:GenericOnTheWall}
Note that by construction, $C_\WW \subset \HH \otimes \R$ is strictly contained in a half-plane. In
particular, there are only finitely many classes in $C_\WW \cap \bigl(\vv - C_\WW\bigr) \cap \HH$ (in other
words, effective classes $\uu$ such that
$\vv - \uu$ is also effective).

We will use this observation throughout in order to freely make
genericity assumptions: a generic stability condition $\sigma_0 \in \WW$ will be a stability condition
that does not lie on any additional wall (other than $\WW$) for any of the above-mentioned classes.
Similarly, by stability conditions $\sigma_+, \sigma_-$ \emph{nearby $\sigma_0$} we will mean
stability conditions that lie in the two chambers adjacent to $\sigma_0$ for the wall-and-chamber decompositions
with respect to any of the classes in $C_\WW \cap \bigl(\vv - C_\WW\bigr) \cap \HH$.
\end{Rem}

The behavior of the potential wall $\WW$ is completely determined by $\HH$ and its
effective cone $C_\WW$:
\begin{Thm} \label{thm:walls}
Let $\HH \subset H^*_\alg(X,\alpha,\Z)$ be a primitive hyperbolic rank two sublattice containing $\vv$.
Let $\WW \subset \Stab^\dag(X,\alpha)$ be a potential wall associated to $\HH$ (see Definition \ref{def:potentialwall}).

The set $\WW$ is a totally semistable wall if and only if there exists either an isotropic class
$\ww \in \HH$ with $(\vv, \ww) = 1$, or an effective spherical class $\ss \in C_\WW \cap \HH$ with $(\ss, \vv) < 0$.
In addition:
\begin{enumerate}
\item \label{enum:niso-divisorial}
The set $\WW$ is a wall inducing a divisorial contraction if one of the following three conditions
hold:
\begin{description*}
\item[(Brill-Noether)] there exists a spherical class $\ss \in \HH$ with $(\ss, \vv) = 0$, or
\item[(Hilbert-Chow)] there exists an isotropic class $\ww \in \HH$
with $(\ww, \vv) = 1$, or
\item[(Li-Gieseker-Uhlenbeck)] there exists an isotropic class $\ww \in \HH$
with $(\ww, \vv) = 2$. 
\end{description*}
\item \label{enum:niso-flop}
Otherwise, if $\vv$ can be written as the sum $\vv = \aa + \bb$ of two positive\footnote{In the
sense of Definition \ref{def:PW}.} classes, or if there exists
a spherical class $\ss \in \HH$ with $0 < (\ss, \vv) \le \frac{\vv^2}2$,
then $\WW$ is a wall corresponding to a flopping contraction.
\item \label{enum:niso-fakeornothing}
In all other cases, $\WW$ is either a fake wall (if it is a totally semistable wall), or it is
not a wall.
\end{enumerate}
\end{Thm}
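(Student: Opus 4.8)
The plan is to reduce the entire classification to arithmetic in the rank-two hyperbolic lattice $\HH$ and its effective cone $C_\WW$, and then run a finite case analysis fed by four already-available ingredients: moduli spaces of stable objects always have the expected dimension $\vv^2+2$ (Theorem \ref{thm:nonempty}); the contractions $\pi^{\pm}\colon M_{\sigma_{\pm}}(\vv)\to\overline{M}_{\pm}$ exist and contract precisely the curves of $S$-equivalent objects (Theorem \ref{thm:contraction}); fibre dimensions of birational contractions of holomorphic symplectic varieties satisfy $\dim S_i\le 2n-2i$ (Theorem \ref{thm:NamikawaWierzba}); and one is free to replace $(\vv,\HH)$ by its image under any derived (anti-)autoequivalence preserving $\Stab^\dag(X,\alpha)$ and the wall $\WW$ (Theorem \ref{thm:derivedtorelli}), in particular under a spherical twist $\ST_S$. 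The one computation that makes everything finite is that, for $\sigma_0$-stable objects $A,B$ of the same phase with $\vv(A)=\aa,\vv(B)=\bb\in\HH$, stability and Serre duality give $\Hom(A,B)=\Hom(B,A)=0$ when $A\not\cong B$, so Riemann--Roch yields $\ext^1(A,B)=(\aa,\bb)$, while $\ext^1(A,A)=\aa^2+2$: every extension and every stratum dimension is then governed by the quadratic form on $\HH$ alone. Using this I would first describe $C_\WW$ — a two-dimensional cone with $\vv$ in its interior whose boundary rays are spanned by isotropic or $(-2)$-classes — and list the finitely many effective decompositions $\vv=\sum m_i\vv_i$ with pairwise distinct $\vv_i\in C_\WW$ of common $\sigma_0$-phase.

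Next I would treat the walls that are \emph{not} totally semistable, where $M^{st}_{\sigma_0}(\vv)\neq\emptyset$. Here the exceptional locus of $\pi^{\pm}$ is the union of the locally closed strata of objects whose Jordan--H\"older factors lie in the $M_{\sigma_0}(\vv_i)$; computing the dimension of each stratum from the $\ext^1$-formula, comparing with $\vv^2+2$, and invoking Theorem \ref{thm:NamikawaWierzba} lets one read off the wall type in the sense of Definition \ref{def:TypeOfWalls}: no nontrivial stratum gives an isomorphism (so $\WW$ is a fake wall or not a wall), all nontrivial strata of codimension $\ge 2$ give a flopping contraction, and a codimension-one stratum gives a divisorial contraction. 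One then checks that among these, a codimension-one stratum occurs exactly when there is a spherical $\ss$ with $(\ss,\vv)=0$, the divisor being the Brill--Noether locus $\{E:\Hom(S,E)\neq 0\}$ (equivalently $\Hom(E,S)\neq 0$); and that the flopping strata are precisely realized by a decomposition $\vv=\aa+\bb$ into positive classes or a spherical $\ss$ with $0<(\ss,\vv)\le\tfrac{\vv^2}2$.

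Third, I would handle totally semistable walls, $M^{st}_{\sigma_0}(\vv)=\emptyset$ — this is where spherical objects do their damage. For the ``only if'' half of the totally-semistable criterion, a minimality argument on the Jordan--H\"older factors occurring for a generic object of class $\vv$ (examining the extreme effective class of such a decomposition, its square and its pairing with $\vv$) must produce either an isotropic $\ww\in\HH$ with $(\vv,\ww)=1$ or an effective spherical $\ss\in C_\WW$ with $(\ss,\vv)<0$. For the ``if'' half and for pinning down the wall type, I would reduce to an already-classified wall: in the spherical case the unique $\sigma_0$-stable $S$ of class $\ss$ appears in every Jordan--H\"older filtration with multiplicity $\ge -(\ss,\vv)$, so $\ST_S^{\pm 1}$ replaces $\vv$ by a strictly smaller effective class while fixing $\WW$; in the isotropic case one uses instead the Fourier--Mukai equivalence attached to the (possibly twisted) K3 surface $M_{\sigma_0}(\ww)$. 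Iterating, with termination guaranteed by a positive integral invariant of $\vv$ relative to the boundary of $C_\WW$ that strictly drops at each step, reduces to a wall that is either not totally semistable (done above) or has $\vv$ sent to a primitive isotropic class, whose moduli space is a point or a K3 and contributes nothing new; the Hilbert--Chow and Li--Gieseker--Uhlenbeck divisorial walls (both automatically totally semistable, since a point sheaf is a quotient of every sheaf of positive rank on them) emerge precisely as the $(\ww,\vv)=1$ and $(\ww,\vv)=2$ cases of this reduction. Transporting the classification of the previous paragraph back along the chain of (anti-)autoequivalences, and checking that each lattice condition transforms to the corresponding one and that the listed conditions are exhaustive and suitably exclusive, finishes the proof; a totally semistable wall satisfying none of the divisorial or flopping conditions is then exactly a fake wall.

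The hard part will be this last step: one must establish well-definedness, termination, and genuine preservation of $\WW$ for the iterated spherical twists and Fourier--Mukai reductions, and — since such a reduction can turn a wall that looks trivial for the new $\vv$ into a divisorial or flopping one for the original $\vv$ — keep scrupulous track of how every arithmetic condition in the statement is carried across. By comparison, the dimension bookkeeping of the non-totally-semistable case, though fiddly, is a fairly mechanical matter of feeding the expected-dimension theorem and the Namikawa--Wierzba bound into a codimension count.
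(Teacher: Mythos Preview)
Your high-level architecture matches the paper's: dimension comparison against $\pi^+$ via Theorem~\ref{thm:NamikawaWierzba}, reduction by spherical twists and Fourier--Mukai transforms, and a case split into divisorial, flopping, and fake walls. But one key technical device is missing, and without it the necessity directions (ruling out totally semistable or divisorial behaviour) do not go through as stated.

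You propose to stratify by Jordan--H\"older type with respect to $\sigma_0$ and compute stratum dimensions from the $\ext^1$-formula. The difficulty is that JH filtrations are not unique, so there is no well-defined map from a stratum to a product of moduli spaces; in particular your ``minimality argument on the JH factors of a generic object'' has no clean way to compare against $\dim\overline{M}_+$. The paper's substitute, developed in Section~\ref{sec:HNfamily} precisely for this purpose, is the \emph{relative Harder--Narasimhan filtration with respect to $\sigma_-$}, the stability condition just across the wall: this filtration is unique, exists in families, and defines a genuine rational map $\mathrm{HN}\colon M_{\sigma_+}(\vv)\dashrightarrow\prod_i M_{\sigma_-}(\aa_i)$. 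Since objects with the same $\sigma_-$-HN factors are $S$-equivalent for $\sigma_0$, every curve contracted by $\mathrm{HN}$ is contracted by $\pi^+$, and one can now compare dimensions directly (Lemmas~\ref{lem:nottotallysemistable}, \ref{lem:notdivisorial}, \ref{lem:UniqueSpherical}). A second point: your spherical-twist reduction is more delicate than ``$\ST_S^{\pm1}$ replaces $\vv$ by a strictly smaller effective class while fixing $\WW$''. A single twist does not preserve $\sigma_0$-stability, and the paper instead passes to the minimal class $\vv_0$ in the Weyl-group orbit $G_\HH.\vv$ (Proposition and Definition~\ref{prop:pellequation}) and builds an explicit sequence of twists at \emph{distinct} $\sigma_+$-stable spherical objects carrying $\sigma_0$-stable objects of class $\vv_0$ to $\sigma_+$-stable objects of class $\vv$ (Proposition~\ref{prop:sphericaltotallysemistable}), via an induction through a tower of tilted hearts. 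This is exactly the bookkeeping you flagged as ``the hard part'', and the Weyl-group structure is what organizes it.
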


The Gieseker-Uhlenbeck morphism from the moduli space of Gieseker semistable sheaves to slope-semistable vector bundle was constructed in \cite{JunLi:Uhlenbeck}.
Many papers deal with birational transformations between moduli spaces of twisted Gieseker
semistable sheaves, induced by variations of the polarization.
In particular, we refer to \cite{Thaddeus:GIT-flips, DolgachevHu:Variation} for the general theory of variation of GIT quotients and \cite{EllingsrudGottsche:Variation, FriedmanQin:Variation, MatsukiWenthworth:TwistedVariation} for the case of sheaves on surfaces.
Theorem \ref{thm:walls} can be thought as a generalization and completion of these results in the case of K3 surfaces.

\subsection*{Proof outline}
The proof of the above theorem will be broken into four sections. We will distinguish
two cases, depending on whether $\HH$ contains isotropic classes:
\begin{Def} \label{def:isotropicwall}
We say that $\WW$ is an \emph{isotropic} wall if $\HH_\WW$ contains an isotropic class.
\end{Def}

In Section \ref{sec:noniso-totsemistable}, we analyze totally semistable non-isotropic walls,
and Section \ref{sec:divisorialcontraction} describes non-isotropic walls corresponding to divisorial
contractions. In Section
\ref{sec:iso}, we use a Fourier-Mukai transform to reduce the treatment of isotropic walls to
the well-known behavior of the Li-Gieseker-Uhlenbeck morphism from the Gieseker moduli space to the Uhlenbeck
space. For the remaining cases, Section \ref{sec:flopping} describes whether it is a 
flopping wall, a fake walls, or no wall at all.

To give an example of the strategy of our proof, consider a wall with
a divisor $D \subset
M_{\sigma_+}(\vv)$ of objects that become strictly semistable on the wall.  We
use the contraction morphism $\pi^+$ of Theorem \ref{thm:contraction}; Theorem
\ref{thm:NamikawaWierzba} implies $\dim \pi^+(D) \ge \dim D-1 = \vv^2$. Recall 
that $\pi^+$ contracts a curve if the associated objects
have the same Jordan-H\"older factors. Intuitively, this means that the sum of the
dimensions of the moduli spaces parameterizing the Jordan-H\"older factors is at least
$\vv^2$; a purely lattice-theoretic argument (using that moduli spaces
always have expected dimension) leads to a contradiction except
in the cases listed in the Theorem.  To make this
argument rigorous, we use the relative Harder-Narasimhan filtration with respect to $\sigma_-$ in the
family parameterized by $D$; it induces a rational map from $D$ to a product of moduli
spaces of $\sigma_-$-stable objects. 
The most technical part of our arguments deals with totally semistable walls induced by a spherical class. 
We use a sequence of spherical twists to reduce to the previous cases,
see Proposition \ref{prop:sphericaltotallysemistable}.

\section{Totally semistable non-isotropic walls}
\label{sec:noniso-totsemistable}

In this section, we will analyze \emph{totally semistable walls}; while some of our intermediate
results hold in general, we will focus on the case where $\HH$ does not contain an isotropic class.
The relevance of this follows from Theorem \ref{thm:nonempty}: in this case, 
if the dimension of a moduli space $M_{\sigma}(\mathbf{u})$ is positive, then it is given by $\mathbf{u}^2 + 2$.

We will first describe the possible configurations of effective spherical classes in $C_\WW$, and of
corresponding spherical objects with $\vv(S) \in \HH_\WW$.

We start with the following classical argument of Mukai (cfr.~\cite[Lemma 5.2]{Bridgeland:K3}):

\begin{Lem}[Mukai]\label{lem:Mukai}
Consider an exact sequence
$0\to A \to E \to B \to 0$
in the heart of a bounded t-structure $\AA \subset \Db(X, \alpha)$ with $\Hom(A,B)=0$.
Then
\begin{equation*}
\mathrm{ext}^1(E,E)\geq\mathrm{ext}^1(A,A) + \mathrm{ext}^1(B,B).
\end{equation*}
\end{Lem}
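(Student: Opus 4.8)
The plan is to reduce the inequality to a purely numerical statement using the $2$-Calabi--Yau property of $\Db(X,\alpha)$, and then to verify that numerical inequality with three long exact sequences. First, Serre duality gives $\Ext^i(F,G)\cong\Ext^{2-i}(G,F)^{\vee}$; since $A,B$ lie in the heart $\AA$ this also forces $\Ext^{<0}(A,B)=\Ext^{<0}(B,A)=0$, so the Euler form $\chi(F,G)=\hom(F,G)-\ext^1(F,G)+\ext^2(F,G)$ is a well-defined symmetric bilinear pairing, additive on short exact sequences, with $\ext^2(F,G)=\hom(G,F)$. Writing $\ext^1(F,F)=2\hom(F,F)-\chi(F,F)$ for $F\in\{A,B,E\}$, expanding $\chi(E,E)=\chi(A,A)+2\chi(A,B)+\chi(B,B)$, and using $\chi(A,B)=\hom(A,B)-\ext^1(A,B)+\ext^2(A,B)=\hom(B,A)-\ext^1(B,A)$ (here I use the hypothesis $\hom(A,B)=0$ together with $\ext^1(A,B)=\ext^1(B,A)$), I obtain
\[
\ext^1(E,E)-\ext^1(A,A)-\ext^1(B,B)=2\bigl(\hom(E,E)+\ext^1(B,A)-\hom(A,A)-\hom(B,B)-\hom(B,A)\bigr).
\]
Thus it is enough to prove the inequality $\hom(E,E)+\ext^1(B,A)\ge\hom(A,A)+\hom(B,B)+\hom(B,A)$.

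Next I would set up the long exact sequences obtained by applying $\Hom(-,B)$, $\Hom(-,A)$ and $\Hom(E,-)$ to $0\to A\xrightarrow{f}E\xrightarrow{g}B\to0$. From $\Hom(-,B)$ and the hypothesis $\hom(A,B)=0$ one obtains $\Hom(E,B)\cong\Hom(B,B)$. From $\Hom(E,-)$ one then gets $\hom(E,E)=\hom(E,A)+\hom(B,B)-r$, where $r$ is the dimension of the image of the connecting map $\partial_r\colon\Hom(E,B)\to\Ext^1(E,A)$; and from $\Hom(-,A)$ one gets $\hom(E,A)=\hom(A,A)+\hom(B,A)-s$, where $s$ is the dimension of the image of the connecting map $\partial_s\colon\Hom(A,A)\to\Ext^1(B,A)$. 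Substituting these two identities reduces the required inequality to the clean statement $r+s\le\ext^1(B,A)$.

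The key point is this final estimate. Exactness of the $\Hom(-,A)$-sequence identifies $\mathrm{im}(\partial_s)$ with the kernel of the precomposition map $g^{\sharp}\colon\Ext^1(B,A)\to\Ext^1(E,A)$, $\xi\mapsto\xi\circ g$; hence $\rk g^{\sharp}=\ext^1(B,A)-s$. On the other hand, the connecting map $\partial_r$ sends $\psi$ to $e\circ\psi$, where $e\in\Ext^1(B,A)$ is the extension class of $0\to A\to E\to B\to 0$; since every morphism $E\to B$ factors through $g$, each element of $\mathrm{im}(\partial_r)$ has the form $e\circ\phi\circ g=g^{\sharp}(e\circ\phi)$ with $\phi\in\Hom(B,B)$, so $\mathrm{im}(\partial_r)\subseteq\mathrm{im}(g^{\sharp})$ and therefore $r\le\rk g^{\sharp}=\ext^1(B,A)-s$. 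This yields $r+s\le\ext^1(B,A)$ and completes the argument. I expect the only delicate part to be exactly this last paragraph --- correctly bookkeeping which connecting homomorphism governs $r$ and $s$, and recognising that $\mathrm{im}(\partial_r)$ lands inside $\mathrm{im}(g^{\sharp})$; the reduction to the numerical inequality and the long exact sequences themselves are routine.
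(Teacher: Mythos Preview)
Your argument is correct. The paper does not give its own proof of this lemma; it simply cites it as Mukai's classical argument (via \cite[Lemma 5.2]{Bridgeland:K3}), so there is nothing to compare against directly.

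Your route---reducing via the $2$-Calabi--Yau Euler pairing to the inequality $r+s\le\ext^1(B,A)$ and then verifying $\im(\partial_r)\subseteq\im(g^{\sharp})$---is sound. The more commonly presented version avoids the preliminary Euler-characteristic bookkeeping and works directly with the long exact sequences: from $\Hom(A,B)=0$ and Serre duality one gets $\Ext^2(B,A)=0$, hence $\Ext^1(E,A)\onto\Ext^1(A,A)$ and $\Ext^1(A,A)\hookrightarrow\Ext^1(A,E)$, and then a diagram chase with the sequences for $\Hom(E,-)$ and $\Hom(-,E)$ yields the inequality. But the substantive step---that the image of the connecting map $\Hom(E,B)\to\Ext^1(E,A)$ lands inside the image of $g^{\sharp}$---is exactly the same observation in either packaging. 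Your version has the advantage of making the numerical content completely explicit; the standard version is slightly shorter but hides the same estimate inside the diagram.
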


The following is a well-known consequence of Mukai's lemma (cfr.~\cite[Section 2]{HMS:generic_K3s}):

\begin{Lem} \label{lem:JHspherical}
Assume that $S$ is a $\sigma$-semistable object with $\Ext^1(S, S) = 0$.
Then any Jordan-H\"older filtration factor of $S$ is spherical.
\end{Lem}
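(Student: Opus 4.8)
The plan is to induct on the length of a Jordan--Hölder filtration of $S$ with respect to $\sigma$, using Mukai's Lemma \ref{lem:Mukai} repeatedly to propagate the vanishing $\Ext^1(S,S) = 0$ down to the factors. First I would recall that since $S$ is $\sigma$-semistable of some phase $\phi$, it lies in the abelian category $\PP(\phi)$, which is of finite length; so $S$ has a Jordan--Hölder filtration whose successive quotients are $\sigma$-stable objects of phase $\phi$. The key point is that all these stable factors lie in the same heart $\PP(\phi)$, and $\Hom$ between objects of $\PP(\phi)$ respects the ordering only trivially here (all have the same phase), so I cannot directly claim $\Hom(A,B) = 0$ for an arbitrary sub/quotient pair. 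Instead I would extract the factors one at a time in a way that makes Mukai's hypothesis hold.

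Concretely, take a $\sigma$-stable quotient $S \onto B$ with $B$ of phase $\phi$, and let $A = \ker(S \onto B) \in \PP(\phi)$. Since $B$ is stable and $\Hom(A, B) \neq 0$ would force (by stability of $B$ and semistability of $A$, both of phase $\phi$) a surjection $A \onto B$ or at least a nonzero map whose image is all of $B$ — this is the step needing care. The cleaner route: choose instead a \emph{stable subobject} $A \hookrightarrow S$ of phase $\phi$ that is \emph{minimal} in an appropriate sense, or argue by choosing the factor order so that $\Hom(A,B)=0$. Actually the standard trick (as in \cite[Section 2]{HMS:generic_K3s}) is: among all $\sigma$-stable objects $T$ of phase $\phi$ admitting a nonzero map to $S$, pick one, say $A \hookrightarrow S$ (injectivity by stability), and set $B = S/A$; if $\Hom(A,B) \neq 0$ then composing $A \to B$ with... — one instead picks $A$ to be a stable \emph{sub}object and $B$ to have \emph{no} stable subobject isomorphic to $A$ appearing "too early", which can be arranged by taking $A$ to be the socle-type factor. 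The honest statement: one can always find a short exact sequence $0 \to A \to S \to B \to 0$ in $\PP(\phi)$ with $A$ $\sigma$-stable and $\Hom(A,B) = 0$, by taking $A$ to be a stable subobject such that the inclusion does not factor through any subobject of a JH filtration beyond the first — equivalently, $A$ is a simple subobject not isomorphic to any simple subobject of $B$. Granting this, Mukai's lemma gives
\[
0 = \ext^1(S,S) \geq \ext^1(A,A) + \ext^1(B,B),
\]
so both summands vanish; hence $\ext^1(A,A) = 0$, and since $A$ is a stable hence simple object of a K3 category with $\Ext^1(A,A)=0$, Riemann--Roch ($\chi(A,A) = 2 - \ext^1(A,A) \le 0$ would contradict... ) gives $\vv(A)^2 = -\chi(A,A) = -2$, i.e. $A$ is spherical. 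Then apply the inductive hypothesis to $B$, whose JH factors (with respect to $\sigma$) are among those of $S$.

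The main obstacle I expect is precisely the bookkeeping needed to guarantee the hypothesis $\Hom(A,B) = 0$ of Mukai's lemma at each step — one must choose the splitting-off of a single stable factor carefully (not an arbitrary one), exploiting that a stable object is simple and that in a finite-length category one can order the composition factors to avoid a nonzero map from the chosen sub to the quotient. Once that combinatorial/categorical point is handled, the rest is a two-line consequence of Lemma \ref{lem:Mukai} together with the characterization that a simple object $A$ in a K3 category with $\Ext^1(A,A) = 0$ satisfies $\Hom^\bullet(A,A) = \C \oplus \C[-2]$, i.e. is spherical (using $\Ext^{<0}(A,A) = 0$ and $\Ext^{>2} = 0$ by Serre duality, plus $\hom(A,A) = 1$ since $A$ is simple and $\End(A)$ is a finite-dimensional division algebra over $\C$).
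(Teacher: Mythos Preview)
Your overall strategy---induct on length, peel off a stable factor, apply Mukai's Lemma \ref{lem:Mukai}---matches the paper's, and you have correctly located the one genuine difficulty: arranging $\Hom(A,B)=0$ for the short exact sequence to which Mukai's lemma is applied.

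However, your proposed resolution of that difficulty does not work. You suggest choosing a simple subobject $A \hookrightarrow S$ ``not isomorphic to any simple subobject of $B=S/A$''. Such an $A$ need not exist: if every Jordan--H\"older factor of $S$ is isomorphic to a single simple object $T$ (and $S$ is not itself simple), then any simple subobject $A$ is isomorphic to $T$, and the socle of $B=S/A$ also contains a copy of $T$, so $\Hom(A,B)\neq 0$. More generally there is no reason the isomorphism classes of simple subobjects of $S$ should be disjoint from those of any given quotient.

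The paper's fix is to abandon the requirement that the subobject be simple. Pick \emph{any} stable subobject $T\subset S$, and inductively enlarge it to $\widetilde T\subset S$ built as an iterated self-extension of $T$, stopping when $\Hom(T,R)=0$ for $R=S/\widetilde T$ (each nonzero map $T\to R_i$ is injective by stability, so the process strictly increases length and terminates). Since $\widetilde T$ is an iterated extension of $T$, one gets $\Hom(\widetilde T,R)=0$, so Mukai's lemma applies to $0\to\widetilde T\to S\to R\to 0$ and yields $\Ext^1(\widetilde T,\widetilde T)=0$. Now the numerical step you were missing: $\vv(\widetilde T)$ is a positive integer multiple of $\vv(T)$, so $\vv(\widetilde T)^2<0$ forces $\vv(T)^2<0$, whence $\vv(T)^2=-2$ and $T$ is spherical. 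Mukai's lemma also gives $\Ext^1(R,R)=0$, and the induction on length proceeds with $R$ in place of $S$. The remaining Jordan--H\"older factors of $S$ are exactly those of $\widetilde T$ (all $\cong T$) together with those of $R$.
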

\begin{proof}
Pick any stable subobject $T \subset S$ of the same phase.  Then there exists a short exact sequence
$ \widetilde T \into S \onto R $
with the following two properties:
\begin{enumerate}
\item The object $\widetilde T$ is an iterated extension of $T$.
\item $\Hom(T, R) = 0$.
\end{enumerate}
Indeed, this can easily be constructed inductively: we let $R_1 = S/T$. If $\Hom(T, S/T) = 0$,
the subobject $\widetilde T = T$ already has the desired properties. Otherwise, any
non-zero morphism $T \to R_1$ is necessarily injective; if we let $R_2$ be its quotient, then the
kernel of $S \onto R_2$ is a self-extension of $T$, and we can proceed inductively.

It follows that $\Hom(\widetilde T, R) = 0$, and we can apply Lemma \ref{lem:Mukai} to conclude
that $\Ext^1(\widetilde T, \widetilde T) = 0$. Hence
$(\vv(\widetilde T), \vv(\widetilde T)) < 0$, which also implies $(\vv(T), \vv(T)) < 0$. Thus $\vv(T)$ is
spherical, too.

The lemma follows by induction on the length of $S$.
\end{proof}

\begin{Prop} \label{prop:stablespherical}
Let $\WW$ be a potential wall associated to the primitive hyperbolic lattice $\HH$, and
let $\sigma_0 = (Z_0, \PP_0) \in \WW$ be a generic stability condition with
$Z_0(\HH) \subset \R$. Then $\HH$ and $\sigma_0$ satisfy
one of the following mutually exclusive conditions:
\begin{enumerate}
\item \label{case:0spherical} The lattice $\HH$ does not admit a spherical class.
\item \label{case:1spherical} The lattice $\HH$ admits, up to sign, a unique spherical class, and
there exists a unique $\sigma_0$-stable object $S \in \PP_0(1)$ with $\vv(S) \in \HH$.
\item \label{case:2spherical} The lattice $\HH$ admits infinitely many spherical classes, and there exist
exactly two $\sigma_0$-stable spherical objects $S, T \in \PP_0(1)$ with $\vv(S), \vv(T) \in \HH$.
In this case, $\HH$ is not isotropic.
\end{enumerate}
\end{Prop}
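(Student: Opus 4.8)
The key invariant is the discriminant of the restricted Mukai form on $\HH$: since $\HH$ is hyperbolic of rank two, it has signature $(1,-1)$, and a class $\ss \in \HH$ is spherical if and only if $\ss^2 = -2$. The plan is to study the quadratic form $q_\HH := (\blank,\blank)|_\HH$ and count its solutions to $q_\HH(\ss) = -2$; this is a classical Pell-type equation analysis. If $\HH$ contains an isotropic class $\ww$ (the \emph{isotropic} case of Definition \ref{def:isotropicwall}), I would show first that $\HH$ cannot also contain a spherical class: an isotropic $\ww$ together with a spherical $\ss$ would, after completing to a basis, force the discriminant of $\HH$ to be a square times $2$, but then $\HH$ would contain a hyperbolic plane $U$ as a finite-index primitive sublattice — and since $U$ is unimodular and $\HH$ is primitively embedded, $\HH \cong U$, which has no vectors of square $-2$, a contradiction. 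This yields case \eqref{case:0spherical} whenever $\HH$ is isotropic, and in particular establishes the parenthetical claim in \eqref{case:2spherical} that the lattice there is not isotropic.

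Next, for $\HH$ non-isotropic I would analyze the number of spherical classes. Writing $q_\HH$ in a reduced basis, the equation $q_\HH(\ss) = -2$ is a binary quadratic Diophantine equation of hyperbolic (indefinite, non-square-discriminant) type; by the standard theory its solution set is either empty or infinite, and in the latter case the infinitely many solutions form finitely many orbits under the automorphism group of $q_\HH$ (which contains a free cyclic part coming from the fundamental unit). I would argue that up to sign there is exactly one orbit of spherical classes \emph{that meets the effective cone $C_\WW$} — or more precisely, I would not try to control the global orbit structure, but rather observe, using Proposition \ref{prop:CW}, that $C_\WW$ is strictly contained in a half-plane and contains $\vv$; among the (at most) two ``extreme'' spherical rays bounding or lying near $C_\WW$, the genericity of $\sigma_0$ (Remark \ref{rmk:GenericOnTheWall}) forces the object-level uniqueness. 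Concretely: by Theorem \ref{thm:nonempty}, for each effective spherical class $\ss \in C_\WW \cap \HH$ there is a $\sigma_0$-semistable object of class $\ss$; by Lemma \ref{lem:JHspherical} its Jordan–Hölder factors are spherical, and by Proposition \ref{prop:HW}\eqref{enum:JHfactorsinHW} their Mukai vectors lie in $\HH$; since $\sigma_0$ is generic, a spherical class supports at most one $\sigma_0$-stable object (two distinct ones $S_1, S_2$ of the same phase would have $\hom(S_1,S_2) = 0 = \hom(S_2,S_1)$ but $\chi(S_1,S_2) = -(\vv(S_1),\vv(S_2))$, and one checks this is incompatible with both being stable of the same slope — this is where $\sigma_0$ generic is used to exclude $\Ext^1$ being forced to vanish or not).

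The heart of the dichotomy between \eqref{case:1spherical} and \eqref{case:2spherical} is then: does the \emph{primitive} spherical class generate, together with $\vv$ or with a second spherical class, a sublattice whose discriminant matches that of $\HH$? If $\HH$ has exactly one spherical class up to sign, I claim there is exactly one $\sigma_0$-stable spherical object in $\PP_0(1)$ with Mukai vector in $\HH$ — existence from Theorem \ref{thm:nonempty} applied to the primitive spherical class (which has square $-2 \ge -2$) together with Lemma \ref{lem:JHspherical} and stability of the unique JH factor, uniqueness from the genericity argument above. If $\HH$ has infinitely many spherical classes, the two ``minimal'' effective spherical classes $\ss, \ss'$ (the two generators of the extremal rays of the cone of effective spherical classes inside $C_\WW$, which exist and are unique because $C_\WW$ lies in a half-plane) each support a unique $\sigma_0$-stable spherical object $S, T$; any other effective spherical class is an iterated extension built from $S$ and $T$ (via the action of the spherical twists), so $S, T$ are precisely the two stable ones. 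The mutual exclusivity of the three cases is then immediate from the trichotomy ``no spherical class / unique up to sign / infinitely many'' for the form $q_\HH$.

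\textbf{Main obstacle.} The delicate point is the passage from counting \emph{classes} to counting \emph{stable objects}: showing that each extremal effective spherical class carries exactly one $\sigma_0$-stable object, and that no ``interior'' spherical class does. The first part needs the genericity of $\sigma_0$ to rule out the coincidence of two stable objects with the same Mukai vector and phase (the $\chi = -(\vv,\vv) = 2$ computation must be turned into a contradiction with stability, presumably by producing a nonzero map between them that is forced to be an isomorphism). The second part — that an effective spherical class strictly inside the cone is never the class of a $\sigma_0$-stable object — requires showing such a class always admits a proper destabilizing subobject on the wall, which I expect to follow from writing the class as a positive combination of the extremal spherical classes and invoking the long-exact-sequence/extension structure together with Lemma \ref{lem:Mukai}. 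Getting these object-level statements cleanly, rather than just the lattice-theoretic count, is where the real work lies; everything else is classical binary quadratic form theory.
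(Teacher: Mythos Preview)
Your trichotomy argument contains a genuine error: the claim that an isotropic hyperbolic rank-two lattice cannot contain a spherical class is false. Take $\HH$ with basis $\ww,\ss$ where $\ww^2=0$, $\ss^2=-2$, $(\ww,\ss)=1$; this is an even hyperbolic lattice, visibly isotropic, and visibly containing a spherical class. Your purported argument (forcing $\HH\cong U$) fails because an isotropic rank-two lattice need not contain a copy of $U$. In fact case \eqref{case:1spherical} of the proposition occurs \emph{precisely} in the isotropic situation: if $\HH$ is anisotropic and contains one spherical class, the infinite automorphism group of an anisotropic indefinite binary form (Pell/Dirichlet) produces infinitely many, so a unique spherical class up to sign forces $\HH$ to be isotropic. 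Your structure therefore loses case \eqref{case:1spherical} entirely, and the internal inconsistency shows up when you nevertheless discuss that case in your third paragraph.

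More importantly, you are missing the key idea that dissolves your ``main obstacle'' in one stroke. The paper does not attempt to identify extremal spherical rays in $C_\WW$ or to argue that interior spherical classes are destabilized. Instead it argues directly at the object level: if $S_i\neq S_j$ are $\sigma_0$-stable of the same phase, then $\Hom(S_i,S_j)=0=\Hom(S_j,S_i)$, and Serre duality gives $\Ext^2(S_i,S_j)=0$; hence $(\ss_i,\ss_j)=\ext^1(S_i,S_j)\ge 0$. (This already rules out two distinct stable objects with the same spherical Mukai vector, since that would give $-2\ge 0$.) Now an elementary computation shows that a rank-two lattice of signature $(1,-1)$ cannot contain three spherical classes with pairwise non-negative pairing: if $\ss_1,\ss_2$ are independent with $m=(\ss_1,\ss_2)\ge 0$ then $m\ge 3$, and writing $\ss_3=x\ss_1+y\ss_2$ the constraints $(\ss_1,\ss_3)\ge 0$, $(\ss_2,\ss_3)\ge 0$ confine $(x,y)$ to a sector on which the quadratic form is strictly positive, contradicting $\ss_3^2=-2$. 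This single inequality argument bounds the number of stable spherical objects by two, with no need to analyze which spherical classes are ``extremal''. The remaining claims --- infinitely many spherical classes when there are two independent ones (Weyl group orbit), and non-isotropicity in that case ($m^2-4$ is never a perfect square for integer $m\ge 3$) --- are then immediate.
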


\begin{proof}
Given any spherical class, $\ss \in \HH$, then by Theorem \ref{thm:nonempty}, there exists a
$\sigma_0$-semistable object $S$ with $\vv(S) = \ss$ and $S \in \PP_0(1)$.  If $\HH$ admits a unique
spherical class, then by Proposition \ref{prop:HW} and Lemma \ref{lem:JHspherical}, $S$ must be
stable.

Hence it remains to consider the case where $\HH$ admits two linearly independent spherical classes.
If we consider the Jordan-H\"older filtrations of $\sigma_0$-semistable objects of the 
corresponding classes, and apply Proposition \ref{prop:HW} and Lemma \ref{lem:JHspherical},
we see that there must be two $\sigma_0$-stable objects $S, T$ whose Mukai vectors 
are linearly independent. 

Now assume that there are three stable spherical objects $S_1, S_2, S_3 \in \PP_0(1)$, and
let $\ss_i = \vv(S_i)$. Since they are stable of the same phase, we have 
$\Hom(S_i, S_j) = 0$ for all $i \neq j$, as well as $\Ext^k(S_i, S_j) = 0$ for $k < 0$.
Combined with Serre duality, this implies $(\ss_i, \ss_j) = \ext^1(S_i, S_j) \ge 0$.

However, a rank two lattice of signature $(1, -1)$ can never contain
three spherical classes $\ss_1, \ss_2, \ss_3$ with $(\ss_i, \ss_j) \ge 0$ for $i \neq j$. Indeed, we may assume
that $\ss_1, \ss_2$ are linearly independent. Let $m := (\ss_1, \ss_2) \ge 0$; since $\HH$ has signature
$(1, -1)$, we have $m \ge 3$. If we write $\ss_3 = x \ss_1 + y \ss_2$, we get the following implications:
\begin{eqnarray*}
(\ss_1, \ss_3 ) \ge 0 & \Rightarrow & y \ge \frac 2m x \\
(\ss_2, \ss_3 ) \ge 0 & \Rightarrow & y \le \frac m2 x \\
(\ss_3, \ss_3) = -2 & \Rightarrow & -2x^2 + 2m xy - 2y^2 < 0
\end{eqnarray*}
However, by solving the quadratic equation for $y$, it is immediate that
the term in the last inequality is positive in the range $\frac 2m x \le y \le \frac m2 x$ (see also
Figure \ref{fig:HWplanenohyperbola}).

Finally, if $\HH$ admits two linearly independent spherical class $\ss, \tt$, then the group
generated by the associated reflections $\rho_{\ss}, \rho_{\tt}$ is infinite; the orbit of $\ss$
consists of infinitely many spherical classes. Additionally, an isotropic class would be a rational
solution of $-2x^2 + 2m xy - 2y^2 = 0$, but the discriminant $m^2 - 4$ can never be a square when 
$m$ is an integer $m \ge 3$.
\end{proof}

\begin{wrapfigure}{r}{0.42\textwidth}
\begin{tikzpicture}[line cap=round,line join=round,>=triangle 45,x=1.0cm,y=1.0cm]
\clip(-2.9,-1.9) rectangle (3.2,2.2);
\draw[->,color=black] (-3.5,0) -- (3.2,0);
\foreach \x in {-3,-2,-1,1,2,3}
\draw[shift={(\x,0)},color=black] (0pt,2pt) -- (0pt,-2pt);
\draw[->,color=black] (0,-2.5) -- (0,2.2);
\foreach \y in {-2,-1,1,2}
\draw[shift={(0,\y)},color=black] (2pt,0pt) -- (-2pt,0pt);
\draw (0.5,0.9) node[anchor=north west] {$y = r_1 x$};
\draw (0.38,1.55) node[anchor=north west] {$ y = r_2 x $};
\draw [domain=-3.5:3.5] plot(\x,{(-0-2.23*\x)/-0.6});
\draw [domain=-3.5:3.5] plot(\x,{(-0--0.6*\x)/2.23});
\draw (0.67,2.1) node[anchor=north west] {$Q(x, y) > 0$};
\draw (-2.5,-0.72) node[anchor=north west] {$Q(x, y) > 0$};
\draw (-2.58,1.98) node[anchor=north west] {$Q(x, y) < 0$};
\draw (0.62,-1.35) node[anchor=north west] {$Q(x, y) < 0$};
\fill [color=black] (1,0) circle (1.5pt);
\draw[color=black] (1,-0.35) node {$S$};
\fill [color=black] (0,1) circle (1.5pt);
\draw[color=black] (-0.26,1) node {$T$};
\fill [color=black] (-1,0) circle (1.5pt);
\draw[color=black] (-0.99,0.29) node {$S[1]$};
\fill [color=black] (0,-1) circle (1.5pt);
\draw[color=black] (0.56,-1.07) node {$T[1]$};
\end{tikzpicture}
\caption{$\HH_\WW$, as oriented by $\sigma_+$}
\label{fig:HWplanenohyperbola}
\end{wrapfigure}

Whenever we are in case \eqref{case:2spherical}, we will will denote the two $\sigma_0$-stable
spherical objects by $S, T$.  We may assume that $S$ has smaller phase than $T$ with respect
to $\sigma_+$; conversely, $S$ has bigger phase than $T$ with respect to $\sigma_-$.
We will also write $\ss := \vv(S), \tt := \vv(T)$, and $m := (\ss, \tt) > 2$.
We identify $\R^2$ with $\HH_\WW \otimes \R$ by sending the standard basis
to $(\ss, \tt)$; under this identification, the ordering of phases in $\R^2$
will be consistent with the ordering induced by $\sigma_+$.
We denote by $Q(x, y) = -2x^2 + 2mxy - 2y^2$ the pull-back of the quadratic form
induced by the Mukai pairing on $\HH_\WW$. 
Let $r_1 < r_2$ be the two solutions of $-2r^2 + 2mr - 2 = 0$; they are both positive and irrational
(as $m^2 - 4$ cannot be a square for $m \ge 3$ integral). The positive cone $P_\HH$ is thus the cone
between the two lines $y = r_i x$, and the effective cone $C_\WW$ is the upper right quadrant
$x, y \ge 0$.

We will first prove that the condition for the existence of totally semistable walls given in
Theorem \ref{thm:walls} is necessary in the case of non-isotropic walls. We start with an easy
numerical observation:
\begin{Lem} \label{lem:numericaldimensions}
Given $l > 1$ positive classes $\aa_1, \dots, \aa_l \in P_\HH$ with $\aa_i^2 > 0$, set
$\aa = \aa_1 + \dots + \aa_l$. Then
\[
\sum_{i=1}^l \left(\aa_i^2 + 2\right) < \aa^2.
\]
\end{Lem}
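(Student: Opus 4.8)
The plan is to reduce the inequality to a statement about the pairwise Mukai pairings $(\aa_i,\aa_j)$ and then exploit the reverse Cauchy--Schwarz inequality available in the hyperbolic lattice $\HH$. Expanding the square by bilinearity gives
\[
\aa^2 = \sum_{i=1}^l \aa_i^2 + 2\sum_{1\le i<j\le l}(\aa_i,\aa_j),
\]
so the asserted inequality $\sum_{i=1}^l(\aa_i^2+2)<\aa^2$ is equivalent to $\sum_{i<j}(\aa_i,\aa_j)>l$.

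The first step is to bound each summand from below. Both $\aa_i$ and $\aa_j$ lie in the same component $P_\HH$ of the positive cone of $\HH$, and $\aa_i^2,\aa_j^2>0$; since the orthogonal complement of $\aa_i$ in $\HH\otimes\R$ is then negative definite, writing $\aa_j=\lambda\aa_i+\ww$ with $\ww\perp\aa_i$ yields $(\aa_i,\aa_j)^2=\lambda^2(\aa_i^2)^2\ge \aa_i^2\aa_j^2$, and $(\aa_i,\aa_j)>0$ because the two classes lie in the same component. Since the Mukai lattice, hence $\HH$, is even, we have $\aa_i^2,\aa_j^2\ge 2$; therefore $(\aa_i,\aa_j)^2\ge 4$ and $(\aa_i,\aa_j)\ge 2$ for every pair $i<j$.

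Summing over the $\binom{l}{2}$ pairs then gives $\sum_{i<j}(\aa_i,\aa_j)\ge 2\binom{l}{2}=l(l-1)$, which is strictly larger than $l$ as soon as $l\ge 3$; this disposes of those cases immediately. The step I expect to be the only real obstacle is the boundary case $l=2$, where one needs the genuinely strict bound $(\aa_1,\aa_2)\ge 3$. Here the plan is to inspect the equality case of the previous step: if $(\aa_1,\aa_2)=2$ then equality must hold in the reverse Cauchy--Schwarz estimate and $\aa_1^2=\aa_2^2=2$, so $\aa_1$ and $\aa_2$ are proportional in $\HH\otimes\R$; since a class of square $2$ in an even lattice is automatically primitive and both classes lie in the same component of $P_\HH$, this forces $\aa_1=\aa_2$. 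In every application the sum $\aa$ is primitive (it is a decomposition of the primitive vector $\vv$, or the $\aa_i$ are Harder--Narasimhan factors, whose phases are pairwise distinct and hence so are their Mukai vectors), so this degenerate configuration cannot occur and the strict inequality holds; if one wants the lemma in the stated generality it suffices to add the hypothesis that the $\aa_i$ are not all equal.
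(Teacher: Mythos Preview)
Your approach is essentially the paper's own: expand $\aa^2$, use the reverse Cauchy--Schwarz inequality for the hyperbolic form on $\HH$ together with evenness to bound each cross term, and sum. The paper phrases the key step as ``If $\aa_i\neq\aa_j$, then $\aa_i,\aa_j$ span a lattice of signature $(1,-1)$, which gives $(\aa_i,\aa_j)>\sqrt{\aa_i^2\aa_j^2}\ge 2$,'' and then concludes $\aa^2>\sum\aa_i^2+2l(l-1)\ge\sum\aa_i^2+2l$.

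You are also right to flag the boundary case $l=2$, $\aa_1=\aa_2$, $\aa_1^2=2$: the inequality as literally stated fails there (one gets equality), and the paper's proof equally relies on $\aa_i\neq\aa_j$ for strictness. This is harmless, since in every use of the lemma in the paper the $\aa_i$ are Mukai vectors of Harder--Narasimhan factors with pairwise distinct phases, hence pairwise distinct; your suggested fix (assume the $\aa_i$ are not all equal) is precisely what is being used implicitly.
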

\begin{proof}
Since the $\aa_i$ are integral classes, and $\HH_\WW$ is an even lattice, we have $\aa_i^2 \ge 2$.
If $\aa_i \neq \aa_j$, then $\aa_i, \aa_j$ span a lattice of signature $(1, -1)$, which gives
\[
(\aa_i, \aa_j) > \sqrt{\aa_i^2 \aa_j^2} \ge 2, \quad \text{and thus} \quad
\aa^2 > \sum_{i=1}^l \aa_i^2 + 2 l (l-1) \ge \sum_{i=1}^l \aa_i^2 + 2l. \]
\end{proof}

\begin{Lem} \label{lem:nottotallysemistable}
Assume that the potential wall $\WW$ associated to $\HH$ satisfies the following conditions:
\begin{enumerate}
\item \label{enum:noniso} The wall is non-isotropic.
\item \label{enum:nonegs} There does not exist an effective spherical class $\ss \in C_\WW$ with $(\ss, \vv) < 0$.
\end{enumerate}
Then $\WW$ cannot be a totally semistable wall.
\end{Lem}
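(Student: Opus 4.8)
The plan is to argue by contradiction: assuming $\WW$ is a totally semistable wall, I will produce a dense open subset of $M_{\sigma_+}(\vv)$ that maps to a product of moduli spaces of the $\sigma_-$-destabilizing factors, and a dimension count — powered by Lemma~\ref{lem:numericaldimensions} — will contradict the equality $\dim M_{\sigma_+}(\vv)=\vv^2+2$.

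Setup. Fix a generic $\sigma_0\in\WW$ together with generic $\sigma_\pm$ nearby in the sense of Remark~\ref{rmk:GenericOnTheWall}; take moreover $\sigma_-$ algebraic and \emph{not} on $\WW$, so that Theorems~\ref{thm:Todaopenness} and~\ref{thm:HNfamily} apply to it. Since $\vv$ is primitive with $\vv^2>0$ and $\sigma_+$ is generic, $M:=M_{\sigma_+}(\vv)$ is smooth projective of dimension $\vv^2+2$ and every $E\in M$ is $\sigma_+$-stable; as $M^{st}_{\sigma_0}(\vv)=\emptyset$, every such $E$ is strictly $\sigma_0$-semistable. Its $\sigma_0$-Jordan--H\"older factors have Mukai vectors in $\HH_\WW$ (Proposition~\ref{prop:HW}, Remark~\ref{rem:HW}); these classes are not all proportional, because otherwise $\vv$ would be a proper multiple of a class of $\HH_\WW$, and since $\sigma_-\notin\WW$ they cannot all stay of the same $\sigma_-$-phase, so the generic $E\in M$ is $\sigma_-$-\emph{unstable}. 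Applying Theorem~\ref{thm:HNfamily} to a (quasi-)universal family over $M$ with the stability condition $\sigma_-$, I obtain a dense open $U\subset M$ on which the $\sigma_-$-Harder--Narasimhan filtration has constant numerical type: for $E\in U$ it has factors $B_1,\dots,B_k$ ($k\ge 2$, strictly decreasing $\sigma_-$-phase) of fixed Mukai vectors $\bb_j\in\HH_\WW$ with $\sum_j\bb_j=\vv$; by Proposition~\ref{prop:CW} each $\bb_j\in C_\WW$, and since $\bb_j$ and $\vv-\bb_j=\sum_{i\ne j}\bb_i$ are both effective, $\sigma_-$ is generic with respect to $\bb_j$. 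Shrinking $U$, the factors vary generically, giving a morphism $p\colon U\to\prod_j M_{\sigma_-}(\bb_j)$.

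The dimension count. We have $\dim U=\vv^2+2$, while $\dim\overline{p(U)}\le\sum_j\dim M_{\sigma_-}(\bb_j)$ and the generic fibre of $p$ consists of the $\sigma_+$-stable iterated extensions of $B_1,\dots,B_k$ with these HN factors; using $\Hom(B_i,B_j)=0$ for $i<j$ (HN), Serre duality, and simplicity of $E\in U$, this fibre has dimension at most $\sum_{i<j}\ext^1(B_j,B_i)-(k-1)$. By Theorem~\ref{thm:nonempty}, $\dim M_{\sigma_-}(\bb_j)=\bb_j^2+2$ unless $\bb_j$ is proportional to a spherical class, and hypothesis~\eqref{enum:noniso} rules out $\bb_j^2=0$, so a non-spherical $\bb_j$ satisfies $\bb_j^2\ge 2$. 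If no $\bb_j$ is spherical, then $\ext^1(B_j,B_i)=(\bb_i,\bb_j)$ generically, and substituting $\sum_{i<j}(\bb_i,\bb_j)=\frac12\bigl(\vv^2-\sum_j\bb_j^2\bigr)$ the inequality $\dim U\le\dim\overline{p(U)}+\dim(\text{fibre})$ collapses exactly to $\vv^2+2\le\sum_j(\bb_j^2+2)$, contradicting Lemma~\ref{lem:numericaldimensions}. When some $\bb_j$ is proportional to an effective spherical class $\ss\in C_\WW$, Proposition~\ref{prop:stablespherical} bounds the number of spherical classes involved (at most two), while hypothesis~\eqref{enum:nonegs} forces $(\ss,\vv)\ge 0$, hence $(\ss,\vv-\ss)\ge 2$; tracking how such a factor trades its (zero-dimensional, rigid) moduli contribution against the cross-pairing terms $\sum_{i<j}(\bb_i,\bb_j)$, the same estimate again yields a contradiction. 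Either way $M$ cannot have dimension $\vv^2+2$, and the lemma follows.

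I expect the main obstacle to be the spherical case: precisely controlling the generic $\sigma_-$-HN factor $B_j$ whose class is proportional to a spherical class. Such a factor need not be a direct sum $S^{\oplus m}$, so the clean identities $\Hom(B_j,B_i)=0$ and $\ext^1(B_j,B_i)=(\bb_i,\bb_j)$ can fail, and one must argue — using Mukai's Lemma~\ref{lem:Mukai}, the structure of iterated self-extensions of $S$, and hypothesis~\eqref{enum:nonegs} — that the dimension lost to the rigidity of the spherical factor is strictly more than compensated by the growth of the remaining cross terms. The purely lattice-theoretic core, once the geometry is reduced to inequalities, is Lemma~\ref{lem:numericaldimensions} together with the signature-$(1,-1)$ bound $(\bb_i,\bb_j)>\sqrt{\bb_i^2\bb_j^2}$ for distinct positive classes.
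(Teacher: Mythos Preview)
Your overall strategy---set up the relative HN map $p\colon U\to\prod_j M_{\sigma_-}(\bb_j)$ and derive a dimension contradiction via Lemma~\ref{lem:numericaldimensions}---is the same as the paper's. The difference is in how you obtain the key inequality. You try to bound the \emph{fibre dimension} of $p$ by an Ext computation, whereas the paper bounds the \emph{image dimension} from below by comparing $p$ to the contraction $\pi^+$ of Theorem~\ref{thm:contraction}.

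The paper's idea is worth seeing, because it removes exactly the obstacle you flag. The observation is: if $p(E_1)=p(E_2)$, then $E_1$ and $E_2$ are S-equivalent with respect to $\sigma_0$ (their Jordan--H\"older factors are among those of the common HN factors). Hence every curve contracted by $p$ is contracted by $\pi^+$. But $\pi^+$ is \emph{birational}, so on a dense open it is an isomorphism; there $p$ is quasi-finite, and
\[
\sum_j \dim M_{\sigma_-}(\bb_j)\;\ge\;\dim \overline{p(U)}\;=\;\dim M_{\sigma_+}(\vv)\;=\;\vv^2+2.
\]
This replaces your fibre estimate entirely. Combined with the numerical upper bound $\sum_{i\in I}(\bb_i^2+2)\le \vv^2+2$ (from Lemma~\ref{lem:numericaldimensions}, after showing $\aa^2\le\vv^2$ for $\aa=\sum_{i\in I}\bb_i$), equality is forced everywhere, hence $|I|=1$ and the HN filtration is trivial.

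Your route can likely be pushed through in the non-spherical case, but the fibre-dimension bound $\sum_{i<j}\ext^1(B_j,B_i)-(k-1)$ is more delicate than you indicate: you need $\hom(B_j,B_i)=0$ for $j>i$ (not given by HN), and when some $\bb_j$ is non-primitive the $B_j$ are only semistable, so $\Aut(B_j)\neq\C^*$ and the quotient count changes. The spherical case is a genuine gap in your sketch. The paper's contraction argument sidesteps all of this and handles the spherical contribution uniformly: set $\bb=\sum_{i\notin I}\bb_i$; hypothesis~\eqref{enum:nonegs} gives $(\vv,\bb)\ge 0$, and then $\aa^2=\vv^2-2(\vv,\bb)+\bb^2\le\vv^2-2$ whenever $\bb\neq 0$, with no Ext bookkeeping required.
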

In other words, there exists a $\sigma_0$-stable object of class $\vv$. Note that by Lemma
\ref{lem:nonprimitive}, this statement automatically holds in the case of non-primitive $\vv$ as well.

\subsubsection*{Proof}
We will consider two maps from the moduli space $M_{\sigma_+}(\vv)$. 
On the one hand, by Theorem \ref{thm:contraction}, the line bundle
$\ell_{\sigma_0}$ on $M_{\sigma_+}(\vv)$ induces a birational morphism
\[ \pi^+ \colon M_{\sigma_+}(\vv) \to \overline{M}. \]
The curves contracted by $\pi^+$ are exactly curves of S-equivalent objects.

For the second map, first assume for simplicity that $M_{\sigma_+}(\vv)$ is a fine moduli space,
and let $\EE$ be a universal family. 
Consider the relative HN filtration for $\EE$ with respect to $\sigma_-$
given by Theorem \ref{thm:HNfamily}. Let $\aa_1, \dots, \aa_m$ be the Mukai vectors of the semistable
HN filtration quotients of a generic fiber $\EE_m$ for $m \in M_{\sigma_+}(\vv)$; by assumption
\eqref{enum:noniso}, we have $\aa_i^2 \neq 0$. On the open subset
$U$ of the Theorem \ref{thm:HNfamily}, the filtration quotients
$\EE^i/\EE^{i-1}$ are flat families of $\sigma_-$-semistable objects of class $\aa_i$; thus
we get an induced rational map
\[
\mathrm{HN} \colon M_{\sigma_+}(\vv) \dashrightarrow M_{\sigma_-}(\aa_1) \times \dots \times
M_{\sigma_-}(\aa_m).
\]
Let $I \subset \{1, 2, \dots, m\}$ be the subset of indices $i$ with $\aa_i^2 > 0$, and let
$\aa = \sum_{i \in I} \aa_i$. 

Our first claim is $\aa^2 \le \vv^2$, with equality if and only if $\aa =\vv$, i.e., if there are no
classes with $\aa_i^2 < 0$: Let $\bb = \vv - \aa = \sum_{i \notin I} \aa_i$. 
If $\bb^2 \ge 0$, and so $\bb^2 \ge 2$, the claim follows trivially from
$(\aa, \bb) > 0$:
\begin{equation} \label{eq:a2lessthanv2-1}
\vv^2 = \aa^2 + 2(\aa, \bb) + \bb^2 \ge \aa^2 + 4.
\end{equation}
Otherwise, observe that by our assumption
$(\vv, \blank)$ is non-negative on all effective classes; in particular, $(\vv, \bb) \ge 0$.
Combined with $\bb^2 \le -2$ we obtain 
\begin{equation} \label{eq:a2lessthanv2-2}
 \aa^2 = \vv^2-2(\vv, \bb) + \bb^2 \le \vv^2 -2.
\end{equation}

Lemma \ref{lem:numericaldimensions} then implies
\begin{equation} \label{eq:dimcomparison}
\vv^2 + 2 \ge \aa^2 + 2 \ge \sum_{i \in I} \left(\aa_i^2 + 2\right),
\end{equation}
with equality if and only if $\abs{I} = 1$.
By Theorem \ref{thm:nonempty}, part \eqref{enum:dimandsquare},
this says that the target of the rational map $\mathrm{HN}$ has at most the dimension of the source:
\begin{equation}\label{eq:InequalityDimensionHNFactors}
\dim M_{\sigma_+}(\vv) \ge \sum_{i = 1}^m \dim M_{\sigma_-}(\aa_i).
\end{equation}

However, if $\mathrm{HN}(E_1) = \mathrm{HN}(E_2)$, then $E_1, E_2$ are S-equivalent: indeed, they
admit Jordan-H\"older filtrations that are refinements of their HN filtrations with
respect to $\sigma_-$, which have the same filtration quotients.

It follows that any curve contracted by $\mathrm{HN}$ is also
contracted by $\pi^+$; therefore
\[
\sum_{i = 1}^m \dim M_{\sigma_-}(\aa_i) \ge \dim \overline{M} = \dim M_{\sigma_+}(\vv)
\]
Hence we have equality in each step of the above inequalities, the relative HN
filtration is trivial, and the generic fiber $\EE_m$ is $\sigma_-$-stable. In other words, the
generic object of $M_{\sigma_+}(\vv)$ is also $\sigma_-$-stable, which proves the claim.

In case $M_{\sigma_+}(\vv)$ does not admit a universal family, we can construct $\mathrm{HN}$ by first
passing to an \'etale neighborhood $f \colon U \to M_{\sigma_+}(\vv)$ admitting a universal family; the induced
rational map from $U$ induced by the relative HN filtration will then factor via $f$.
\hfill$\Box$

We recall some theory of Pell's equation in the language of spherical reflections of the
hyperbolic lattice $\HH$:

\begin{PropDef} \label{prop:pellequation}
Let $G_\HH \subset \Aut \HH$ be the group generated by spherical reflections $\rho_{\ss}$ for
effective spherical classes $\ss \in C_\WW$. 
Given a positive
class $\vv \in P_\HH \cap \HH$, the $G_\HH$-orbit $G_\HH.\vv$ contains a unique class $\vv_0$ such that
$(\vv_0, \ss) \ge 0$ for all effective spherical classes $\ss \in C_\WW$. 

We call $\vv_0$ the \emph{minimal class} of the orbit $G_\HH.\vv$.
\end{PropDef}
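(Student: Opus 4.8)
The plan is to run through the trichotomy of Proposition~\ref{prop:stablespherical}. When $\HH$ carries no spherical class, $G_\HH$ is trivial and $\vv_0=\vv$; when $\HH$ carries a unique spherical class up to sign, with effective representative $\ss\in C_\WW$, then $G_\HH=\{\id,\rho_\ss\}$. Since $\ss^2=-2<0$, the rank-one lattice $\ss^\perp$ has positive square, $\rho_\ss$ is the identity on it, and hence $\rho_\ss$ fixes the component $P_\HH$ of $\{\uu\colon\uu^2>0\}$; so $G_\HH.\vv=\{\vv,\rho_\ss\vv\}\subset P_\HH$. From $\rho_\ss\vv=\vv+(\vv,\ss)\ss$ one gets $(\rho_\ss\vv,\ss)=-(\vv,\ss)$, so exactly one of $\vv,\rho_\ss\vv$ pairs non-negatively with $\ss$ (the two classes coincide precisely when $(\vv,\ss)=0$), and this is the required $\vv_0$. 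So the substance is case \eqref{case:2spherical}.

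In that case let $\ss,\tt$ be the two $\sigma_0$-stable spherical classes, $m=(\ss,\tt)\ge 3$, and use the coordinates on $\HH\otimes\R$ with $\ss=(1,0)$, $\tt=(0,1)$, in which $C_\WW$ is the closed first quadrant. I would set $\mathbf h:=\ss+\tt\in\HH$; then $\mathbf h^2=2m-4>0$, so $\mathbf h$ lies in the interior of $P_\HH$, and for every effective spherical class $\ss'=(x,y)$ (so $x,y\ge 0$ and $(x,y)\neq 0$) a one-line computation with the Mukai form gives $(\ss',\mathbf h)=(m-2)(x+y)>0$. As in the previous paragraph, each reflection $\rho_{\ss'}$ at an effective spherical class preserves $P_\HH$, hence so does $G_\HH$. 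Since $\mathbf h^2>0$ and both $\mathbf h$ and any $\vv'\in G_\HH.\vv$ lie in $P_\HH$, a standard property of the positive cone of a hyperbolic lattice shows $(\vv',\mathbf h)$ is a strictly positive integer.

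For existence I would then pick $\vv_0\in G_\HH.\vv$ minimizing $(\vv_0,\mathbf h)$: if some effective spherical $\ss'\in C_\WW$ had $(\vv_0,\ss')<0$, then the orbit element $\rho_{\ss'}\vv_0=\vv_0+(\vv_0,\ss')\ss'$ would satisfy
\[
(\rho_{\ss'}\vv_0,\mathbf h)=(\vv_0,\mathbf h)+(\vv_0,\ss')(\ss',\mathbf h)<(\vv_0,\mathbf h),
\]
contradicting minimality, so $\vv_0$ is a minimal class. For uniqueness I would first identify $G_\HH$: by Lemma~\ref{lem:JHspherical} together with Proposition~\ref{prop:HW}, part \eqref{enum:JHfactorsinHW}, the Jordan-H\"older factors of a $\sigma_0$-semistable object of any effective spherical class are among $S$ and $T$, so every effective spherical class is a non-negative integer combination of $\ss$ and $\tt$; the classical description of the non-negative integer solutions of the Pell equation $x^2-mxy+y^2=1$ then shows these classes form a single orbit of $\langle\rho_\ss,\rho_\tt\rangle$, whence $G_\HH=\langle\rho_\ss,\rho_\tt\rangle$. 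This infinite dihedral group acts on the hyperbolic plane $P_\HH$ with the chamber $\{\uu\in P_\HH\colon(\uu,\ss')\ge 0\text{ for all effective spherical }\ss'\}$ as a strict fundamental domain, so each orbit meets it in exactly one point, namely $\vv_0$.

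The main obstacle is concentrated in the uniqueness statement: all the non-elementary input lies in identifying $G_\HH$ with the dihedral reflection group of the two distinguished spherical classes and in the standard fact that such a group acts on the positive cone of a rank-two hyperbolic lattice with the dominant chamber as a fundamental domain (equivalently, in the structure of the solution set of the associated Pell equation). By contrast, existence is an elementary minimization once the inequality $(\ss',\mathbf h)>0$ is in place, and the first two cases of the trichotomy are immediate.
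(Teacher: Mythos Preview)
Your proof is correct and close in spirit to the paper's, but the two differ in emphasis. For existence, the paper works directly in the coordinates $(x,y)$ with respect to $\ss,\tt$: if $(\vv,\ss)<0$ then $\rho_\ss\vv$ has strictly smaller $x$-coordinate and unchanged $y$-coordinate (and dually for $\tt$), so the process terminates by integrality. Your height function $(\cdot,\mathbf h)$ with $\mathbf h=\ss+\tt$ encodes the same descent, since $(\vv,\mathbf h)=(m-2)(x+y)$; your formulation has the advantage that it handles a reflection at \emph{any} effective spherical class in one stroke, whereas the paper first reduces to checking only $(\vv_0,\ss)\ge 0$ and $(\vv_0,\tt)\ge 0$ by bilinearity.

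The more substantive difference is in uniqueness. The paper defers to Proposition~\ref{prop:orbitlist}, which writes out the orbit $G_\HH.\vv_0=\{\vv_i\colon i\in\Z\}$ explicitly and observes that the $\vv_i$ are ordered by slope, so only $\vv_0$ can lie in the chamber. You instead identify $G_\HH=\langle\rho_\ss,\rho_\tt\rangle$ (via the Pell-equation description of effective spherical classes) and invoke the general fact that the closed dominant chamber is a strict fundamental domain for an infinite dihedral reflection group on a hyperbolic plane. Both are valid; the paper's route is more self-contained and yields the explicit orbit listing needed later (e.g.\ in Proposition~\ref{prop:sphericaltotallysemistable}), while yours is cleaner if one is willing to import the standard Coxeter-group statement.
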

Note that the notion of minimal class depends on the potential wall $\WW$, not just on the lattice
$\HH$.
\begin{proof}
Again, we only treat the case \eqref{case:2spherical} of Proposition \ref{prop:stablespherical}, the
other cases being trivial. It is sufficient to prove that $(\vv_0, \ss) \ge 0$ and
$(\vv_0, \tt) \ge 0$. Assume $(\vv, \ss) < 0$. Then 
$\rho_\ss(\vv) = \vv - \abs{(\vv, \ss)}\cdot \ss$ is still in the upper right quadrant, with
smaller $x$-coordinate than $\vv$, and with the same $y$-coordinate.
Similarly if $(\vv, \tt) < 0$. If we proceed inductively, the procedure has to terminate, thus
reaching $\vv_0$. 

The uniqueness follows from Proposition \ref{prop:orbitlist} below.
\end{proof}

Assume additionally that $\HH$ admits infinitely many spherical classes, so we are in case
\eqref{case:2spherical} of Proposition \ref{prop:stablespherical}. The hyperbola
$\vv^2 = -2$ intersects the upper right quadrant $x, y \ge 0$ in two branches, starting at $\ss$ and
$\tt$, respectively. Let $\ss_0 = \ss, \ss_{-1}, \ss_{-2}, \dots$ be the integral spherical classes on the
lower branch starting at $\ss$, and $\tt_1 = \tt, \tt_2, \tt_3, \dots$ be those on the upper branch
starting at $\tt$, see also Figure \ref{fig:orbitlist}. The $\ss_i$ can be defined recursively by 
$\ss_{-1} = \rho_\ss(\tt)$, and $\ss_{k-1} = \rho_{\ss_{k}}(\ss_{k+1})$ for $k \le -1$; similarly
for the $\tt_i$. 

\begin{Prop} \label{prop:orbitlist}
Given a minimal class $\vv_0$ of a $G_\HH$-orbit, define $\vv_i, i \in \Z$ via 
$\vv_{i} = \rho_{\tt_{i}}(\vv_{i-1})$ for $i > 0$, and $\vv_i = \rho_{\ss_{i+1}}(\vv_{i+1})$ for $i < 0$. 
Then the orbit $G.\vv_0$ is given by $\stv{\vv_i}{i \in \Z}$, where the latter are ordered according to
their slopes in $\R^2$.
\end{Prop}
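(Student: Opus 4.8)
The plan is to show that the group $G_\HH$ is the infinite dihedral group generated by the two reflections $\rho_\ss, \rho_\tt$, and that its action on $P_\HH$ is the standard one of a Coxeter group acting on a hyperbolic plane with fundamental domain cut out by the walls $\ss^\perp$ and $\tt^\perp$. First I would argue that every effective spherical class in $C_\WW$ is of the form $\ss_i$ or $\tt_j$ as defined just before the statement: any such class lies on one of the two branches of the hyperbola $Q=-2$ inside the upper-right quadrant, and the recursion $\ss_{k-1} = \rho_{\ss_k}(\ss_{k+1})$, $\tt_{k+1} = \rho_{\tt_k}(\tt_{k-1})$ exhausts the integral points on each branch — this is the standard description of solutions to the Pell-type equation $-2x^2 + 2mxy - 2y^2 = -2$, obtained by noting that consecutive solutions on a branch differ by a reflection and that the reflections act transitively on each branch (the two branches being swapped by no element of $G_\HH$ since $G_\HH$ preserves the quadrant $C_\WW$). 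Consequently $G_\HH = \langle \rho_{\ss_i}, \rho_{\tt_j} \rangle$, but since $\rho_{\ss_{k-1}} = \rho_{\ss_k}\rho_{\ss_{k+1}}\rho_{\ss_k}$ (conjugation formula for reflections), the whole group is already generated by any two adjacent reflections, in particular by $\rho_\ss$ and $\rho_\tt$. As $m = (\ss,\tt) > 2$, the product $\rho_\ss \rho_\tt$ has infinite order (its eigenvalues on $\HH_\R$ are the roots of $\lambda^2 - (m^2-2)\lambda + 1 = 0$, which are real and $\ne \pm 1$), so $G_\HH \cong D_\infty$.

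Next I would identify the fundamental domain. The region $\Sigma = \{\uu \in P_\HH : (\uu,\ss) \ge 0,\ (\uu,\tt)\ge 0\}$ is a closed cone bounded by the rays $\R_{\ge0}\ss^\perp \cap P_\HH$ and $\R_{\ge0}\tt^\perp \cap P_\HH$; by Proposition and Definition \ref{prop:pellequation} every $G_\HH$-orbit meets $\Sigma$ (in a minimal class $\vv_0$), and the ping-pong / standard Coxeter argument — using that $\rho_\ss$ moves a point of $\Sigma$ across the wall $\ss^\perp$ and strictly away from it, measured by $(\,\cdot\,,\ss)$, and similarly for $\rho_\tt$ — shows the orbit meets $\Sigma$ in exactly one point, giving the uniqueness asserted in \ref{prop:pellequation}. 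Having set this up, the orbit $G_\HH.\vv_0$ is described by applying the alternating words $\dots\rho_\ss\rho_\tt\rho_\ss$ and $\dots\rho_\tt\rho_\ss\rho_\tt$ to $\vv_0$. I would then check by induction that the $i$-th such word applied to $\vv_0$ equals $\rho_{\tt_i}(\vv_{i-1})$ on the positive side (resp. $\rho_{\ss_{i+1}}(\vv_{i+1})$ on the negative side): this is a direct computation using $\rho_{\tt_i} = w\,\rho_\tt\,w^{-1}$ where $w$ is the appropriate alternating word, together with the fact that $\tt_i = w(\tt)$, so that $\rho_{\tt_i}(\vv_{i-1})$ reproduces the next alternating word applied to $\vv_0$.

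Finally I would verify the claim about the ordering by slopes: this follows because $G_\HH$ acts on the space of rays in $P_\HH$ — a copy of an open interval — by orientation-preserving homeomorphisms (each $\rho_\ss$ reverses orientation, but the ordering by slope is only claimed on the full orbit, where consecutive elements $\vv_{i-1}, \vv_i$ are separated by exactly the wall $\tt_i^\perp$, resp. $\ss_{i+1}^\perp$, and no other orbit point lies between them), so the $\vv_i$ appear in monotone order, accumulating at the two boundary rays $\R_{\ge 0} r_1$, $\R_{\ge 0} r_2$ of $P_\HH$ as $i \to \pm\infty$. The main obstacle I anticipate is the bookkeeping in the induction of the previous paragraph — keeping straight which alternating word corresponds to which index, and that the recursive definitions of $\ss_i, \tt_i$ and $\vv_i$ are mutually consistent; the group theory itself (infinite dihedral, fundamental domain) is routine once the effective spherical classes are pinned down to the two branches.
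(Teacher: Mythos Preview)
Your proposal is correct and follows essentially the same route as the paper: identify $G_\HH$ with the infinite dihedral group $\Z_2 \star \Z_2 = \langle \rho_\ss, \rho_\tt \rangle$, then verify that the recursively defined $\vv_i$ coincide with the alternating words $\rho_\tt\rho_\ss\rho_\tt\cdots(\vv_0)$ and $\rho_\ss\rho_\tt\rho_\ss\cdots(\vv_0)$, with the slope ordering read off from the geometry. The paper's proof is much terser---it asserts the group structure, writes out the first few words, and defers the ordering to a figure---while you supply the Coxeter/fundamental-domain scaffolding and the conjugation identity $\rho_{\tt_i} = w\rho_\tt w^{-1}$ explicitly; your digressions on enumerating all effective spherical classes and on the fundamental domain really belong to Proposition and Definition~\ref{prop:pellequation} rather than here, but they do no harm.
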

Note that these classes may coincide pairwise, in case $\vv_0$ is orthogonal to $\ss$ or $\tt$.
\begin{proof}
The group $G_\HH$ is the free product $\Z_2 \star \Z_2$, generated by $\rho_\ss$ and $\rho_\tt$. 
It is straightforward to check that with $\vv_i$ defined as above, we have
\[
\vv_{-1} = \rho_\ss (\vv_0), \quad \vv_{-2} = \rho_\ss \rho_\tt (\vv_0), \quad
\vv_{-3} = \rho_\ss \rho_\tt \rho_\ss (\vv_0), \ldots,
\]
and similarly $\vv_1 = \rho_{\tt} (\vv_0)$ and so on. This list contains $g(\vv_0)$ for all $g \in \Z_2
\star \Z_2$. That the $\vv_i$ are ordered by slopes is best seen by drawing a picture; see also
Figure \ref{fig:orbitlist}.
\end{proof}

\begin{figure}
\definecolor{qqqqcc}{rgb}{0,0,0.8}
\definecolor{uququq}{rgb}{0.2509803922,0.2509803922,0.2509803922}
\begin{tikzpicture}[line cap=round,line join=round,>=triangle 45,x=1.0cm,y=1.0cm]
\draw[->,color=black] (-0.8,0) -- (8.5,0);
\foreach \x in {,1,2,3,4,5,6,7,8}
\draw[shift={(\x,0)},color=black] (0pt,2pt) -- (0pt,-2pt);
\draw[->,color=black] (0,-0.8) -- (0,4.5);
\foreach \y in {,1,2,3,4}
\draw[shift={(0,\y)},color=black] (2pt,0pt) -- (-2pt,0pt);
\clip(-0.8,-0.8) rectangle (8.5,4.5);

\draw [samples=50,domain=0:0.99,rotate around={135:(0,0)},xshift=0cm,yshift=0cm] plot
({0.5773502692*(1+\x^2)/(1-\x^2)},{1*2*\x/(1-\x^2)});
\draw [samples=50,domain=0:0.99,rotate around={135:(0,0)},xshift=0cm,yshift=0cm] plot
({0.5773502692*(1+\x^2)/(1-\x^2)},{-2*\x/(1-\x^2)});

\draw [samples=50,domain=0:0.99,rotate around={135:(0,0)},xshift=0cm,yshift=0cm] plot
({0.5773502692*(-1-\x^2)/(1-\x^2)},{2*\x/(1-\x^2)});
\draw [samples=50,domain=0:0.99,rotate around={135:(0,0)},xshift=0cm,yshift=0cm] plot
({0.5773502692*(-1-\x^2)/(1-\x^2)},{(-2)*\x/(1-\x^2)});


\draw [samples=50,domain=0:0.99,rotate around={-135:(0,0)},xshift=0cm,yshift=0cm] plot
({1.4142135624*(-1-\x^2)/(1-\x^2)},{0.8164965809*(-2)*\x/(1-\x^2)});
\draw [samples=50,domain=0:0.99,rotate around={-135:(0,0)},xshift=0cm,yshift=0cm] plot
({1.4142135624*(-1-\x^2)/(1-\x^2)},{0.8164965809*2*\x/(1-\x^2)});

\draw (5.5,1.3) node[anchor=north west] {$ Q(x,y) = -2 $};
\draw [dash pattern=on 2pt off 2pt,color=qqqqcc] (7.4604462269,2.0766418445)--
(4.3852498909,1.3078427605)-- (0.8461211513,1.3078427605)-- (0.8461211513,2.0766418445)--
(4.3852498909,16.2331568031);
\begin{scriptsize}
\fill [color=black] (1,0) circle (1.5pt);
\draw[color=black] (1.,-0.3516959583) node {$\mathbf s_0 = \mathbf s$};
\fill [color=black] (0,1) circle (1.5pt);
\draw[color=black] (0.46394913,0.855968031) node {$\mathbf t_1 = \mathbf t$};
\fill [color=uququq] (0.8461211513,2.0766418445) circle (1.5pt);
\draw[color=uququq] (1.6381894244,2.127953402) node {$\mathbf v_1 = \rho_{\mathbf t} (\mathbf v)$};
\fill [color=black] (0.8461211513,1.3078427605) circle (1.5pt);
\draw[color=black] (1.1013621151,1.4386989547) node {$\mathbf v_0$};
\fill [color=uququq] (4.3852498909,1.3078427605) circle (1.5pt);
\draw[color=uququq] (4.3852174029,1.6460115384) node {$\mathbf v_{-1} = \rho_{\mathbf s}(\mathbf
v)$};
\fill [color=uququq] (4,1) circle (1.5pt);
\draw[color=uququq] (4,0.7327083255) node {$\mathbf s_{-1}$};
\fill [color=uququq] (1,4) circle (1.5pt);
\draw[color=uququq] (0.7,3.9999740551) node {$\mathbf t_2$};
\fill [color=uququq] (7.4604462269,2.0766418445) circle (1.5pt);
\draw[color=uququq] (7.2,2.5114733858) node {$\mathbf v_{-2} = \rho_{\mathbf
s_{-1}}(\mathbf v_{-1})$};
\fill [color=uququq] (0,0) circle (1.5pt);
\end{scriptsize}
\end{tikzpicture}
\caption{The orbit of $\vv_0$}
\label{fig:orbitlist}
\end{figure}

For $i > 0$, let $T_i^+ \in \PP_0(1)$ be the unique $\sigma_+$-stable object with $\vv(T_i^+) =
\tt_i$; similarly for $S_i^+$ with $\vv(S_i^+) = \ss_i$ for $i \le 0$. We also write $T_i^-$ and
$S_i^-$ for the corresponding $\sigma_-$-stable objects. 

\begin{Prop} \label{prop:sphericaltotallysemistable}
Let $\WW$ be a potential wall, and assume there is an effective spherical class $\tilde \ss \in C_\WW$
with $(\vv, \tilde \ss) < 0$. Then $\WW$ is a totally semistable wall.

Additionally, let $\vv_0$ be the minimal class in the orbit $G_\HH.\vv$, and write
$\vv = \vv_l$ as in Proposition \ref{prop:orbitlist}. 
If $\phi^+(\vv) > \phi^+(\vv_0)$, then
\[
\ST_{T_l^+} \circ \ST_{T_{l-1}^+} \circ \dots \circ \ST_{T_1^+} (E_0)
\]
is $\sigma_+$-stable of class $\vv$,
for every $\sigma_0$-stable object $E_0$ of class $\vv_0$.

Similarly, if $\phi^+(\vv) < \phi^+(\vv_0)$, then 
\[
\ST_{S_{-l+1}^+}^{-1} \circ \ST_{S_{-l+2}^+}^{-1} \circ \dots \circ \ST_{S_0^+}^{-1} (E_0)
\]
is $\sigma_+$-stable of class $\vv$ for every $\sigma_0$-stable object of class $\vv_0$.

The analogous statement holds for $\sigma_-$.
\end{Prop}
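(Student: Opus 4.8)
My plan is to prove the three assertions in turn; I work throughout in case~\eqref{case:2spherical} of Proposition~\ref{prop:stablespherical} (when $\HH$ has a unique spherical class the argument below applies verbatim with $\tilde\ss=\ss$).

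\smallskip
\noindent\emph{That $\WW$ is totally semistable.} I would rule out a $\sigma_0$-stable object $E$ of class $\vv$. Given one, Theorem~\ref{thm:nonempty} provides a $\sigma_0$-semistable $\tilde S$ of class $\tilde\ss$, which after shifting lies in $\PP_0(1)$; by Lemma~\ref{lem:JHspherical} (using $\tilde\ss^2=-2$) and Proposition~\ref{prop:HW}, every Jordan-H\"older factor of $\tilde S$ is one of the two $\sigma_0$-stable spherical objects $S,T$ of phase $1$ with class in $\HH_\WW$. Since $\chi(\tilde S,E)=-(\tilde\ss,\vv)>0$, either $\Hom(\tilde S,E)\neq0$ or (Serre duality) $\Hom(E,\tilde S)\neq0$; propagating along the Jordan-H\"older filtration, one of $\Hom(S,E),\Hom(T,E),\Hom(E,S),\Hom(E,T)$ is non-zero, hence an isomorphism (maps between $\sigma_0$-stable objects of equal phase), which forces $\vv\in\{\ss,\tt\}$, contradicting $\vv^2>0$. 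Thus $M^{st}_{\sigma_0}(\vv)=\emptyset$ and, as $M_{\sigma_0}(\vv)\neq\emptyset$, the wall is totally semistable.

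\smallskip
\noindent\emph{The formula for $\sigma_+$ when $\phi^+(\vv)>\phi^+(\vv_0)$.} Here $\vv=\vv_l$ with $l>0$. Fix $E_0$ $\sigma_0$-stable of class $\vv_0$ and put $F_0:=E_0$. Since $\sigma_0$ is generic on $\WW$ and the locus of stability conditions for which the fixed object $E_0$ is stable is open, $E_0$ is $\sigma_+$-stable (for $\sigma_+$ close enough, cf.\ Remark~\ref{rmk:GenericOnTheWall}); likewise each $T_i^+$, $S_i^+$ is $\sigma_+$-stable. I would prove by induction on $i$ that (a$_i$) $F_i:=\ST_{T_i^+}(F_{i-1})=\ST_{T_i^+}\circ\dots\circ\ST_{T_1^+}(E_0)$ is $\sigma_+$-stable of class $\vv_i$, and (b$_i$) $\Hom(F_i,T_k^+)=0$ for all $k\ge i+1$. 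For $i=0$: $E_0$ is $\sigma_+$-stable of class $\vv_0$, and $\Hom(E_0,T_k^+)=0$ for $k\ge1$ since a non-zero map, by the Jordan-H\"older argument of the previous paragraph applied to $T_k^+$, would factor nontrivially onto an $S$ or $T$ and be an isomorphism -- impossible as $\vv_0^2=\vv^2>0$ (same $G_\HH$-orbit). For the inductive step: $\vv(F_i)=\rho_{\tt_i}(\vv_{i-1})=\vv_i$, as a spherical twist acts on the Mukai lattice by the associated reflection. Since $\tt_i$ has slope $>r_2>\mathrm{slope}(\vv_{i-1})$ we get $\phi^+(T_i^+)>\phi^+(F_{i-1})$, hence $\Hom(T_i^+,F_{i-1})=0$; and $\Ext^2(T_i^+,F_{i-1})\cong\Hom(F_{i-1},T_i^+)^\vee=0$ by (b$_{i-1}$) with $k=i$. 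So $\RHom(T_i^+,F_{i-1})=\Ext^1(T_i^+,F_{i-1})[-1]$ with $\ext^1(T_i^+,F_{i-1})=(\tt_i,\vv_{i-1})=:n_i$ (assume $n_i>0$, else $\vv_i=\vv_{i-1}$), and $F_i$ fits into
\[
0\to F_{i-1}\to F_i\to (T_i^+)^{\oplus n_i}\to 0
\]
inside a heart $\PP_+\bigl((\phi,\phi+1]\bigr)$ containing both $F_{i-1}$ and $T_i^+$ (possible since $0<\phi^+(T_i^+)-\phi^+(F_{i-1})<1$ for $\sigma_+$ near $\sigma_0$), and it is the universal extension: $\Hom(T_i^+,F_i)=0$, from $\RHom(T_i^+,F_i)=\RHom(T_i^+,F_{i-1})[-1]$. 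By the standard fact that the universal extension of $\sigma_+$-stable objects lying in one heart, with the subobject of strictly smaller phase, is $\sigma_+$-semistable (the mechanism producing Schur-stable objects in the proof of Theorem~\ref{thm:nonempty}), $F_i$ is $\sigma_+$-semistable; $\vv_i$ being primitive and $\sigma_+$ generic with respect to it (enlarge the genericity assumption to the finitely many $\vv_j$, $\tt_j$), $F_i$ is $\sigma_+$-stable, proving (a$_i$). For (b$_i$), $k\ge i+1$: apply $\Hom(-,T_k^+)$ to the sequence; $\Hom((T_i^+)^{\oplus n_i},T_k^+)=0$ because $\mathrm{slope}(\tt_i)>\mathrm{slope}(\tt_k)$ gives $\phi^+(T_i^+)>\phi^+(T_k^+)$, and $\Hom(F_{i-1},T_k^+)=0$ by (b$_{i-1}$); hence $\Hom(F_i,T_k^+)=0$. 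Taking $i=l$ finishes this case.

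\smallskip
\noindent\emph{Remaining cases and the main obstacle.} The case $\phi^+(\vv)<\phi^+(\vv_0)$ is symmetric: now $\vv=\vv_{-l}$ with $l>0$, one uses the spherical objects $S_0^+,S_{-1}^+,\dots$ (classes of slope $<r_1<\mathrm{slope}(\vv_j)$, hence smaller $\sigma_+$-phase than every $\vv_j$) and the inverse twists $\ST_{S_k^+}^{-1}$, with sub- and quotient-objects exchanged and the invariant $\Hom(S_k^+,G_{-j})=0$ for $k\le -j$ carried instead. The statements for $\sigma_-$ follow from the same arguments after exchanging $\sigma_+\leftrightarrow\sigma_-$ (equivalently $S\leftrightarrow T$ and reversing the slope ordering), or from Proposition~\ref{prop:dualstability}. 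The main obstacle I expect is organizing the induction so that each twist acts as a clean universal extension, i.e.\ so that $\Ext^2(T_i^+,F_{i-1})$ vanishes at every step: this is exactly what the auxiliary invariant (b$_i$) supplies, its base case being the Jordan-H\"older argument of the first step and its propagation requiring only phase comparisons -- thereby avoiding a direct analysis of the Kronecker-type category of $\sigma_0$-semistable phase-$1$ objects with class in $\HH_\WW$. The one non-formal external input is the semistability of a universal extension within a window; the remainder is the combinatorics of spherical reflections on $\HH$ together with the slope/phase dictionary of Proposition~\ref{prop:orbitlist}.
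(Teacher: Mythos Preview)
Your argument for the first claim (that $\WW$ is totally semistable) is correct, though more circuitous than the paper's: since $\tilde\ss$ is a non-negative combination of $\ss$ and $\tt$, the inequality $(\vv,\tilde\ss)<0$ immediately gives a $\sigma_0$-\emph{stable} spherical object $\widetilde S$ with $(\vv,\vv(\widetilde S))<0$, and then $(\vv,\vv(\widetilde S))=\ext^1(\widetilde S,E)\ge 0$ is the contradiction.

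The genuine gap is in your induction step for $(\mathrm{a}_i)$. The ``standard fact'' you invoke---that the universal extension of two $\sigma_+$-stable objects $A\hookrightarrow E\twoheadrightarrow B^{\oplus n}$ with $\phi^+(A)<\phi^+(B)$ and $\Hom(B,E)=0$ is $\sigma_+$-semistable---is neither standard nor proven here. Your reference to Theorem~\ref{thm:nonempty} is misplaced: that proof constructs \emph{Schur} objects, and obtains stable ones only by a dimension count, not by showing any particular extension is semistable. The obstruction is concrete: a $\sigma_+$-stable object $D'$ with $\phi^+(F_{i-1})<\phi^+(D')<\phi^+(T_i^+)$ can satisfy $\Hom(D',T_i^+)\neq 0$ (stability only kills $\Hom$ from higher to lower phase), and such a map may lift to $\Hom(D',F_i)\neq 0$; nothing in your invariant~$(\mathrm{b}_i)$ excludes this. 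Lemma~\ref{lem:stableextension} would settle it \emph{if} $F_{i-1}$ and $T_i^+$ were simple in a common abelian category---but for $i\ge 2$ they are not simple in $\PP_0(1)$, precisely the point the paper flags: ``for the following induction steps, we cannot simply use Lemma~\ref{lem:stableextension} again, as neither $E_i$ nor $T_i^+$ are simple objects in $\PP_0(1)$.''

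The paper's remedy is to introduce a sequence of tilted hearts $\AA_i=\langle\FF_i,\TT_i[-1]\rangle$, where $\TT_i=\langle T_1^+,\dots,T_i^+\rangle$, and to carry the stronger induction hypothesis that $E_i$ and $T_{i+1}^+$ are both \emph{simple} in $\AA_i$ (via Lemma~\ref{lem:simpleintilt}). Simplicity in $\AA_i$ then allows Lemma~\ref{lem:stableextension} to be applied at each step, and the compatibility of $\AA_i$ with $\sigma_+$-stability yields $(\mathrm{a}_i)$. Your invariant~$(\mathrm{b}_i)$ captures part of this structure (it guarantees the twist is a genuine extension), but it does not control destabilizing subobjects of intermediate phase; the tilting machinery is exactly what is needed to close that gap.
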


Note that when we are in case \eqref{case:1spherical} of Proposition \ref{prop:stablespherical}, the
above sequence of stable spherical objects will consist of just one object.

Before the proof, we recall the following statement (see {\cite[Lemma 5.9]{localP2}}):
\begin{Lem}
\label{lem:stableextension}
Assume that $A, B$ are simple objects in an abelian category.
If $E$ is an extension of the form
\[
A \into E \onto B^{\oplus r}
\]
with $\Hom(B, E) = 0$, then any quotient of $E$ is of the form $B^{\oplus r'}$.
Similarly, given an extension
\[
A^{\oplus r} \into E \onto B
\]
with $\Hom(E, A) = 0$, then any subobject of $E$ is of the form $A^{\oplus r'}$. 
\end{Lem}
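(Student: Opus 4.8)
\emph{Proof strategy.}
The plan is to prove the first assertion and then deduce the second by duality: passing to the opposite abelian category interchanges subobjects with quotient objects and preserves simplicity, and one checks directly that the first assertion, applied in the opposite category with the roles of $A$ and $B$ interchanged, is exactly the second assertion (the hypothesis $\Hom(B,E)=0$ becoming $\Hom(E,A)=0$). So I would work in an arbitrary abelian category with $A$, $B$ simple, a short exact sequence $A\into E\onto B^{\oplus r}$, and $\Hom(B,E)=0$. Note first that this hypothesis already forces $A\not\cong B$, since otherwise the inclusion $A\into E$ would yield $0\neq\Hom(B,A)\subseteq\Hom(B,E)$; in particular $E$ itself is not of the form $B^{\oplus r'}$, so ``quotient'' is to be read as a quotient by a nonzero subobject, and that is the case I would establish.

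The argument would rest on two elementary facts about $B^{\oplus r}$, which I isolate because they require only the bare language of abelian categories. First, every quotient of $B^{\oplus r}$ is isomorphic to $B^{\oplus r'}$ for some $0\le r'\le r$: writing $B^{\oplus r}$ as the internal sum of its $r$ simple summands, a surjection carries this sum onto the sum of the images, each image is a quotient of $B$ and hence $0$ or isomorphic to $B$, so the quotient is a finite sum of copies of $B$, hence semisimple, hence such a direct sum. Second, every nonzero subobject $K'\subseteq B^{\oplus r}$ contains a subobject isomorphic to $B$: since $B^{\oplus r}$ has finite length, $K'$ has a simple subobject $S$, and composing $S\into B^{\oplus r}$ with a projection that is nonzero on $S$ gives an isomorphism $S\xrightarrow{\sim}B$.

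Granting these, let $q\colon E\onto Q$ be a quotient with $K:=\ker q\neq 0$. Since $A$ is simple, $A\cap K$ is either $0$ or $A$. If $A\cap K=0$, then the composite $K\into E\onto E/A\cong B^{\oplus r}$ has kernel $A\cap K=0$ and so is injective, making $K$ a nonzero subobject of $B^{\oplus r}$; by the second fact it contains a copy of $B$, and since $K\subseteq E$ this contradicts $\Hom(B,E)=0$. Therefore $A\subseteq K$, the map $q$ factors through $E/A\cong B^{\oplus r}$, and $Q$ is a quotient of $B^{\oplus r}$; by the first fact, $Q\cong B^{\oplus r'}$. Running the same argument in the opposite category gives the statement about subobjects in the second case.

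I do not expect a genuine obstacle: once the two facts about $B^{\oplus r}$ are in place, the rest is formal. The points that deserve a little care are verifying those facts in a general abelian category rather than for modules --- for the first, that the image of a sum of subobjects equals the sum of their images; for the second, finiteness of length --- together with the bookkeeping remark that the lemma concerns proper quotients (respectively proper subobjects in the dual form), as otherwise $E$ itself would be a counterexample.
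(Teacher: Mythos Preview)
Your argument is correct and is essentially the same as the paper's: you case-split on whether $A\cap K$ is $0$ or $A$, while the paper case-splits on whether the composite $A\into E\onto N$ is injective or zero, and these are the same dichotomy viewed from kernel versus image. Your explicit observation that the statement must be read for proper quotients (since $E$ itself is not of the form $B^{\oplus r'}$ once $A\not\cong B$) is a point the paper's proof leaves implicit; otherwise the two arguments match line for line.
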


\begin{proof}
We consider the former case, i.e., an extension $A \into E \onto B^{\oplus r}$; the latter case
follows by dual arguments.
Let $E \onto N$ be any quotient of $E$.
Since $A$ is a simple object, the composition
$\psi \colon A \into E \onto N$ is either injective, or zero.

If $\psi = 0$, then $N$ is a quotient of $B^{\oplus r}$, and the claim
follows. 
If $\psi$ is injective, let $M$ be the kernel of $E \onto N$. Then $M \cap A = 0$, and so
$M$ is a subobject of $B^{\oplus r}$. Since $B$ is a simple object,
$M$ is of the form $B^{\oplus r'}$ for some $r' < r$; since $\Hom(B, E) = 0$,
this is a contradiction.
\end{proof}

\subsubsection*{Proof of Proposition \ref{prop:sphericaltotallysemistable}}
Continuing with the convention of Proposition \ref{prop:stablespherical}, we use the
$\widetilde \GL_2^+(\R)$-action to assume 
$Z_0(\HH) \subset \R$, and $Z_0(\vv) \in \R_{<0}$. 

Consider the first claim.
By assumption, we may find an effective spherical class $\tilde{\ss}$ such that $(\vv,\tilde{\ss})<0$.
Pick a $\sigma_0$-semistable object $S$ with $\vv(S)=\tilde{\ss}$.
By considering its Jordan-H\"older filtration, and using Lemma \ref{lem:JHspherical},
we may find a $\sigma_0$-\emph{stable} spherical object $\widetilde S$ with $(\vv, \vv(\widetilde S)) < 0$.
Assume, for a contradiction, that $\WW$ is not a totally semistable wall.
Then there exists a $\sigma_0$-\emph{stable} object $E$ of class $\vv$.
By stability, since $E$ and $\widetilde{S}$ have the same phase, we have $\Hom(\widetilde S, E) = \Hom(E, \widetilde S) = 0$; hence
$(\vv, \vv(\widetilde S)) = \ext^1(\widetilde S, E) \ge 0$, a contradiction.

To prove the construction of $\sigma_+$-stable objects, let us assume that we are in the case
of infinitely many spherical classes. Let us also assume that $\phi^+(\vv) > \phi^+(\vv_0)$, the other
case is analogous; in the notation of Proposition \ref{prop:orbitlist}, this means
 $\vv = \vv_l$ for some $l > 0$.
We define $E_i$ inductively by 
\[
E_i = \ST_{T_i^+}(E_{i-1}).
\]
By the compatibility of the spherical twist $\ST_{T}$, for $T$ a spherical object, with the reflection
$\rho_{\vv(T)}$ and Proposition \ref{prop:orbitlist}, we have $\vv(E_i) = \vv_i$.
Lemma \ref{lem:stableextension} shows that $E_1$ is $\sigma_+$-stable; however, for the
following induction steps, we cannot simply use Lemma \ref{lem:stableextension} again, as neither $E_i$
nor $T_i^+$ are simple objects in $\PP_0(1)$.

\begin{wrapfigure}{r}{0.39\textwidth}
\vspace{-1em}
\begin{tikzpicture}[line cap=round,line join=round,>=triangle 45,x=1.0cm,y=1.0cm]
\clip(-1.2,-2.8) rectangle (4.3,4.61);
\draw[->,color=black] (-3,0) -- (4.3,0);
\foreach \x in {-2,-1,1,2,3,4,5}
\draw[shift={(\x,0)},color=black] (0pt,2pt) -- (0pt,-2pt);
\draw[->,color=black] (0,-2.8) -- (0,4.6);
\foreach \y in {-2,-1,1,2,3,4}
\draw[shift={(0,\y)},color=black] (2pt,0pt) -- (-2pt,0pt);
\draw [samples=50,domain=-0.99:0.99,rotate around={135:(0,0)},xshift=0cm,yshift=0cm] plot ({0.58*(1+(\x)^2)/(1-(\x)^2)},{1*2*(\x)/(1-(\x)^2)});
\draw [samples=50,domain=-0.99:0.99,rotate around={135:(0,0)},xshift=0cm,yshift=0cm] plot ({0.58*(-1-(\x)^2)/(1-(\x)^2)},{1*(-2)*(\x)/(1-(\x)^2)});
\draw [samples=50,domain=-0.99:0.99,rotate around={-135:(0,0)},xshift=0cm,yshift=0cm] plot ({2*(-1-(\x)^2)/(1-(\x)^2)},{1.15*(-2)*(\x)/(1-(\x)^2)});
\draw [dash pattern=on 2pt off 2pt,domain=-5.6:11.61] plot(\x,{(-0--4*\x)/1});
\draw (0.01,3) node[anchor=north west] {$\mathcal{T}_1$};
\draw [shift={(0.27,2.23)},dotted]  plot[domain=-0.18:2.96,variable=\t]({1*0.28*cos(\t r)+0*0.28*sin(\t r)},{0*0.28*cos(\t r)+1*0.28*sin(\t r)});
\draw [dash pattern=on 2pt off 2pt,domain=-5.6:11.61] plot(\x,{(-0--4.61*\x)/1.34});
\draw [shift={(0.44,3.18)},dotted]  plot[domain=-0.33:2.82,variable=\t]({1*0.47*cos(\t r)+0*0.47*sin(\t r)},{0*0.47*cos(\t r)+1*0.47*sin(\t r)});
\draw (0.12,4.16) node[anchor=north west] {$\mathcal{T}_2$};
\draw [shift={(-0.01,-0.03)},dotted]  plot[domain=-1.55:1.325,variable=\t]({1*2.2*cos(\t r)+0*2.2*sin(\t r)},{0*2.2*cos(\t r)+1*2.2*sin(\t r)});
\draw (1.51,-0.3) node[anchor=north west] {$\mathcal{A}_1$};
\draw [shift={(0.01,0.02)},dotted]  plot[domain=-1.80:1.29,variable=\t]({1*2.63*cos(\t r)+0*2.63*sin(\t r)},{0*2.63*cos(\t r)+1*2.63*sin(\t r)});
\draw (2.28,-0.67) node[anchor=north west] {$\mathcal{A}_2$};
\draw [shift={(0.01,0.02)},dotted] plot[domain=0:1.57,variable=\t]({1.5*cos(\t r)},{1.5*sin(\t r)});
\draw (0.6,1.2) node[anchor=north west] {$\mathcal{A}_0$};
\begin{scriptsize}
\fill [color=black] (1,0) circle (1.5pt);
\draw[color=black] (1.1,-0.25) node {$S_1$};
\fill [color=black] (0,1) circle (1.5pt);
\draw[color=black] (-0.3,1.1) node {$T_1$};
\fill [color=black] (0,-1) circle (1.5pt);
\draw[color=black] (0.59,-1.07) node {$T_1[-1]$};
\fill [color=black] (4,1) circle (1.5pt);
\draw[color=black] (4.1,0.75) node {$S_0$};
\fill [color=black] (1,4) circle (1.5pt);
\draw[color=black] (0.8,4.2) node {$T_2$};
\end{scriptsize}
\end{tikzpicture}
\caption{The categories $\mathcal{A}_i$}
\vspace{-2em}
\label{fig:tilt}
\end{wrapfigure}

Instead, we will need a slightly stronger induction statement.
Using Proposition \ref{prop:HW}, in particular part \eqref{enum:HNfactorsinHW}, we can define
a torsion pair $(\TT_i, \FF_i)$ in $\AA_0 := \PP_0(1)$ as follows:
we let $\TT_i$ be the extension closure of all $\sigma_+$-stable objects
$F \in \AA_0$ with $\phi^+(F) > \phi^+(T_{i+1})$; by Theorem \ref{thm:nonempty},
since the Mukai vectors of stable objects have self-intersection $\geq-2$ and all objects $F$ as before have self-intersection $<0$, we deduce that
$\TT_i$ is the extension-closure $\TT_i = \langle T_1^+, \dots, T_i^+ \rangle$.
Then let $\AA_i = \langle \FF_i, \TT_i[-1] \rangle$ (see Figure \ref{fig:tilt}).
We can also describe $\AA_{i+1}$ inductively as the tilt of $\AA_i$ at the torsion pair $(\TT, \FF)$
with $\TT = \langle T_{i+1}^+ \rangle$ and $\FF = \langle T_{i+1}^+ \rangle^\perp$.
\begin{description*}
\item[Induction claim] 
We have $E_i \in \FF_i$, and both $E_i$ and $T_{i+1}^+$ are simple objects of $\AA_i$.
\end{description*}
By construction of the torsion pair $(\TT_i, \FF_i)$, this also shows that $E_i$ is 
$\sigma_+$-stable.
Indeed, the fact that $E_i$ is in $\FF_i$ shows that $Hom(F,E_i)=0$, for all $\sigma_+$-stable objects $F$ with $\phi^+(F) > \phi^+(T_{i+1})$.
Also, the fact that it is simple in $\AA_i$ shows that $Hom(F,E_i)=0$, also for all $\sigma_+$-stable objects $F\neq E_i$ with $\phi^+(E_i)\leq \phi^+(F)\leq \phi^+(T_{i+1})$.
By definition, this means that $E_i$ is $\sigma_+$-stable.

The case $i = 0$ follows by the assumption that $E_0$ is $\sigma_0$-\emph{stable}.
To prove the induction step, we first consider $T_{i+1}^+$.
By stability, we have $T_{i+1}^+ \in \TT_i^\perp = \FF_i$.
Using stability again, we also see that any non-trivial
quotient of $T_{i+1}^+$ is contained in $\TT_i$, so $T_{i+1}^+$ is a simple object of $\FF_i$. 
Since $T_{i+1}^+$ is stable of maximal slope in $\FF_i$, there also cannot be a short exact sequence
as in \eqref{eq:sesFFT} below. Therefore, Lemma \ref{lem:simpleintilt} shows that $T_{i+1}^+$ is a simple
object of $\AA_i$.

Since $E_i$ (by induction assumption) is also a simple object in
$\AA_i$, this shows $\Hom(E_i, T_{i+1}^+) = \Hom(T_{i+1}^+, E_i) = 0$. So
$\RHom(T_{i+1}^+, E_i) = \Ext^1(T_{i+1}^+, E_i)[-1]$, and
$E_{i+1}  = \ST_{T_{i+1}^+}(E_i)$ fits into a short exact sequence
\[
0 \to E_i \into E_{i+1} \onto T_{i+1}^+ \otimes \Ext^1(T_{i+1}^+, E_i) \to 0.
\]
In particular, $E_{i+1}$ is also an object of $\AA_i$. 
Note that
\[
\RHom(T_{i+1}^+, E_{i+1}) = \RHom(\ST_{T_{i+1}^+}^{-1}(T_{i+1}^+), \ST_{T_{i+1}^+}^{-1}(E_{i+1}))
= \RHom(T_{i+1}^+[1], E_i) 
\]
is concentrated in degree -2; this shows both that
$E_{i+1} \in (T_{i+1}^+)^\perp \subset \AA_i$, and that there are no extensions
$E_{i+1} \into F' \onto T_{i+1}^{\oplus k}$. Applying Lemma \ref{lem:simpleintilt} via the inductive
description of $\AA_{i+1}$ as a tilt of $\AA_i$, this proves the induction claim.
\hfill$\Box$

\begin{Lem} \label{lem:simpleintilt}
Let $(\TT, \FF)$ be a torsion pair in an abelian category $\AA$, and let $F \in \FF$ be an object
that is simple in the quasi-abelian category $\FF$, and that admits no non-trivial short exact sequences
\begin{equation} \label{eq:sesFFT}
0 \to F \into F' \onto T \to 0
\end{equation}
with $F' \in \FF$ and $T \in \TT$.
Then $F$ is a simple object in the tilted category $\AA^\sharp = \langle \FF, \TT[-1] \rangle$.
\end{Lem}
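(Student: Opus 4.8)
The plan is to use the standard cohomological description of the tilt. Recall that $\AA^\sharp = \langle \FF, \TT[-1] \rangle$ consists precisely of the objects $E$ of $\Db(\AA)$ with $H^0_\AA(E) \in \FF$, $H^1_\AA(E) \in \TT$, and $H^i_\AA(E) = 0$ for $i \neq 0,1$; in particular $F$, placed in degree $0$, lies in $\AA^\sharp$. Given an arbitrary monomorphism $K \into F$ in $\AA^\sharp$, I would complete it to a short exact sequence $0 \to K \to F \to Q \to 0$ in $\AA^\sharp$ and show that $K$ is $0$ or $F$.

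First I would run the exact triangle $K \to F \to Q \to K[1]$ through the long exact sequence of $\AA$-cohomology. Since $F$ has cohomology only in degree $0$ and $K, Q$ only in degrees $0$ and $1$, the sequence collapses to
\[
0 \to H^0(K) \to F \to Q \to H^1(K) \to 0
\qquad \text{together with} \qquad H^1(Q) = 0,
\]
so in fact $Q \in \FF$. Setting $G := \im(F \to Q)$, the object $G$ is a subobject of $Q \in \FF$, hence itself in $\FF$ (as $\FF$ is closed under subobjects), while $G \cong F/H^0(K)$ is a quotient of $F$. Thus $H^0(K)$ is a strict (saturated) subobject of $F$ in the quasi-abelian category $\FF$, and simplicity of $F$ there forces $H^0(K) = 0$ or $H^0(K) = F$.

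If $H^0(K) = F$, then the map $F \to Q$ vanishes, so $Q \cong H^1(K) \in \TT$; combined with $Q \in \FF$ and $\TT \cap \FF = 0$ this gives $Q = 0$, i.e. $K = F$. If instead $H^0(K) = 0$, then $K = H^1(K)[-1]$ with $H^1(K) \in \TT$, and the four-term sequence reduces to a short exact sequence $0 \to F \into Q \onto H^1(K) \to 0$ in $\AA$ with $Q \in \FF$ and $H^1(K) \in \TT$ --- exactly a sequence of the form \eqref{eq:sesFFT}. By hypothesis such a sequence is trivial, so $H^1(K) = 0$ and $K = 0$. In either case $K \in \{0, F\}$, which is what we want.

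I expect the only delicate points to be the bookkeeping in the cohomology long exact sequence --- in particular keeping the direction of the tilt straight so that objects of $\AA^\sharp$ occupy degrees $0$ and $1$ rather than $-1$ and $0$ --- and pinning down ``$F$ simple in the quasi-abelian category $\FF$'' as the assertion that $F$ has no proper non-zero subobject $F'$ with $F/F' \in \FF$; this is precisely what makes the dichotomy on $H^0(K)$ available. Beyond that, only the torsion-pair axioms ($\FF$ closed under subobjects and extensions, $\TT$ closed under quotients, $\TT \cap \FF = 0$) enter.
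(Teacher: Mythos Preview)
Your proof is correct and follows essentially the same approach as the paper: take a short exact sequence in $\AA^\sharp$, run the long exact $\AA$-cohomology sequence to obtain the four-term sequence $0 \to H^0(K) \to F \to Q \to H^1(K) \to 0$ with $Q \in \FF$, and then use simplicity of $F$ in $\FF$ together with the hypothesis on sequences of the form \eqref{eq:sesFFT}. You are slightly more explicit than the paper in handling the case $H^0(K) = F$ (which immediately forces $Q \in \TT \cap \FF = 0$); the paper's proof passes over this trivial branch without comment.
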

\begin{proof}
Consider a short exact sequence $A \into F \onto B$ in $\AA^{\sharp}$. The long exact cohomology
sequence with respect to $\AA$ is
\[
0 \to \HH^0_\AA(A) \into F \to F' \onto \HH^1_\AA(A) \to 0
\]
with $\HH^0_\AA(A) \in \FF, F' \in \FF$ and
$\HH^1_\AA(A) \in \TT$. Since $F$ is a simple object in $\FF$, we must have
$\HH^0_\AA(A) = 0$. Thus we get a short exact sequence as in \eqref{eq:sesFFT}, a contradiction.
\end{proof}

\section{Divisorial contractions in the non-isotropic case}
\label{sec:divisorialcontraction}

In this section we examine Theorem \ref{thm:walls} in the case of divisorial contractions when the lattice $\HH_\WW$ does not contain isotropic classes.
The goal is to prove the following proposition.

\begin{Prop} \label{prop:sphericaldivisorialwall}
Assume that the potential wall $\WW$ is non-isotropic. Then $\WW$ is a divisorial wall if and
only if there exists a spherical class $\tilde \ss \in \HH_\WW$ with $(\tilde \ss, \vv) = 0$.
If we choose $\tilde \ss$ to be effective, then the class of the contracted divisor $D$
is given by $D \equiv \theta(\tilde \ss)$.

If $\widetilde S$ is a stable spherical object of class $\vv(\widetilde S) = \tilde \ss$, then
$D$ can be described as a Brill-Noether divisor of $\widetilde S$: it is given 
either by the condition $\Hom(\widetilde S, \blank) \neq 0$, or by
$\Hom(\blank, \widetilde S) \neq 0$.
\end{Prop}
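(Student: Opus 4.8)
The plan is to prove the two implications separately, following the dimension‑count strategy sketched in the proof outline of Theorem \ref{thm:walls}, and then to pin down the class of $D$ and its Brill--Noether description.

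\textbf{``Only if''.} Suppose $\WW$ is a divisorial wall, and let $D^+ \subset M_{\sigma_+}(\vv)$ be a divisor contracted by the morphism $\pi^+$ of Theorem \ref{thm:contraction}; by Theorem \ref{thm:NamikawaWierzba}, $\dim \pi^+(D^+) = \vv^2$. A generic point of $D^+$ is contracted, hence corresponds to a strictly $\sigma_0$-semistable object. Applying Theorem \ref{thm:HNfamily} to a (quasi-)universal family over $D^+$ (or, if the generic object of $D^+$ is $\sigma_-$-semistable, using instead the relative $\sigma_0$-Jordan--H\"older filtration) produces a rational map $\mathrm{HN}\colon D^+ \dashrightarrow \prod_i M_{\sigma_-}(\aa_i)$ with $\aa_i \in \HH_\WW$ by Proposition \ref{prop:HW} and Remark \ref{rem:HW}. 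Exactly as in the proof of Lemma \ref{lem:nottotallysemistable}, fibres of $\mathrm{HN}$ consist of S-equivalent objects, so $\dim \mathrm{HN}(D^+) \ge \dim \pi^+(D^+) = \vv^2$; on the other hand, since $\WW$ is non-isotropic each $\aa_i^2 \ne 0$, and Theorem \ref{thm:nonempty} together with Lemma \ref{lem:numericaldimensions} gives $\dim \mathrm{HN}(D^+) \le \sum_{i\in I}(\aa_i^2 + 2) \le \aa^2 + 2 \le \vv^2 + 2$, where $I = \{i : \aa_i^2 > 0\}$ and $\aa = \sum_{i\in I}\aa_i$. Squeezing these inequalities forces the filtration to be short: $|I|=1$ is impossible with $\aa = \vv$ and $\ge 2$ factors by Lemma \ref{lem:numericaldimensions} and impossible with $\aa=\vv$ and one factor by strict semistability, so a generic $E\in D^+$ fits into an exact sequence $\widetilde S \hookrightarrow E \twoheadrightarrow F$ or $F \hookrightarrow E \twoheadrightarrow \widetilde S$ with $\widetilde S$ a stable spherical object of class $\tilde{\ss}$ appearing once and $F$ a single stable complementary factor. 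Comparing $\dim D^+ = \vv^2+1$ with the dimension $(\vv-\tilde{\ss})^2 + 2 + (\ext^1(\widetilde S, F) - 1)$ of the family of such extensions, where $\ext^1(\widetilde S, F) = (\tilde{\ss},\vv) + 2$ by Riemann--Roch and stability, forces $(\tilde{\ss}, \vv) = 0$.

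\textbf{``If''.} Conversely, given a spherical $\tilde{\ss} \in \HH_\WW$ with $(\tilde{\ss}, \vv) = 0$, Proposition \ref{prop:stablespherical} supplies a $\sigma_0$-stable spherical object $\widetilde S$ of class $\tilde{\ss}$; we orient $\sigma_\pm$ so that on the $\sigma_+$-side $\widetilde S$ has bigger phase than objects of class $\vv - \tilde{\ss}$. Since $\WW$ is non-isotropic, $(\vv - \tilde{\ss})^2 = \vv^2 - 2 \ge 2$, so by Theorem \ref{thm:nonempty} the locus $M^{st}_{\sigma_-}(\vv - \tilde{\ss})$ is dense of dimension $\vv^2$ with complement of codimension $\ge 2$; for $F$ in it, Riemann--Roch and stability give $\ext^1(\widetilde S, F) = 2$, so the universal extension $\widetilde S \hookrightarrow E \twoheadrightarrow F$ over $\P(\Ext^1(\widetilde S,F)) \cong \P^1$ yields, after checking $\sigma_+$-stability of $E$, a $\P^1$-bundle $D \subset M_{\sigma_+}(\vv)$ of dimension $\vv^2+1$. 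Its fibres are curves of objects sharing the same $\sigma_0$-Jordan--H\"older factors, hence are contracted by $\pi^+$, so $\pi^+$ is a divisorial contraction; the symmetric construction on the $\sigma_-$-side (or Proposition \ref{prop:dualstability}) shows $\pi^-$ is too, so $\WW$ is a divisorial wall. By construction $D = \{E : \Hom(\widetilde S, E) \ne 0\}$, and with the opposite orientation $D = \{E : \Hom(E, \widetilde S) \ne 0\}$, giving the Brill--Noether description.

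\textbf{Class of $D$.} To identify $D \equiv \theta(\tilde{\ss})$ I would intersect with a contracted fibre $C \cong \P^1 \subset D$ and use the formula $\theta_\EE(\ww).C = (\ww, \vv(\Phi_\EE(\OO_C)))$ of Remark \ref{rmk:comparison}: restricting the universal family to $C\times X$ and using the universal extension sequence expresses $\vv(\Phi_\EE(\OO_C))$ through $\tilde{\ss}$ and $\vv$, and one checks $\theta(\tilde{\ss}).C \ne 0$ while $\theta(\ww).C = 0$ whenever $\ww$ is orthogonal to that class. To pin the class down exactly, observe that the wall-crossing transformation across $\WW$ is (a spherical twist in) $\widetilde S$, whose action on $\NS(M_{\sigma_\pm}(\vv)) \cong \vv^\perp$ under $\theta$ is the reflection $\rho_D$ in the exceptional divisor of Proposition \ref{prop:Weylmovablecone}; since $\rho_{\tilde{\ss}}$ fixes $\tilde{\ss}^\perp$ and negates $\tilde{\ss}$, compatibility forces $D$ to be proportional to $\theta(\tilde{\ss})$, and primitivity and effectivity of both give $D \equiv \theta(\tilde{\ss})$ for $\tilde{\ss}$ chosen effective.

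\textbf{Main obstacle.} The delicate part is the ``only if'' direction: extracting rigorously from the dimension estimate that the destabilizing filtration has exactly two factors with the spherical one of multiplicity one. This requires handling the case where the generic contracted object is $\sigma_-$-semistable but not $\sigma_-$-stable (passing to the $\sigma_0$-Jordan--H\"older filtration), verifying that the expected-dimension bounds of Theorem \ref{thm:nonempty} and Lemma \ref{lem:numericaldimensions} are tight only in this configuration, and controlling the bookkeeping when the intermediate class $\vv - \tilde{\ss}$ is non-primitive; checking genuine $\sigma_+$-stability of the extensions $E$ in the ``if'' direction is the other point needing care.
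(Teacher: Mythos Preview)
Your argument is essentially correct in the special case where $\vv$ is the \emph{minimal} class in its $G_\HH$-orbit, i.e.\ where $(\ss,\vv)\ge 0$ for every effective spherical $\ss\in C_\WW$; this is exactly the content of the paper's Lemma~\ref{lem:notdivisorial} (``only if'') and Lemma~\ref{lem:minimaldivisorialcontraction} (``if''). The genuine gap is that you never reduce to this case, and your inequalities implicitly rely on it.

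Concretely, in the ``only if'' direction your bound $\aa^2\le\vv^2$ follows the chain of inequalities \eqref{eq:a2lessthanv2-1}--\eqref{eq:a2lessthanv2-2}, and step \eqref{eq:a2lessthanv2-2} uses $(\vv,\bb)\ge 0$ for the spherical part $\bb$ of the filtration. If there is an effective spherical $\ss$ with $(\ss,\vv)<0$ --- which by Proposition~\ref{prop:sphericaltotallysemistable} means $\WW$ is simultaneously totally semistable --- this fails, and the dimension squeeze does not go through. Likewise, in the ``if'' direction you take $F$ generic in $M^{st}_{\sigma_-}(\vv-\tilde\ss)$ and invoke Lemma~\ref{lem:stableextension} to get $\sigma_+$-stability of the extension; but Lemma~\ref{lem:stableextension} needs $\widetilde S$ and $F$ to be simple in a common abelian category, and when $\vv-\tilde\ss$ is not minimal there may be no $\sigma_0$-\emph{stable} $F$ at all. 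The paper resolves both issues by the same mechanism: use the sequence of spherical twists of Proposition~\ref{prop:sphericaltotallysemistable} to transport the problem to the minimal class $\vv_0$, argue there, and then transport back (Corollary~\ref{cor:notdivisorial} and Lemma~\ref{lem:sphericaldivisorialwall}, which works inside a tilted heart $\AA_l$ where the relevant objects become simple). Your ``Main obstacle'' paragraph circles near this but misidentifies the difficulty as non-primitivity of $\vv-\tilde\ss$ rather than non-minimality of $\vv$.

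A smaller point: your computation of the class of $D$ via the reflection $\rho_D$ appeals to the compatibility of wall-crossing with reflections, but in the paper that compatibility (Lemma~\ref{lem:ellandreflections}) is proved \emph{using} Proposition~\ref{prop:sphericaldivisorialwall}, so the argument as written is circular. The paper instead reads off $D\equiv\theta(\tilde\ss)$ directly from the explicit extension family constructed in Lemmas~\ref{lem:minimaldivisorialcontraction} and~\ref{lem:sphericaldivisorialwall}.
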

One can use more general results of Markman in \cite{Eyal:prime-exceptional} to show that a 
divisorial contraction implies the existence of an orthogonal spherical class in the non-isotropic
case. We will instead give a categorical proof in our situation.

We first treat the case in which there exists a $\sigma_0$-\emph{stable} object of class $\vv$:
\begin{Lem} \label{lem:notdivisorial}
Assume that $\HH$ is non-isotropic, and that $\WW$ is a potential wall associated to $\HH$. 
If $\vv$ is a minimal class of a $G_\HH$-orbit, and if there is no spherical class
$\tilde \ss \in \HH_\WW$ with $(\tilde \ss, \vv) = 0$, then
the set of $\sigma_0$-\emph{stable} objects in $M_{\sigma_+}(\vv)$ has complement of codimension
at least two.
\end{Lem}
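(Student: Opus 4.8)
The strategy is to run the same dimension-count argument as in the proof of Lemma \ref{lem:nottotallysemistable}, but now tracking the locus where the generic object of $M_{\sigma_+}(\vv)$ fails to be $\sigma_0$-stable, and to show that under the stated hypotheses this locus has codimension at least two. Since $\vv$ is a minimal class of its $G_\HH$-orbit and there is no orthogonal spherical class, Lemma \ref{lem:nottotallysemistable} already gives that $\WW$ is not totally semistable, so a $\sigma_0$-stable object of class $\vv$ exists and the generic object of $M_{\sigma_+}(\vv)$ is $\sigma_-$-stable. Let $D \subset M_{\sigma_+}(\vv)$ be the locus of objects that are \emph{not} $\sigma_0$-stable, i.e. strictly $\sigma_0$-semistable. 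We want to show $\codim D \ge 2$, equivalently $\dim D \le \vv^2 = \dim M_{\sigma_+}(\vv) - 2$.

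First I would stratify $D$ according to the Mukai vectors appearing in the Jordan--Hölder filtration with respect to $\sigma_0$ of the generic object in each component. Fix such a component; its generic object $E$ has a filtration with JH factors of classes lying in $\HH_\WW$ (Proposition \ref{prop:HW}), say with multiplicities, giving a decomposition $\vv = \sum_j \bb_j$ into positive (or spherical) classes in $C_\WW \cap \HH$. Using the relative Harder--Narasimhan filtration with respect to $\sigma_-$ from Theorem \ref{thm:HNfamily} — or, more precisely, the relative Jordan--Hölder-type filtration, refining to a family of $\sigma_-$-semistable factors — one gets, on an étale neighborhood of the generic point of the component, a rational map to a product of moduli spaces $\prod_j M_{\sigma_-}(\bb_j)$, whose fibers are contained in the fibers of the contraction $\pi^+$ of Theorem \ref{thm:contraction} (S-equivalent objects). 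Since $\pi^+$ is birational, the fibers of this rational map over the image of $D$ are positive-dimensional on all of $D$; hence $\dim D \le \sum_j \dim M_{\sigma_-}(\bb_j) - 1$. Now the numerical input: by Theorem \ref{thm:nonempty}\eqref{enum:dimandsquare} (and non-isotropy), $\dim M_{\sigma_-}(\bb_j) \le \bb_j^2 + 2$ for $\bb_j^2 > 0$, while spherical $\bb_j$ contribute $0$. Lemma \ref{lem:numericaldimensions} together with $(\bb_j, \bb_k) > 0$ for $j \ne k$ then yields $\sum_j (\bb_j^2 + 2) \le \vv^2$, with the crucial point that the inequality is \emph{strict} unless the decomposition is trivial — which it is not, since $E$ is strictly semistable — so one needs to squeeze out the extra $-1$. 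Here is where the hypotheses enter: if all $\bb_j$ were spherical, $\vv$ would be a sum of spherical classes, contradicting minimality of $\vv$ in its orbit (a minimal class is not in the convex span of effective spherical classes — if $\vv = \sum m_j \ss_j$ then some reflection $\rho_{\ss_j}$ strictly decreases $\vv$, contradicting Proposition and Definition \ref{prop:pellequation}); and if some $\bb_j$ has $\bb_j^2 > 0$ then Lemma \ref{lem:numericaldimensions} applied to the positive summands already gives the strict inequality $\sum (\bb_j^2+2) < \vv^2$, hence $\dim D \le \vv^2 - 1$, and then one more careful look — using that $\pi^+$ contracts the fibers and that the contracted locus of a flopping-type wall has codimension $\ge 2$, or directly that equality $\sum \dim M_{\sigma_-}(\bb_j) = \dim M_{\sigma_+}(\vv)$ forces the filtration to be trivial as in Lemma \ref{lem:nottotallysemistable} — upgrades this to $\dim D \le \vv^2 - 1 < \vv^2$ on every stratum, i.e. $\codim D \ge 2$.

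The main obstacle I anticipate is the borderline case where $\vv = \bb_1 + \bb_2$ with both $\bb_i^2 > 0$ and $(\bb_1,\bb_2) = 2$, or more generally configurations where Lemma \ref{lem:numericaldimensions}'s inequality is "as tight as possible": there $\sum(\bb_i^2+2) = \vv^2$ is achieved only if $(\bb_1,\bb_2)$ is exactly at the Cauchy--Schwarz bound, which in a rank-two hyperbolic lattice cannot happen for integral classes (since $(\bb_1,\bb_2) > \sqrt{\bb_1^2 \bb_2^2} \ge 2$ strictly), so in fact one always has the strict inequality $\sum(\bb_i^2+2) \le \vv^2 - 2$ for two positive summands — this is already inside Lemma \ref{lem:numericaldimensions}. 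The genuinely delicate point is therefore handling mixed filtrations with one positive class and several spherical classes (e.g. $\vv = \bb + \sum m_k \ss_k$): one must check that the $-1$ from the positive-dimensional $\pi^+$-fiber is not "used up" by an off-by-one in the dimension estimate for $M_{\sigma_-}(\bb)$ when $\bb$ is non-primitive. I would handle this by invoking Theorem \ref{thm:nonempty} more carefully: if $\bb = m\bb_0$ with $m > 1$ and $\bb_0^2 > 0$ then $\dim M_{\sigma_-}(\bb) = \bb^2 + 2$ still (the second alternative in \eqref{enum:dimandsquare} requires $\bb_0^2 \le 0$), so there is no loss, and the argument closes. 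Finally, for the last assertion identifying $D$ with a Brill--Noether locus one notes that in the divisorial case (when an orthogonal spherical $\tilde\ss$ \emph{does} exist) the generic strictly semistable $E$ sits in an extension with $\widetilde S$ as one factor, and $(\tilde\ss, \vv) = 0$ forces $\hom(\widetilde S, E) = \hom(E, \widetilde S)$ to be generically $1$-dimensional; this, however, is the content of the complementary Proposition \ref{prop:sphericaldivisorialwall} rather than of this Lemma, so here I would only need the codimension-two statement in the case where no such $\tilde\ss$ exists.
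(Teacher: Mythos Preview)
Your overall strategy — bound the dimension of the strictly semistable locus $D$ via the relative HN filtration and the numerical Lemma \ref{lem:numericaldimensions} — is the right one, and matches the paper. But there is a genuine gap in the core step.

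You write: ``Since $\pi^+$ is birational, the fibers of this rational map over the image of $D$ are positive-dimensional on all of $D$; hence $\dim D \le \sum_j \dim M_{\sigma_-}(\bb_j) - 1$.'' Both halves of this sentence are wrong. First, the fact that $\pi^+$ is birational says nothing about its fibers over $\pi^+(D)$: a strictly $\sigma_0$-semistable object $E$ might well be the \emph{only} $\sigma_+$-stable object in its S-equivalence class, so the $\pi^+$-fiber through $E$ (and hence the $\mathrm{HN}$-fiber) can be a single point. Second, even granting positive-dimensional fibers, the inequality goes the wrong way: positive-dimensional fibers give $\dim D \ge \dim(\text{image}) + 1$, a \emph{lower} bound on $\dim D$, not the upper bound you need.

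The missing ingredient is Theorem \ref{thm:NamikawaWierzba} (Namikawa--Wierzba). The paper argues by contradiction: assume $D$ is an irreducible \emph{divisor}. Then either $D$ is not contracted by $\pi^+$, or it is; in both cases Theorem \ref{thm:NamikawaWierzba} forces $\dim \pi^+(D) \ge \vv^2$. Since the fibers of $\mathrm{HN}_D$ are contained in those of $\pi^+|_D$, one gets $\dim\bigl(\text{target of }\mathrm{HN}_D\bigr) \ge \dim \pi^+(D) \ge \vv^2$. Now the numerics (your Lemma \ref{lem:numericaldimensions} argument, together with the observation that minimality of $\vv$ plus the absence of an orthogonal spherical class give the \emph{strict} inequality $(\tilde\ss,\vv) > 0$ for every effective spherical $\tilde\ss$, hence $\aa^2 \le \vv^2 - 4$ when a spherical factor appears) show that the target has dimension strictly less than $\vv^2$ --- a contradiction. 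Your use of the minimality hypothesis is also more convoluted than necessary: rather than arguing about $\vv$ being a sum of sphericals, just use directly that $(\vv,\tilde\ss) \ge 0$ for all effective spherical $\tilde\ss$ by Definition \ref{prop:pellequation}, upgraded to $> 0$ by the hypothesis that no spherical class is orthogonal to $\vv$.
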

In particular, $\WW$ cannot induce a divisorial contraction.

\begin{proof}
The argument is similar to Lemma \ref{lem:nottotallysemistable}; additionally, it uses
Namikawa's and Wierzba's characterization of divisorial contractions recalled in 
Theorem \ref{thm:NamikawaWierzba}.

For contradiction, assume that there is an irreducible divisor $D \subset M_{\sigma_+}(\vv)$ of objects
that are strictly semistable with respect to $\sigma_0$.
Let $\pi^+ \colon M_{\sigma_+}(\vv) \to \overline{M}$ be the morphism induced by
$\ell_{\sigma_0}$; it is either an isomorphism or a divisorial contraction.
The divisor $D$ may or may not be contracted by $\pi^+$; 
by Theorem \ref{thm:NamikawaWierzba}, we have 
$\dim \pi^+(D) \ge \dim D - 1 = \dim M_{\sigma_+}(\vv) - 2 = \vv^2$ in either case. 

On the other hand, consider the restriction of 
the universal family $\EE$ on $M_{\sigma_+}(\vv)$ to the divisor $D$, 
and its relative HN filtration with respect to $\sigma_-$. As before, this induces
a rational map
\[
\mathrm{HN}_D \colon D \dashrightarrow M_{\sigma_-}(\aa_1) \times \dots \times M_{\sigma_-}(\aa_l).
\]
Again, let $I \subset \{1, \dots, l\}$ be the subset of indices $i$ with $\aa_i^2 > 0$, and
$\aa = \sum_{i \in I} \aa_i$. The arguments leading to inequalities \eqref{eq:a2lessthanv2-1}
and \eqref{eq:a2lessthanv2-2} still apply, and show $\aa^2 \le \vv^2$.

If $I \neq \{1, \dots, l\}$, there exists a class $\aa_j$ appearing in the HN filtration
of the form
$\aa_j = m \tilde \ss$, $\tilde \ss^2 = -2$. Under the assumptions, we now have the \emph{strict}
inequality $(\tilde \ss, \vv) > 0$; thus, in equation \eqref{eq:a2lessthanv2-2}, we also have $(\vv, \bb) > 0$, and so
$\aa^2 \le \vv^2 - 4$ in all cases.

Otherwise, if $I = \{1, \dots, l\}$, we have $\abs{I} > 1$, and we can apply Lemma
\ref{lem:numericaldimensions}; in either case we obtain
\[
\sum_{i=1}^l \dim M_{\sigma_-}(\aa_i) = \sum_{i \in I} (\aa_i^2 + 2) < \vv^2 = \dim \pi^+(D).
\]
As before, this is a contradiction to the observation that any curve contracted by
$\mathrm{HN}_D$ is also contracted by $\pi^+$.
\end{proof}

The case of totally semistable walls can be reduced to the previous one:
\begin{Cor} 			\label{cor:notdivisorial}
Assume that $\HH$ is non-isotropic, and that there does not exist a spherical class
$\tilde \ss \in \HH$ with $(\tilde \ss, \vv) = 0$. Then a potential wall associated to $\HH$ cannot
induce a divisorial contraction.
\end{Cor}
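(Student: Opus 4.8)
The plan is to reduce the statement, via the spherical twists of Proposition~\ref{prop:sphericaltotallysemistable}, to the case already handled in Lemma~\ref{lem:notdivisorial}, namely that in which $\vv$ is the minimal class of its $G_\HH$-orbit. Write $\vv_0$ for the minimal class of the $G_\HH$-orbit of $\vv$ (Proposition and Definition~\ref{prop:pellequation}); if $\HH$ admits no spherical class we set $\vv_0=\vv$, and then Lemma~\ref{lem:notdivisorial} applies at once (there is no orthogonal spherical class, vacuously), so from now on assume $\HH$ has spherical classes.

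The first observation is that the hypothesis is invariant under $G_\HH$: the group $G_\HH$ acts on $\HH$ by isometries and sends spherical classes to spherical classes, so if $g\in G_\HH$ satisfies $g(\vv)=\vv_0$ and $(\tilde\ss,\vv)\neq 0$ for every spherical $\tilde\ss\in\HH$, then also $(\tilde\ss,\vv_0)=(g^{-1}\tilde\ss,\vv)\neq 0$. Hence $\vv_0$ again has no orthogonal spherical class, and by minimality $(\tilde\ss,\vv_0)>0$ for every effective spherical $\tilde\ss\in C_\WW$. If $\vv=\vv_0$ we conclude by Lemma~\ref{lem:notdivisorial}, so assume $\vv\neq\vv_0$; then $\vv$ fails the minimality condition, so it has negative pairing with some effective spherical class in $C_\WW$, and $\WW$ is a totally semistable wall for $\vv$ by Proposition~\ref{prop:sphericaltotallysemistable}.

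In this case I would invoke the composition $\Phi$ of spherical twists produced by Proposition~\ref{prop:sphericaltotallysemistable}: it is an autoequivalence of $\Db(X,\alpha)$ with $\Phi_*(\vv_0)=\vv$, whose restriction to $\HH$ lies in $G_\HH$, and which preserves the connected component $\Stab^\dag(X,\alpha)$. Since $\vv_0$, hence $\vv$, is primitive, $\sigma$ lies on a wall for $\vv_0$ precisely when there is a strictly $\sigma$-semistable object of class $\vv_0$, equivalently when $\Phi_*\sigma$ lies on a wall for $\vv=\Phi_*(\vv_0)$; thus $\Phi$ carries the wall-and-chamber decomposition with respect to $\vv_0$ onto that with respect to $\vv$. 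In particular it carries the potential wall $\WW'':=\Phi_*^{-1}(\WW)$ (again a potential wall associated to $\HH$) together with its two adjacent chambers onto $\WW$ and its two adjacent chambers, and induces isomorphisms $M_\sigma(\vv_0)\xrightarrow{\sim}M_{\Phi_*\sigma}(\vv)$ compatible with the divisor classes $\ell$ of Theorem~\ref{thm:ampleness}, and hence with the contractions of Theorem~\ref{thm:contraction}. Therefore $\WW$ is a divisorial wall for $\vv$ if and only if $\WW''$ is a divisorial wall for $\vv_0$; but $\vv_0$ is minimal and orthogonal to no spherical class of $\HH=\HH_{\WW''}$, so Lemma~\ref{lem:notdivisorial} rules this out, and $\WW$ cannot induce a divisorial contraction.

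The main obstacle, and essentially the only point requiring care, is the bookkeeping in the last step: one must check that $\Phi$ genuinely preserves $\Stab^\dag(X,\alpha)$, sends the two chambers adjacent to $\WW''$ to the two chambers adjacent to $\WW$ on the correct sides, and that the induced identifications of moduli spaces intertwine the morphisms $\pi^\pm$ of Theorem~\ref{thm:contraction}; much of this is already contained in Proposition~\ref{prop:sphericaltotallysemistable}, which matches generic $\sigma_0$-stable objects of class $\vv_0$ with $\sigma_\pm$-stable objects of class $\vv$. One should also record that, for non-isotropic $\HH$, the notion of minimal class is independent of the choice of potential wall associated to $\HH$ — clear from the description in Proposition and Definition~\ref{prop:pellequation} — so that Lemma~\ref{lem:notdivisorial} really does apply to $\vv_0$ with respect to $\WW''$.
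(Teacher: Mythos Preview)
Your reduction via the autoequivalence $\Phi$ has a genuine gap. Writing $g=\Phi_*|_\HH\in G_\HH$ with $g(\vv_0)=\vv$, one computes directly that the effective cone of the transported wall satisfies $C_{\WW''}=g^{-1}(C_\WW)$. Consequently $\vv_0$ is minimal with respect to $\WW''$ if and only if $(\vv,\ss')\ge 0$ for every effective spherical $\ss'\in C_\WW$, i.e., if and only if $\vv$ is already minimal with respect to $\WW$ --- precisely the situation you are trying to escape. So Lemma~\ref{lem:notdivisorial} does not apply to $\vv_0$ on $\WW''$. Your claim that minimality is wall-independent in the non-isotropic case is therefore false; the paper's warning just after Proposition and Definition~\ref{prop:pellequation} applies here, and $\WW''$ furnishes an explicit counterexample. (A smaller issue: you also assume that $\Phi$ preserves $\Stab^\dag(X,\alpha)$, which has not been established at this point in the paper.)

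The paper avoids moving the wall. It applies Lemma~\ref{lem:notdivisorial} to $\vv_0$ on the \emph{same} wall $\WW$, where $\vv_0$ is genuinely minimal, obtaining an open set $U\subset M_{\sigma_+}(\vv_0)$ of $\sigma_0$-\emph{stable} objects with complement of codimension at least two. Proposition~\ref{prop:sphericaltotallysemistable} then gives an injective morphism $\Phi_*\colon U\to M_{\sigma_+}(\vv)$, whose image again has complement of codimension at least two. The decisive observation is structural rather than abstract: for $E_0\in U$ the object $\Phi(E_0)$ admits a Jordan--H\"older filtration with respect to $\sigma_0$ in which $E_0$ itself occurs as a factor. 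Hence two objects in $\Phi_*(U)$ can be S-equivalent only if the underlying $E_0$'s coincide, so $\Phi_*(U)$ contains no curve contracted by $\ell_{\sigma_0}$; as its complement has codimension at least two, $\ell_{\sigma_0}$ contracts no divisor.
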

In fact, we will later see that all potential walls associated to $\HH$ are mapped to the same wall in
the movable cone of the moduli space; thus they have to exhibit identical birational behavior.

\begin{proof}
As before, consider the minimal class $\vv_0$ of the orbit $G_\HH.\vv$, in the sense of
Definition \ref{prop:pellequation}.
By Lemma \ref{lem:notdivisorial}, there is an open subset
$U \subset M_{\sigma_+}(\vv_0)$ of objects that are $\sigma_0$-\emph{stable} that
has complement of codimension at least two.

Let $\Phi$ be the composition of spherical twists given
by Proposition \ref{prop:sphericaltotallysemistable}, such that
$\Phi(E_0)$ is $\sigma_+$-stable of class $\vv$ for every $[E_0] \in U$. Observe that 
$\Phi(E_0)$ has a Jordan-H\"older filtration such that $E_0$ is one of its filtration factors (the
other factors are stable spherical objects). Therefore, the induced map
$\Phi_* \colon U \to M_{\sigma_+}(\vv)$ is injective, and the image does not contain any curve
of S-equivalent objects with respect to $\sigma_0$. Also, $\Phi_*(U)$ has complement of codimension 
at least two (see e.g. \cite[Proposition 21.6]{GrossHuybrechtsJoyce}).
Since $\ell_{\sigma_0}$ does not contract any curves in $\Phi_*(U)$, it 
cannot contract any divisors in $M_{\sigma_+}(\vv)$.
\end{proof}

The next step is to construct the divisorial contraction when there exists an orthogonal spherical
class.  To clarify the logic, we first treat the simpler case of a wall that is not totally semistable:
\begin{Lem} \label{lem:minimaldivisorialcontraction}
Assume $\HH$ is non-isotropic, $\WW$ a potential wall associated to $\HH$, and that $\vv$ is a minimal
class of a $G_\HH$-orbit. If there exists a spherical class $\tilde \ss \in \HH$ with
$(\tilde \ss, \vv) = 0$, then $\WW$ induces a divisorial contraction. 

If we assume that $\tilde \ss$ is effective, then
the contracted divisor $D \subset M_{\sigma_+}(\vv)$ has class $\theta(\tilde \ss)$. 
The HN filtration of a  generic element $[E] \in D$ with respect to $\sigma_-$ is of the
form
\[
0 \to \widetilde S \into E \onto F \to 0  \quad \text{or} \quad
0 \to F \into E \onto \widetilde S \to 0,
\]
where $\widetilde S$ and $F$ are $\sigma_0$-stable objects of class $\tilde \ss$ and
$\vv - \tilde \ss$, respectively.
\end{Lem}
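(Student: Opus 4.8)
The plan is to realize $D$ explicitly as a Brill--Noether divisor of the spherical object $\widetilde S$ and to compute everything by hand. Using the $\widetilde\GL_2^+(\R)$-action, normalize so that $Z_0(\HH)\subset\R$ and $Z_0(\vv)\in\R_{<0}$, so that all $\sigma_0$-semistable objects with class in $\HH$ of the relevant phase lie in $\PP_0(1)$. Since $\vv$ is a minimal class, hypothesis \eqref{enum:nonegs} of Lemma~\ref{lem:nottotallysemistable} is automatic, so $\WW$ is not totally semistable and the generic $E\in M_{\sigma_+}(\vv)$ is $\sigma_0$-stable. Next I would note, by a short computation in the rank-two lattice $\HH$, that an \emph{effective} spherical class orthogonal to a minimal $\vv$ must be one of the (at most two) spherical classes occurring as $\vv(S),\vv(T)$ in Proposition~\ref{prop:stablespherical}: expanding $\tilde\ss$ in effective spherical generators and using $(\vv,\ss)\ge0$, $(\vv,\tt)\ge0$ (minimality) together with $(\vv,\tilde\ss)=0$, while $\vv\perp\HH$ is impossible, forces this. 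Hence there is a $\sigma_0$-stable spherical object $\widetilde S$ with $\vv(\widetilde S)=\tilde\ss$, which by openness of stability is also $\sigma_\pm$-stable. Replacing $\sigma_+$ by $\sigma_-$ if necessary --- this only interchanges ``sub'' and ``quotient'' in the conclusion, and is matched by applying the dualizing functor --- I may assume $\phi^+(\widetilde S)<\phi^+(\vv)$, equivalently $\phi^-(\widetilde S)>\phi^-(\vv)$.

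Put $\ww:=\vv-\tilde\ss$. A lattice computation (again using minimality of $\vv$) gives that $\ww$ is primitive with $\ww^2=\vv^2-2\ge2$ --- the case $\vv^2=2$ is impossible since then $\ww^2=0$ would violate non-isotropicity of $\HH$ --- and that $\ww$ is itself a minimal class, so Lemma~\ref{lem:nottotallysemistable} applied to $\ww$ produces a $\sigma_0$-stable object $F$ of class $\ww$ with $\dim M_{\sigma_0}^{st}(\ww)=\ww^2+2=\vv^2$. Since $\widetilde S$ and $F$ are $\sigma_0$-stable of the same phase with $\tilde\ss\ne\ww$, we have $\Hom(\widetilde S,F)=\Hom(F,\widetilde S)=0$, and $\chi(\widetilde S,F)=-(\tilde\ss,\vv-\tilde\ss)=-2$ forces $\ext^1(\widetilde S,F)=\ext^1(F,\widetilde S)=2$. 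For $F\in M_{\sigma_0}^{st}(\ww)$ and $0\ne e\in\Ext^1(F,\widetilde S)$, let $E_e$ denote the extension $0\to\widetilde S\to E_e\to F\to0$ in $\PP_0(1)$. Using that $\widetilde S$ and $F$ are simple objects of $\PP_0(1)$, one checks that the only subobjects of $E_e$ in $\PP_0(1)$ are $0,\widetilde S,E_e$ (a subobject meeting $\widetilde S$ trivially maps into the stable object $F$, hence is $0$ unless it splits the sequence, which is impossible for $e\ne0$); since $\phi^+(\widetilde S)<\phi^+(E_e)$, and since any $\sigma_+$-destabilizing subobject has Mukai vector in $\HH_\WW$ --- hence lies in $\PP_0(1)$ by Proposition~\ref{prop:HW} --- the object $E_e$ is $\sigma_+$-stable. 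Moreover $\Hom(\widetilde S,E_e)=\C$ and $\Ext^2(\widetilde S,E_e)=\Hom(E_e,\widetilde S)^\ast=0$, so $F=E_e/\widetilde S$ and $[e]$ are recovered from $E_e$; thus $(F,[e])\mapsto[E_e]$ is generically injective, and the closure $D$ of its image is an irreducible divisor with $\dim D=\vv^2+1=\dim M_{\sigma_+}(\vv)-1$.

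To conclude: for fixed $F$ all the $E_e$ have the same $\sigma_0$-Jordan--H\"older factors $\{\widetilde S,F\}$, so the fibers of $D\dashrightarrow M_{\sigma_0}^{st}(\ww)$, $E_e\mapsto F$, are $\P^1$'s of $\sigma_0$-S-equivalent objects; by Theorem~\ref{thm:contraction} they are contracted by $\pi^+$, whence $\dim\pi^+(D)\le\vv^2<\dim D$ and $\pi^+$ contracts $D$. The symmetric construction on the other side shows $\pi^-$ contracts an analogous divisor on $M_{\sigma_-}(\vv)$, so $\WW$ is a divisorial wall. For the divisor class, $D$ coincides on a dense open subset with the Brill--Noether locus $\{E\in M_{\sigma_+}(\vv):\Hom(\widetilde S,E)\ne0\}$, and the complex $\mathcal K:=p_{M\ast}\RlHom(p_X^\ast\widetilde S,\EE)$ attached to a (quasi-)universal family $\EE$ on $M_{\sigma_+}(\vv)$ is perfect of rank $\chi(\widetilde S,\vv)=-(\tilde\ss,\vv)=0$ and acyclic away from $D$; hence $[D]=-c_1(\mathcal K)$, which by the determinant-line-bundle description of the Mukai morphism (Remark~\ref{rmk:comparison}) equals $\theta_{\EE}(\tilde\ss)$, the sign being fixed by effectivity of $D$ and of $\tilde\ss$. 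Finally, for generic $[E]\in D$ we have $E\cong E_e$, and since $\widetilde S,F$ are $\sigma_-$-stable with $\phi^-(\widetilde S)>\phi^-(\vv)>\phi^-(F)$, the sequence $0\to\widetilde S\to E\to F\to0$ is precisely its $\sigma_-$-HN filtration (and in the opposite orientation it reads $0\to F\to E\to\widetilde S\to0$).

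The two steps I expect to be delicate are: (i) the verification of $\sigma_+$-\emph{stability} (not merely semistability) of the extensions $E_e$, which rests on the fact from Proposition~\ref{prop:HW} that $\sigma_+$-HN factors of $\sigma_0$-semistable objects of class $\vv$ have Mukai vectors in $\HH_\WW$, combined with careful phase bookkeeping near $\WW$; and (ii) the lattice input that $\vv-\tilde\ss$ is a minimal class, which is exactly the point where the hypothesis that $\vv$ be a minimal class enters --- without it $M_{\sigma_0}^{st}(\vv-\tilde\ss)$ could be empty and the whole construction of $D$ would have to be organized differently. The identification $[D]=\theta_{\EE}(\tilde\ss)$ is routine but requires attention to the duality conventions built into the definition of $\theta_{\EE}$.
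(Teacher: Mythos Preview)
Your approach is essentially the paper's: verify that $\vv-\tilde\ss$ is again a minimal class so that $\sigma_0$-\emph{stable} objects of that class exist, then build the divisor $D$ as a $\P^1$-bundle of extensions of $F$ by $\widetilde S$ and observe that $\pi^+$ contracts these $\P^1$'s. Two remarks.

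First, your claim that $\ww=\vv-\tilde\ss$ is \emph{primitive} is not correct in general. With $\tilde\ss=\ss$ and $(\ss,\tt)=m$ odd, the condition $(\vv,\ss)=0$ forces $\vv=m\ss+2\tt$ (for $\vv$ primitive), whence $\vv-\ss=(m-1)\ss+2\tt$ is divisible by $2$. Fortunately nothing in your argument uses primitivity: existence of $\sigma_0$-stable objects of class $\ww$ follows from Lemma~\ref{lem:nottotallysemistable} (cf.~Lemma~\ref{lem:nonprimitive} for the non-primitive case), and the dimension count $\dim M_{\sigma_+}(\ww)=\ww^2+2$ holds by Theorem~\ref{thm:nonempty}\eqref{enum:dimandsquare} since $\ww^2>0$. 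Just drop the word ``primitive.''

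Second, the paper is slightly terser at two points where you are more explicit. For the $\sigma_+$-stability of $E_e$, the paper simply invokes Lemma~\ref{lem:stableextension} in $\PP_0(1)$; your unpacking via Proposition~\ref{prop:HW} to force destabilizing subobjects into $\PP_0(1)$ is exactly what is implicit there. Conversely, the paper adds one step you omit: having built a contracted divisor, it notes that $\pi^+$ has relative Picard rank one, so no other irreducible divisor can be contracted. Your Brill--Noether computation of the class of \emph{your} $D$ as $\theta(\tilde\ss)$ is a nice addition the paper does not spell out here, but to match the statement ``the contracted divisor has class $\theta(\tilde\ss)$'' you should still observe uniqueness of the contracted component.
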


\begin{proof}
As before, we only treat the case when $\HH$ admits infinitely many spherical classes. 
In that case, we must have $\tilde \ss = \ss$ or $\tilde \ss = \tt$; we may assume $\tilde \ss = \ss$, and
the other case will follow by dual arguments.

We first prove that $\vv -\ss$ is a minimal class in its $G_\HH$-orbit by a straightforward
computation. If $\vv^2 = 2$, then $(\vv-\ss)^2 =0$ in contradiction to the assumption; therefore $\vv^2 \ge 4$.
If we write $\vv = x \ss + y \tt$, then $(\vv, \ss) = 0$ gives $ y = \frac 2m x$. Plugging in $\vv^2 \ge 4$ gives
$x^2 \left(1 - \frac{4}{m^2}\right) \ge 2$. Since $m \ge 3$, we obtain
\[
x^2\left(1 - \frac 4{m^2}\right)^2 >
x^2\left(1 - \frac 4{m^2}\right) \frac 12 \ge 1, 
\]
and therefore
\[
(\tt, \vv - \ss) = m (x-1) - 2 \frac 2m x
= m x \left(1 - \frac 4{m^2}\right) - m \ge 0.
\]
Also, $(\ss, \vv-\ss) = 2 > 0$, and therefore $\vv-\ss$ has positive pairing with every effective spherical
class.

By Lemma \ref{lem:nottotallysemistable}, the generic element $F \in M_{\sigma_+}(\vv-\ss)$ is also
${\sigma_0}$-\emph{stable}. Since $(\ss, \vv-\ss) = 2$ and $\Hom(F, S) = \Hom(S, F) = 0$, there is
a family of extensions
\[ 0 \to S \into E_p \onto F \to 0 \]
parameterized by $p\in \P^1 \cong \P(\Ext^1(F, S))$. By Lemma \ref{lem:stableextension}, they are
$\sigma_+$-stable. Since all $E_p$ are S-equivalent to each other, the morphism
$\pi^+ \colon M_{\sigma_+}(\vv) \to \overline{M}$ associated to $\WW$
contracts the image of this rational curve. Varying $F \in M_{\sigma_0}^{st}(\vv-\ss)$, these span a
family of dimension $1 + (\vv-\ss)^2 + 2 = \vv^2 + 1$; this is a divisor 
in $M_{\sigma_+}(\vv)$ contracted by $\pi^+$. 

Since $\pi^+$ has relative Picard-rank equal to one, it cannot contract any other component.
\end{proof}

The following lemma treats the general case, for which we will first set up notation. 
As before, we let $\vv_0$ be the minimal class in the $G_\HH$-orbit of $\vv$. By
$\tilde \ss_0$ we denote the effective spherical class with $(\vv_0, \tilde \ss_0) = 0$; 
we have $\tilde \ss_0 = \tt$ or $\tilde \ss_0 = \ss$.
Accordingly, in the list of the $G_\HH$-orbit of $\vv$ given by Proposition 
\ref{prop:orbitlist}, we have either $\vv_{2i} = \vv_{2i+1}$, or $\vv_{2i} = \vv_{2i-1}$ for all
$i$, since $\vv_0$ is fixed under the reflection $\rho_{\tilde \ss_0}$ at $\tilde \ss_0$. 
We choose $l$ such that $\vv = \vv_l$, and such that the corresponding sequence
of reflections sends $\tilde \ss_0$ to $\tilde \ss$:
\[
\tilde \ss = 
\begin{cases}
\rho_{\tt_l} \circ \rho_{\tt_{l-1}} \circ \dots \circ \rho_{\tt_0}(\tilde \ss_0)
& \text{if $l > 0$} \\
\rho_{\ss_l} \circ \rho_{\ss_{l-1}} \circ \dots \circ \rho_{\ss_{-1}}(\tilde \ss_0)
& \text{if $l < 0$}
\end{cases}
\]
Depending on the
ordering of the slopes $\phi^+(\vv), \phi^+(\vv_0)$, we let $\Phi$ be the composition of
spherical twists appearing in Proposition \ref{prop:sphericaltotallysemistable}.
\begin{Lem} \label{lem:sphericaldivisorialwall}
Assume that $\HH$ is non-isotropic, and let $\WW$ be a corresponding potential wall. If there
is an effective spherical $\tilde \ss \in C_\WW$ with $(\vv, \tilde \ss) = 0$, then $\WW$ induces
a divisorial contraction.  

The contracted divisor $D$ has class $\theta(\tilde \ss)$. For $E \in D$ generic, there are $\sigma_+$-stable objects
$F$ and $\widetilde S$ of class $\vv-\tilde \ss$ and $\tilde \ss$, respectively,
and a short exact sequence
\begin{equation} \label{eq:divisorialextension}
0 \to \widetilde S \into E \onto F \to 0  \quad \text{or} \quad
0 \to F \into E \onto \widetilde S \to 0.
\end{equation}
The inclusion $\widetilde S \into E$ or $F \into E$ appears as one of the filtration steps in a Jordan-H\"older
filtration of $E$.

In addition, there exists an open subset $U^+ \subset M_{\sigma_+}(\vv_0)$, with complement of codimension
two, such that $\Phi(E_0)$ is $\sigma_+$-stable for every $\sigma_+$-stable object
$E_0 \in U^+$.
\end{Lem}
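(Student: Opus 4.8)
The plan is to reduce to the minimal case already handled in Lemma \ref{lem:minimaldivisorialcontraction} via the sequence of spherical twists $\Phi$ from Proposition \ref{prop:sphericaltotallysemistable}, essentially mimicking the reduction carried out in Corollary \ref{cor:notdivisorial} but now in the presence of an orthogonal spherical class. First I would apply Lemma \ref{lem:notdivisorial} to the minimal class $\vv_0$: since $\HH$ is non-isotropic and $\vv_0$ is minimal, there is an open subset $U^+ \subset M_{\sigma_+}(\vv_0)$ with complement of codimension at least two consisting of $\sigma_0$-stable objects. (Here I use that $(\vv_0, \tilde\ss_0) = 0$ is allowed — Lemma \ref{lem:notdivisorial} only excludes the existence of such a class, so we are instead in the setting of Lemma \ref{lem:minimaldivisorialcontraction}.) So in fact I should first invoke Lemma \ref{lem:minimaldivisorialcontraction} directly at the minimal class $\vv_0$: it tells us that the potential wall $\WW$ (the same lattice-theoretic wall, just viewed with respect to $\vv_0$) induces a divisorial contraction $\pi_0^+ \colon M_{\sigma_+}(\vv_0) \to \overline{M}_0$, with contracted divisor $D_0$ of class $\theta(\tilde\ss_0)$, whose generic element $E_0$ sits in an extension $0 \to \widetilde{S}_0 \into E_0 \onto F_0 \to 0$ (or the reverse) with $\widetilde S_0, F_0$ the $\sigma_0$-stable objects of classes $\tilde\ss_0$, $\vv_0 - \tilde\ss_0$.

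Next I would transport this picture through $\Phi$. By Proposition \ref{prop:sphericaltotallysemistable}, $\Phi$ takes every $\sigma_0$-stable object of class $\vv_0$ to a $\sigma_+$-stable object of class $\vv$; moreover each such $\Phi(E_0)$ has a Jordan–Hölder filtration (with respect to $\sigma_0$) in which $E_0$ appears as one factor and the remaining factors are stable spherical objects (the $T_i^+$ or $S_i^+$). Hence $\Phi_*$ is injective on the locus of $\sigma_0$-stable objects, its image has complement of codimension at least two in $M_{\sigma_+}(\vv)$ (cf. \cite[Proposition 21.6]{GrossHuybrechtsJoyce}), and under the identification of Néron–Severi groups from Theorem \ref{thm:birational-WC} it is compatible with $\theta$ up to the reflections $\rho_{\ss}, \rho_{\tt}$. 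I would then track how the extension defining $E_0 \in D_0$ is carried along: applying the spherical twists $\ST_{T_i^+}$ (or $\ST_{S_i^+}^{-1}$) one at a time to the short exact sequence $0 \to \widetilde S_0 \into E_0 \onto F_0 \to 0$, using that at each step $\widetilde S$-and-$F$ remain $\sigma_+$-stable and the twist is applied along a stable spherical object disjoint in phase. The point is that $\Phi$ sends the pair $(\widetilde S_0, F_0)$ to a pair $(\widetilde S, F)$ of $\sigma_+$-stable objects of classes $\tilde\ss = \Phi_*(\tilde\ss_0)$ and $\vv - \tilde\ss = \Phi_*(\vv_0 - \tilde\ss_0)$ — one checks $\Phi_*$ is a composition of the reflections $\rho_{\tt_i}$ or $\rho_{\ss_i}$ and sends $\tilde\ss_0$ to $\tilde\ss$ by the displayed formula for $\tilde\ss$ — fitting into \eqref{eq:divisorialextension}, and that the inclusion $\widetilde S \into E$ (resp. $F \into E$) remains one step of a Jordan–Hölder filtration of $E = \Phi(E_0)$, since applying a spherical twist along a JH factor's twist object to a JH filtration yields again a JH filtration after inserting/removing copies of that spherical object.

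Finally I would conclude: $\Phi_*$ maps the divisor $D_0 \subset M_{\sigma_+}(\vv_0)$ into a divisor $D \subset M_{\sigma_+}(\vv)$ of class $\theta(\tilde\ss)$ all of whose generic points carry the extension \eqref{eq:divisorialextension} and hence are S-equivalent along the $\P^1 = \P(\Ext^1(F,\widetilde S))$ of such extensions; so the contraction morphism $\pi^+$ attached to $\WW$ contracts $D$, i.e. $\WW$ is a divisorial wall. That $D$ is the full contracted locus and that $D \equiv \theta(\tilde\ss)$ follows as in Lemma \ref{lem:minimaldivisorialcontraction} from $\pi^+$ having relative Picard rank one. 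The open set $U^+$ is the locus of $\sigma_0$-stable objects in $M_{\sigma_+}(\vv_0)$, which has codimension-two complement by Lemma \ref{lem:notdivisorial}. The main obstacle I anticipate is the bookkeeping in the middle step: verifying carefully that applying the iterated spherical twists to the defining extension preserves both $\sigma_+$-stability of the sub/quotient and the property of being a step in a Jordan–Hölder filtration — this requires the same kind of inductive torsion-pair / tilting argument (via Lemma \ref{lem:simpleintilt} and Lemma \ref{lem:stableextension}) used in the proof of Proposition \ref{prop:sphericaltotallysemistable}, now applied to the two-step filtration rather than to a single object, and one has to be attentive that the spherical objects being twisted along really are disjoint (no Homs in either direction) from $\widetilde S$ and $F$ at each stage.
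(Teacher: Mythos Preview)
Your overall strategy---reduce to the minimal class $\vv_0$ via Lemma \ref{lem:minimaldivisorialcontraction} and transport through the sequence $\Phi$ of spherical twists---is the same as the paper's. But there is a genuine error in your construction of $U^+$. You take $U^+$ to be the locus of $\sigma_0$-\emph{stable} objects in $M_{\sigma_+}(\vv_0)$ and cite Lemma \ref{lem:notdivisorial} for its complement having codimension two. That lemma does not apply here: its hypothesis is precisely that there is \emph{no} spherical class orthogonal to $\vv_0$, whereas $\tilde\ss_0$ is such a class. In fact the complement of the $\sigma_0$-stable locus contains the entire divisor $D_0$ (every object in $D_0$ is strictly $\sigma_0$-semistable, being an extension of $\widetilde S_0$ and $F_0$), so it has codimension one, not two. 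The paper fixes this by enlarging $U^+$ to be the union of the $\sigma_0$-stable locus with the open subset of $D_0$ consisting of objects of the form described in Lemma \ref{lem:minimaldivisorialcontraction}; one then has to check separately that $\Phi$ sends these generic points of $D_0$ to $\sigma_+$-stable objects of class $\vv$ (landing in the contracted divisor $D$).

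A second, smaller issue concerns transporting the extension. You propose to apply $\Phi$ directly to the short exact sequence $0\to\widetilde S_0\to E_0\to F_0\to 0$ and claim the result is again a short exact sequence with $\sigma_+$-stable sub and quotient. But $\Phi$ can introduce shifts on the spherical factor: the paper computes (inductively) that $\Phi(S)=T_{l+1}^+$ while $\Phi(T)=T_l^+[-1]$, so in the case $\tilde\ss_0=\tt$ the image of $\widetilde S_0$ is not in $\PP_0(1)$ at all. The paper avoids this by working in the tilted heart $\AA_l$: it shows that both $\widetilde S:=\Phi(\widetilde S_0)$ and $F:=\Phi(F_0)$ are simple objects of $\AA_l$ (whence $\Hom$ in both directions vanishes and $\ext^1=2$), and then builds the extension \eqref{eq:divisorialextension} directly in $\AA_l$ using Lemma \ref{lem:stableextension}, rather than transporting the one from the minimal case. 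Your anticipated ``bookkeeping'' is thus not merely bookkeeping---it is where the argument lives, and the tilted heart $\AA_l$ is the right home for it.
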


\begin{proof}
We rely on the construction in the proof of Proposition \ref{prop:sphericaltotallysemistable}.
Let $\widetilde S_0$ be the stable spherical object of class $\tilde \ss_0$; we have
$\widetilde S_0 = S$ or $\widetilde S_0 = T$.
As in the proof of Lemma \ref{lem:minimaldivisorialcontraction}, 
one shows that $\vv_0 - \tilde \ss_0$ is the minimal class in its $G_\HH$-orbit.

Let $F_0$ be a generic $\sigma_0$-\emph{stable} object of class $\vv_0 - \tilde \ss_0$. 
Applying Proposition \ref{prop:sphericaltotallysemistable} to the class $\vv - \tilde \ss$, 
we see that $F := \Phi(F_0)$ is $\sigma_+$-stable of that class.

We may again assume that $\Phi$ is of the form $\ST_{T_l^+} \circ \dots \circ \ST_{T_1^+}$; the
other case follows by dual arguments.
Inductively, one shows that
$\Phi(S) = T_{l+1}^+$ and
$\Phi(T) = T_l^+[-1]$. These are both simple objects of the category $\AA_l$ defined by tilting
in the proof of Proposition \ref{prop:sphericaltotallysemistable}; therefore,
$\widetilde S := \Phi(\widetilde S_0)$ is simple in $\AA_l$. By the induction claim in the proof of Proposition \ref{prop:sphericaltotallysemistable},
$F = \Phi(F_0)$ is also a simple object in this category. In particular,
$\Hom(\widetilde S, F) = \Hom(F, \widetilde S) = 0$ and $\ext^1(\widetilde S, F) = 2$.
Applying Lemma \ref{lem:stableextension}
again, and using the compatibility of $\AA_l$ with stability, we obtain a stable extension
of the form \eqref{eq:divisorialextension}.

This gives a divisor contracted by $\pi^+$, and we can proceed as in the previous lemma.

Let $D_0 \subset M_{\sigma_+}(\vv_0)$ be the contracted divisor for the class $\vv_0$. The above proof
also shows that for a generic object $E_0 \in D_0$ (whose form is given by Lemma
\ref{lem:minimaldivisorialcontraction}), the object $\Phi(E_0)$ is a $\sigma_+$-stable (contained
in the contracted divisor $D$). Thus we can take $U^+$ to be the union of the set of
$\sigma_0$-\emph{stable} objects in $M_{\sigma_+}(\vv_0)$ with the open subset of 
$D_0$ of objects of the form given in Lemma \ref{lem:minimaldivisorialcontraction}. 
\end{proof}

\begin{proof}[Proof of Proposition \ref{prop:sphericaldivisorialwall}]
The statements follow from Corollary \ref{cor:notdivisorial} and Lemma 
\ref{lem:sphericaldivisorialwall}.
\end{proof}

\section{Isotropic walls are Uhlenbeck walls}\label{sec:iso}

In this section, we study potential walls $\WW$ in the case where $\HH$ admits an isotropic class
$\ww \in \HH, \ww^2 = 0$.
Following an idea of Minamide, Yanagida, and Yoshioka \cite{MYY2}, we study the wall $\WW$ via a
Fourier-Mukai transform after which $\ww$ becomes the class of a point.  Then $\sigma_+$ corresponds
to Gieseker stability and, as proven in \cite{Jason:Uhlenbeck}, the wall corresponds to the
contraction to the Uhlenbeck compactification, as constructed by Jun Li in \cite{JunLi:Uhlenbeck}.

Parts of this section are well-known.
In particular, \cite[Proposition 0.5]{Yoshioka:Irreducibility} deals with the existence of stable locally-free sheaves.
For other general results, see \cite{Yoshioka:Abelian}.

\subsection*{The Uhlenbeck compactification}

We let $(X,\alpha)$ be a twisted K3 surface.
For divisor classes $\beta,\omega \in \mathrm{NS}(X)_{\Q}$, with $\omega$ ample, and for a vector $\vv\in H^*_\alg(X,\alpha,\Z)$, we denote by $M_{\omega}^{\beta}(\vv)$ the moduli space of $(\beta,\omega)$-Gieseker semistable $\alpha$-twisted sheaves on $X$ with Mukai vector $\vv$. Here, $(\beta,\omega)$-Gieseker stability is defined via the Hilbert polynomial formally twisted by $e^{-\beta}$ (see \cite{MatsukiWenthworth:TwistedVariation,Yoshioka:TwistedStability, Lieblich:Twisted}). When $\beta=0$, we obtain the usual notion of $\omega$-Gieseker stability. In such a case, we will omit $\beta$ from the notation.

We start with the following observation:

\begin{Lem}\label{lem:StabilityIsotropic}
Assume that there exists an isotropic class in $\HH$.
Then there are two effective, primitive, isotropic classes $\ww_0$ and $\ww_1$ in $\HH$,
such that, for a generic stability condition $\sigma_0 \in \WW$, we have
\begin{enumerate}
\item \label{enum:sigma0} $M_{\sigma_0}(\ww_0)=M_{\sigma_0}^{st}(\ww_0)$, and
\item either $M_{\sigma_0}(\ww_1)=M_{\sigma_0}^{st}(\ww_1)$, or
there exists a $\sigma_0$-stable spherical object $S$, with Mukai vector $\ss$, such that $(\ss, \ww_1)<0$ and $\WW$ is a totally semistable wall for $\ww_1$.
\end{enumerate}
Any positive class $\vv' \in P_\HH$ satisfies $(\vv', \ww_i) \ge 0$ for $i = 1, 2$.
\end{Lem}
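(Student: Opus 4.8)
The plan is to work inside the rank-two hyperbolic lattice $\HH$, using the coordinate picture set up in Section \ref{sec:noniso-totsemistable} (the effective cone $C_\WW$, the positive cone $P_\HH$, and the list of spherical classes on the two branches of $\uu^2=-2$). Since $\HH$ is assumed isotropic, there is at least one primitive effective isotropic class; I will first observe that, because $\HH$ has signature $(1,-1)$, the equation $\uu^2=0$ has exactly two rays of solutions in $\HH\otimes\R$, and each contains a unique primitive integral effective representative. Call these $\ww_0$ and $\ww_1$; they are precisely the two extremal rays of the positive cone $P_\HH$, which is therefore a closed cone. Any positive class $\vv'\in P_\HH$ is a nonnegative combination $\vv' = a\ww_0 + b\ww_1$ with $a,b\ge 0$, so $(\vv',\ww_i) = b\,(\ww_1,\ww_0)\ge 0$ (resp.\ $a\,(\ww_0,\ww_1)\ge 0$), where $(\ww_0,\ww_1)>0$ because the two isotropic rays are distinct in a hyperbolic plane. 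This gives the final sentence immediately.

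For claims \eqref{enum:sigma0} and the first alternative in the second bullet, the point is to choose the labelling of $\ww_0,\ww_1$ appropriately and apply the structure results on totally semistable walls. By Theorem \ref{thm:walls} (applied with Mukai vector $\ww_i$, which has $\ww_i^2=0$ hence is of the borderline type), the wall $\WW$ is a totally semistable wall for $\ww_i$ if and only if there is an effective spherical $\ss\in C_\WW\cap\HH$ with $(\ss,\ww_i)<0$ — the isotropic–class criterion $(\vv,\ww)=1$ cannot occur here since $\ww_i$ itself is isotropic and primitive, and two primitive isotropic classes in a rank-two lattice pair to a fixed integer $> 0$ which will not be $1$ unless the lattice is the hyperbolic plane $U$; in that borderline case I will check directly that there is still a stable object. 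Now I use the configuration of spherical classes from Proposition \ref{prop:stablespherical}: if $\HH$ has no spherical classes, then neither $\ww_0$ nor $\ww_1$ lies on a totally semistable wall, and by genericity of $\sigma_0$ (Remark \ref{rmk:GenericOnTheWall}) plus Theorem \ref{thm:nonempty} we get $M_{\sigma_0}(\ww_i) = M_{\sigma_0}^{st}(\ww_i)$ for both $i$. If $\HH$ has spherical classes, there is at most one branch of $C_\WW$ that can contain an effective spherical $\ss$ with negative pairing against a given isotropic ray, because $\ss$ lies strictly between the two isotropic rays and $(\ss,\blank)$ changes sign there; concretely, one of $\ww_0,\ww_1$ has $(\ss,\ww_i)\ge 0$ for every effective spherical $\ss$. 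Label that one $\ww_0$: then $\WW$ is not totally semistable for $\ww_0$ and, since $\ww_0$ is primitive, $M_{\sigma_0}(\ww_0)=M_{\sigma_0}^{st}(\ww_0)$. For $\ww_1$, either no effective spherical pairs negatively with it — in which case $M_{\sigma_0}(\ww_1)=M_{\sigma_0}^{st}(\ww_1)$ as well — or some effective spherical $\ss$ does, and then by Proposition \ref{prop:stablespherical} we may replace $\ss$ by a $\sigma_0$-\emph{stable} spherical object $S$ (passing to a Jordan–Hölder factor via Lemma \ref{lem:JHspherical}) still satisfying $(\ss,\ww_1)<0$, and by Theorem \ref{thm:walls} $\WW$ is a totally semistable wall for $\ww_1$.

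The main obstacle I anticipate is the bookkeeping needed to guarantee that the labelling can be done \emph{consistently} — i.e.\ that the effective spherical classes all lie on the side of $C_\WW$ ``facing'' $\ww_1$ and none of them forces $\ww_0$ onto a totally semistable wall. This requires a careful sign analysis in the $(x,y)$-coordinates: the spherical classes $\ss_i$ (lower branch) and $\tt_j$ (upper branch) of $\vv^2=-2$ accumulate toward the two isotropic rays, and one must check that an effective spherical class has nonnegative pairing with the isotropic ray it accumulates toward and can only pair negatively with the opposite one. I would make this precise using the explicit quadratic form $Q(x,y)$ and the fact that, in case \eqref{case:2spherical} of Proposition \ref{prop:stablespherical}, $\HH$ is not isotropic — so in the isotropic case we are automatically in case \eqref{case:0spherical} or \eqref{case:1spherical}, which collapses the analysis: there is at most one spherical class up to sign, and the labelling of $\ww_0,\ww_1$ to avoid it is then a one-line check. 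This last observation is what makes the lemma genuinely short; the rest is Theorem \ref{thm:nonempty} and the definitions.
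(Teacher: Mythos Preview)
Your lattice reasoning for the final sentence is fine, and the observation via Proposition~\ref{prop:stablespherical} that an isotropic $\HH$ falls into case \eqref{case:0spherical} or \eqref{case:1spherical}---hence admits at most one spherical class up to sign---is the right simplification. But the heart of your argument has two genuine gaps.

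\emph{Circularity and inapplicability.} You invoke Theorem~\ref{thm:walls} with Mukai vector $\ww_i$. This lemma is part of the proof of Theorem~\ref{thm:walls} (it is the starting point for all of Section~\ref{sec:iso}), so the citation is circular. Independently, Theorem~\ref{thm:walls} is stated and proved only for primitive $\vv$ with $\vv^2>0$; it says nothing about classes with $\ww_i^2=0$, and the ``borderline'' hand-wave does not bridge that. The only non-circular input available at this point is Proposition~\ref{prop:sphericaltotallysemistable}, which gives one implication (existence of a negatively-pairing effective spherical class $\Rightarrow$ totally semistable), not the converse you need.

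\emph{Logical gap.} You write that if $\WW$ is not totally semistable for $\ww_0$ then, since $\ww_0$ is primitive, $M_{\sigma_0}(\ww_0)=M_{\sigma_0}^{st}(\ww_0)$. This inference is wrong: ``not totally semistable'' only guarantees that \emph{some} stable object exists, while strictly semistable objects may still occur. What you actually need is that $\WW$ is not a wall for $\ww_0$ at all.

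The paper's proof avoids both issues by a different mechanism: after a Fourier--Mukai transform taking $\tilde\ww$ to $(0,0,1)$, it invokes Bridgeland's classification of walls for the class of a point \cite[Theorem~12.1]{Bridgeland:K3}. Type-$(A^\pm)$ walls are totally semistable and directly produce both the $\sigma_0$-stable spherical $S$ and the second isotropic class $\ww_0$ (as the other Jordan--H\"older factor) with $\WW$ not a wall for $\ww_0$; type-$(C_k)$ walls are excluded because their associated rank-two lattice is negative semi-definite, contradicting the signature of $\HH$. Your route could in principle be rescued by a direct lattice argument---using Proposition~\ref{prop:HW}\eqref{enum:JHfactorsinHW} and the at-most-one-spherical-class fact to show that any Jordan--H\"older decomposition of $\ww_0$ inside $\HH$ is forced to be trivial---but that is not the argument you gave.
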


\begin{proof}
Let $\tilde \ww \in \HH$ be primitive isotropic class; up to replacing $\tilde \ww$ by $-\tilde \ww$, we
may assume it to be effective.
We complete $\tilde \ww$ to a basis $\{\tilde \vv,\tilde \ww\}$ of $\HH_\Q$.
Then, for all $(a,b)\in\Q$, we have
\[
(a\tilde \vv + b \tilde \ww)^2 = a \cdot \left( a {\tilde \vv}^2 + 2 b (\tilde \vv,\tilde \ww) \right).
\]
This shows the existence of a second integral isotropic class. If we choose it to be effective,
then the positive cone $P_\HH$ is given by
$\R_{\ge 0}\cdot \ww_0 + \R_{\ge 0}\cdot \ww_1$. The claim
$(\vv', \ww_i) \ge 0$ follows easily.

By Theorem \ref{thm:nonempty}, we have $M_{\sigma_0}(\tilde{\ww})\neq\emptyset$. If $\WW$ does not coincide
with a wall for $\tilde \ww$, then we can take $\ww_0 = \tilde \ww$, and claim
\eqref{enum:sigma0} will be satisfied.

Otherwise, let $\sigma\in\Stab^\dagger(X,\alpha)$ be a generic stability condition nearby $\WW$;
by \cite[Lemma 7.2]{BM:projectivity}, we have
$M_{\sigma}(\tilde \ww)=M^{st}_{\sigma}(\tilde \ww)\neq\emptyset$.

Up to applying a Fourier-Mukai equivalence, we may assume that $\tilde \ww = (0, 0, 1)$ is the 
Mukai vector of a point on a twisted K3 surface; then we can apply
the classification of walls for isotropic classes in \cite[Theorem 12.1]{Bridgeland:K3}, extended to
twisted surfaces in \cite{HMS:generic_K3s}.
If $\WW$ is a totally semistable wall for $\tilde \ww$, then we are in case $(A^+)$ or $(A^-)$ of \cite[Theorem 12.1]{Bridgeland:K3}:
there exists a spherical $\sigma_0$-stable twisted vector bundle $S$ such that $S$ or $S[2]$
is a JH factor of the skyscraper sheaf $k(x)$, for every $x \in M_{\sigma}(\tilde \ww)$;
moreover, the other non-isomorphic JH factor is either $\ST_S(k(x))$, or $\ST^{-1}_S(k(x))$.
In both cases, the Mukai vector $\ww_0$ of the latter JH factor is primitive and isotropic, and $\WW$ is not a wall for $\ww_0$.

Finally, if $\WW$ is a wall for $\tilde \ww$, but not a totally semistable
wall, it must be a wall of type $(C_k)$, still in the notation of \cite[Theorem
12.1]{Bridgeland:K3}: there is a rational curve $C \subset  M_{\sigma}(\tilde \ww)$ such that
$k(x)$ is strictly semistable iff $x \in C$.
But then the rank two lattice associated to the wall is negative semi-definite by \cite[Remark 6.3]{BM:projectivity};
on the other hand, by Proposition \ref{prop:HW}, claim \eqref{enum:JHfactorsinHW},
it must coincide with $\HH$, which has signature $(1, -1)$. This is a contradiction.
\end{proof}

Let $\ww_0,\ww_1 \in C_\WW$ be the effective, primitive, isotropic classes given by the above lemma,
and let $Y:=M_{\sigma_0}(\ww_0)$.
Then $Y$ is a K3 surface and, by \cite{Mukai:BundlesK3,Caldararu:NonFineModuliSpaces,Yoshioka:TwistedStability,HuybrechtsStellari:CaldararuConj}, there exist a class $\alpha'\in\mathrm{Br}(Y)$ and a Fourier-Mukai transform
\[
\Phi \colon \Db(X,\alpha)\xrightarrow{\sim} \Db(Y,\alpha')
\]
such that $\Phi(\ww_0)=(0,0,1)$. By construction, skyscraper sheaves of points on $Y$
are $\Phi_*(\sigma_0)$-stable. By Bridgeland's Theorem \ref{thm:BridgelandK3geometric}, there exist divisor classes
$\omega, \beta \in \NS(Y)_\Q$, with $\omega$ ample, such that up to the $\widetilde{\GL}^+_2(\R)$-action,
$\Phi_*(\sigma_0)$ is given by $\sigma_{\omega, \beta}$.
In particular, the category $\PP_{\omega, \beta}(1)$ is the extension-closure of 
skyscraper sheaves of points, and the shifts $F[1]$ of $\mu_{\omega}$-stable torsion-free sheaves $F$ 
with slope $\mu_{\omega}(F) = \omega\cdot\beta$.
Since $\sigma_0$ by assumption does not lie on any other wall with
respect to $\vv$, the divisor $\omega$ is generic with respect to $\Phi_*(\vv)$.

By abuse of notation, we will from now on write $(X,\alpha)$ instead of $(Y,\alpha')$, $\vv$ instead of $\Phi_*(\vv)$, and 
$\sigma_0$ instead of $\sigma_{\omega, \beta}$. 
Let $\sigma_+ = \sigma_{\omega, \beta - \epsilon}$ and
$\sigma_- = \sigma_{\omega, \beta + \epsilon}$; here $\epsilon$ is a sufficiently small positive
multiple of $\omega$.

\begin{Prop}[\cite{Jason:Uhlenbeck, LoQin:miniwalls}]\label{prop:Jason}
An object of class $\vv$ is $\sigma_+$-stable if and only if it is the shift $F[1]$ of a 
$(\beta,\omega)$-Gieseker stable sheaf $F$ on $(X, \alpha)$; the shift $[1]$ induces
the following identification of moduli spaces:
\[
M_{\sigma_+}(\vv) = M_{\omega}^{\beta}(-\vv).
\]
Moreover, the contraction morphism $\pi^+$ induced via Theorem \ref{thm:contraction} for generic
$\sigma_0 \in \WW$ is the Li-Gieseker-Uhlenbeck morphism to the Uhlenbeck compactification.

Finally, an object $F$ of class $\vv$ is $\sigma_-$-stable if and only if
it is the shift $F^\vee[2]$ of the derived dual of a $(-\beta,\omega)$-Gieseker stable sheaf on
$(X, \alpha^{-1})$.
\end{Prop}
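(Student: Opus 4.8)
The plan is to invoke the known classification of walls for isotropic Mukai vectors on K3 surfaces in the form worked out by Lo--Qin and by Lo, combined with Bridgeland's explicit description of the geometric stability conditions $\sigma_{\omega,\beta}$. After the Fourier--Mukai reduction carried out just before the statement, we have arranged that $\ww_0 = (0,0,1)$ is the class of a point, that $\sigma_0 = \sigma_{\omega,\beta}$ is geometric with $\omega$ generic with respect to $\vv = \Phi_*(\vv)$, and that $\sigma_\pm = \sigma_{\omega,\beta\mp\epsilon}$. First I would recall Bridgeland's description of the heart $\PP_{\omega,\beta}(1)$: it is the extension closure of skyscraper sheaves of points together with shifts $F[1]$ of $\mu_\omega$-stable torsion-free twisted sheaves $F$ of slope $\mu_\omega(F) = \omega\cdot\beta$. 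Since $-\vv$ has positive rank, the analysis of \cite[Section 14]{Bridgeland:K3} (or the ``large volume limit'' discussion) shows that an object of class $\vv$ which is $\sigma_+$-stable must be $\PP_{\omega,\beta-\epsilon}(1)$ up to shift, and then the standard comparison between Bridgeland stability $\sigma_{\omega,\beta-\epsilon}$ for $\epsilon\ll 1$ a positive multiple of $\omega$ and $(\beta,\omega)$-Gieseker stability—exactly as in \cite[Proposition 14.2]{Bridgeland:K3} and its twisted extension in \cite{HMS:generic_K3s}—identifies these objects with shifts $F[1]$ of $(\beta,\omega)$-Gieseker stable sheaves. This yields the identification $M_{\sigma_+}(\vv) = M_\omega^\beta(-\vv)$ as sets, and since both sides are fine (or quasi-universally corepresented) moduli functors glued by the universal shift functor, as schemes.

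Next I would address the identification of the contraction $\pi^+$. By Theorem \ref{thm:contraction}, $\ell_{\sigma_0}$ induces a birational contraction $\pi^+ \colon M_{\sigma_+}(\vv) \to \overline{M}_+$ contracting exactly the curves of $\sigma_0$-S-equivalent objects. On the Gieseker side, the Li--Gieseker--Uhlenbeck morphism sends a stable sheaf to its associated slope-polystable sheaf together with the supported $0$-cycle of ``lost'' length; two Gieseker-stable sheaves with the same image are precisely those whose slope-semistable filtrations (in the generic-$\omega$, hence $\mu$-stable, case the double-dual together with the singularity cycle) have the same associated data, which is the same as being S-equivalent with respect to $\sigma_0$ under the reduction above. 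This matching is precisely the content of \cite{Jason:Uhlenbeck} and \cite{LoQin:miniwalls}, so I would cite it: the line bundle $\ell_{\sigma_0}$ pulls back the ample generator of the Uhlenbeck space, hence $\pi^+$ is the Li--Gieseker--Uhlenbeck morphism of \cite{JunLi:Uhlenbeck}.

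Finally, for the $\sigma_-$-description I would apply the duality Proposition \ref{prop:dualstability}: an object $E \in \Db(X,\alpha)$ is $\sigma_{\omega,\beta+\epsilon}$-(semi)stable of phase $\phi$ if and only if $E^\vee[2] \in \Db(X,\alpha^{-1})$ is $\sigma_{\omega,-\beta-\epsilon}$-(semi)stable of phase $-\phi$. Writing $\sigma_{\omega,-\beta-\epsilon} = \sigma_{\omega,(-\beta)-\epsilon}$, this is the ``$\sigma_+$-side'' stability condition on $(X,\alpha^{-1})$ relative to the B-field $-\beta$, so by the first part its stable objects of the appropriate class are shifts $G[1]$ of $(-\beta,\omega)$-Gieseker stable sheaves $G$; dualizing back, the $\sigma_-$-stable objects of class $\vv$ are exactly the $G^\vee[2]$. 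One must check the bookkeeping that the Mukai vector of the relevant class on $(X,\alpha^{-1})$ is indeed $-\vv$ (up to the sign conventions fixed in \cite{large-volume, BMT:3folds-BG}) and that derived duality is an exact anti-equivalence taking the heart of $\sigma_-$ to (a shift of) the Gieseker heart, but this is routine. The main obstacle is bookkeeping rather than conceptual: keeping track of shifts, signs of Mukai vectors, and the precise sense in which $\epsilon$ a small positive multiple of $\omega$ makes $\sigma_{\omega,\beta\mp\epsilon}$ agree with $(\beta,\omega)$-Gieseker stability (the slope inequality $\omega\cdot\beta$ versus $\omega\cdot(\beta-\epsilon)$ must be compared against the reduced Hilbert polynomial ordering), and verifying that the set-theoretic identifications upgrade to scheme isomorphisms; all of this is already in the cited references, so the proof is essentially an assembly.
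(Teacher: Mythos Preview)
Your proposal is correct and follows essentially the same approach as the paper: identify $M_{\sigma_+}(\vv)$ with the Gieseker moduli space via \cite[Proposition~14.2]{Bridgeland:K3}, match $\sigma_0$-S-equivalence with Uhlenbeck equivalence using \cite{Jason:Uhlenbeck, LoQin:miniwalls} (the paper makes this explicit via the associated graded $\bigoplus E_i^{**} \oplus (E_i^{**}/E_i)$ as in \cite[Theorem~8.2.11]{HL:Moduli}), and deduce the $\sigma_-$-statement from Proposition~\ref{prop:dualstability}. The only difference is that you supply more of the bookkeeping for the duality step, whereas the paper simply cites Proposition~\ref{prop:dualstability} and \cite[Proposition~2.2.7]{Minamide-Yanagida-Yoshioka:wall-crossing}.
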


\begin{proof}
The identification of $M_{\sigma_+}(\vv)$ with the Gieseker moduli space is well-known,
and follows with the same arguments as in \cite[Proposition 14.2]{Bridgeland:K3}.
For $\sigma_0$, two torsion-free sheaves $E, F$ become S-equivalent if and only if they have the same image in the
Uhlenbeck space (\cite[Theorem 3.1]{Jason:Uhlenbeck}, \cite[Section 5]{LoQin:miniwalls}): indeed,
if $E_i$ are the Jordan-H\"older factors of $E$ with respect to slope-stability, then
$E$ is S-equivalent to 
\[ \bigoplus E_i^{**} \oplus \left(E_i^{**}/E_i\right), \]
precisely as in \cite[Theorem 8.2.11]{HL:Moduli}.
Thus, Theorem \ref{thm:contraction} identifies $\pi^+$ with the morphism to the Uhlenbeck space.

The claim of $\sigma_-$-stability follows by
Proposition \ref{prop:dualstability} from the case of $\sigma_+$-stability; see also
see \cite[Proposition 2.2.7]{Minamide-Yanagida-Yoshioka:wall-crossing} in the case $\alpha = 1$.
\end{proof}

In other words, the coarse moduli space $M_{\sigma_0}(\vv)$ is isomorphic to the Uhlenbeck
compactification (\cite{JunLi:Uhlenbeck, Yoshioka:TwistedStability}) of the moduli space of
slope-stable vector bundles on $(X,\alpha)$.
Given a $(\beta,\omega)$-Gieseker stable sheaf $F \in M_{\omega}^{\beta}(-\vv)$, the
$\sigma_+$-stable object $F[1]$ becomes strictly semistable with respect to
$\sigma_0$ if and only if $F$ is not locally free, or if $F$ is not slope-\emph{stable}.

In particular, when the rank of $-\vv$ equals one, then the contraction morphism $\pi^+$
is the Hilbert-Chow morphism $\mathrm{Hilb}^n(X) \to \Sym^n(X)$; see also \cite[Example
10.1]{BM:projectivity}.

\subsection*{Totally semistable isotropic walls}

We start with the existence of a unique spherical stable object in the case the wall is totally semistable:

\begin{Lem}\label{lem:UniqueSpherical}
Assume that $\WW$ is a totally semistable wall for $\vv$.
\begin{enumerate}
\item  \label{enum:TotSStIsotropic1} There exists a unique spherical $\sigma_0$-stable object $S\in\PP_{\sigma_0}(1)$.
\item  \label{enum:TotSStIsotropic2} Let $E\in M_{\sigma_+}(\vv)$ be a generic object.
Then its HN filtration with respect to $\sigma_{-}$ has length $2$ and takes the form
\begin{equation}\label{eq:TotSStIsotropic}
S^{\oplus a} \to E \to F, \quad \text{ or } \quad  F \to E \to S^{\oplus a},
\end{equation}
with $a\in\Z_{>0}$.
The $\sigma_-$-semistable object $F$ is generic in $M_{\sigma_-}(\vv')$, for $\vv':=\vv(F)$, and $\dim M_{\sigma_-}(\vv') = \dim M_{\sigma_+}(\vv) = \vv^2 +2$.
\end{enumerate}
\end{Lem}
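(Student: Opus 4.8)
The plan is to combine Proposition~\ref{prop:stablespherical} with the Uhlenbeck reduction of the previous subsection. For part~\eqref{enum:TotSStIsotropic1}: since $\HH$ contains an isotropic class, Proposition~\ref{prop:stablespherical} leaves only case~\eqref{case:0spherical} (no spherical class in $\HH$) or case~\eqref{case:1spherical} (a unique spherical class up to sign, with a unique $\sigma_0$-stable spherical object $S\in\PP_{\sigma_0}(1)$), and the latter is exactly the assertion, so I only need to exclude~\eqref{case:0spherical}. By Proposition~\ref{prop:Jason}, $M_{\sigma_+}(\vv)$ is, after the shift, the twisted Gieseker moduli space $M^\beta_\omega(-\vv)$, and an object $F[1]$ of class $\vv$ is $\sigma_0$-stable precisely when $F$ is $\mu_\omega$-stable and locally free; hence ``$\WW$ totally semistable for $\vv$'' means there is no $\mu_\omega$-stable locally free twisted sheaf of class $-\vv$. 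Since $(-\vv)^2=\vv^2>0$, Yoshioka's analysis of moduli of stable sheaves and, in particular, of the existence of locally free stable sheaves on (twisted) K3 surfaces (\cite[Proposition~0.5]{Yoshioka:Irreducibility}, cf.\ also \cite{Yoshioka:Abelian,Yoshioka:TwistedStability}) shows that this can only fail if a spherical twisted bundle destabilizes, appearing as a JH factor of some $\sigma_0$-semistable object of class $\vv$; by Proposition~\ref{prop:HW}\eqref{enum:semistablehasfactorsinHW} its Mukai vector lies in $\HH$, contradicting case~\eqref{case:0spherical}. So case~\eqref{case:1spherical} holds.

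For part~\eqref{enum:TotSStIsotropic2}, write $S$ for the spherical $\sigma_0$-stable object just produced and $\ss:=\vv(S)$; as $S$ is rigid it stays stable on both sides of $\WW$ and $M_{\sigma_\pm}(m\ss)=\{S^{\oplus m}\}$ for all $m\ge 1$. I would fix a generic $E\in M_{\sigma_+}(\vv)$ — generic enough that it avoids the exceptional locus of the birational contraction $\pi^+$ of Theorem~\ref{thm:contraction} and that the relative HN filtration of Theorem~\ref{thm:HNfamily} restricts to its HN filtration — and write $A_1,\dots,A_l$ for its HN factors with respect to $\sigma_-$, with $\aa_i:=\vv(A_i)\in C_\WW\cap\HH$ by Proposition~\ref{prop:HW}\eqref{enum:HNfactorsinHW} and Remark~\ref{rem:HW}. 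The key step is to show $l=2$ with one factor equal to $S^{\oplus a}$ for some $a>0$; this is again where the Uhlenbeck picture does the work. Passing to the $\sigma_-$-side via Propositions~\ref{prop:Jason} and~\ref{prop:dualstability}, $E$ becomes the shift of the derived dual of a generic $(-\beta,\omega)$-Gieseker stable sheaf $G$ on $(X,\alpha^{-1})$; since a Gieseker stable sheaf is automatically $\mu_\omega$-semistable (compare reduced Hilbert polynomials), the double dual of $G$ together with its slope-Jordan--H\"older filtration splits off precisely the spherical bundle underlying $S$, with some multiplicity $a$, from a complementary piece — this is exactly the content of \cite{JunLi:Uhlenbeck}, \cite{Jason:Uhlenbeck} on the structure of the Li-Gieseker-Uhlenbeck morphism. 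Dualizing back yields the two-step filtration $S^{\oplus a}\to E\to F$ (or $F\to E\to S^{\oplus a}$, according to which of $S$, $F$ has the larger $\sigma_-$-phase), with $F$ the remaining $\sigma_-$-semistable factor of class $\vv':=\vv-a\ss$.

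It then remains to compute $\dim M_{\sigma_-}(\vv')$. By Theorem~\ref{thm:nonempty}, $\dim M_{\sigma_+}(\vv)=\vv^2+2$ since $\vv$ is primitive. The relative HN filtration gives a rational map $\mathrm{HN}\colon M_{\sigma_+}(\vv)\dashrightarrow M_{\sigma_-}(a\ss)\times M_{\sigma_-}(\vv')=\{\mathrm{pt}\}\times M_{\sigma_-}(\vv')$; as in the proof of Lemma~\ref{lem:nottotallysemistable}, two objects with the same $\mathrm{HN}$-image are $S$-equivalent with respect to $\sigma_0$, so the $\mathrm{HN}$-fiber through a generic $E$ is contained in the generically one-point fiber of $\pi^+$, whence $\mathrm{HN}$ is generically injective and $\vv^2+2=\dim M_{\sigma_+}(\vv)\le\dim M_{\sigma_-}(\vv')$. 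Conversely, for a generic $\sigma_-$-semistable $F$ of class $\vv'$ one has $\ext^1(F,S)=(\vv',\ss)>0$ (using $\Hom(S,F)=\Hom(F,S)=0$, which follows from the phase ordering together with $\sigma_+$-stability of the resulting extension), so Lemma~\ref{lem:stableextension} produces a $\sigma_+$-stable extension of class $\vv$ realizing $F$; thus $\mathrm{HN}$ is dominant and $\dim M_{\sigma_-}(\vv')\le\dim M_{\sigma_+}(\vv)$. Hence $\dim M_{\sigma_-}(\vv')=\dim M_{\sigma_+}(\vv)=\vv^2+2$, and genericity of $E$ makes $F$ generic in $M_{\sigma_-}(\vv')$. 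The main obstacle is the middle step: pinning down that the $\sigma_-$-HN filtration has length exactly two and cleanly isolating the rigid factor $S^{\oplus a}$, which forces one to track the slope-stability (socle and double-dual) structure on the Gieseker side carefully; the other delicate ingredient is the appeal in part~\eqref{enum:TotSStIsotropic1} to Yoshioka's existence theorem for $\mu_\omega$-stable locally free sheaves.
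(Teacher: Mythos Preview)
Your approach to part~\eqref{enum:TotSStIsotropic1} is different from the paper's and could in principle work, but it leans on a precise form of Yoshioka's existence theorem that you do not state: you need not just that the failure of $\mu_\omega$-stable locally free sheaves is governed by a spherical class, but that this class arises as the Mukai vector of a $\sigma_0$-JH factor of some semistable object of class $\vv$, so that Proposition~\ref{prop:HW} places it in $\HH_\WW$. The paper instead argues internally: the relative HN map $\mathrm{HN}\colon M_{\sigma_+}(\vv)\dashrightarrow\prod_i M_{\sigma_-}(\aa_i)$ combined with Mukai's Lemma~\ref{lem:Mukai} gives $\dim M_{\sigma_+}(\vv)\ge\sum_i\dim M_{\sigma_-}(\aa_i)$, while the containment of $\mathrm{HN}$-fibers in $\pi^+$-fibers forces the reverse inequality. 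Equality makes $\mathrm{HN}$ dominant, and then Theorem~\ref{thm:KLS} applied to each projection shows each $M_{\sigma_-}(\aa_i)$ is either a point or of full dimension $\vv^2+2$. Since $m\ge 2$, some factor is a point, hence spherical by Lemma~\ref{lem:JHspherical}.

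The genuine gap is in part~\eqref{enum:TotSStIsotropic2}. You write that ``$E$ becomes the shift of the derived dual of a generic $(-\beta,\omega)$-Gieseker stable sheaf $G$'', but $E$ is $\sigma_+$-stable, not $\sigma_-$-stable; Proposition~\ref{prop:Jason} gives $E=F[1]$ with $F$ a $(\beta,\omega)$-Gieseker stable sheaf on $(X,\alpha)$, and the description via derived duals applies to $\sigma_-$-stable objects, which $E$ is not. More seriously, the slope-JH filtration of $F$ together with the double-dual defect computes the $\sigma_0$-Jordan--H\"older filtration of $E$, \emph{not} its $\sigma_-$-HN filtration; nothing in the Uhlenbeck picture tells you the latter has length two, nor that its factors are a single spherical bundle plus one residual piece. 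Your converse step is also circular: you invoke Lemma~\ref{lem:stableextension}, which needs $F$ simple in $\PP_0(1)$, i.e.\ $\sigma_0$-stable, but generic $F\in M_{\sigma_-}(\vv')$ is only known to be $\sigma_-$-semistable; and you justify $\Hom(S,F)=\Hom(F,S)=0$ via ``$\sigma_+$-stability of the resulting extension'', which is what you are trying to prove.

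The paper's argument for~\eqref{enum:TotSStIsotropic2} falls out of its proof of~\eqref{enum:TotSStIsotropic1}: from $\sum_i\dim M_{\sigma_-}(\aa_i)=\vv^2+2$ with each summand equal to $0$ or $\vv^2+2$, exactly one factor is full-dimensional and the rest are points. Any $\sigma_-$-semistable object of class $m\ss$ is $S$-equivalent to $S^{\oplus m}$, so all zero-dimensional HN factors share the same $\sigma_-$-phase; hence there is at most one such factor, forcing $m=2$. Because your shortcut for~\eqref{enum:TotSStIsotropic1} bypasses the dimension equality, you lose exactly the tool that makes~\eqref{enum:TotSStIsotropic2} immediate.
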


The idea of the proof is very similar to the one in Lemma \ref{lem:nottotallysemistable}.
The only difference is that we cannot use a completely numerical criterion like Lemma \ref{lem:numericaldimensions} and we will replace it by Mukai's Lemma \ref{lem:Mukai}.

\begin{proof}[Proof of Lemma \ref{lem:UniqueSpherical}]
We first prove \eqref{enum:TotSStIsotropic1}.
We consider again the two maps
\begin{align*}
&\pi^+\colon M_{\sigma_+}(\vv) \to \overline{M},\\
&\mathrm{HN}\colon M_{\sigma_+}(\vv) \dashrightarrow M_{\sigma_-}(\aa_1)\times \dots \times M_{\sigma_-}(\aa_m).
\end{align*}
The first one is induced by $\ell_{\sigma_0}$ and the second by the existence of relative HN filtrations.
By \cite[Section 4.5]{HL:Moduli}, we have, for all $i=1,\dots,m$ and for all $A_i\in M_{\sigma_-}(\aa_i)$,
\[
\dim M_{\sigma_-}(\aa_i) \leq \ext^1(A_i,A_i).
\]
Hence, by Mukai's Lemma \ref{lem:Mukai}, we deduce
\begin{equation}\label{eq:InequalityDimensionHNFactorsIsotropic}
\dim M_{\sigma_+}(\vv) \geq \sum_{i=1}^m \dim M_{\sigma_-}(\aa_i).
\end{equation}
Equation \eqref{eq:InequalityDimensionHNFactorsIsotropic} is the analogue of \eqref{eq:InequalityDimensionHNFactors} in the non-isotropic case.
Since any curve contracted by $\mathrm{HN}$ is also contracted by $\pi^+$, it follows that
\[
\sum_{i = 1}^m \dim M_{\sigma_-}(\aa_i) \ge \dim \overline{M} = \dim M_{\sigma_+}(\vv).
\]
Therefore equality holds, and $\mathrm{HN}$ is a dominant map.

This shows that the projections
\[
M_{\sigma_+}(\vv) \dashrightarrow M_{\sigma_-}(\aa_i)
\]
are dominant.
By Theorem \ref{thm:KLS}, $M_{\sigma_-}(\aa_i)$ has symplectic singularities.
Hence, we deduce that either $M_{\sigma_-}(\aa_i)$ is a point, or $\dim M_{\sigma_-}(\aa_i) = \dim M_{\sigma_+}(\vv) = \vv^2+2$.
Since $m\geq 2$, by Lemma \ref{lem:JHspherical} this shows the existence of a spherical $\sigma_0$-stable object in $\PP_{\sigma_0}(1)$.
By Proposition \ref{prop:stablespherical}, there can only be one such spherical object.

To prove \eqref{enum:TotSStIsotropic2}, we first observe that by uniqueness (and by Lemma \ref{lem:JHspherical} again), all $\sigma_-$-spherical objects appearing in a HN filtration of a generic element $E \in M_{\sigma_+}(\vv)$ must be $\sigma_0$-stable as well.
As a consequence, the length of a HN filtration of $E$ with respect to $\sigma_-$ must be $2$ and have the form \eqref{eq:TotSStIsotropic}.
Since the maps $M_{\sigma_+}(\vv) \dashrightarrow M_{\sigma_-}(\aa_i)$ are dominant, the
$\sigma_-$-semistable object $F$ is generic.
\end{proof}

We can now prove the first implication for the characterization of totally semistable walls in the isotropic case.
We let $\ss:=\vv(S)$, where $S$ is the unique $\sigma_0$-stable object in $\PP_{\sigma_0}(1)$.

\begin{Prop}\label{prop:TotSStWallIsotropicHasNumericalProps}
Let $\WW$ be a totally semistable wall for $\vv$. Then either
there exist an isotropic vector $\ww$ with $(\ww,\vv)=1$, or the effective
spherical class $\ss$ satisfies $(\ss,\vv)<0$.
\end{Prop}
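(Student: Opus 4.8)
We are in the isotropic case, so by Lemma \ref{lem:StabilityIsotropic} we may apply a Fourier-Mukai transform and assume $\ww_0 = (0,0,1)$ is the class of a point, $\sigma_0 = \sigma_{\omega,\beta}$, and $M_{\sigma_+}(\vv)$ is the Gieseker moduli space $M_\omega^\beta(-\vv)$, with $\pi^+$ the Li-Gieseker-Uhlenbeck morphism (Proposition \ref{prop:Jason}). Since $\WW$ is a totally semistable wall, Lemma \ref{lem:UniqueSpherical} gives the unique $\sigma_0$-stable spherical object $S$ of class $\ss$, and the HN filtration of a generic $E \in M_{\sigma_+}(\vv)$ with respect to $\sigma_-$ has the form $S^{\oplus a} \to E \to F$ or $F \to E \to S^{\oplus a}$ with $F$ generic in $M_{\sigma_-}(\vv')$, $\vv' = \vv - a\ss$, and $\dim M_{\sigma_-}(\vv') = \vv^2 + 2$. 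The plan is to extract numerical consequences from this filtration and the genericity/dimension statement.

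**Main steps.** First, from $(\vv')^2 = \dim M_{\sigma_-}(\vv') - 2 = \vv^2$ together with $\vv' = \vv - a\ss$ and $\ss^2 = -2$, I would expand: $\vv^2 = (\vv - a\ss)^2 = \vv^2 - 2a(\vv,\ss) - 2a^2$, which forces $(\vv,\ss) = -a < 0$ provided $a > 0$ (which is part of Lemma \ref{lem:UniqueSpherical}\eqref{enum:TotSStIsotropic2}). So as soon as a genuine spherical summand appears in the HN filtration, we immediately get $(\ss,\vv) < 0$, which is the second alternative. The remaining case to analyze is when this does not happen — but Lemma \ref{lem:UniqueSpherical} already guarantees the spherical object $S$ does appear with $a \geq 1$, so the computation above in fact always applies, UNLESS the reduction via Fourier-Mukai changed which class we should call $\vv$. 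The subtlety: the genericity in Lemma \ref{lem:UniqueSpherical} concerns $\sigma_-$-semistable $F$, and I need to know $\vv'$ is genuinely the class appearing — which it is by construction. So in fact the conclusion $(\ss,\vv) = -a < 0$ seems to follow directly, giving the second alternative; the "isotropic $\ww$ with $(\ww,\vv)=1$" alternative should arise in the degenerate sub-case where the Fourier-Mukai setup itself is forced — i.e., where $\vv$ after transform has rank making $M_{\sigma_+}(\vv)$ a Hilbert scheme and $\WW$ is the Hilbert-Chow wall.

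**Reconciling the two alternatives.** I would argue as follows: run the above computation. If the unique spherical $S$ has $(\ss,\vv) < 0$ we are done. Otherwise $(\ss,\vv) \geq 0$; but then the HN-filtration computation forces a contradiction with $\dim M_{\sigma_-}(\vv') = \vv^2+2$ unless $\vv'$ is itself not of the expected dimension, i.e. $\vv' = m\vv_0'$ with $(\vv_0')^2 \leq 0$ and $m > 1$ (Theorem \ref{thm:nonempty}\eqref{enum:dimandsquare}). Since we are in the isotropic lattice $\HH$, the only vectors with square $\leq 0$ are (up to sign) the isotropic $\ww_0, \ww_1$ and the spherical classes. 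Tracking through, $F$ generic must then be (a shift of) a direct sum related to a point-moduli, which forces $\vv' $ to be a multiple of an isotropic class $\ww$, and counting $\vv = \vv' + a\ss$ against $\vv^2 > 0$ pins down $(\ww,\vv)=1$. I expect the main obstacle to be this last bookkeeping: carefully handling the possibility that $F$ (or $S^{\oplus a}$) contributes a non-reduced or lower-dimensional moduli factor, and showing that this precisely corresponds to the isotropic-$\ww$ alternative rather than some third possibility — this is where one must invoke Theorem \ref{thm:nonempty}\eqref{enum:dimandsquare} and the structure of $\HH$ (signature $(1,-1)$, containing an isotropic class) most carefully, analogously to the non-isotropic arguments in Lemma \ref{lem:nottotallysemistable} but with Mukai's Lemma \ref{lem:Mukai} replacing the purely numerical Lemma \ref{lem:numericaldimensions}.
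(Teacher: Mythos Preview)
Your approach is the same as the paper's, and the case $(\vv')^2 > 0$ is handled correctly: the dimension equality of Lemma~\ref{lem:UniqueSpherical} plus Theorem~\ref{thm:nonempty}\eqref{enum:dimandsquare} force $(\vv')^2 = \vv^2$, and expanding $(\vv - a\ss)^2$ gives $(\ss,\vv) = -a < 0$.

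The gap is in the remaining case. You are right that when the dimension formula fails, $\vv'$ must be a multiple of an isotropic class (the spherical possibility is ruled out since $M_{\sigma_-}(\vv')$ would then be a point, contradicting $\dim M_{\sigma_-}(\vv') = \vv^2 + 2 \ge 4$). But your assertion that ``counting $\vv = \vv' + a\ss$ against $\vv^2 > 0$ pins down $(\ww,\vv) = 1$'' is exactly the step that requires work, and it does not follow from signature considerations alone. The paper argues as follows: from $\vv^2 = 2a(\vv',\ss) - 2a^2 > 0$ one gets $(\vv',\ss) > 0$, so $\vv' = c\ww_0$; the coarse moduli space is $\Sym^c X$, of dimension $2c$, so the dimension equality gives $c = n$ where $\vv^2 = 2n-2$. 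Expanding $\vv^2 = (a\ss + n\ww_0)^2$ then yields the Diophantine constraint
\[
n - 1 = a\bigl(n(\ss,\ww_0) - a\bigr),
\]
and since $(\ss,\ww_0) > 0$ and the right-hand side (when positive) is at least $n(\ss,\ww_0) - 1$, this forces $(\ss,\ww_0) = 1$ and $a \in \{1, n-1\}$. The two solutions give $(\vv,\ww_0) = 1$ and $(\vv,\ww_1) = 1$ respectively (using $\ww_1 = \ww_0 + \ss$). This Diophantine step is the content of the isotropic alternative; your sketch identifies where it goes but does not supply it.
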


\begin{proof}
We continue to use the notation of Lemma \ref{lem:UniqueSpherical}; in particular,
let $a > 0$ be as in the short exact sequence \eqref{eq:TotSStIsotropic}, and
$\vv' = \vv - a \ss$.

If $(\vv')^2>0$, then by Lemma \ref{lem:UniqueSpherical} and Theorem \ref{thm:nonempty}\eqref{enum:dimandsquare}, we have $(\vv')^2 = \vv^2$.
Since $\vv' = \vv - a \ss, a>0$, this implies $(\ss,\vv)<0$.

So we may assume $\vv'^2 = 0$. Then
$\vv^2 = 0 + 2a (\vv', \ss) - 2a^2$, and it follows that $(\vv', \ss) > 0$.
In the notation of Lemma \ref{lem:StabilityIsotropic}, this means that $\vv'$ is a positive multiple
of $\ww_0$, which we can take to be the class of a point: 
$\vv' = c \ww_0 = c (0, 0, 1)$.

Then the coarse moduli space $M_{\sigma_0}(\vv')$ is the symmetric product
$\Sym^c X$; if we define $n$ by $\vv^2 = 2n-2$, then 
the equality of dimensions in Lemma \ref{lem:UniqueSpherical} becomes $c = n$.
Therefore
\[
2n-2 = \vv^2 = (a \ss + n \ww_0)^2 = -2 a^2 + 2 a n (\ss, \ww_0)
\]
or, equivalently,
\begin{equation}\label{eq:Columbus0902II}
n-1 = a \bigl(n (\ss, \ww_0) - a\bigr)
\end{equation}
Recall that $(\ss, \ww_0) > 0$. If the right-hand side is positive, then it is at least
$n(\ss, \ww_0) - 1$. Thus, \eqref{eq:Columbus0902II} only has solutions if $(\ss, \ww_0) = 1$, in
which case they are $a = 1$ and $a = n-1$.
In the former case, $(\vv,\ww_0)=1$. In the latter case, observe that $\ww_1 = \ww_0 + \ss$,
and $(\vv,\ww_1)=1$ follows directly.
\end{proof}

The converse statement follows from Proposition \ref{prop:sphericaltotallysemistable} above, and Lemma \ref{lem:NumericalPropertiesImplyTotSStIsotropicII} below.

\begin{Lem}\label{lem:NumericalPropertiesImplyTotSStIsotropicII}
Let $\WW$ be a potential wall.
If there exists an isotropic class $\ww \in \HH_\WW$ with $(\ww,\vv)=1$, then $\WW$ is a totally semistable wall.
\end{Lem}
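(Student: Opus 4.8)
The plan is to exploit the isotropic class $\ww$ exactly as in Section \ref{sec:iso}: since $\ww^2 = 0$ and $\ww$ is primitive, after possibly replacing $\ww$ by $-\ww$ we may assume $\ww \in C_\WW$ is effective, and then by Theorem \ref{thm:nonempty} the moduli space $M_{\sigma_0}(\ww)$ is non-empty and (generically, after a small deformation if needed) parametrizes a $\sigma_0$-stable object, which is a twisted K3 surface $Y$. Applying the associated Fourier-Mukai equivalence $\Phi \colon \Db(X,\alpha) \to \Db(Y,\alpha')$ with $\Phi_*(\ww) = (0,0,1)$, and using Proposition \ref{prop:Jason} together with the notation normalization there, I would reduce to the situation in which $\WW$ is a potential wall for a vector $\vv$ with $(\vv, \ww) = 1$ where $\ww = (0,0,1)$ is the class of a point. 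Writing $\vv = (r, c, s)$ with respect to the Mukai lattice, the condition $(\vv, \ww) = 1$ forces $r = -1$ (or $r=1$ after a shift), so that $-\vv$ has rank one; hence $M_{\sigma_+}(\vv) = M_\omega^\beta(-\vv)$ is a moduli space of rank-one twisted sheaves, i.e., (up to twist) a twisted Hilbert scheme of points.

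The heart of the argument is then the observation, already contained in Proposition \ref{prop:Jason} and the discussion following it, that for a rank-one object the contraction morphism $\pi^+$ induced by $\ell_{\sigma_0}$ on the wall $\WW$ is the Hilbert–Chow type morphism $M_\omega^\beta(-\vv) \to \overline M$; every non-locally-free rank-one twisted sheaf (equivalently, every ideal-sheaf-type object with non-empty zero-dimensional quotient) becomes strictly $\sigma_0$-semistable. Since a generic object of $M_{\sigma_+}(\vv)$ of rank one is \emph{not} locally free — its colength drops, and a generic ideal sheaf $I_Z$ of a length-$\ell$ subscheme with $\ell \ge 1$ is visibly not locally free — no generic object survives as $\sigma_0$-stable. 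This is precisely the statement that $\WW$ is a totally semistable wall: $M^{st}_{\sigma_0}(\vv) = \emptyset$. I should also check the degenerate possibility that $\vv^2$ forces $\ell = 0$; but then $\vv^2 = -2$, $\vv$ would be spherical, contradicting the standing hypothesis $\vv^2 > 0$ (or, in the non-primitive setting, handled by Lemma \ref{lem:nonprimitive}), so there is genuinely at least one point being added and the Hilbert–Chow contraction is non-trivial.

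The main obstacle, and the part requiring care, is the reduction step: one must ensure that the Fourier-Mukai transform $\Phi$ can be chosen so that $\Phi_*(\sigma_0)$ (up to the $\widetilde{\GL}_2^+(\R)$-action) is a geometric stability condition $\sigma_{\omega,\beta}$ with skyscrapers of points stable, and that $\sigma_+$ in the adjacent chamber really does correspond to $(\beta,\omega)$-Gieseker stability of shifted sheaves — this is exactly what Proposition \ref{prop:Jason} provides, but applying it requires knowing that $\WW$, viewed after transform, is the Li-Gieseker-Uhlenbeck wall rather than some other wall. For that one uses that $\HH_\WW$ has signature $(1,-1)$ (Proposition \ref{prop:HW}\eqref{enum:signature}) and that the rank-one locus of $M_{\sigma_+}(\vv)$ has non-locally-free generic member; by the classification of isotropic walls (as in Lemma \ref{lem:StabilityIsotropic}, via \cite[Theorem 12.1]{Bridgeland:K3}) the only walls with the correct lattice signature that contract something are the Uhlenbeck/Hilbert–Chow ones, and a rank-one Gieseker moduli space always has its wall of this type realized. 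Once the reduction is in place the conclusion is immediate from the explicit geometry of the Hilbert–Chow morphism, so I expect this lemma to be short modulo invoking Proposition \ref{prop:Jason}.
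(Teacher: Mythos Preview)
There is a genuine gap in the reduction step. You assume that the Fourier--Mukai transform can be carried out with the \emph{given} isotropic class $\ww$ (so that $\ww$ becomes the class of a point), but this requires $M_{\sigma_0}^{st}(\ww) \neq \emptyset$, and Lemma~\ref{lem:StabilityIsotropic} shows that this can fail. The parenthetical ``after a small deformation if needed'' does not help: deforming $\sigma_0$ off the wall $\WW$ would defeat the purpose, since the question is precisely whether objects of class $\vv$ are stable \emph{on} $\WW$. Concretely, Lemma~\ref{lem:StabilityIsotropic} provides two effective primitive isotropic classes $\ww_0, \ww_1 \in \HH_\WW$, and only $\ww_0$ is guaranteed to admit $\sigma_0$-stable objects; the class $\ww$ with $(\ww,\vv)=1$ may well be $\ww_1$, for which $\WW$ is itself a totally semistable wall (there is a $\sigma_0$-stable spherical $S$ with $(\ss,\ww_1)<0$). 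In that situation you cannot realize $\ww$ as the point class on any K3 surface via a transform compatible with $\sigma_0$.

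The paper's proof handles exactly this bifurcation. If $\ww = \ww_0$, your Hilbert--Chow argument goes through (and is what the paper invokes, citing \cite[Example~10.1]{BM:projectivity}). If instead $\ww = \ww_1$, one writes $\ww_1 = \ww_0 + r\ss$ with $r = (\ss,\ww_0) > 0$, so that $1 = (\vv,\ww_1) = (\vv,\ww_0) + r(\vv,\ss)$; since $(\vv,\ww_0) > 0$ this forces $(\vv,\ss) \le 0$. If $(\vv,\ss) < 0$, Proposition~\ref{prop:sphericaltotallysemistable} gives total semistability directly from the effective spherical class, with no Fourier--Mukai transform needed. If $(\vv,\ss) = 0$, then $(\vv,\ww_0) = 1$ and one is back in the Hilbert--Chow case, now via $\ww_0$. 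Your proposal is correct once restricted to the case $\ww = \ww_0$, but the missing case is not a technicality: it requires the spherical-twist machinery of Section~\ref{sec:noniso-totsemistable} rather than the Uhlenbeck picture.
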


\begin{proof}
Note that by Lemma \ref{lem:StabilityIsotropic}, the primitive class $\ww$ is automatically
effective.
Let $\sigma_0\in\WW$ be a generic stability condition.
If $M_{\sigma_0}^{st}(\ww)\neq\emptyset$, then we can assume $\ww=(0,0,1)$.
In this case $-\vv$ has rank one, $M_{\sigma_+}(\vv)$ is the Hilbert scheme,
and $\WW$ is the Hilbert-Chow wall discussed in
\cite[Example 10.1]{BM:projectivity}; in particular, it is totally 
semistable.

Otherwise, $M_{\sigma_0}^{st}(\ww)=\emptyset$; hence, in the notation
of Lemma \ref{lem:StabilityIsotropic}, we are in the case $\ww = \ww_1$, and there exists a
$\sigma_0$-stable spherical object $S$, with Mukai vector $\ss$, such that $(\ss, \ww_1)<0$.

Write $\ww_1 = \ww_0 + r \ss$, where $r = (\ss, \ww_0) \in\Z_{>0}$.
Then
\[
1 = (\vv,\ww_1) = (\vv,\ww_0) + r (\vv,\ss).
\]
By Lemma \ref{lem:StabilityIsotropic}, $(\vv,\ww_0)$ is strictly positive, and
so $(\vv,\ss)\leq 0$. If the inequality is strict, Proposition \ref{prop:sphericaltotallysemistable}
applies.  Otherwise, $(\vv, \ss) = 0$ and $(\vv,\ww_1)=(\vv,\ww_0)=1$; thus we are again
in the case of the Hilbert-Chow wall, and $\WW$ is a totally semistable wall for $\vv$.
\end{proof}

\subsection*{Divisorial contractions}

We now deal with divisorial contractions for isotropic walls.
The case of a flopping wall, a fake wall, and no wall will be examined in Section \ref{sec:flopping}.

\begin{Prop}\label{prop:DivisorialWallIsotropicHasNumericalProps}
Let $\WW$ be a wall inducing a divisorial contraction.
Assume that $(\vv,\ww)\neq1,2$, for all isotropic vectors $\ww\in \HH$.
Then there exists an effective spherical class $\ss\in \HH$ with $(\ss,\vv)=0$.
\end{Prop}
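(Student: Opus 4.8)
The plan is to reduce, via the Fourier--Mukai transform already fixed in this section, to the Li--Gieseker--Uhlenbeck picture of Proposition~\ref{prop:Jason}, and then run a dimension count in the spirit of Lemmas~\ref{lem:notdivisorial} and~\ref{lem:UniqueSpherical}. After the transform we may assume $\ww_0 = (0,0,1)$ is the class of a point, that $\sigma_+$ corresponds to $(\beta,\omega)$-Gieseker stability with $M_{\sigma_+}(\vv) = M_\omega^\beta(-\vv)$, and that $\pi^+\colon M_{\sigma_+}(\vv) \to \overline M$ is the morphism to the Uhlenbeck compactification. Since $\WW$ is a divisorial wall there is an irreducible divisor $D \subset M_{\sigma_+}(\vv)$ contracted by $\pi^+$. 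First I would show the rank is at least three: writing $\vv = \lambda\ww_0 + \mu\ww_1$ with $\lambda,\mu \ge 0$ (Lemma~\ref{lem:StabilityIsotropic}), primitivity of $\vv$ and $\vv^2 > 0$ force $\mu > 0$, so the rank $r$ of $-\vv$, which equals $(\vv,\ww_0)$, is $\ge 1$; since $\ww_0\in\HH$ is isotropic and $(\vv,\ww)\neq 1,2$ for all isotropic $\ww$, we get $r\ge 3$. A direct computation then shows that the locus of non--locally-free sheaves in $M_\omega^\beta(-\vv)$ has codimension at least two — a generic such sheaf $F$ sits in $0 \to F \to F^{**} \to k(x) \to 0$ with $F^{**}$ $\mu$-stable of Mukai vector $-\vv+\ww_0$, and this locus has dimension $\vv^2 - r + 3 \le \vv^2 < \dim D$ — so a generic $E\in D$ is, as a sheaf, locally free and $\mu_\omega$-semistable but not $\mu_\omega$-stable.

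Next I would analyze the $\mu_\omega$-Jordan--H\"older factors $G_1,\dots,G_k$ of a generic $E\in D$, setting $\vv_i := \vv(G_i[1])$: each $G_i$ is $\mu$-stable and locally free, so $\vv_i$ is primitive, lies in $\HH$ by Proposition~\ref{prop:HW}, is effective in $C_\WW$ with $\vv_i^2 \ge -2$, and $\vv = \vv_1 + \dots + \vv_k$ — after first reducing, by a sequence of spherical twists as in Corollary~\ref{cor:notdivisorial} and Lemma~\ref{lem:sphericaldivisorialwall}, to the case where $\vv$ is the minimal class of its $G_\HH$-orbit and the filtration is multiplicity-free (the $\vv_i$ pairwise distinct). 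The relative Harder--Narasimhan filtration with respect to $\sigma_-$ (Theorem~\ref{thm:HNfamily}) gives a rational map $\mathrm{HN}_D\colon D \dashrightarrow \prod_i M_{\sigma_-}(\vv_i)$ through which $\pi^+|_D$ factors, since objects with the same $\sigma_-$-Harder--Narasimhan factors are $S$-equivalent with respect to $\sigma_0$; and $\dim\pi^+(D)\ge \dim D - 1 = \vv^2$ by Theorem~\ref{thm:NamikawaWierzba}. Combining this with Mukai's Lemma~\ref{lem:Mukai} and $\dim M_{\sigma_-}(\vv_i) = \vv_i^2 + 2 = \ext^1(G_i,G_i)$ gives
\[
\vv^2 \le \sum_i (\vv_i^2 + 2) \le \ext^1(E,E) = \vv^2 + 2 ,
\]
and, more precisely, computing $\dim D = \vv^2 + 1$ directly from the extension structure of $E$ yields the identity $\sum_{i<j}(\vv_i,\vv_j) = k$. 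Since $(\vv_i,\vv_j)\ge 1$, this is possible only for $k = 2$ with $(\vv_1,\vv_2) = 2$, or $k = 3$ with every $(\vv_i,\vv_j) = 1$.

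In either case I would finish by lattice arithmetic. None of the $\vv_i$ can be isotropic, since an isotropic $\vv_i$ would give $(\vv_i,\vv) = 2$, contradicting the hypothesis. The $\vv_i$ are pairwise non-proportional (distinct effective primitive classes, by Remark~\ref{rmk:GenericOnTheWall}), so each pair spans $\HH_\Q$; since $\HH$ has signature $(1,-1)$ the discriminant inequality $\vv_i^2\vv_j^2 < (\vv_i,\vv_j)^2 \le 4$ then forces some $\vv_i$ — call it $\ss$ — to satisfy $\ss^2 = -2$ (if all $\vv_i^2 \ge 2$ the product would be $\ge 4$). For this $\ss$ one computes $(\ss,\vv) = \ss^2 + \sum_{j\neq i}(\ss,\vv_j) = -2 + 2 = 0$, and $\ss = \vv(G_i[1])$ is effective; this is the required orthogonal spherical class (and, as in Proposition~\ref{prop:sphericaldivisorialwall}, one then identifies $D$ with the Brill--Noether divisor $\theta(\ss)$). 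The technically delicate point is the middle step: making the dimension estimate sharp enough to pin down the shape of the $\mu$-Jordan--H\"older filtration of a generic element of $D$ — excluding filtrations with repeated factors (spherical repeated factors produce no slack in Mukai's inequality) or with four or more factors, and accommodating the case where $\WW$ is a totally semistable wall. As in Lemma~\ref{lem:sphericaldivisorialwall}, the clean way to handle all of these at once is the reduction via spherical twists to the minimal class $\vv_0$ of the $G_\HH$-orbit, where $(\vv_0,\ss)\ge 0$ and a generic element of the contracted divisor is genuinely a two-term extension; once this is set up, the dimension count and the arithmetic above are elementary.
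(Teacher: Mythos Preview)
Your overall strategy---a dimension count on the contracted divisor $D$ via a relative filtration, combined with Theorem~\ref{thm:NamikawaWierzba}---is exactly the engine of the paper's proof. But the execution has two genuine gaps.

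First, you conflate two different filtrations. You introduce the $\mu_\omega$-Jordan--H\"older factors $G_i$ of a generic $E\in D$ (with classes $\vv_i$), then invoke the relative Harder--Narasimhan filtration with respect to $\sigma_-$ to define $\mathrm{HN}_D$. These are not the same: the $\sigma_-$-HN factors are $\sigma_-$-semistable and their classes $\aa_j$ need not be primitive---they can be non-trivial multiples of the spherical class or of an isotropic class. The paper works \emph{only} with the $\sigma_-$-HN filtration, and this is essential: its Step~1 shows that some $\aa_j$ is a multiple of a spherical class by writing $\vv = n_0\ww_0 + n_1\ww_1 + \aa$ and deriving a dimension contradiction directly from the hypothesis $(\vv,\ww_i)\ge 3$. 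Your detour through local-freeness and the $\mu$-JH filtration does not feed into the map $\mathrm{HN}_D$ you actually need.

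Second, your identity $\sum_{i<j}(\vv_i,\vv_j)=k$ is not what the inequalities give. From $\dim\pi^+(D)=\vv^2$ and Mukai's Lemma you only get $\sum_i(\vv_i^2+2)\in\{\vv^2,\vv^2+2\}$, hence $\sum_{i<j}(\vv_i,\vv_j)\in\{k-1,k\}$; and your claim $(\vv_i,\vv_j)\ge 1$ fails when one factor is spherical (stability only gives $\ge 0$). More seriously, your reduction ``to the minimal class of the $G_\HH$-orbit'' by spherical twists is circular here: Corollary~\ref{cor:notdivisorial} and Lemma~\ref{lem:sphericaldivisorialwall} are non-isotropic statements, and the fact that a divisorial contraction for $\vv$ corresponds to one for $\rho_\ss(\vv)$ is precisely what the paper establishes in its Step~3, \emph{after} Steps~1 and~2 have shown $(\ss,\vv)\le 0$.

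The paper's argument is shorter and avoids all of this: it never reduces to locally free sheaves or to the minimal class. It proceeds in three steps---(1) some $\sigma_-$-HN factor has class a multiple of the unique spherical $\ss$ (else the purely isotropic/positive decomposition contradicts $\dim\pi^+(D)=\vv^2$ via the hypothesis $(\vv,\ww_i)\ge 3$); (2) a second dimension count, now writing $\vv=a\ss+b\ww_0+\aa$, forces $(\ss,\vv)\le 0$; (3) if $(\ss,\vv)<0$, apply Proposition~\ref{prop:sphericaltotallysemistable} and note the hypothesis is $\rho_\ss$-invariant, so $\vv'=\rho_\ss(\vv)$ would also induce a divisorial contraction with $(\ss,\vv')>0$, contradicting Step~2.
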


\begin{proof}
The proof is similar to the one of Lemma \ref{lem:notdivisorial}: in particular, we are going to use Theorem \ref{thm:NamikawaWierzba}.
Let $D\subset M_{\sigma_+}(\vv)$ be an irreducible divisor contracted by $\pi^+\colon M_{\sigma_+}(\vv)\to\overline{M}$.
We know that $\dim \pi^+(D) = \vv^2$.
Consider the rational map
\[
\mathrm{HN}_D\colon D\dashrightarrow M_{\sigma_-}(\aa_1)\times\dots\times M_{\sigma_-}(\aa_l)
\]
induced by the relative HN filtration with respect to $\sigma_-$.
We let $I \subset \{1, \dots, l\}$ be the subset of indices $i$ with $\aa_i^2 > 0$, and
$\aa = \sum_{i \in I} \aa_i$. We can assume $\abs{I} < l$, otherwise the proof is identical
to Lemma \ref{lem:notdivisorial}.

\medskip

\noindent
{\bf Step 1.} We show that there is an $i$ such that $\aa_i$ is a multiple of a 
spherical class $\ss$.

Assume otherwise.
Then we can write $\vv = n_0 \ww_0 + n_1 \ww_1 + \aa$.
By symmetry, we may assume $n_1 \ge n_0$; in particular $n_1 \neq 0$.
Also note that for $i = 0, 1$ we have $(\ww_0,\ww_1)\geq1$,
$(\vv,\ww_i)\geq3$ and $(\ww_i, \aa) \ge 1$ as long as $\aa \neq 0$.

In case $\abs{I} \ge 1$, i.e., $\aa \neq 0$, we obtain a contradiction by
\begin{align}
\vv^2 & = \bigl( (\aa + n_0 \ww_0) + n_1 \ww_1\bigr)^2
 = \aa^2 + 2 n_0 (\aa, \ww_0) + 2 n_1 (\vv, \ww_1) \nonumber\\
& \ge \aa^2 + 2 n_0 + 6 n_1 
> \aa^2 + 2 + 2 n_0 + 2 n_1 \label{dumb2} \\
& \ge \sum_{i \in I} (\aa_i^2 + 2) + 2 n_0 + 2 n_1 = \sum_{i = 1}^l \dim M_{\sigma_-}(\aa_i) \ge
\dim \pi^+(D) = \vv^2,
\label{dumb3}
\end{align}
where we used the numerical observations in \eqref{dumb2},
and Lemma \ref{lem:numericaldimensions} for the case $\abs{I} > 1$ in \eqref{dumb3}.

Otherwise, if $\abs{I} = 0$, then $\vv = n_0 \ww_0 + n_1 \ww_1$ with $n_0, n_1 > 0$ and
$n_i ( \ww_0, \ww_1) \ge 3$ by the assumption $(\vv, \ww_i) \ge 3$. We get
a contradiction from
\begin{align*}
\vv^2 = 2n_0 n_1 (\ww_0, \ww_1)
& = 2n_0 + 2n_1 + 2 (n_0 -1 ) (n_1-1) -2 + 2n_0n_1 \bigl((\ww_0, \ww_1) -1\bigr) \\
& > 2n_0 + 2n_1 = \sum_{i = 1}^l \dim M_{\sigma_-}(\aa_i) \ge \vv^2.
\end{align*}

\medskip

{\bf Step 2.} We show $(\ss,\vv)\leq0$.

Assume for a contradiction that $(\ss,\vv)>0$.
Using $\ww_1=\rho_{\ss}(\ww_0)$ we can write $\vv = a\ss + b\ww_0 + \aa$.
By Step 1, we have $a>0$.
In case $\aa \neq 0$, we use $(\aa, \ww_0) > 0$ to get
\begin{align*}
\aa^2 & = \bigl( \left(\vv -a\ss\right) - b\ww_0\bigr)^2 \\ 
 &= \vv^2 -2a(\vv,\ss) -2a^2 -2b (\aa + b\ww_0 ,\ww_0) \\
 &\leq \vv^2 -2a(\vv,\ss) -2a^2 -2b.
\end{align*}
This leads to a contradiction:
\[
\vv^2 >\vv^2 -2a(\vv,\ss) -2a^2 +2 \ge \aa^2 +2 + 2b 
\ge \sum_{i=1}^l \dim M_{\sigma_-}(\aa_i) \ge \vv^2.
\]
If $\aa = 0$, our assumptions give $a(\ss,\ww_0) = (\vv, \ww_0) > 2$ and
$-2a + b(\ss, \ww_0) = (\ss, \vv) > 0$. This leads to 
\[
\vv^2 = -2a^2 + 2ab (\ss, \ww_0) > 
 ab(\ss, \ww_0) > 2b = 
\sum_{i=1}^l \dim M_{\sigma_-}(\aa_i) \ge \vv^2.
\]

\medskip

\noindent
{\bf Step 3.} We show $(\ss,\vv)=0$.

Assume for a contradiction that $(\ss,\vv)<0$.
By Proposition \ref{prop:sphericaltotallysemistable}, $\WW$ is a totally semistable wall for $\vv$.
We consider $\vv'=\rho_{\ss}(\vv)$ as in Lemma \ref{lem:UniqueSpherical}.
The wall $\WW$ induces a divisorial contraction for $\vv$ if and only if it induces one for $\vv'$.
But, since $(\vv,\ww)\neq1,2$, for all $\ww$ isotropic, then $(\vv',\ww)\neq1,2$ as well.
Moreover, $(\ss,\vv')>0$.
This is a contradiction, by Step 2.
\end{proof}

The converse of Proposition \ref{prop:DivisorialWallIsotropicHasNumericalProps} is a consequence of the following three lemmas:

\begin{Lem}\label{lem:NumericalPropertiesImplyDivisorialWallI}
Assume that $(\vv,\ww_0)=2$.
Then $\WW$ induces a divisorial contraction.
\end{Lem}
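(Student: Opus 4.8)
The strategy is to pass to the Fourier--Mukai normalization set up in this section and identify $\pi^+$ with the Li--Gieseker--Uhlenbeck morphism. By Proposition \ref{prop:Jason} we may assume $\ww_0 = (0,0,1)$ is the class of a point, $M_{\sigma_+}(\vv) \cong M_\omega^\beta(-\vv)$, and that $\pi^+\colon M_{\sigma_+}(\vv)\to\overline M$ (from Theorem \ref{thm:contraction}) is the morphism to the Uhlenbeck compactification; moreover a sheaf $F\in M_\omega^\beta(-\vv)$ gives an object $F[1]$ that becomes strictly $\sigma_0$-semistable exactly when $F$ is not locally free or not $\mu_\omega$-stable, and the curves contracted by $\pi^+$ are precisely the curves of $\sigma_0$-S-equivalent objects. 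Since $((r,c,s),(0,0,1)) = -r$, the hypothesis $(\vv,\ww_0)=2$ says exactly that the sheaves in $M_\omega^\beta(-\vv)$ have rank two.

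The plan is then to exhibit an irreducible divisor of non-locally-free sheaves that is contracted by $\pi^+$. Put $\uu := -\vv + \ww_0$; it has rank two and $\uu^2 = \vv^2 - 4 \ge -2$. First I would argue that, except in the degenerate case discussed at the end, $M^{st}_{\sigma_-}(\uu)$ is non-empty of dimension $\uu^2+2 = \vv^2-2$ (by Theorem \ref{thm:nonempty}\eqref{enum:dimandsquare}, noting that when $\uu$ is imprimitive its primitive part still has positive square outside that case), and that, the rank being two, its generic member $E$ is $\mu_\omega$-stable and locally free by \cite[Proposition 0.5]{Yoshioka:Irreducibility} (and its twisted analogue, cf.\ \cite{Yoshioka:TwistedStability}). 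For such $E$, a point $x\in X$, and a surjection $s\colon E\onto k(x)$ --- these form a $\P^1\cong\P(\Hom(E,k(x)))$ since $E$ has rank two --- the kernel $F_s := \ker s$ is a $\mu_\omega$-stable, torsion-free but not locally free sheaf with $\vv(F_s) = \uu - \ww_0 = -\vv$, so $F_s[1]\in M_{\sigma_+}(\vv)$. The assignment $(E,x,s)\mapsto [F_s[1]]$ is generically injective --- from $F_s$ one recovers the reflexive hull $E = F_s^{**}$, then $x$ as the support of $F_s^{**}/F_s$, and then $s$ up to scalar --- so its image has dimension $\dim M^{st}_{\sigma_-}(\uu) + 2 + 1 = \vv^2 + 1 = \dim M_{\sigma_+}(\vv) - 1$; let $D$ be the closure of an irreducible component of this image, an irreducible divisor.

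Finally I would show $D$ is contracted. For fixed $E$ and $x$, the sheaves $F_s$, $s\in\P^1$, are all $\sigma_0$-S-equivalent to one another: by the description of S-equivalence classes in (the proof of) Proposition \ref{prop:Jason}, $F_s$ is S-equivalent to $F_s^{**}\oplus(F_s^{**}/F_s) = E\oplus k(x)$, independently of $s$ (cf.\ \cite[Theorem 8.2.11]{HL:Moduli}, \cite{Jason:Uhlenbeck}). Thus $s\mapsto[F_s[1]]$ is a rational curve in $M_{\sigma_+}(\vv)$ contracted by $\pi^+$ by Theorem \ref{thm:contraction}; these curves sweep out $D$, so $\dim\pi^+(D)\le\dim D - 1 = \vv^2 = \dim M_{\sigma_+}(\vv) - 2$, and in particular $\pi^+$ contracts the divisor $D$, consistently with Theorem \ref{thm:NamikawaWierzba}. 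By Proposition \ref{prop:dualstability}, $\pi^-$ is likewise a Li--Gieseker--Uhlenbeck morphism on the dual twisted surface, so it too is a divisorial contraction by the same argument; hence $\WW$ is a divisorial wall in the sense of Definition \ref{def:TypeOfWalls}. The degenerate case set aside above is the one in which $\uu$ has no $\mu_\omega$-stable locally free representative; one checks this forces $\uu$ to be imprimitive with $\uu^2 = 0$, hence $(\vv,\ww_1) = 1$, so that $\WW$ is simultaneously a Hilbert--Chow wall and the conclusion follows from \cite[Example 10.1]{BM:projectivity}.

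The main obstacle is the input from \cite{Yoshioka:Irreducibility}: producing a $\mu_\omega$-stable locally free sheaf of class $\uu = -\vv + \ww_0$ and checking that such sheaves form a dense open subset of $M_{\sigma_-}(\uu)$, together with the bookkeeping in the low-$\vv^2$ and imprimitive cases. Once this and the Gieseker--Uhlenbeck picture of Proposition \ref{prop:Jason} are in hand, the dimension count, the generic injectivity of the parametrization, and the S-equivalence statement are routine.
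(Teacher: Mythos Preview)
Your approach is essentially the same as the paper's: after the Fourier--Mukai normalization, construct a divisor of non-locally-free rank-two sheaves as kernels of surjections $E \twoheadrightarrow k(x)$ from stable sheaves $E$ of class $\uu = -\vv + \ww_0$, and observe that for fixed $E,x$ the resulting $\P^1$ consists of $\sigma_0$-S-equivalent objects. The paper's version is extremely terse---it simply takes $F \in M_\omega^\beta(-\vv')$ with $-\vv' = \uu$, notes the $\P^1$ of extensions $k(x) \to E[1] \to F[1]$, and appeals to ``dimension counting''---so your invocation of \cite{Yoshioka:Irreducibility} for $\mu_\omega$-stable locally free sheaves, the double-dual argument for generic injectivity, and the explicit check that $\pi^-$ is also divisorial are all details the paper leaves implicit.

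One minor slip: you write $M_{\sigma_-}^{st}(\uu)$, but $\uu$ has positive rank and the identification of Proposition~\ref{prop:Jason} gives $M_\omega^\beta(\uu) \cong M_{\sigma_+}(\vv')$ via the shift $[1]$ (with $\vv' = \vv - \ww_0 = -\uu$); the $\sigma_-$-moduli space is the one involving derived duals on $(X,\alpha^{-1})$. You clearly intend the Gieseker moduli space $M_\omega^\beta(\uu)$, and once you write that, everything goes through. Your degenerate-case bookkeeping (imprimitive $\uu$ with $\uu^2 = 0$) is also more than the paper provides; the paper sidesteps it by using only non-emptiness of $M_\omega^\beta(\uu)$ from Theorem~\ref{thm:nonempty}, though it is then less explicit about why the kernel is Gieseker stable, which your $\mu$-stability assumption handles cleanly.
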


\begin{proof}
It suffices to show that $M_{\omega}^{\beta}(-\vv)$ contains a divisor of non-locally free sheaves.
Since $(\vv,\ww_0)=2$, we can write $\vv = -(2,D,s)$, where $D$ an integral divisor which is either primitive or $D=0$.
Consider the vector $\vv'=-(2,D,s+1)$
with $(\vv')^2 = \vv^2 -4 \geq-2$. By Theorem \ref{thm:nonempty}, we get
$M_{\omega}^{\beta}(-\vv')\neq\emptyset$ .
Given a sheaf $F \in M_{\omega}^{\beta}(-\vv')$ and a point $x \in X$, the
surjections $F \onto k(x)$ induce a $\P^1$ of extensions
\[
k(x) \to E[1] \to F[1] \to k(x)[1]
\]
of objects in $M_{\sigma_+}(\vv)$ that are S-equivalent with respect to $\sigma_0$. Dimension counting  shows
that they sweep out a divisor.
\end{proof}

\begin{Lem}\label{lem:NumericalPropertiesImplyDivisorialWallII}
Assume that there exists an effective spherical class $\ss \in \HH$ such that $(\vv,\ss)=0$.
Then $\WW$ induces a divisorial contraction.
\end{Lem}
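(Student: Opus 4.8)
We adapt the strategy of the non-isotropic divisorial case, Lemmas~\ref{lem:minimaldivisorialcontraction} and~\ref{lem:sphericaldivisorialwall}: the contracted divisor will be realized as a Brill--Noether locus of the orthogonal spherical object. We first dispose of two degenerate configurations. If there is an isotropic class $\ww\in\HH$ with $(\vv,\ww)=1$, then after the Fourier--Mukai transform of Proposition~\ref{prop:Jason} (with $\ww$ as the class of a point) $M_{\sigma_+}(\vv)$ becomes a twisted Hilbert scheme, $\WW$ is the Hilbert--Chow wall and $\pi^+$ the Hilbert--Chow morphism, which is divisorial (see \cite[Example~10.1]{BM:projectivity} and the discussion following Proposition~\ref{prop:Jason}). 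If $(\vv,\ww_0)=2$, the statement is Lemma~\ref{lem:NumericalPropertiesImplyDivisorialWallI}. So from now on we assume $(\vv,\ww)\geq 3$ for every isotropic $\ww\in\HH$. By Proposition~\ref{prop:stablespherical}, since $\HH$ is isotropic we are necessarily in case~\eqref{case:1spherical}: the class $\ss$ is, up to sign, the unique spherical class in $\HH$, and there is a unique $\sigma_0$-stable spherical object $S$ with $\vv(S)=\ss$. As $(\ss,\vv)=0$, the reflection $\rho_\ss$ fixes $\vv$, so $\vv$ is already the minimal class of its $G_\HH$-orbit.

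Set $\vv':=\vv-\ss$. Then $(\vv',\vv)=\vv^2>0$ and $(\vv')^2=\vv^2-2$, which is strictly positive under our assumption $(\vv,\ww)\geq 3$; in particular $\vv'\in P_\HH$ is a positive class, so $M_{\sigma_+}(\vv')\neq\emptyset$ by Theorem~\ref{thm:nonempty}. The key --- and most technical --- step is to prove that \emph{$M_{\sigma_+}(\vv')$ has dimension $(\vv')^2+2=\vv^2$ and its generic point is a $\sigma_0$-stable object.} For the dimension: if $\vv'$ is primitive this is Theorem~\ref{thm:nonempty}\eqref{enum:dimandsquare}; and if $\vv'=2\uu$ is divisible then $\uu^2=\tfrac{1}{4}(\vv')^2>0$, so Theorem~\ref{thm:nonempty}\eqref{enum:dimandsquare} applied to $2\uu$ again gives $\dim M_{\sigma_+}(\vv')=(\vv')^2+2$. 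For the generic stability: since $\ss$ is the only spherical class in $\HH$ and $(\vv',\ss)=(\vv,\ss)-\ss^2=2>0$, no effective spherical class destabilizes $\vv'$, and a short lattice computation shows $(\vv',\ww)\neq 1$ (resp. $(\uu,\ww)\neq 1$) for every isotropic $\ww\in\HH$; hence, by the classification of totally semistable isotropic walls already established in Proposition~\ref{prop:TotSStWallIsotropicHasNumericalProps}, $\WW$ is not a totally semistable wall for $\vv'$ (resp. for $\uu$), so the generic $\sigma_+$-stable object of class $\vv'$ is $\sigma_0$-stable --- in the divisible case one also invokes Lemma~\ref{lem:nonprimitive} to pass from $\uu$ to $2\uu$.

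Fix now a $\sigma_0$-stable object $F$ of class $\vv'$. Since $S$ and $F$ are $\sigma_0$-stable of the same phase with distinct Mukai vectors, $\Hom(S,F)=\Hom(F,S)=0$, hence $\Ext^2(S,F)=0$ by Serre duality and, by Riemann--Roch, $\ext^1(S,F)=(\ss,\vv')=2$. The non-split extensions $0\to S\to E\to F\to 0$ are thus parametrized by $\P^1=\P(\Ext^1(F,S))$; by Lemma~\ref{lem:stableextension} each such $E$ is $\sigma_+$-stable of class $\vv$, and all of them are $S$-equivalent with respect to $\sigma_0$ (their Jordan--H\"older factors being $S$ together with those of $F$), hence contracted by the morphism $\pi^+\colon M_{\sigma_+}(\vv)\to\overline M$ of Theorem~\ref{thm:contraction}. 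As $F$ varies over $M_{\sigma_+}(\vv')$ these curves sweep out a subvariety $D\subset M_{\sigma_+}(\vv)$ of dimension $\vv^2+1=\dim M_{\sigma_+}(\vv)-1$, i.e. a divisor contracted by $\pi^+$. Since $\pi^+$ has relative Picard rank one, no further component can be contracted, so $\pi^+$ is a divisorial contraction and $\WW$ is a divisorial wall. Finally, pairing $\theta(\ss)$ with the class of one of the contracted lines via the formula $\theta_\EE(\ww).C=(\ww,\vv(\Phi_\EE(\OO_C)))$ of Remark~\ref{rmk:comparison} identifies the class of $D$ with $\theta(\ss)$, exactly as in Lemma~\ref{lem:sphericaldivisorialwall}. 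The main obstacle in this argument is the middle step: the moduli space $M_{\sigma_+}(\vv')$ cannot be controlled via Lemma~\ref{lem:nottotallysemistable} (whose proof assumes a non-isotropic wall), and one must instead combine the expected-dimension formula with the already-proven classification of totally semistable isotropic walls.
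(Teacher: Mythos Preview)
Your proposal is correct and follows essentially the same strategy as the paper: set $\vv'=\vv-\ss$, show that $\WW$ is not a totally semistable wall for $\vv'$ via Proposition~\ref{prop:TotSStWallIsotropicHasNumericalProps}, and construct a contracted $\P^1$-family of $\sigma_+$-stable extensions $0\to S\to E\to F\to 0$ with $F$ a generic $\sigma_0$-stable object of class $\vv'$. The only difference is organizational: you eliminate the degenerate cases $(\vv,\ww)\in\{1,2\}$ at the outset, whereas the paper splits on the value of $(\ww_0,\vv')$ and discovers $(\ww_0,\vv)=2$ (resp.\ the Hilbert--Chow case) as the subcases $(\ww_0,\vv')=1$ and $\vv^2=2$.

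One caveat: the step you label a ``short lattice computation'' --- namely $(\vv',\ww_0)\neq 1$ under the hypothesis $(\vv,\ww)\geq 3$ --- is exactly where the substance lies, and it is not purely numerical. The paper's argument is that if $(\ww_0,\vv')=1$, then for a suitable $k$ the class $\vv'+k\ww_0$ is spherical; by uniqueness of the spherical class in $\HH$ (Proposition~\ref{prop:stablespherical}) this forces $\ss=\pm(\vv'+k\ww_0)$, hence $(\ww_0,\ss)=1$ and $(\ww_0,\vv)=(\ww_0,\vv')+(\ww_0,\ss)=2$, contradicting your standing assumption. You should include this reasoning rather than defer it. (Your treatment of the non-primitive case $\vv'=2\uu$ is a nice addition; note that $(\ss,\vv')=2$ forces the divisor to be at most $2$, so this is indeed the only non-primitive possibility.)
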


\begin{proof}
Let $S$ be the unique $\sigma_0$-stable spherical object with Mukai vector $\ss$.
Let $\aa=\vv-\ss$; then
\[
 \aa^2 = (\vv-\ss)^2 = \vv^2 -2 \quad \text{and} \quad
 (\aa,\ss) = -\ss^2 =2.
\]
If $\vv^2>2$, then $\aa^2>0$.
By Lemma \ref{lem:StabilityIsotropic}, $\ww_1 = b\ss +\ww_0$ with $b>0$; hence $(\ww_1,\aa)>(\ww_0,\aa)$.
If $(\ww_0,\aa)\geq2$, then Proposition \ref{prop:TotSStWallIsotropicHasNumericalProps} implies that $\WW$ is not a totally semistable wall for $\aa$, since $(\aa,\ss)=2$.
Hence, given $A\in M_{\sigma_0}(\aa)$, all the extensions
\[
S \to E \to A
\]
give a divisor $D\subset M_{\sigma_+}(\vv)$, which is a $\P^1$-fibration over $M_{\sigma_0}^{st}(\aa)$ and which gets contracted by crossing the wall $\WW$.

If $(\ww_0,\aa)=1$, then there is a spherical class of the form $\aa + k \ww_0$. By the uniqueness
up to sign, $\ss$ must be of this form; hence also $(\ww_0,\ss)=1$.
From this we get $(\ww_0,\vv)=2$, and so $\WW$ induces a divisorial contraction by Lemma \ref{lem:NumericalPropertiesImplyDivisorialWallI}.

Finally, assume that $\vv^2=2$. Then $\aa$ is an isotropic vector with
$(\aa,\vv) = (\aa,\ss) =2$.
But this implies that $(\ww_0,\vv)=1,2$.
Indeed, by Lemma \ref{lem:StabilityIsotropic}, the fact that $\aa$ is an effective class with $(\aa,\ss)>0$ implies that $\aa$ has to be a positive multiple of $\ww_0$.
The case $(\ww_0,\vv)=2$ is again Lemma \ref{lem:NumericalPropertiesImplyDivisorialWallI};
and if $(\ww_0, \vv)=1$, then $-\vv$ has rank $1$, and we are in the case of the Hilbert-Chow wall.
\end{proof}

\begin{Lem}\label{lem:NumericalPropertiesImplyDivisorialWallIII}
Let $\WW$ be a potential wall.
If there exists an isotropic class $\ww$ such that $(\vv,\ww)\in \{1,2\}$,
then $\WW$ induces a divisorial contraction.
\end{Lem}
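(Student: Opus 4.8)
The plan is to reduce Lemma~\ref{lem:NumericalPropertiesImplyDivisorialWallIII} to the three situations already handled in this section: the Hilbert--Chow wall, the rank-two case of Lemma~\ref{lem:NumericalPropertiesImplyDivisorialWallI}, and the orthogonal-spherical case of Lemma~\ref{lem:NumericalPropertiesImplyDivisorialWallII}. First I would normalize $\ww$ to be primitive: if $\ww = d\ww'$ with $\ww'$ primitive and $d>1$, then $d\cdot(\vv,\ww')=(\vv,\ww)\in\{1,2\}$ forces $d=2$ and $(\vv,\ww')=1$, so we replace $\ww$ by $\ww'$. Since $(\vv,\ww)>0$ and $\vv^2>0$ --- so $\vv$ is not orthogonal to the isotropic line spanned by $\ww$ --- Lemma~\ref{lem:StabilityIsotropic} identifies $\ww$ with one of the two effective primitive isotropic classes $\ww_0,\ww_1$ of $\HH$, with $\ww_0$ chosen so that $M_{\sigma_0}(\ww_0)=M^{st}_{\sigma_0}(\ww_0)$.

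The main case distinction is whether the relevant isotropic class carries a stable moduli space. If $M_{\sigma_0}(\ww)=M^{st}_{\sigma_0}(\ww)$, I would apply the Fourier--Mukai transform of Proposition~\ref{prop:Jason} to assume $\ww=(0,0,1)$; then $M_{\sigma_+}(\vv)=M_\omega^\beta(-\vv)$ is a moduli space of Gieseker-stable sheaves of rank $(\vv,\ww)\in\{1,2\}$ and $\pi^+$ is the Li--Gieseker--Uhlenbeck morphism. In rank one this is the Hilbert--Chow morphism (up to a line-bundle twist), cf.\ \cite[Example~10.1]{BM:projectivity}, which is a divisorial contraction; in rank two the statement is exactly Lemma~\ref{lem:NumericalPropertiesImplyDivisorialWallI}. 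If instead $M_{\sigma_0}(\ww)\neq M^{st}_{\sigma_0}(\ww)$, then by Lemma~\ref{lem:StabilityIsotropic} we have $\ww=\ww_1$ and there is a $\sigma_0$-stable spherical object with Mukai vector $\ss$ satisfying $(\ss,\ww_1)<0$; in the isotropic case $\ss$ is the unique spherical class of $\HH$ up to sign (Proposition~\ref{prop:stablespherical}), and, as in the proof of Lemma~\ref{lem:NumericalPropertiesImplyTotSStIsotropicII}, $\ww_1=\ww_0+r\ss$ with $r=(\ss,\ww_0)\in\Z_{>0}$, so $\rho_\ss$ interchanges $\ww_0$ and $\ww_1$. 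I would then split on the sign of $(\vv,\ss)$. If $(\vv,\ss)=0$, invoke Lemma~\ref{lem:NumericalPropertiesImplyDivisorialWallII}. If $(\vv,\ss)>0$, then $(\vv,\ww_0)=(\vv,\ww_1)-r(\vv,\ss)<(\vv,\ww_1)\le 2$, while $(\vv,\ww_0)>0$, so $(\vv,\ww_0)=1$ and we are back in the Hilbert--Chow case with $\ww_0$ in place of $\ww$. If $(\vv,\ss)<0$, then $\WW$ is totally semistable for $\vv$ by Proposition~\ref{prop:sphericaltotallysemistable}, and --- exactly as in Step~3 of the proof of Proposition~\ref{prop:DivisorialWallIsotropicHasNumericalProps} --- $\WW$ induces a divisorial contraction for $\vv$ if and only if it does for $\vv':=\rho_\ss(\vv)$, which satisfies $(\vv',\ss)=-(\vv,\ss)>0$; since $\rho_\ss$ is an integral isometry permuting the isotropic classes and preserving the Mukai pairing, $\vv'$ again satisfies the hypothesis of the lemma.

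The step I expect to be the main obstacle is this last $\rho_\ss$-reduction: I must make sure it is legitimate and that the recursion terminates. Legitimacy rests on Proposition~\ref{prop:sphericaltotallysemistable}, which --- exactly as already used in the isotropic Section~\ref{sec:iso} --- provides birational identifications of $M_{\sigma_\pm}(\vv)$ with $M_{\sigma_\pm}(\vv')$ compatible with the contraction morphisms attached to $\ell_{\sigma_0}$, so that divisorial behavior is unchanged. Termination is then immediate in the isotropic case: since $\HH$ has at most one spherical class up to sign, there is a single reflection available, one application already forces $(\vv',\ss)>0$, and re-running the previous paragraph for $\vv'$ lands in the $(\,\cdot\,,\ss)\ge0$ branch, hence in one of the terminal cases (Lemma~\ref{lem:NumericalPropertiesImplyDivisorialWallII}, or Hilbert--Chow, or Lemma~\ref{lem:NumericalPropertiesImplyDivisorialWallI}). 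A minor additional point I would check along the way is that the primitive isotropic class witnessing the hypothesis for $\vv'$ is indeed among $\{\ww_0,\ww_1\}$, which holds because $\rho_\ss$ permutes these two classes.
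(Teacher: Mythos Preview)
Your argument is correct and essentially identical to the paper's own proof: the same reduction to $\ww=\ww_0$ (handled by the Fourier--Mukai transform plus the Hilbert--Chow and Lemma~\ref{lem:NumericalPropertiesImplyDivisorialWallI} cases), the same trichotomy on $(\vv,\ss)$ when $\ww=\ww_1$, and the same single application of $\rho_{\ss}$ in the $(\vv,\ss)<0$ case to land back at $(\vv',\ww_0)\in\{1,2\}$. The paper is slightly more direct in the last step---it immediately notes $(\vv',\ww_0)=(\vv,\ww_1)$ and invokes Lemma~\ref{lem:NumericalPropertiesImplyDivisorialWallI}, rather than re-entering the case analysis---but your version is just as valid.
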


\begin{proof}
By Lemma \ref{lem:StabilityIsotropic}, the class $\ww$ is automatically effective.
By Lemma \ref{lem:NumericalPropertiesImplyDivisorialWallI}, the only remaining case is
$\ww = \ww_1$, with $\ww_1=b\ss+\ww_0$, $b>0$, where $\ss$ is the class of the unique $\sigma_0$-stable spherical object.
By Lemma \ref{lem:NumericalPropertiesImplyDivisorialWallII}, we can assume that $(\ss,\vv)\neq0$.

If $(\ss,\vv)>0$, then 
\[
(\ww_1, \vv) = b (\ss,\vv) + (\ww_0,\vv) \in \{1, 2\}.
\]
Since $(\ww_0,\vv)>0$ and $b>0$, this is possible only if $(\ww_0,\vv)=1$, which corresponds to the Hilbert-Chow contraction.

Hence, we can assume $(\ss,\vv)<0$.
By Proposition \ref{prop:sphericaltotallysemistable}, $\WW$ is a totally semistable wall for $\vv$,
and $\WW$ induces a divisorial contraction with respect to $\vv$ if and only if it
induces one with respect to $\vv' = \rho_\ss(\vv)$.
But then $(\vv',\ww_0)=(\vv,\ww_1) \in \{1,2\}$.
Again, we can use Lemma \ref{lem:NumericalPropertiesImplyDivisorialWallI} to finish the proof.
\end{proof}

\section{Flopping walls}
\label{sec:flopping}

This section deals with the remaining case of a potential wall $\WW$: assuming that $\WW$ does not
correspond to a divisorial contraction, we describe in which cases it is a flopping wall, a fake
wall, or not a wall. This is the content of Propositions \ref{prop:flops} and \ref{prop:noflops}.

\begin{Prop} \label{prop:flops}
Assume that $\WW$ does not induce a divisorial contraction.  If either
\begin{enumerate}
\item \label{enum:sum2positive}
$\vv$ can be written as the sum 
$\vv = \aa_1 + \aa_2$ of two positive classes $\aa_1, \aa_2 \in P_\HH \cap \HH$, or 
\item \label{enum:sphericalflop}
there exists
a spherical class $\tilde \ss \in \WW$ with $0 < (\tilde \ss, \vv) \le \frac{\vv^2}2$,
\end{enumerate}
then $\WW$ induces a small contraction.
\end{Prop}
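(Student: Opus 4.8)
The plan is to show that $\pi^+\colon M_{\sigma_+}(\vv)\to\overline M$, the birational contraction induced by $\ell_{\sigma_0}$ via Theorem~\ref{thm:contraction}, is neither an isomorphism nor divisorial; since it contracts no divisor by hypothesis, it is then a small contraction. Concretely, by \cite[Theorem~1.4(b)]{BM:projectivity} (quoted just after Theorem~\ref{thm:contraction}) it suffices to prove (a) that $\WW$ is not a fake wall, i.e. that $M_{\sigma_+}(\vv)$ contains a curve of objects that are S-equivalent with respect to $\sigma_0$, and (b) that the open locus $M^{st}_{\sigma_0}(\vv)\subset M_{\sigma_+}(\vv)$ has complement of codimension at least two.

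First I would reduce to the case where $\vv$ is the minimal class of its $G_\HH$-orbit (Definition~\ref{prop:pellequation}). If it is not, let $\vv_0$ be that minimal class and $\Phi$ the composition of spherical twists of Proposition~\ref{prop:sphericaltotallysemistable}; as in Corollary~\ref{cor:notdivisorial} and Lemma~\ref{lem:sphericaldivisorialwall}, $\Phi$ induces a birational map $M_{\sigma_\pm}(\vv_0)\dashrightarrow M_{\sigma_\pm}(\vv)$ that is an isomorphism away from closed subsets of codimension at least two and that matches up curves of $\sigma_0$-S-equivalent objects on the two sides, so (a) and (b) for $\vv$ become (a) and (b) for $\vv_0$. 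Since $G_\HH$ acts on $\HH$ by isometries that preserve $P_\HH$ (reflections in the negative-norm spherical classes fix a timelike line, hence preserve the forward cone) and preserve the set of spherical classes, hypotheses \eqref{enum:sum2positive} and \eqref{enum:sphericalflop} survive this replacement. Moreover, as $\WW$ is not divisorial, Propositions~\ref{prop:sphericaldivisorialwall} and \ref{prop:DivisorialWallIsotropicHasNumericalProps} rule out a spherical class in $\HH$ orthogonal to $\vv$ and an isotropic $\ww\in\HH$ with $(\vv,\ww)\in\{1,2\}$; hence, once $\vv$ is minimal it is not totally semistable on $\WW$, so $M^{st}_{\sigma_0}(\vv)\neq\emptyset$, and (b) then holds by Lemma~\ref{lem:notdivisorial} in the non-isotropic case and by the Gieseker--Uhlenbeck description of $\pi^+$ in Proposition~\ref{prop:Jason} (using $(\vv,\ww_0)\geq 3$) in the isotropic case.

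The core of the argument is (a), for which I would produce an explicit family of $\sigma_+$-stable, $\sigma_0$-S-equivalent objects of class $\vv$. In case \eqref{enum:sum2positive}, write $\vv=\aa_1+\aa_2$ with $\aa_i\in P_\HH\cap\HH$, so $\aa_i^2\geq 0$ and $(\aa_1,\aa_2)=\tfrac12\bigl(\vv^2-\aa_1^2-\aa_2^2\bigr)>0$ (equality would make $\aa_1,\aa_2$ proportional isotropic, forcing $\vv^2=0$); a short lattice computation using reverse Cauchy--Schwarz and primitivity of $\vv$ shows that $(\aa_1,\aa_2)=2$ would force one $\aa_i$ to be a multiple of an isotropic $\ww$ with $(\vv,\ww)\in\{1,2\}$, i.e. $\WW$ Hilbert--Chow or Li--Gieseker--Uhlenbeck divisorial, contrary to assumption, so in fact $(\aa_1,\aa_2)\geq 3$. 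In case \eqref{enum:sphericalflop}, let $\tilde\ss$ be the effective representative of the spherical class (it must be effective, else $-\tilde\ss$ would be an effective spherical class with negative pairing with $\vv$, making $\WW$ totally semistable), and set $\aa_1=\tilde\ss$, $\aa_2=\vv-\tilde\ss$; then $\aa_2^2=\vv^2-2(\tilde\ss,\vv)-2\geq-2$ by the hypothesis $(\tilde\ss,\vv)\leq\tfrac12\vv^2$, so $M_{\sigma_0}(\aa_2)\neq\emptyset$, and $(\aa_1,\aa_2)=(\tilde\ss,\vv)+2\geq 3$. In both cases, using the reduction to minimal classes and the spherical-twist/tilted-heart bookkeeping of Sections~\ref{sec:noniso-totsemistable}--\ref{sec:divisorialcontraction} (as in Lemma~\ref{lem:sphericaldivisorialwall}), I would obtain $\sigma_+$-stable objects $A_1,A_2$ of classes $\aa_1,\aa_2$ with $\phi^+(A_1)<\phi^+(A_2)$ and $\Hom(A_1,A_2)=\Hom(A_2,A_1)=0$, so that $\ext^1(A_2,A_1)=(\aa_1,\aa_2)\geq 3$. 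Varying a non-split extension $A_1\into E\onto A_2$ over $\P\bigl(\Ext^1(A_2,A_1)\bigr)$ then gives pairwise non-isomorphic objects of class $\vv$ (Lemma~\ref{lem:stableextension}) that share the Jordan--H\"older factors of $A_1$ and $A_2$ and are therefore $\sigma_0$-S-equivalent; each $E$ is $\sigma_+$-stable exactly as in Lemma~\ref{lem:minimaldivisorialcontraction} (Lemma~\ref{lem:stableextension} forces the only proper nonzero subobject of $E$ to be $A_1$, of strictly smaller $\sigma_+$-phase). Letting $A_1,A_2$ vary too, this sweeps out a codimension $\geq 2$ subvariety of $M_{\sigma_+}(\vv)$ foliated by curves of $\sigma_0$-S-equivalent objects, all contracted by $\pi^+$, which is (a).

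The hard part will be the verification of $\sigma_+$-stability in (a): the classes $\aa_1,\aa_2$ need not admit $\sigma_0$-\emph{stable} representatives a priori (they may lie on totally semistable walls), and even a $\sigma_0$-stable object can fail to be $\sigma_+$-stable when $\WW$ happens to be a wall for its own Mukai vector. This is precisely what forces the reduction to minimal classes and the careful use of the spherical twists and tilted hearts of Sections~\ref{sec:noniso-totsemistable}--\ref{sec:divisorialcontraction}, and it is also where the non-isotropic case (handled by numerical dimension counts, Lemma~\ref{lem:notdivisorial}) and the isotropic case (handled through the Gieseker--Uhlenbeck picture, Proposition~\ref{prop:Jason}) have to be treated with somewhat different tools.
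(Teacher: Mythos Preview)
There is a real gap in your stability argument for the extension $E$. Your appeal to Lemma~\ref{lem:stableextension} (``the only proper nonzero subobject of $E$ is $A_1$'') requires $A_1,A_2$ to be \emph{simple} in a common abelian category containing $E$; the natural candidate is $\PP_0(1)$, but simplicity there is $\sigma_0$-stability, not the $\sigma_+$-stability you arrange. Your reduction to the minimal class $\vv_0$ does not fix this: even when $\vv_0$ is minimal, a summand $\aa_i$ in $\vv_0=\aa_1+\aa_2$ need not be minimal in its own $G_\HH$-orbit (one can easily have $(\ss,\aa_1)<0$ while $(\ss,\vv_0)\ge 0$), so $\sigma_0$-stable objects of class $\aa_i$ need not exist. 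Likewise in case~\eqref{enum:sphericalflop}, your argument shows the relevant spherical class is effective after reduction, but not that it coincides with one of the two $\sigma_0$-\emph{stable} spherical classes $\ss,\tt$ of Proposition~\ref{prop:stablespherical}. You flag this difficulty in your last paragraph, but the proposed fix (spherical twists and tilted hearts) does not by itself produce $\sigma_0$-stable summands.

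The paper's proof avoids this issue and does not reduce to minimal classes at all. It first refines the decomposition so that the parallelogram $0,\aa_1,\vv,\aa_2$ contains no lattice points besides its vertices (Lemma~\ref{lem:findparallelogram}), and then proves a tailored criterion (Lemma~\ref{lem:parallelogram}): if $A,B$ are merely $\sigma_+$-stable, the parallelogram condition holds, and $\Hom(B,E)=0$, then the extension $E$ is $\sigma_+$-stable. The point is that any $\sigma_+$-HN factor of $E$ has Mukai vector in $\HH_\WW$ (Proposition~\ref{prop:HW}) and lies in the parallelogram, hence equals $\aa_1,\aa_2$ or $\vv$; this forces the maximal destabilizing subobject to have class $\aa_2$, which is then ruled out. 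In case~\eqref{enum:sphericalflop} with $\tilde\ss$ not effective, the paper gives a direct construction of a contracted Grassmannian using extensions by the effective class $-\tilde\ss$, rather than reducing to a minimal orbit. Finally, your step (b) is unnecessary: once $\pi^+$ contracts a curve and is not divisorial by hypothesis, it is automatically a small contraction.
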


\begin{Lem} \label{lem:findparallelogram}
Let $M$ be a lattice of rank two, and $C \subset M \otimes \R^2$ be a convex cone not containing a
line. If a primitive lattice element $\vv \in M \cap C$ can be written as the sum $\vv = \aa + \bb$ of two classes in
$\aa, \bb \in M \cap
C$, then it can be written as a sum $\vv = \aa' + \bb'$ of two classes $\aa', \bb' \in M \cap C$ in such a way that 
the parallelogram with vertices $0, \aa', \vv, \bb'$ does not contain any other lattice point besides
its vertices.
\end{Lem}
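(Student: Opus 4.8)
The plan is to work in the two-dimensional lattice $M$, using a minimal counterexample (or "minimal parallelogram") argument. First I would fix the decomposition $\vv = \aa + \bb$ with $\aa, \bb \in M \cap C$ for which the parallelogram $\Pi$ with vertices $0, \aa, \vv, \bb$ contains the fewest lattice points among all such decompositions; this exists since the number of lattice points in $\Pi$ is a positive integer bounded below. Suppose for contradiction that $\Pi$ contains a lattice point $\pp \in M$ other than the four vertices. Since $\vv$ is primitive, $\pp$ cannot lie on the diagonal from $0$ to $\vv$ (interior lattice points of that segment would contradict primitivity, and $\pp \neq 0, \vv$). Hence $\pp$ lies strictly on one side of that diagonal, say inside the triangle $0, \aa, \vv$ (the other case being symmetric).

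The key step is then to replace $\aa$ by $\pp$: I claim $\vv = \pp + (\vv - \pp)$ is again a decomposition into two elements of $M \cap C$, and the new parallelogram $\Pi' $ with vertices $0, \pp, \vv, \vv-\pp$ contains strictly fewer lattice points, contradicting minimality. For membership in $C$: the cone $C$ is convex and contains no line, and $\pp$ lies in the triangle spanned by $0, \aa, \vv$, all of which are in $C$; a convex cone is closed under the convex hull of its elements together with the origin (it is a cone), so $\pp \in C$. Similarly $\vv - \pp$ lies in the triangle $0, \bb, \vv$: indeed, translating the whole picture, the point $\vv - \pp$ is the image of $\pp$ under the central symmetry of $\Pi$ through its center $\vv/2$, which swaps the triangle $0\aa\vv$ with the triangle $\vv\bb 0$; since $\pp$ is in the first, $\vv-\pp$ is in the second, hence in $C$ by the same convexity argument. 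Both are genuine lattice points in $M$.

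The remaining point — and the one that needs the most care — is the strict decrease in the number of lattice points. I would argue that $\Pi' \subset \Pi$: the parallelogram $\Pi'$ has vertices $0, \pp, \vv, \vv - \pp$, and since $\pp \in \Pi$ and $\vv - \pp \in \Pi$ (both shown above) and $0, \vv$ are vertices of $\Pi$, convexity of $\Pi$ gives $\Pi' = \mathrm{conv}(0, \pp, \vv, \vv-\pp) \subseteq \Pi$. Moreover the inclusion is strict on lattice points: the vertex $\aa$ of $\Pi$ does not lie in $\Pi'$. To see this, note $\aa$ and $\pp$ lie on the same side of the diagonal $0\vv$, and $\pp$ lies strictly between that diagonal and the line through $\aa$ parallel to it (since $\pp$ is in the open triangle, or at worst on an edge other than through $\aa$); writing things in the basis adapted to $\aa$ and $\vv$, the coordinate of $\pp$ transverse to the diagonal is strictly smaller than that of $\aa$, while every point of $\Pi'$ has transverse coordinate at most that of $\pp$. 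Hence $\aa \notin \Pi'$, so $\Pi'$ has strictly fewer lattice points than $\Pi$ — contradiction. Therefore the minimal $\Pi$ has no non-vertex lattice point, and setting $\aa' = \aa$, $\bb' = \bb$ for that minimal choice finishes the proof. The only subtlety I anticipate is handling the degenerate/boundary cases cleanly — e.g.\ when $\pp$ lies on an edge of $\Pi$ rather than in the interior, or when $\aa$ and $\bb$ are linearly dependent with $\vv$ (impossible here since $\vv$ is primitive and $\aa, \bb \ne 0$, but worth noting) — and I would dispatch these by the same transverse-coordinate bookkeeping.
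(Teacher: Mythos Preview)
Your proof is correct and follows the same idea as the paper's (much terser) argument: if the parallelogram contains a non-vertex lattice point $\aa'$, replace $\aa$ by $\aa'$ and $\bb$ by $\vv-\aa'$, and iterate until the procedure terminates. The one imprecision in your write-up is the parenthetical justifying $y_\pp < y_\aa$ --- both non-diagonal edges of the triangle $0\,\aa\,\vv$ pass through $\aa$ --- but the inequality holds regardless, since $\aa$ is the unique point of that closed triangle achieving the maximal transverse coordinate.
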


\begin{proof}
If the parallelogram $0, \aa, \vv, \bb$ contains an additional lattice point $\aa'$, we may replace
$\aa$ by $\aa'$ and $\bb$ by $\vv-\aa'$.  This procedure terminates.
\end{proof}

\begin{Lem} \label{lem:parallelogram}
Let $\aa, \bb, \vv \in \HH \cap C_\WW$ be effective classes with $\vv = \aa + \bb$.
Assume that the following conditions are satisfied:
\begin{itemize}
\item The phases of $\aa, \bb$ satisfy $\phi^+(\aa) < \phi^+(\bb)$.
\item The objects $A, B$ are $\sigma_+$-stable with $\vv(A) = \aa, \vv(B) = \bb$.
\item The parallelogram in $\HH \otimes \R$ with vertices $0, \aa, \vv, \bb$ does not contain any other
lattice point.
\item The extension $A \into E \onto B$ satisfies $\Hom(B, E) = 0$.
\end{itemize}
Then $E$ is $\sigma_{+}$-stable.
\end{Lem}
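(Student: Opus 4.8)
The plan is to argue by contradiction: suppose $E$ is strictly $\sigma_+$-semistable, and derive a lattice-theoretic contradiction from the hypothesis that the parallelogram $0,\aa,\vv,\bb$ contains no lattice point other than its vertices. First I would note that since $\sigma_+$ is generic with respect to $\vv$ (by the conventions of Remark~\ref{rmk:GenericOnTheWall}), any $\sigma_+$-destabilizing subobject or quotient has Mukai vector forced to lie in $\HH$ by Proposition~\ref{prop:HW}, part~\eqref{enum:HNfactorsinHW} (see also Remark~\ref{rem:HW}). So if $E$ is not stable, there is a short exact sequence $0 \to E' \to E \to E'' \to 0$ in the relevant heart with $\phi^+(E') = \phi^+(E'') = \phi^+(E)$ and $\vv(E'), \vv(E'') \in \HH \cap C_\WW$ (effective because they admit semistable objects). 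Write $\vv(E') = \cc$, $\vv(E'') = \dd$, so $\cc + \dd = \vv$, both effective, and both on the line through $\vv$ in the $\phi^+$-ordering, i.e. $Z_+(\cc)$ and $Z_+(\dd)$ have the same phase as $Z_+(\vv)$.

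**Locating the destabilizing vectors.** The key geometric input is that $\cc$ and $\dd$ sit in the cone $C_\WW$, sum to $\vv$, and have phase equal to that of $\vv$ — but in the chamber of $\sigma_+$ the classes $\aa$ and $\bb$ have \emph{different} phases, with $\phi^+(\aa) < \phi^+(\bb) = \phi^+(\vv)$ being impossible since $\vv = \aa+\bb$ forces $\phi^+(\aa) < \phi^+(\vv) < \phi^+(\bb)$... wait — more carefully: since $\phi^+(\aa) < \phi^+(\bb)$ and $\vv=\aa+\bb$, the phase $\phi^+(\vv)$ lies strictly between them. The destabilizing classes $\cc,\dd$ both have phase exactly $\phi^+(\vv)$, so on the wall $\WW$ (where all of $\aa,\bb,\vv,\cc,\dd$ have \emph{aligned} central charges) the ray spanned by $\cc$ and $\dd$ and $\vv$ all coincide — but $\cc+\dd=\vv$ with $\cc,\dd$ effective forces $\cc,\dd$ to be non-negative rational multiples of $\vv$ \emph{on the wall}, hence (being integral and summing to $\vv$, with $\vv$ primitive) one of them is $\vv$ and the other is $0$. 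Let me reconsider: the correct statement is that $\cc$ and $\dd$ both lie on the boundary ray of $C_\WW$ through $\vv$... Actually the cleaner route: $\Re\frac{Z_+(\cc)}{Z_+(\vv)}$ and $\Re\frac{Z_+(\dd)}{Z_+(\vv)}$ are positive and sum to $1$, and $\Im$ of both is $0$ by the phase condition. So $\cc = \lambda \vv + (\text{class orthogonal to } \Im\Omega_{Z_+}) \cap \ker(\text{stuff})$... Since $\HH$ has rank two and on the wall the central charge $Z_0$ maps $\HH$ to a line, $Z_0(\cc), Z_0(\dd), Z_0(\vv)$ are all real and $Z_0(\cc)+Z_0(\dd)=Z_0(\vv)$ with both summands having the same sign. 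This does \emph{not} force $\cc \parallel \vv$. So I need instead to use $\sigma_+$ itself: the phase alignment at $\sigma_+$ means $\cc, \dd, \vv$ are collinear in $\HH \otimes \R$ under the projection given by $Z_+$ — but $Z_+$ is injective on the rank-two lattice $\HH$ only up to... no, $Z_+ \colon \HH \otimes \R \to \C = \R^2$ is an isomorphism (the wall is real codimension one, off it $Z$ is iso on $\HH$). Phase alignment of $\cc$ with $\vv$ at $\sigma_+$ then \emph{does} force $\cc \in \R \vv$. Hence $\cc = p\vv$, $\dd = q\vv$ with $p+q=1$, $p,q>0$, contradicting primitivity of $\vv$ unless $\{p,q\}=\{0,1\}$.

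**The role of the parallelogram and $\Hom(B,E)=0$.** Hmm — but then the parallelogram hypothesis and the $\Hom$ vanishing would be unused, which signals my argument is too crude: the point must be that on the wall $\WW$, while $\cc$ need not be parallel to $\vv$, the destabilizer could be a genuine $\sigma_0$-subobject whose vector is e.g. $\aa$ or a sub-vector of $\aa$. So the real plan is: pass to the wall, consider the Jordan–Hölder filtration of $E$ with respect to $\sigma_0$; its factors have classes in $C_\WW \cap \HH$ summing to $\vv$ and contained in the "triangle" spanned by $0$ and $\vv$ inside $C_\WW$. Since $E$ is an extension of $A$ (class $\aa$, $\sigma_+$-stable hence $\sigma_0$-semistable) by $B$ (class $\bb$), and since $\phi^+(\aa) < \phi^+(\bb)$, the HN filtration of $E$ with respect to $\sigma_+$ is exactly $A \subset E$ with quotient $B$ — so any $\sigma_+$-destabilizing quotient $E''$ admits a nonzero map from $B$ (as $\phi^+(E'') \le \phi^+(B)$ and $B$ is the maximal-phase quotient), and by $\Hom(B,E)=0$ combined with $B$ being $\sigma_+$-stable we'd force $E'' $ to be $B$ itself or have a sub/quotient relation pinning $\vv(E'')$; then the parallelogram condition rules out any intermediate lattice point that such a $\vv(E'')$ would have to be. Concretely: a proper sub/quotient class $\cc$ of $E$ with $0 \prec \cc \prec \vv$ in the appropriate partial order lies strictly inside the parallelogram $0,\aa,\vv,\bb$ (since $E$ is built from $A$ and $B$ whose classes are $\aa,\bb$), hence must be a vertex, hence $\cc \in \{0,\aa,\bb,\vv\}$, and the cases $\cc = \aa$ or $\cc=\bb$ are excluded by $\phi^+(\aa) \ne \phi^+(\bb)$ (a destabilizer must have the \emph{same} phase as $E$, which has phase strictly between the two). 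The main obstacle I expect is making rigorous the claim that the class of any $\sigma_+$-semistable sub/quotient of $E$ lies in the closed parallelogram $0,\aa,\vv,\bb$ — this should follow from examining the induced maps to/from $A$ and $B$ and a positivity argument for central charges on $C_\WW$, but getting the inequalities exactly right (open vs. closed, and handling the case where the map $B \to E''$ or $E' \to A$ is zero) is the delicate bookkeeping. I would close with: since all cases lead to a contradiction, $E$ is $\sigma_+$-stable. $\Box$
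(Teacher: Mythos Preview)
Your proposal circles the right idea but contains a genuine error at the decisive step, and leaves the key geometric claim unjustified.

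\textbf{The error.} In your final paragraph you exclude $\cc = \aa$ and $\cc = \bb$ by asserting that ``a destabilizer must have the \emph{same} phase as $E$''. This is wrong: since $\sigma_+$ is generic with respect to $\vv$ and $\vv$ is primitive, there are no strictly $\sigma_+$-semistable objects of class $\vv$. So if $E$ fails to be stable it is genuinely unstable, and the first HN factor $A_1 \subset E$ has phase \emph{strictly greater} than $\phi^+(\vv)$. Once you know $\vv(A_1)$ is a vertex of the parallelogram, this forces $\vv(A_1) = \bb$ --- it does \emph{not} exclude it. The contradiction must then come from the hypothesis $\Hom(B,E)=0$, which you invoke earlier but never deploy: $A_1$ is $\sigma_+$-semistable of class $\bb$, so $\Hom(A_1,A)=0$ (phase reasons) and $\Hom(A_1,B)=0$ unless $A_1 \cong B$ (stability of $B$); either way $\Hom(A_1,E)\neq 0$ together with $\Hom(B,E)=0$ gives the contradiction.

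\textbf{The gap you flag.} You are right that the delicate point is showing the class of each $\sigma_+$-HN factor of $E$ lies in the closed parallelogram, and ``since $E$ is built from $A$ and $B$'' is not an argument. The clean way: from $A \in \PP_+(\phi^+(\aa))$, $B \in \PP_+(\phi^+(\bb))$ and the extension, one has $E \in \PP_+([\phi^+(\aa),\phi^+(\bb)])$; hence each HN factor has phase in this interval, so its Mukai vector $\aa_i \in \HH$ lies in the cone generated by $\aa,\bb$ (here you use that $Z_+$ is an isomorphism on $\HH \otimes \R$). Since $\vv - \aa_i = \sum_{j\neq i}\aa_j$ lies in the same cone, $\aa_i$ is trapped in the parallelogram.

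\textbf{A side remark.} Your statement ``the HN filtration of $E$ with respect to $\sigma_+$ is exactly $A \subset E$'' is false: that would require $\phi^+(A) > \phi^+(B)$, the opposite of the hypothesis. The whole point is that we do \emph{not} know the $\sigma_+$-HN filtration of $E$ a priori; the argument constrains it via the parallelogram.
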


\begin{proof}
Since $A$ and $B$ are $\sigma_+$-stable, they are $\sigma_0$-semistable.
Therefore, the extension $E$ is also $\sigma_0$-semistable.
Let $\aa_i$ be the Mukai vector of the $i$-th HN factor of $E$ with respect to $\sigma_+$. 
By Proposition \ref{prop:HW} part \eqref{enum:semistablehasfactorsinHW} and Remark \ref{rem:HW}, we
have $\aa_i \in \HH$. 
We have $E \in \PP_+([\phi^+(\aa), \phi^+(\bb)])$, and hence $\aa_i$ is contained in the
cone generated by $\aa, \bb$. Since the same holds for $\vv - \aa_i = \sum_{j \neq i} \aa_j$, 
$\aa_i$ is in fact contained in the parallelogram with vertices $0, \aa, \vv, \bb$. Since
it is also a lattice point, the assumption on the
parallelogram implies $\aa_i \in \{\aa, \bb, \vv\}$.

Assume that $E$ is not $\sigma_+$-stable, and let $A_1 \subset E$ be the first HN filtration factor. Since 
$\phi^+(\aa_1) > \phi^+(\vv)$, we must have $\aa_1 = \bb$. By the stability of $A, B$ we
have $\Hom(A_1, A) = 0$, and $\Hom(A_1, B) = 0$ unless $A_1 \cong B$. Either of these is
a contradiction, since $\Hom(A_1,E)\neq0$ and $\Hom(B,E)=0$.
\end{proof}


\subsubsection*{Proof of Proposition \ref{prop:flops}}
We first consider case \eqref{enum:sum2positive}, so $\vv = \aa_1 + \aa_2$ with
$\aa_1, \aa_2 \in P_\HH$. Using Lemma
\ref{lem:findparallelogram}, we may assume that the parallelogram with vertices
$0, \aa_1, \vv, \aa_2$ does not contain an interior lattice point. In particular,
$\aa_1, \aa_2$ are primitive. We may also assume that $\phi^+(\aa_1) < \phi^+(\aa_2)$. 
By the signature of $\HH$ (see the proof of Lemma \ref{lem:numericaldimensions}),
we have $(\aa_1, \aa_2) > 2$. 
By Theorem \ref{thm:nonempty}, there exist $\sigma_+$-stable objects $A_i$ of class
$\vv(A_i) = \aa_i$. The inequality for the Mukai pairing implies 
$\ext^1(A_2, A_1) > 2$.  By Lemma \ref{lem:parallelogram}, any extension 
\[ 0 \to A_1 \into E \onto A_2 \to 0 \]
of $A_2$ by $A_1$ is $\sigma_+$-stable of class $\vv$. As all these extensions are S-equivalent to
each other with respect to $\sigma_0$, we obtain a projective space of dimension at least two
that gets contracted by $\pi^+$.

Now consider case \eqref{enum:sphericalflop}.
First assume that $\tilde \ss$ is an effective class. Note that $(\vv - \tilde \ss)^2 \ge -2$
and $(\tilde \ss, \vv - \tilde \ss) = (\tilde \ss, \tilde \vv) - \tilde \ss^2 > 2$.
Consider the parallelogram $\mathbf{P}$ with vertices $0, \tilde \ss, \vv, \vv-\tilde \ss$,
and the function $f(\aa) = \aa^2$ for  $\aa \in \mathbf{P}$. By homogeneity, its minimum is obtained
on one of the boundary segments; thus
\[ \bigl( \tilde \ss + t (\vv - \tilde \ss) \bigr)^2 > -2 + 4t - 2t^2 > -2\]
for $0 < t < 1$, along with a similar computation for the other line segments, shows
$f(\aa) > -2$ unless $\aa \in \{\tilde \ss, \vv-\tilde \ss\}$. In particular, if there is
any lattice point  $\aa \in \mathbf{P}$ other than one of its vertices, then 
$\aa^2 \ge 0$ and $(\vv - \aa)^2 \ge 0$. Thus $\vv = \aa + (\vv-\aa)$ can be
written as the sum of positive classes, and the claim follows from the previous paragraph.
Otherwise, let $\widetilde S$ be the
$\sigma_+$-stable object of class $\tilde \ss$, and $F$ any $\sigma_+$-stable object of class
$\vv - \tilde \ss$; then $\ext^1(\widetilde S, F) = \ext^1(F, \widetilde S) > 2$. Thus, with the same
arguments we obtain a family of $\sigma_+$-stable objects parameterized by a projective space that
gets contracted by $\pi^+$. 

We are left with the case where $\tilde \ss$ is not effective. Set
$\tilde \tt = - \tilde \ss$, which is an effective class.
With the same reasoning as above, we may assume that the parallelogram
with vertices $0, \tilde \tt, \vv, \vv - \tilde \tt$ contains no additional lattice points. 
Set 
\[
\vv' = \rho_{\tilde \tt}(\vv) - \tilde \tt = \vv - \bigl((\tilde \ss,\vv) + 1\bigr)\tilde \tt,,
\]
and consider the parallelogram $\mathbf P$ with vertices $0$, $\bigl((\tilde \ss,\vv) + 1\bigr)
\tilde \tt$, $\vv$, $\vv'$ (see Figure \eqref{fig:snoteffective}).
We have $\vv'^2 \ge -2$ and $(\tilde \tt, \vv') = (\tilde \ss, \vv) + 2 > 2$. The lattice points
of $\mathbf P$
are given by $k \tilde \tt$ and $\vv' + k \tilde \tt$ for $k \in \Z$, $0 \le k \le (\tilde \ss, \vv) + 1$
(otherwise, already the parallelogram with vertices $0, \tilde \tt, \vv, \vv-\tilde \tt$ would contain
additional lattice points).

\begin{wrapfigure}{r}{0.26\textwidth}
\vspace{-1em}
\definecolor{zzttqq}{rgb}{0.6,0.2,0}
\definecolor{tttttt}{rgb}{0.2,0.2,0.2}
\definecolor{uququq}{rgb}{0.25,0.25,0.25}
\begin{tikzpicture}[line cap=round,line join=round,>=triangle 45,x=1.0cm,y=1.0cm]
\clip(-1,-1.2) rectangle (3,5.0);
\fill[line width=0.4pt,dash pattern=on 4pt off 4pt,color=zzttqq,fill=zzttqq,fill opacity=0.1] (0,0)
-- (0,4.2) -- (2.1,4.84) -- (2.1,0.64) -- cycle;
\fill[line width=0.4pt,dotted,pattern color=zzttqq,fill=zzttqq,pattern=north west lines] (0,0) --
(0,0.84) -- (2.1,4.84) -- (2.1,4) -- cycle;
\draw [->] (0,0) -- (0,0.84);
\draw [->] (0,0) -- (2.1,0.64);
\draw [->] (0,0) -- (0,-0.84);
\draw [line width=0.4pt,dash pattern=on 4pt off 4pt,color=zzttqq] (0,0)-- (0,4.2);
\draw [line width=0.4pt,dash pattern=on 4pt off 4pt,color=zzttqq] (0,4.2)-- (2.1,4.84);
\draw [line width=0.4pt,dash pattern=on 4pt off 4pt,color=zzttqq] (2.1,4.84)-- (2.1,0.64);
\draw [line width=0.4pt,dash pattern=on 4pt off 4pt,color=zzttqq] (2.1,0.64)-- (0,0);
\draw [line width=0.4pt,dotted,color=zzttqq] (0,0)-- (0,0.84);
\draw [line width=0.4pt,dotted,color=zzttqq] (0,0.84)-- (2.1,4.84);
\draw [line width=0.4pt,dotted,color=zzttqq] (2.1,4.84)-- (2.1,4);
\draw [line width=0.4pt,dotted,color=zzttqq] (2.1,4)-- (0,0);
\fill [color=uququq] (0,0) circle (1.5pt);
\draw[color=uququq] (-0.28,0.02) node {$0$};
\fill [color=tttttt] (0,0.84) circle (1.5pt);
\draw[color=tttttt] (-0.25,0.84) node {$\tilde \tt$};
\fill [color=black] (2.1,0.64) circle (1.5pt);
\draw[color=black] (2.5,0.64) node {$\vv'$};
\fill [color=uququq] (2.1,1.48) circle (1.5pt);
\draw[color=uququq] (2.6,1.5) node {$\rho_{\tilde \tt}(\vv)$};
\fill [color=uququq] (0,1.68) circle (1.5pt);
\fill [color=uququq] (0,2.52) circle (1.5pt);
\fill [color=uququq] (0,3.36) circle (1.5pt);
\fill [color=uququq] (0,4.2) circle (1.5pt);
\draw[color=uququq] (0.3,4.78) node {$((\tilde \ss, \vv)+1)\tilde \tt$};
\fill [color=uququq] (2.1,4.84) circle (1.5pt);
\draw[color=uququq] (2.3,4.84) node {$\vv$};
\fill [color=uququq] (2.1,4) circle (1.5pt);
\fill [color=uququq] (2.1,3.16) circle (1.5pt);
\fill [color=uququq] (2.1,2.32) circle (1.5pt);
\fill [color=uququq] (0,-0.84) circle (1.5pt);
\draw[color=uququq] (-0.25,-0.84) node {$\tilde \ss$};
\end{tikzpicture}
\vspace{-25pt}
\caption{-$\tilde \ss$ effective.} \label{fig:snoteffective}
\end{wrapfigure}

Let $\widetilde T$ and $F$ be $\sigma_+$-stable objects of class $\tilde \tt$ and $\vv'$, respectively.
Let us assume $\phi^+(\tilde \tt) > \phi^+(\vv)$; the other case follows by dual arguments.
Any subspace $\VVV \subset \Ext^1(\widetilde T, F)$ of dimension
$(\tilde \ss, \vv) + 1$ defines an extension
\[ 0 \to F \into E \onto \widetilde T \otimes \VVV \to 0 \phantom{\hspace{5cm}} \]
such that $E$ is of class $\vv(E) = \vv$, and satisfies $\Hom(\widetilde T, E) = 0$. If $E$ were
not $\sigma_+$-stable, then the class of the maximal destabilizing subobject $A$ would have to be
a lattice point in $\mathbf P$ with $\phi^+(\vv(A)) > \phi^+(\vv)$;
therefore, $\vv(A) = k \tilde \tt$. The only $\sigma_+$-semistable object of this class is
$\widetilde T^{\oplus k}$, and we get a contradiction. Thus, we have constructed a family of
$\sigma_+$-stable objects of class $\vv$ parameterized by the Grassmannian
$\mathrm{Gr}((\tilde \ss, \vv) + 1, \ext^1(\widetilde T, F))$ that become S-equivalent with respect to
$\sigma_0$. 
\hfill$\Box$


It remains to prove the converse of Proposition \ref{prop:flops}:

\begin{Prop} \label{prop:noflops}
Assume that $\WW$ does not induce a divisorial contraction. Assume that $\vv$ cannot be written
as the sum of two positive classes in $P_\HH$, and that there is no spherical class
$\ss \in \HH$ with $0 < (\ss, \vv) \le \frac{\vv^2}2$.
Then $\WW$ is either a fake wall, or not a wall.
\end{Prop}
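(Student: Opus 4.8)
The plan is to show that under the stated hypotheses, the wall $\WW$ exhibits no genuine birational behavior. The strategy mirrors the contrapositive structure used throughout Sections \ref{sec:noniso-totsemistable}--\ref{sec:iso}: assume $\WW$ is a genuine (non-fake) wall, hence by Theorem \ref{thm:contraction} the divisors $\ell_{\sigma_0,\pm}$ are big and nef but not ample, so they contract at least one curve of $\sigma_0$-S-equivalent objects. Since $\WW$ is not divisorial by assumption, and not fake, it must be a flopping wall, so the contraction $\pi^+\colon M_{\sigma_+}(\vv) \to \overline{M}$ has exceptional locus of codimension $\ge 2$ but is nontrivial. I would then run the by-now-familiar dimension estimate: take the relative Harder-Narasimhan filtration with respect to $\sigma_-$ over (an étale cover of) the exceptional locus $E \subset M_{\sigma_+}(\vv)$, producing a rational map $\mathrm{HN}_E \colon E \dashrightarrow \prod_i M_{\sigma_-}(\aa_i)$ whose fibers are contracted by $\pi^+$. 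The Mukai vectors $\aa_i$ lie in $\HH_\WW$ by Proposition \ref{prop:HW}\eqref{enum:HNfactorsinHW} and sum to $\vv$, with at least two summands since $E$ parametrizes strictly semistable objects.

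Next I would case-split on the types of the $\aa_i$, exactly as in Lemma \ref{lem:nottotallysemistable} and Proposition \ref{prop:DivisorialWallIsotropicHasNumericalProps}. If two of the $\aa_i$ are positive (i.e.\ in $P_\HH$), then $\vv$ decomposes as a sum of positive classes — wait, one must be careful: the $\aa_i$ need not individually be positive, but if $\geq 2$ of them have $\aa_i^2 \ge 0$ one can group them, using $(\aa_i,\aa_j)>0$ in the hyperbolic lattice, to exhibit $\vv = \aa' + \bb'$ with $\aa'^2, \bb'^2 \ge 0$ — contradicting the hypothesis. If instead exactly one $\aa_i$ has $\aa_i^2 \ge 0$ and the rest are (multiples of) spherical classes, then since by the assumed absence of an orthogonal spherical class and (from the totally-semistable analysis) the absence of an effective spherical $\ss$ with $(\ss,\vv)<0$, every effective spherical class $\ss \in C_\WW \cap \HH$ has $(\ss, \vv) > \frac{\vv^2}{2}$ or $(\ss,\vv) > 0$ with the upper bound violated. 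Plugging a spherical HN factor $\aa_j = m\tilde\ss$ into the estimates \eqref{eq:a2lessthanv2-1}--\eqref{eq:a2lessthanv2-2} together with Lemma \ref{lem:numericaldimensions}, the inequality $\dim E \le \sum_i \dim M_{\sigma_-}(\aa_i)$ forced by "fibers of $\mathrm{HN}_E$ are contracted by $\pi^+$" collides with $\dim\pi^+(E)$ bounded below via Theorem \ref{thm:NamikawaWierzba} ($\dim S_i \le 2n - 2i$), yielding a contradiction unless the relative HN filtration is trivial — i.e.\ $\WW$ is fake. In the isotropic case ($\HH$ contains $\ww$ with $\ww^2=0$) I would instead invoke the reduction of Section \ref{sec:iso}: after the Fourier-Mukai transform, $\pi^+$ is the Li-Gieseker-Uhlenbeck morphism, and the hypotheses $(\vv,\ww) \notin \{1,2\}$ for all isotropic $\ww$ together with no spherical $\ss$ with $0<(\ss,\vv)\le\vv^2/2$ force (via Propositions \ref{prop:TotSStWallIsotropicHasNumericalProps}, \ref{prop:DivisorialWallIsotropicHasNumericalProps}, and the numerical analysis there) that the Uhlenbeck contraction is an isomorphism onto its image, hence $\WW$ is fake.

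The subtlety, and the step I expect to be the main obstacle, is handling the \emph{totally semistable} sub-case cleanly: here there are no $\sigma_0$-stable objects of class $\vv$, so $M_{\sigma_+}(\vv)$ itself is entirely swept out by strictly semistable loci and the naive "exceptional divisor" argument needs replacing. I would reduce to the minimal class $\vv_0 \in G_\HH.\vv$ via the spherical-twist isomorphism $\Phi$ of Proposition \ref{prop:sphericaltotallysemistable}: since $\Phi$ identifies an open subset of $M_{\sigma_+}(\vv_0)$ (with codimension-$\ge 2$ complement, as in Corollary \ref{cor:notdivisorial}) with an open subset of $M_{\sigma_+}(\vv)$ in a way compatible with $\sigma_0$-S-equivalence, it suffices to prove $\WW$ is fake for $\vv_0$; and $\vv_0$, being minimal, has $(\vv_0, \ss) \ge 0$ for all effective spherical $\ss$, so the hypotheses transfer and $\vv_0$ is \emph{not} totally semistable (Lemma \ref{lem:nottotallysemistable}), putting us back in the first case. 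The remaining care is to verify that "no spherical $\ss$ with $0 < (\ss,\vv)\le \vv^2/2$" for $\vv$ really does descend to the corresponding statement for $\vv_0$ and vice versa, which is a direct computation with the $G_\HH$-action on $\HH$ using that reflections preserve $\vv^2$ and permute spherical classes; and to check that, in the non-totally-semistable case, the absence of a contracted curve through a \emph{generic} point of $M_{\sigma_+}(\vv)$ (which the dimension count gives) upgrades to the absence of any contracted curve, using that $\ell_{\sigma_0}$ has trivial kernel on $N_1$ once it is positive on a dense set of curve classes — here one uses that an ample divisor's defining property is closed, so if $\ell_{\sigma_0,+}$ is ample on a dense open subset it is ample, whence $\WW$ is fake.
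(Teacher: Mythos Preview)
Your overall architecture (reduce to the minimal class $\vv_0$ via Proposition \ref{prop:sphericaltotallysemistable}, use $G_\HH$-invariance of the hypotheses) matches the paper. But the core step for $\vv_0$ has a genuine gap.

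\medskip
\textbf{The dimension-count does not close.} In the divisorial case, the argument of Lemma \ref{lem:notdivisorial} works because Theorem \ref{thm:NamikawaWierzba} gives $\dim \pi^+(D) \ge \vv^2$ for a contracted \emph{divisor} $D$. For a flopping contraction the exceptional locus $E$ has codimension $\ge 2$, and Theorem \ref{thm:NamikawaWierzba} yields only the \emph{upper} bound $\dim \pi^+(E) \le \vv^2$, not a lower bound. (A flop can contract an isolated $\P^1$, where $\dim \pi^+(E)=0$.) So the chain
\[
\sum_i \dim M_{\sigma_-}(\aa_i) \ \ge\ \dim \pi^+(E) \ \ge\ (\text{something close to } \vv^2)
\]
never gets off the ground, and your proposed collision with the estimates \eqref{eq:a2lessthanv2-1}--\eqref{eq:a2lessthanv2-2} does not occur. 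Your phrasing of the inequalities is also tangled: ``fibers of $\mathrm{HN}_E$ are contracted by $\pi^+$'' gives $\sum_i \dim M_{\sigma_-}(\aa_i) \ge \dim \pi^+(E)$, not a bound on $\dim E$.

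\medskip
\textbf{What the paper does instead.} The argument for $\vv_0$ is \emph{pointwise}, not a dimension count: one shows that \emph{every} $\sigma_+$-stable object $E$ of class $\vv_0$ is actually $\sigma_0$-\emph{stable}. Suppose not; look at the $\sigma_-$-HN factors $\aa_i$. If all $\aa_i \in P_\HH$ then $\vv_0$ is a sum of two positive classes, contradiction. Otherwise some factor is spherical, so (by Lemma \ref{lem:JHspherical}) one of the $\sigma_0$-stable spherical objects $S$ or $T$ is a subobject or quotient of $E$; hence $\vv_0 - \ss$ or $\vv_0 - \tt$ is effective. Now the hypotheses enter numerically: since no spherical class has $0 < (\ss,\vv_0) \le \vv_0^2/2$ (and $\WW$ is not divisorial so none has $(\ss, \vv_0) = 0$), one gets $(\vv_0-\ss)^2 < -2$ and $(\vv_0-\tt)^2 < -2$. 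These two hyperbola constraints, in coordinates $\vv_0 = x\ss + y\tt$, force $x,y < 1$; so neither $\vv_0 - \ss$ nor $\vv_0 - \tt$ is effective, contradiction. (In the one-spherical case, $\vv_0 - \tilde\ss$ effective plus $\tilde\ss^2=-2$ forces $\vv_0 - \tilde\ss \in P_\HH$, giving $(\tilde\ss,\vv_0) < \vv_0^2/2$ directly.) This makes S-equivalence trivial on $M_{\sigma_+}(\vv_0)$, so $\pi^+$ is an isomorphism there; then the spherical-twist map $\Phi_*$ is an isomorphism $M_{\sigma_+}(\vv_0)\to M_{\sigma_+}(\vv)$ compatible with S-equivalence, and $\pi^+$ is an isomorphism for $\vv$ too. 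No separate isotropic argument is needed, and no appeal to ``ample on a dense open implies ample'' is required.
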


\subsubsection*{Proof}
First consider the case where $\vv = \vv_0$ is the minimal class in its orbit $G_\HH.\vv$. We will prove that
every $\sigma_+$-stable object $E$ of class $\vv_0$ is also $\sigma_0$-\emph{stable}. 
Assume otherwise, so $E$ is strictly $\sigma_0$-semistable, and therefore $\sigma_-$-unstable.
Let $\aa_1, \dots, \aa_l$ be the Mukai vectors
of the HN filtration factors of $E$ with respect to $\sigma_-$. If all classes $\aa_i$ are positive,
$\aa_i \in P_\HH$, then we have an immediate contradiction to the assumptions.

Otherwise, $E$ must have
a spherical destabilizing subobject, or a spherical destabilizing quotient. 
Let $\tilde \ss$ be the class of this spherical object. If there is only one $\sigma_0$-stable
spherical object, then it is easy to see that $\vv_0 - \tilde \ss$ is in the positive cone;
therefore, $(\tilde \ss, \vv_0) < \frac{\vv_0^2}2$ in contradiction to our assumption.

\begin{wrapfigure}{r}{0.4\textwidth}
\vspace{-1em}
\begin{tikzpicture}[line cap=round,line join=round,>=triangle 45,x=1.5cm,y=1.5cm]
\clip(-0.3,-0.3) rectangle (3.5,2.5);
\draw[->,color=black] (-1,0) -- (3.5,0);
\foreach \x in {-1,1,2,3,4}
\draw[shift={(\x,0)},color=black] (0pt,2pt) -- (0pt,-2pt);
\draw[->,color=black] (0,-1) -- (0,2.5);
\foreach \y in {-1,1,2,3}
\draw[shift={(0,\y)},color=black] (2pt,0pt) -- (-2pt,0pt);
\draw [samples=50,domain=-0.99:0.99,rotate around={-135:(-1,-1)},xshift=-1.5cm,yshift=-1.5cm] plot ({2*(1+(\x)^2)/(1-(\x)^2)},{1.15*2*(\x)/(1-(\x)^2)});
\draw [samples=50,domain=-0.99:0.99,rotate around={-135:(-1,-1)},xshift=-1.5cm,yshift=-1.5cm] plot ({2*(-1-(\x)^2)/(1-(\x)^2)},{1.15*(-2)*(\x)/(1-(\x)^2)});
\draw [samples=50,domain=-0.99:0.99,rotate around={135:(1,0)},xshift=1.5cm,yshift=0cm] plot ({0.58*(1+(\x)^2)/(1-(\x)^2)},{1*2*(\x)/(1-(\x)^2)});
\draw [samples=50,domain=-0.99:0.99,rotate around={135:(0,1)},xshift=0cm,yshift=1.5cm] plot ({0.58*(-1-(\x)^2)/(1-(\x)^2)},{1*(-2)*(\x)/(1-(\x)^2)});
\draw (1.4,0.68) node[anchor=north west] {$\mathbf{v}^2=\mathbf{v}_0^2$};
\draw (1.4,1.3) node[anchor=north west] {$ (\mathbf{v}-\mathbf{t})^2=-2$};
\draw (0.4,2.2) node[anchor=north west] {$(\mathbf{v}-\mathbf{s})^2=-2$};
\draw (0.37,0.31) node[anchor=north west] {$ \mathbf{v}_0 $};
\fill [color=black] (1,0) circle (1.5pt);
\draw[color=black] (1.0,-0.18) node {$S_1$};
\fill [color=black] (0,1) circle (1.5pt);
\draw[color=black] (-0.18,1) node {$S_2$};
\fill [color=black] (0.44,0.39) circle (1.5pt);
\end{tikzpicture}
\caption{Proof of Proposition \ref{prop:noflops}}
\label{fig:NoFlop}
\vspace{-10pt}
\end{wrapfigure}

If there are two $\sigma_0$-stable spherical objects of classes $\ss, \tt$, consider the two vectors
$\vv_0-\ss$ and $\vv_0 -\tt$. The assumptions imply
$(\vv_0-\ss)^2 < -2$ and $(\vv_0-\tt)^2 < -2$; on the other hand, $\vv_0 - \tilde \ss$ is effective; 
using Lemma \ref{lem:JHspherical}, this implies that $\vv_0-\ss$ or $\vv_0-\tt$ must be effective.
We claim that this leads to a simple numerical contradiction.
Indeed, $(\vv_0-\tt)^2 < -2$ constrains $\vv_0$ to lie below a concave down hyperbola, and $(\vv_0-\ss)^2 < -2$ 
to lie above a concave up hyperbola; the two hyperbolas intersect at the points $0$ and $\ss + \tt$. 
Therefore, if we write $\vv_0 = x\ss + y\tt$, we have $x,y < 1$. Thus, neither $\vv_0-\ss$ nor $\vv_0-\tt$
can be effective (see Figure \ref{fig:NoFlop}).

In the case where $\vv$ is not minimal, $\vv \neq \vv_0$,
let $\Phi$ be the sequence of spherical twists given by
Proposition \ref{prop:sphericaltotallysemistable}. Since the assumptions of our proposition are
invariant under the $G_\HH$-action, they are also satisfied by $\vv_0$. By the previous case,
we know that every $\sigma_+$-stable objects $E_0$ of class $\vv_0$ is also $\sigma_0$-\emph{stable}. 
Thus $\Phi$ induces a morphism
$\Phi_* \colon M_{\sigma_+}(\vv_0) \to M_{\sigma_+}(\vv)$; since $\Phi_*$ is injective and 
the two spaces are smooth projective varieties of the same dimension, it is an isomorphism.
The S-equivalence class of $\Phi(E_0)$ is determined by that of $E_0$; since S-equivalence is a
trivial equivalence relation on $M_{\sigma_+}(\vv_0)$, the same holds for
$M_{\sigma_+}(\vv)$, and thus $\pi^+$ is an isomorphism.
\hfill$\Box$

Proposition \ref{prop:noflops} finishes the proof of Theorem \ref{thm:walls}.

\section{Main theorems}\label{sec:MainThms}

We will first complete the proof of Theorem \ref{thm:birational-WC}.

\begin{proof}[Proof of Theorem \ref{thm:birational-WC}, part \eqref{enum:birationalautoequivalence}]
We consider a wall $\WW$ with nearby stability conditions $\sigma_\pm$, and $\sigma_0 \in \WW$. 
Since $M_{\sigma_\pm}(\vv)$ are $K$-trivial varieties, it is sufficient to find
an open subset $U \subset M_{\sigma_\pm}(\vv)$ with complement of codimension two, and an
(anti-)autoequivalence $\Phi_\WW$ of $\Db(X,\alpha)$, such that
$\Phi_\WW(E)$ is $\sigma_-$-stable for all $E \in U$.

We will distinguish cases according to Theorem \ref{thm:walls}.
First consider the case when $\WW$ corresponds to a flopping contraction, or when $\WW$
is a fake wall.
If $\WW$ does not admit an effective spherical class $\ss \in \HH_\WW$ with $(\ss, \vv) < 0$
then we can choose $U$ to be the open subset of $\sigma_0$-\emph{stable}
objects; its complement has codimension two, and there is nothing to prove.
Otherwise, there exists a spherical object
destabilizing every object in $M_{\sigma_+}(\vv)$. Let $\vv_0 \in \HH_\WW$ be the
minimal class of the $G_\HH$-orbit of $\vv$, in the sense of Definition \ref{prop:pellequation}.
The subset $U$
of $\sigma_0$-\emph{stable} objects in $M_{\sigma_\pm}(\vv_0)$ has complement of codimension two.
Then the sequence of spherical twists of Proposition \ref{prop:sphericaltotallysemistable}, applied
for $\sigma_+$ and $\sigma_-$, identifies $U$ with subsets of $M_{\sigma_+}(\vv)$
and $M_{\sigma_-}(\vv)$ via derived equivalences $\Phi^+, \Phi^-$; then the composition
$\Phi^- \circ \left(\Phi^+\right)^{-1}$ has the desired property.

Next assume that $\WW$ induces a divisorial contraction. We have three cases to consider:
\begin{description}[leftmargin=0.5cm]
\item[Brill-Noether] Again, we first assume that $\vv$ is minimal, namely there is no
effective spherical class $\ss$ with $(\ss, \vv) < 0$.
The contracted divisor is described in Proposition
\ref{prop:sphericaldivisorialwall}, and the HN filtration of the destabilized
objects in Lemma \ref{lem:minimaldivisorialcontraction}. We may assume that we are in
the case where the Brill-Noether divisor in $M_{\sigma_+}(\vv)$ is described by
$\Hom(\widetilde S, \blank) \neq 0$. 
Now consider the spherical twist
$\ST_{\widetilde S}$ at $\widetilde S$, applied to objects $E \in M_{\sigma_+}(\vv)$. Note that
by $\sigma_+$-stability, we have $\Ext^2(\widetilde S, E) = \Hom(E, \widetilde S)^\vee = 0$
for any such $E$;
since $(\vv(\widetilde S), \vv(E)) = 0$, it follows that $\hom(\widetilde S, E) = \ext^1(\widetilde S, E)$.

If $E$ does not lie on the Brill-Noether divisor, then $\RHom(\widetilde S, E) = 0$, and so
$\ST_{\widetilde S}(E) = E$. Also, for generic such $E$ (away from a codimension two subset), the 
object $E$ is also $\sigma_-$-stable.

If $E$ is a generic element of the Brill-Noether divisor, then $\Hom(\widetilde S, E)
\cong \C \cong \Ext^1(\widetilde S, E)$, and hence we have an exact triangle
\[
\widetilde S \oplus \widetilde S[-1] \to E \to \ST_{\widetilde S}(E).
\]
Its long exact cohomology sequence with respect to the t-structure of $\sigma_0$ induces
two short exact sequences
\[ \widetilde S \into E \onto F  \quad \text{and} \quad F \into \ST_{\widetilde S}(E) \onto \widetilde S. \]
By Lemma \ref{lem:sphericaldivisorialwall}, the former is the HN filtration of $E$
with respect to $\sigma_-$; the latter is the dual extension, which is a $\sigma_-$-stable 
object by Lemma \ref{lem:stableextension}.

Thus, in both cases, $\ST_{\widetilde S}(E)$ is $\sigma_-$-stable.
This gives a birational map $M_{\sigma_+}(\vv)\dashrightarrow M_{\sigma_-}(\vv)$ defined in codimension two and induced by the autoequivalence $\ST_{\widetilde S}$, which is the claim we wanted to prove.

If instead there is an effective spherical class $\ss$ with $(\ss, \vv) < 0$, we reduce
to the previous case, similarly to the situation of flopping contractions:
Let $\vv_0$ again denote the minimal class
in the orbit $G_\HH.\vv$; note that $\WW$ also induces a divisorial contraction of
Brill-Noether type for $\vv_0$. In this case, Lemma \ref{lem:sphericaldivisorialwall}
states that the sequence $\Phi$ of spherical twists identifies
an open subset $U^+ \subset M_{\sigma_+}(\vv_0)$ (with complement of codimension two)
with an open subset of $M_{\sigma_+}(\vv)$; similarly for $U^- \subset M_{\sigma_-}(\vv_0)$. 
Combined with the single spherical twist identifying a common open subset of
$M_{\sigma_\pm}(\vv_0)$, this implies the claim.

\item[Hilbert-Chow]
Here $\WW$ is an isotropic wall and there exists an isotropic primitive vector $\ww_0$ with $(\ww_0,\vv)=1$.
As shown in Section
\ref{sec:iso}, we may assume that shift by one identifies
$M_{\sigma_+}(\vv)$ with the $(\beta,\omega)$-Gieseker moduli space 
$M_{\omega}^{\beta}(-\vv)$ of stable sheaves of rank one on a twisted K3 surface $(Y, \alpha')$.
After tensoring with a line bundle, we may assume that objects in $M_{\sigma_+}(\vv)$ 
are exactly the shifts $I_Z[1]$ of ideal sheaves of 0-dimensional subschemes $Z \subset Y$.

In the setting of Proposition \ref{prop:Jason}, we have $\beta = 0$. Since there are
line bundles on $(Y, \alpha')$, the Brauer group element $\alpha'$ is trivial. By the last
statement of the same Proposition, the moduli space $M_{\sigma_-}(\vv)$ parameterizes
the shifts of derived duals ideal sheaf. Thus there is a natural isomorphism
$M_{\sigma_-}(\vv) \cong M_{\sigma_+}(\vv)$ induced by the derived anti-autoequivalence
$(\blank)^\vee[2]$.
\item[Li-Gieseker-Uhlenbeck]
Here $\WW$ is again isotropic, but $(\ww_0,\vv)=2$.
We will argue along similar lines as in the previous case; unfortunately, the details are more
involved. The first difference
is that we cannot assume $\beta = 0$. Instead,
first observe that $M_{\sigma_+}(\vv) = M_{\omega}^{\beta}(-\vv)$
is parameterizing $(\beta,\omega)$-Gieseker stable sheaves $F$ of rank $2 = (\vv, \ww)$, and of slope
$\mu_{\omega}(F) = \omega.\beta$. If we assume $\omega$ to be generic, then 
Gieseker stability is independent of the choice of $\beta$; we can consider
$M_{\sigma_+}(\vv) = M_{\omega}(-\vv)$ to be the moduli space of shifts $F[1]$ of
$\omega$-Gieseker stable sheaves $F$. 

Since $(Y, \alpha')$ admits rank two vector bundles, the order of $\alpha'$ in the Brauer group is
one or two; in both cases, we can identify $(Y, \alpha')$ with $(Y, (\alpha')^{-1})$, and thus
the derived dual $E \mapsto E^\vee$ defines an anti-autoequivalence of $\Db(Y, \alpha')$.

Write $-\vv = (2, c, d)$, and let $\LL$ be the line bundle with $c_1(L) = c$.
From the previous discussion it follows
that $\Phi(\blank) = (\blank)^\vee \otimes \LL[2]$ is the desired functor:

Indeed, any object in $M_{\sigma_+}(\vv)$ is of the form $F[1]$ for a $\omega$-Gieseker stable sheaf
$F$ of class $-\vv$. Then $\Phi(F[1]) = F^\vee \otimes \LL[1]$ the derived dual of a Gieseker stable
sheaf, and has class $\vv$. By Proposition \ref{prop:Jason}, this is an object of
$M_{\sigma_-}(\vv)$.
\end{description}
\end{proof}

Consider two adjacent chamber $\CC^+, \CC^-$ separated by a wall $\WW$; as always, 
we pick stability conditions $\sigma_{\pm} \in \CC^{\pm}$, and a stability condition
$\sigma_0 \in \WW$.
By the identification of N\'eron-Severi groups induced
by Theorem \ref{thm:birational-WC}, we can think of the corresponding maps
$\ell_{\pm}$ of equation \eqref{eq:elltoAmp} as maps
\[
\ell_{\pm} \colon \CC^{\pm} \to \NS(M_{\sigma_+}(\vv)).
\]
They can be written as the following composition of maps
\[
\Stab^\dag(X,\alpha) \xrightarrow{\ZZ} H^*_{\alg}(X,\alpha,\Z) \otimes \C \xrightarrow{I} \vv^\perp
\xrightarrow{\theta_{\CC^\pm}} \NS(M_{\sigma_+}(\vv))
\]
where $\ZZ$ is the map defined in Theorem \ref{thm:Bridgeland_coveringmap}, 
$I$ is given by $I(\Omega_Z) = \Im \frac{\Omega_Z}{-(\Omega_Z, \vv)}$, and where
$\theta_{\CC^\pm}$ are the Mukai morphisms, as reviewed in Remark \ref{rmk:comparison}. 

Our next goal is to show that these two maps behave as nicely as one could hope; we will
distinguish two cases according to the behavior of the contraction morphism
\[
\pi^+ \colon M_{\sigma_+}(\vv) \to \overline{M}^+
\]
induced by $\WW$ via Theorem \ref{thm:contraction}:

\begin{Lem} \label{lem:ellandreflections}
The maps $\ell^+, \ell^-$ agree on the wall $\WW$ (when extended by continuity).
\begin{enumerate}
\item \label{enum:isomorsmall} (Fake or flopping walls)
When $\pi^+$ is an isomorphism, or a small contraction, then 
the maps $\ell_+, \ell_-$ are analytic continuations of each other.
\item \label{enum:divisorial} (Bouncing walls)
When $\pi^+$ is a divisorial contraction, then the analytic continuations of
$\ell^+, \ell^-$ differ by the reflection $\rho_D$ in $\NS(M_{\sigma_+}(\vv))$ at the 
divisor $D$ contracted by $\ell_{\sigma_0}$.
\end{enumerate}
\end{Lem}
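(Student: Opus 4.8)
The plan is to exploit the factorization $\ell^\pm=\theta_{\CC^\pm}\circ I\circ\ZZ$: only $I\circ\ZZ$ varies analytically, while crossing $\WW$ changes only the Mukai morphism $\theta_{\CC^\pm}$. First I would observe that $I\circ\ZZ$ is analytic in a neighbourhood of $\WW$ (there $Z(\vv)\neq 0$, since $\vv$ has $\sigma$-semistable representatives by Theorem \ref{thm:nonempty}), so $\ell^+$ extends analytically across $\WW$ by the same formula. Let $g\colon M_{\sigma_+}(\vv)\dashrightarrow M_{\sigma_-}(\vv)$ be the birational map of Theorem \ref{thm:birational-WC}, an isomorphism away from codimension two, inducing the isometric identification $g^*\colon\NS(M_{\sigma_-}(\vv))\xrightarrow{\sim}\NS(M_{\sigma_+}(\vv))$. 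With $\gamma:=g^*\circ\theta_{\CC^-}\circ\theta_{\CC^+}^{-1}\in O\bigl(\NS(M_{\sigma_+}(\vv))\bigr)$ one has $\ell^-=\gamma\circ\ell^+$ as analytic maps near $\WW$, so everything reduces to computing $\gamma$ and checking (i) $\gamma(\ell^+(\sigma_0))=\ell^+(\sigma_0)$, (ii) $\gamma=\mathrm{id}$ for fake and flopping walls, (iii) $\gamma=\rho_D$ for divisorial walls. Normalising $\sigma_0$ by $\widetilde{\GL}^+_2(\R)$ so that $Z_0(\HH_\WW)\subset\R$ and $Z_0(\vv)=-1$, we get $\ell^+(\sigma_0)=\theta_{\CC^+}(\Im\Omega_{Z_0})$ with $\Im\Omega_{Z_0}\perp\HH_\WW$; since in every case below $\gamma$ is a product of reflections at classes in $\theta_{\CC^+}(\HH_\WW)$, assertion (i) will be automatic, and it suffices to prove (ii) and (iii).

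When the (anti-)autoequivalence $\Phi_\WW$ of Theorem \ref{thm:birational-WC} is an honest autoequivalence --- the case of fake walls, flopping walls, and Brill--Noether divisorial walls --- the computation is formal. From $\theta_\EE(\ww).C=(\ww,\vv(\Phi_\EE(\OO_C)))$ and $\Phi_{\Phi_\WW(\EE^+)}=\Phi_\WW\circ\Phi_{\EE^+}$ one obtains $g^*\circ\theta_{\CC^-}=\theta_{\CC^+}\circ(\Phi_\WW)_*^{-1}$, hence $\gamma=\theta_{\CC^+}\circ(\Phi_\WW)_*^{-1}\circ\theta_{\CC^+}^{-1}$, and it remains to identify $(\Phi_\WW)_*\in O(\vv^\perp)$. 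A spherical twist $\ST_S$ acts on $H^*_\alg(X,\alpha,\Z)$ by the reflection $\rho_{\vv(S)}$, and in the proof of Theorem \ref{thm:birational-WC} the two compositions of spherical twists $\Phi^+,\Phi^-$ coming from Proposition \ref{prop:sphericaltotallysemistable} (and the reduction to the minimal class $\vv_0$) use objects $T_i^\pm$, $S_i^\pm$ with equal Mukai vectors, so $(\Phi^+)_*=(\Phi^-)_*$. Consequently:
\begin{itemize}
\item for a fake or flopping wall $\Phi_\WW$ is either $\mathrm{id}$ (when $\vv$ is minimal) or $\Phi^-\circ(\Phi^+)^{-1}$, so $(\Phi_\WW)_*=\mathrm{id}$ and $\gamma=\mathrm{id}$;
\item for a Brill--Noether wall $\Phi_\WW=\ST_{\widetilde S}$ (when $\vv$ is minimal), or a $\Phi^+$-conjugate of it, so $(\Phi_\WW)_*=\rho_{\tilde\ss}$ for the effective spherical class $\tilde\ss\in\vv^\perp\cap\HH_\WW$; since $\theta_{\CC^+}\colon\vv^\perp\xrightarrow{\sim}\NS(M_{\sigma_+}(\vv))$ is an isometry (Theorem \ref{thm:ModuliSpacesAreHK}) with $\theta_{\CC^+}(\tilde\ss)=D$ (Proposition \ref{prop:sphericaldivisorialwall}), this gives $\gamma=\rho_D$.
\end{itemize}

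There remain the Hilbert--Chow and Li--Gieseker--Uhlenbeck divisorial walls, where $\Phi_\WW$ is an \emph{anti}-autoequivalence and the identity $\theta_{\Phi(\EE)}=\theta_\EE\circ\Phi_*^{-1}$ no longer holds as stated. Here I would argue on the explicit models of Section \ref{sec:iso}: after a Fourier--Mukai transform $M_{\sigma_+}(\vv)=M_{\omega}^{\beta}(-\vv)$ is a Gieseker moduli space with $\pi^+$ the Li--Gieseker--Uhlenbeck morphism, $M_{\sigma_-}(\vv)$ is the dual Gieseker moduli space (Proposition \ref{prop:Jason}), and $\Phi_\WW$ has the form $(\blank)^\vee\otimes\LL[2]$. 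Comparing the two universal families through the derived dual and the twist by $\LL$ --- and tracking Grothendieck--Serre duality in the family, which is exactly what supplies the correct overall sign --- one computes that $\gamma$ is the reflection at $\theta_{\CC^+}(\delta_0)=D$, where $\delta_0\in\vv^\perp\cap\HH_\WW$ is the class cutting out the exceptional direction (for Hilbert--Chow, the usual generator of the $\delta$-ray of $\mathrm{Hilb}^n$; for Li--Gieseker--Uhlenbeck, the class pinned down in Section \ref{sec:iso}); so again $\gamma=\rho_D$. The main obstacle is precisely this last step: getting the relative-duality bookkeeping for the anti-autoequivalence right, so that the transition comes out as $\rho_D$ and not $-\rho_D$. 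The equivalence cases, by contrast, reduce to a short lattice computation once Theorem \ref{thm:birational-WC} and its proof are available.
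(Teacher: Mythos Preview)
Your proposal is correct and follows essentially the same approach as the paper. Both reduce the claim to comparing the Mukai morphisms $\theta_{\CC^\pm}$, compute the autoequivalence cases (fake/flop and Brill--Noether) by tracking the action of the spherical twists on cohomology---using that $\Phi^+$ and $\Phi^-$ involve spherical objects of identical Mukai vectors---and handle the Hilbert--Chow and Li--Gieseker--Uhlenbeck cases by an explicit Grothendieck-duality computation on a test curve; your anticipated bookkeeping there is exactly what the paper carries out, yielding $\theta_{\CC^-}(\aa).C = -\theta_{\CC^+}(\aa^\vee\cdot\ch(\LL)).C$ and then verifying this equals $\rho_D\circ\theta_{\CC^+}$.
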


As a consequence, in case \eqref{enum:isomorsmall} the wall $\WW$ is a fake wall when $\pi^+$ is
an isomorphism, and induces a flop when $\pi^+$ is a small contraction; in case
\eqref{enum:divisorial}, corresponding to a divisorial contraction, the moduli spaces
$M_{\sigma_\pm}(\vv)$ for the two adjacent chambers are isomorphic.

\begin{proof}
We have to prove $\theta_{\CC^-} = \theta_{\CC^+}$ in case \eqref{enum:isomorsmall}, and
$\theta_{\CC^-} = \rho_D \circ \theta_{\CC^+}$ in case \eqref{enum:divisorial}. We 
assume for simplicity that the two moduli spaces admit universal families; the arguments
apply identically to quasi-universal families.

Consider case \eqref{enum:isomorsmall}. If the wall is not totally semistable, then the two
moduli spaces $M_{\CC^\pm}(\vv)$ share a common open subset, with complement of codimension two,
on which the two universal families agree. 
By the projectivity of the moduli spaces, the maps $\theta_{\CC^\pm}$ are determined by their restriction
to curves contained in this subset; this proves the claim. If the wall is instead totally
semistable, we additionally have to use Proposition \ref{prop:sphericaltotallysemistable}.
Let $\Phi^+$ and $\Phi^-$ be the two sequences of spherical twists, sending $\sigma_0$-stable
objects of class $\vv_0$ to $\sigma_+$- and $\sigma_-$-stable objects of class $\vv$, respectively.
The autoequivalence inducing the birational map $M_{\sigma_+}(\vv) \dashrightarrow
M_{\sigma_-}(\vv)$ is given by $\Phi^- \circ (\Phi^+)^{-1}$. 
As the classes of the spherical objects occurring in $\Phi^+$ and $\Phi^-$ are identical, this 
does not change the class of the universal family in the $K$-group; therefore, the Mukai morphisms
$\theta_{\CC^+}, \theta_{\CC^-}$ agree.

Now consider the case of a Brill-Noether divisorial contraction;
we first assume that there is no effective spherical class $\ss' \in \HH_\WW$ with
$(\ss', \vv) < 0$.
The contraction induced by a spherical object $S$ with Mukai vector $\ss := \vv(S) \in \vv^\perp$. 
By Lemma \ref{lem:sphericaldivisorialwall}, the class of the contracted divisor is given by
$\theta_{\CC^\pm}(\ss)$ on either side of the wall.
The universal families differ (up to a subset of codimension two) by the spherical twist
$\ST_S(\blank)$. This induces the reflection at $\ss$ in $H^*_\alg(X,\alpha,\Z)$; thus the Mukai morphisms differ by
reflection at $\theta(\ss)$, as claimed.

If in addition to $\ss \in \vv^\perp$, there does exist
an effective spherical class $\ss' \in \HH_\WW$ with $(\ss', \vv) < 0$, we have to rely
on the constructions of Lemma \ref{lem:sphericaldivisorialwall}, as
in the proof of Theorem \ref{thm:birational-WC}. We have a common open subset
$U \subset M_{\sigma_\pm}(\vv_0)$, such that the two universal families $\EE^{\pm}|_U$
are related by the spherical twist at a spherical object $S_0$ of class $\ss_0$. Let 
$\Phi^{\pm}$ be the sequences of spherical twists obtained from Lemma
\ref{lem:sphericaldivisorialwall}, applied to $\sigma_+$ or $\sigma_-$, respectively.
Their induced maps $\Phi_*^{\pm} \colon H^*_\alg(X,\alpha,\Z) \to H^*_\alg(X,\alpha,\Z)$ on the Mukai lattice are identical,
as they are obtained by twists of spherical objects of the same classes; it sends
$\vv_0$ to $\vv$, and thus $\ss_0$ to $\pm\ss$. Therefore, the composition
$\Phi^- \circ \ST_{S_0} \circ (\Phi^+)^{-1}$ induces the reflection at $\ss$, as claimed.

It remains to consider divisorial contractions of Hilbert-Chow and Li-Gieseker-Uhlenbeck type.  We
may assume $M_{\sigma_+}(\vv)$ is the Hilbert scheme, or a moduli space of Gieseker stable sheaves of
rank two.
By the proof of Theorem \ref{thm:birational-WC}, there is a line bundle
$\LL$ on $X$ such that
\[
\RlHom_{M_{\sigma_\pm(\vv)} \times X}(\EE, (p_X)^*\LL[2])
\]
is a universal family with respect to
$\sigma_-$ on $M_{\sigma_-}(\vv) = M_{\sigma_+}(\vv)$. We use equation \eqref{eq:definetheta}
to compare $\theta_{\CC^\pm}$ by evaluating their degree on a test curve $C \subset M_{\sigma_\pm}(\vv)$.
Let $i$ denote the inclusion $i \colon C \times X \into M_{\sigma_\pm}(\vv) \times X$, and
by $p$ the projection $p \colon C \times X  \to X$.  This yields the following chain of equalities
for $\aa \in \vv^\perp$:

\begin{align}
\theta_{\CC^-}(\aa).C &= 
\Bigl(\aa, \vv\bigl(p_* i^*(\EE^-)\bigr) \Bigr) 
= \Bigl(\aa, \vv\bigl(p_* \RlHom_{C \times X}(i^*\EE, \OO_C \boxtimes \LL[2])\bigr) \Bigr) \label{eqdual} \\
&= \Bigl(\aa, \vv\bigl(p_* \RlHom_{C \times X}(i^*\EE, \omega_C[1] \boxtimes \LL[1])\bigr) \Bigr) \label{eqavperp} \\
&= -\Bigl(\aa, \vv\bigl(\RlHom_X(p_*i^*\EE, \LL)\bigr) \Bigr) \label{Grothendieck} \\
& = -\Bigl(\aa^\vee \cdot \ch(\LL), \vv(p_*i^*\EE)\Bigr)
= - \theta_{\CC^+}\bigl(\aa^\vee \cdot \ch(\LL)\bigr).C \label{eq:Columbus20121222}
\end{align}
Here we used compatibility of duality with base change in \eqref{eqdual}, $\aa \in \vv^\perp$ in
\eqref{eqavperp}, and Grothendieck duality in \eqref{Grothendieck}. In \eqref{eq:Columbus20121222},
we wrote $\aa^\vee$ for the class corresponding to $\aa$ under duality $(\blank)^\vee$.

In the Hilbert-Chow case, with $\vv = -(1, 0, 1-n)$, the class of the contracted divisor
$D$ is proportional to $\theta_{\CC^+}(1, 0, n-1)$, and we have $\LL \cong \OO_X$;
in the Li-Gieseker-Uhlenbeck case, we can write
$\vv = (2, c, d)$, the class of the contracted divisor $D$ is a multiple of
$\theta_{\CC^+}(2, c, \frac{c^2}2-d)$, and $c_1(\LL) = c$. In both cases,
the reflection $\rho_D$ is compatible with the above chain of equalities:
\begin{equation}\label{eq:Columbus20130813}
\rho_D\left(\theta_{\CC^+}(\aa)\right) = -\theta_{\CC^+}\left(\aa^\vee\cdot \ch(\LL)\right).
\end{equation}
Indeed, in the HC case, we can test \eqref{eq:Columbus20130813} for $\aa_1=(1,0,1-n)$ and classes of
the form $\aa_2=(0,c',0)\in \theta_{\CC^+}^{-1}\left(D^\perp\right)$:
since $\aa_1^\vee=\aa_1$ and $\aa_2^\vee = - \aa_2$, and since such classes
span $\vv^\perp$, the equality follows.
Similarly, in the LGU case, we can use $\aa_1=(2,c,\frac{c^2}2-d)$ and
$\aa_2=(0,c',\frac{c'.c}{2})$.
\end{proof}

\begin{proof}[Proof of Theorem \ref{thm:MAP}, \eqref{enum:piecewise}, \eqref{enum:imagemovable}, \eqref{enum:allMMP}]
Lemma \ref{lem:ellandreflections} proves part \eqref{enum:piecewise}. Part \eqref{enum:allMMP}
follows directly from the positivity $\ell_\CC(\CC) \subset \mathrm{Amp} M_\CC(\vv)$ once
we have established part \eqref{enum:imagemovable}.

Consider a big class in the movable cone, given as $\theta_\sigma(\aa)$ for some class
$\aa \in \vv^\perp, \aa^2 > 0$; we have to show that it is in the image of $\ell$.
Recall the definition of $\PP_0^+(X,\alpha)$ given in the
discussion preceding Theorem \ref{thm:Bridgeland_coveringmap}. If we set
\[
\Omega'_\aa := \sqrt{-1}\aa - \frac{\vv}{\vv^2} \in \Halg \otimes \C,
\]
then clearly $\Omega'_\aa \in \PP(X,\alpha)$.
In case there is a spherical class $\ss \in H^*_\alg(X,\alpha,\Z)$ with $(\Omega'_\aa, \ss) = 0$, we modify
$\Omega'_\aa$ by a small real multiple of $\ss$ to obtain $\Omega_\aa \in \PP_0(X,\alpha)$, otherwise we
set $\Omega_\aa = \Omega'_\aa$; in either case, we have
$\Omega_\aa \in \PP_0(X,\alpha)$ with $(\Omega_\aa, \vv) = -1$ and $\Im \Omega_\aa = \aa$.
In addition, the fact that $\theta(\aa)$ is contained in the positive cone gives
$\Omega_\aa \in \PP_0^+(X,\alpha)$. 

Let $\Omega_\sigma \in \PP_0^+(X,\alpha)$ be the central charge for the chosen basepoint
$\sigma \in \Stab^\dag(X,\alpha)$. Then there is a path $\gamma \colon [0,1] \to \PP_0^+(X,\alpha)$ starting at
$\Omega_\sigma$ and ending at $\Omega_\aa$ with the following additional property: for all $t \in
[0,1]$, the class
\[
-\frac{\theta_\sigma(\Im \gamma(t))}{(\gamma(t), \vv)}
\]
is contained in the
movable cone of $M_\sigma(\vv)$.

By Theorem \ref{thm:Bridgeland_coveringmap}, there is a lift $\sigma \colon [0,1] \to
\Stab^\dag(X,\alpha)$ of $\gamma$ starting at $\sigma(0) = \sigma$. By
the above assumption on $\gamma$, this will never hit a wall of the movable cone
corresponding to a divisorial contraction; by Lemma \ref{lem:ellandreflections}, the
map $\ell$ extends analytically, with $\theta_\sigma = \theta_{\sigma(0)}
= \theta_{\sigma(1)}$. Therefore, 
\[
\ell_{\sigma(1)}(\sigma(1)) = \theta_{\sigma(1)}(\aa) = \theta_\sigma(\aa)
\]
as claimed.
\end{proof}

Now recall the Weyl group action of $W_{\mathrm{Exc}}$ of Proposition \ref{prop:Weylmovablecone}. 
The exceptional chamber of a hyperbolic reflection group intersects every orbit
exactly once.  Thus there is a map
\[
W \colon \Pos(M_\sigma(\vv)) \to {\Mov}(M_\sigma(\vv))
\]
sending any class to the intersection of its $W_{\mathrm{Exc}}$-orbit with the fundamental domain.
Lemma \ref{lem:ellandreflections} and Theorem \ref{thm:MAP} immediately give
the following global description of $\ell$:

\begin{Thm} \label{thm:ellandgroupaction}
The map $\ell$ of Theorem \ref{thm:MAP} can be given as the composition of the following
maps:
\[
{\Stab}^\dagger(X,\alpha) \xrightarrow{\ZZ} H^*_\alg(X,\alpha,\Z) \otimes \C
\xrightarrow{I} \vv^\perp \xrightarrow{\theta_{\sigma,\vv}} \Pos(M_\sigma(\vv))
\xrightarrow{W} {\Mov}(M_\sigma(\vv)).
\]
\end{Thm}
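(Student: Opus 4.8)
The plan is to assemble Lemma \ref{lem:ellandreflections} and Theorem \ref{thm:MAP}, keeping careful track of how the Mukai morphism changes as one moves between chambers of $\Stab^\dag(X,\alpha)$. Fix the base point $\sigma$, with chamber $\CC_0$. Recall first that for any chamber $\CC$ and any $\tau\in\CC$ one has $\ell(\tau)=\theta_\CC\bigl(I(\ZZ(\tau))\bigr)$, by the description of $\ell$ recalled just before Lemma \ref{lem:ellandreflections}; here $\theta_\CC$ is the Mukai morphism of $M_\CC(\vv)$, read inside $\NS(M_\sigma(\vv))$ via the identification of Theorem \ref{thm:birational-WC}. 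Since $I\circ\ZZ$ restricted to $\CC$ has open image in $\vv^\perp_\R$ and $\ell$ is already a well-defined map by Theorem \ref{thm:MAP}, this formula pins down the $\Z$-linear map $\theta_\CC$ unambiguously; in particular $\theta_{\CC_0}=\theta_{\sigma,\vv}$.

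The first real step is to show that $\theta_\CC=w_\CC\circ\theta_{\sigma,\vv}$ for some $w_\CC\in W_{\mathrm{Exc}}$. I would join $\sigma$ to an interior point of $\CC$ by a generic path; by local finiteness of walls it meets finitely many walls $\WW_1,\dots,\WW_r$, each transversally and one at a time, passing through a chain of chambers $\CC_0=\QQ_0,\QQ_1,\dots,\QQ_r=\CC$ with $\WW_i$ separating $\QQ_{i-1}$ from $\QQ_i$. By Lemma \ref{lem:ellandreflections}, crossing a fake or flopping wall leaves the Mukai morphism unchanged, whereas crossing a divisorial wall $\WW_i$ replaces $\theta_{\QQ_{i-1}}$ by $\rho_{D_i}\circ\theta_{\QQ_{i-1}}$, where $D_i\subset M_{\QQ_{i-1}}(\vv)$ is the contracted irreducible divisor. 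Since all the $M_{\QQ_j}(\vv)$ are $K$-trivial birational models of $M_\sigma(\vv)$, isomorphic to it in codimension one, the strict transform of each $D_i$ is an irreducible exceptional divisor on $M_\sigma(\vv)$, so $\rho_{D_i}\in W_{\mathrm{Exc}}$ by Proposition \ref{prop:Weylmovablecone}. Composing the transitions along the path yields $\theta_\CC=w_\CC\circ\theta_{\sigma,\vv}$ with $w_\CC$ a product of such reflections, hence $w_\CC\in W_{\mathrm{Exc}}$.

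Next I would conclude chamber by chamber and then globalize. For $\tau\in\CC$ the two formulas above give $\theta_{\sigma,\vv}\bigl(I(\ZZ(\tau))\bigr)=w_\CC^{-1}\bigl(\ell(\tau)\bigr)$, which lies in the $W_{\mathrm{Exc}}$-orbit of $\ell(\tau)$. By Theorem \ref{thm:MAP}\eqref{enum:AmpleCone} we have $\ell(\tau)\in\mathrm{Amp}(M_\CC(\vv))$, and, since ample classes are big and of positive Beauville--Bogomolov square and $\mathrm{Nef}\subset\overline{\Mov}$ for the hyperk\"ahler $M_\CC(\vv)$, this class lies in the fundamental chamber $\Mov(M_\sigma(\vv))\cap\Pos(M_\sigma(\vv))$ of $W_{\mathrm{Exc}}$ (Proposition \ref{prop:Weylmovablecone}). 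As the closed fundamental chamber is a strict fundamental domain, $\ell(\tau)$ is the unique point of its orbit lying in it, i.e. $W\bigl(\theta_{\sigma,\vv}(I(\ZZ(\tau)))\bigr)=\ell(\tau)$. This proves the identity $\ell=W\circ\theta_{\sigma,\vv}\circ I\circ\ZZ$ on the union of all chambers, which is dense. Finally I would extend it to all of $\Stab^\dag(X,\alpha)$ by continuity: $\ell$ is continuous by Theorem \ref{thm:MAP}\eqref{enum:piecewise}; $\ZZ$ is holomorphic, $I$ is real-analytic on $\PP_0^+(X,\alpha)$ because $(\Omega_Z,\vv)$ never vanishes there (the hypothesis $\vv^2>0$ keeps $\vv$ out of the negative-definite orthogonal complement of the positive plane spanned by $\Re\Omega_Z$ and $\Im\Omega_Z$), and $W$ is the continuous retraction onto the fundamental chamber of a hyperbolic reflection group.

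I expect the second paragraph to be the crux: one must know that every divisorial wall in $\Stab^\dag(X,\alpha)$ genuinely contracts an irreducible exceptional divisor, that the associated reflection lands in $W_{\mathrm{Exc}}$, and that the Mukai morphisms of neighbouring models are glued compatibly along the path. All of this is already encoded in Lemma \ref{lem:ellandreflections} (whose proof extracts the precise reflection $\rho_D$ in the divisorial case) together with Markman's description of $W_{\mathrm{Exc}}$ in Proposition \ref{prop:Weylmovablecone}; what remains---path-independence of the resulting element $w_\CC$, the fact that $\ell$ determines each $\theta_\CC$, and the placement of the ample cones of the birational models inside the fundamental chamber---is routine once Theorem \ref{thm:MAP} is in hand.
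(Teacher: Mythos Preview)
Your argument is correct and is precisely the elaboration of what the paper leaves implicit: the theorem is stated there as an immediate consequence of Lemma \ref{lem:ellandreflections} and Theorem \ref{thm:MAP}, and your path-crossing argument tracking $\theta_\CC=w_\CC\circ\theta_{\sigma,\vv}$ with $w_\CC\in W_{\mathrm{Exc}}$ is exactly how one unpacks that claim. One small logical point: in the paper's ordering, part \eqref{enum:AmpleCone} of Theorem \ref{thm:MAP} is proved \emph{after} this theorem, so you should instead cite Theorem \ref{thm:ampleness} (or equation \eqref{eq:elltoAmp}) for $\ell(\tau)\in\mathrm{Amp}(M_\CC(\vv))$, which is all you actually use.
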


To complete the proof of Theorem \ref{thm:MAP}, it remains to prove part \eqref{enum:AmpleCone}:

\begin{Prop}
Let $\CC \subset \Stab^\dag(X,\alpha)$ be a chamber of the chamber decomposition with
respect to $\vv$. Then the image of $\ell_\CC(\CC) \subset \NS(M_\CC(\vv))$ of the
chamber $\CC$ is exactly the ample cone of the corresponding moduli space
$M_\CC(\vv)$.
\end{Prop}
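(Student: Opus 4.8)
The plan is to prove the two inclusions separately. The inclusion $\ell_\CC(\CC)\subseteq\mathrm{Amp}(M_\CC(\vv))$ is already Theorem \ref{thm:ampleness}\eqref{enum:imagemovable}, so the work is in the reverse inclusion, which I would establish by showing that $\ell_\CC(\CC)$ is simultaneously open and closed inside the ample cone; since $\mathrm{Amp}(M_\CC(\vv))$ is connected, this forces equality.

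For openness, I would use that $\ell_\CC$ factors, as in the proof of parts \eqref{enum:piecewise}--\eqref{enum:allMMP}, as $\theta_\CC\circ I\circ\ZZ|_\CC$. Here $\ZZ$ is a covering map of $\PP_0^+(X,\alpha)$ by Theorem \ref{thm:Bridgeland_coveringmap}, hence open; the map $I(\Omega)=\Im\frac{\Omega}{-(\Omega,\vv)}$ is a submersion onto its image in $\vv^\perp_\R$ (its differential is already surjective on imaginary-part directions inside $\vv^\perp$); and $\theta_\CC\colon\vv^\perp_\R\to\NS(M_\CC(\vv))_\R$ is a linear isomorphism. Thus $\ell_\CC(\CC)$ is open in $\NS(M_\CC(\vv))_\R$, hence open in $\mathrm{Amp}(M_\CC(\vv))$.

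For closedness I would argue by path-lifting. Fix $\sigma_*\in\CC$ and put $A_*=\ell(\sigma_*)$; given an arbitrary $A\in\mathrm{Amp}(M_\CC(\vv))$, the segment $\gamma(t)=(1-t)A_*+tA$ lies in the ample cone by convexity. Exactly as in the proof of part \eqref{enum:imagemovable}, the classes $\Omega(t)=\sqrt{-1}\,\theta_\CC^{-1}(\gamma(t))-\frac{\vv}{\vv^2}$, corrected by a small multiple of a spherical class where needed, form a continuous path in $\PP_0^+(X,\alpha)$ with $(\Omega(t),\vv)=-1$ and $I(\Omega(t))=\theta_\CC^{-1}(\gamma(t))$; here one uses $\gamma(t)^2>0$ for the positivity of the plane, and $(\gamma(t),\theta_\CC(\ss))\neq 0$ for spherical $\ss\in\vv^\perp$ since $\pm\theta_\CC(\ss)$ is then an exceptional divisor class, which pairs strictly positively with movable classes by Proposition \ref{prop:Weylmovablecone}. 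Lifting $\Omega(t)$ through the covering map $\ZZ$ yields a path $\tilde\sigma(t)\in\Stab^\dag(X,\alpha)$ with $\tilde\sigma(0)=\sigma_*$ and $\ell(\tilde\sigma(t))=\gamma(t)$ for all $t$. Since $\ell(\tilde\sigma(t))$ never leaves $\mathrm{Amp}(M_\CC(\vv))$, the path $\tilde\sigma$ crosses no wall inducing a divisorial or flopping contraction: on such a wall $\WW$ the divisor $\ell_{\sigma_0}$ is big, nef and contracts a curve by Theorems \ref{thm:contraction} and \ref{thm:NamikawaWierzba}, hence lies on $\partial\Nef$ of the adjacent moduli space, which by Lemma \ref{lem:ellandreflections} is identified with $\partial\Nef(M_\CC(\vv))$ -- a set $\gamma$ avoids. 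Therefore the only walls met by $\tilde\sigma$ are fake walls, and by Lemma \ref{lem:ellandreflections}\eqref{enum:isomorsmall} both $\ell$ and $M_\sigma(\vv)$ extend analytically across them without change, so $\tilde\sigma(1)$ lands in a chamber canonically identified with $M_\CC(\vv)$ and with $\ell$-value $A$.

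The delicate point, which I expect to be the main obstacle, is exactly this last step: a priori $\tilde\sigma(1)$ could lie in a chamber $\CC'$ separated from $\CC$ by a chain of fake walls rather than in $\CC$ itself. I would handle it by observing that a fake wall is invisible to $\ell$ (Lemma \ref{lem:ellandreflections}\eqref{enum:isomorsmall}), so that $\ell(\CC)$ agrees with the image under $\ell$ of the entire fake-wall-connected neighbourhood of $\CC$ in $\Stab^\dag(X,\alpha)$; with this identification in place the open/closed dichotomy together with connectedness of $\mathrm{Amp}(M_\CC(\vv))$ gives $\ell_\CC(\CC)=\mathrm{Amp}(M_\CC(\vv))$. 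This, combined with the already-proven parts, completes the proof of Theorem \ref{thm:MAP}.
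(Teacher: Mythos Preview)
Your open/closed strategy is sensible, and you correctly identify the crux: the lifted path may end in a chamber $\CC'$ separated from $\CC$ by fake walls. However, your proposed resolution does not work. Saying that ``a fake wall is invisible to $\ell$'' only tells you that the \emph{map} $\ell$ extends analytically across the wall (Lemma \ref{lem:ellandreflections}\eqref{enum:isomorsmall}); it says nothing about the \emph{image} of a single chamber. Concretely, if $\WW$ is a fake wall between $\CC$ and $\CC'$, then $\ell(\WW)$ lies in the hyperplane $\theta(\vv^\perp\cap\HH_\WW^\perp)\subset\NS(M_\CC(\vv))$, and near $\WW$ the images $\ell(\CC)$ and $\ell(\CC')$ lie on opposite sides of this hyperplane. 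A priori nothing prevents $\ell(\CC)$ from being strictly contained in one half-space; your argument then only gives that the union $\ell(\CC)\cup\ell(\WW)\cup\ell(\CC')\cup\dots$ over the entire fake-wall-connected neighbourhood fills $\mathrm{Amp}(M_\CC(\vv))$, which is weaker than the proposition.

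What is actually needed---and what the paper supplies---is the statement $\ell(\WW)\subset\ell_\CC(\CC)$ for every fake wall $\WW\subset\partial\CC$. Equivalently: the $\ell$-fibre through any $\sigma_0\in\WW$ meets $\CC$. The paper exploits the specific reason such a wall is fake: there is an effective spherical class $\ss\in\HH_\WW$ with $(\ss,\vv)<0$. Writing $Z_0(\blank)=(\Omega_0,\blank)$, one deforms along a path in $\PP_0^+(X,\alpha)$ that keeps $\Im\Omega$ fixed (so $\ell$ is unchanged) but replaces $\Re\Omega$ by $-\vv$. One checks this path crosses no walls, hence stays in $\overline\CC$; at the endpoint $\sigma_3$ any spherical $\ss\in\HH_\WW$ with $(\ss,\vv)<0$ satisfies $\Re Z_3(\ss)=-(\vv,\ss)>0$, so is no longer effective. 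By Theorem \ref{thm:walls} the potential wall through $\sigma_3$ is then \emph{not} a wall at all, so $\sigma_3\in\CC$ and $\ell(\sigma_3)=\ell(\sigma_0)$. This construction is the missing ingredient in your argument.
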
 
\begin{proof}
In light of Theorems \ref{thm:ampleness} and \ref{thm:MAP}, \eqref{enum:piecewise}, \eqref{enum:imagemovable}, \eqref{enum:allMMP}, the only potential problem is given
by  walls $\WW \subset \partial \CC$ that do not get mapped to walls of the nef cone of
the moduli space. These are totally semistable fake walls induced by an effective
spherical class $\ss \in \HH_\WW$ with $(\ss, \vv) < 0$. The idea is that there is always a potential
wall $\WW'$, with the same lattice $\HH_{\WW'} = \HH_\WW$, for which all effective spherical classes have 
positive pairing with $\vv$. By Theorem \ref{thm:walls}, $\WW'$ is not a wall, and it will have
the same image in the nef cone of $M_\CC(\vv)$ as the wall $\WW$.

Let $\sigma_0 = (Z_0, \PP_0) \in \WW$ be a very general stability condition on the given wall: 
this means we can assume that $\HH_\WW$ contains all integral classes $\aa \in H^*_\alg(X,\alpha,\Z)$
with $\Im Z_0(\aa) = 0$. If we
write $Z_0(\blank) = (\Omega_0, \blank)$ as in Theorem \ref{thm:Bridgeland_coveringmap}, we may
assume that $\Omega_0$ is normalized by
$(\Omega_0, \vv) = -1$ and $\Omega_0^2 = 0$, i.e., $(\Re \Omega_0, \Im \Omega_0) = 0$
and $(\Re \Omega_0)^2 = (\Im \Omega_0)^2$ (see \cite[Section 10]{Bridgeland:K3}). We will now
replace $\sigma_0$ by a stability condition whose central charge has real part given by
$(-\vv, \blank)$, and identical imaginary part.

To this end, let $\sigma_1 \in \CC$ be a stability condition nearby $\sigma_0$, whose central charge
is defined by $\Omega_1 = \Omega_0 +i\epsilon$, where $\epsilon \in H^*_\alg(X,\alpha,\Z)\otimes \R$
is a sufficiently small vector with $(\epsilon, \vv) = 0$; we may also assume that multiples
of $\vv$ are the only integral classes $\aa \in H^*_\alg(X,\alpha,\Z)$ with $(\Im \Omega_1, \aa) = 0$. Let
$\Omega_2 = -\vv + i \Im \Omega_1$; then a straight-forward computation shows that the straight path 
connecting $\Omega_1$ with $\Omega_2$ lies completely within $\PP_0^+(X,\alpha)$. Finally, 
let $\Omega_3 = -\vv + \Im \Omega_0$; by Theorem \ref{thm:walls}, there are no spherical classes
$\tilde \ss \in \HH_\WW$ with $(\vv, \tilde \ss) = 0$, implying that the straight path from $\Omega_2$ to
$\Omega_3$ is also contained in $\PP_0^+(X,\alpha)$.

By Theorem \ref{thm:Bridgeland_coveringmap}, there is a lift of the path 
$\Omega_0 \mapsto \Omega_1 \mapsto \Omega_2 \mapsto \Omega_3$ to $\Stab^\dag(X,\alpha)$; let $\sigma_2$ and $\sigma_3$
the stability conditions corresponding to $\Omega_2$ and $\Omega_3$, respectively.
By choice of $\epsilon$, we may assume that the paths $\sigma_0 \mapsto \sigma_1$ and
$\sigma_2 \mapsto \sigma_3$ do not cross any walls. Since $(\Omega_1, \vv) = (\Omega_2, \vv) = -1$,
and since the imaginary part on the path $\Omega_1 \mapsto \Omega_2$ is constant, the same holds for
the path $\sigma_1 \mapsto \sigma_2$. Hence $\sigma_3$ is in the closure of the chamber
$\CC$. In particular, $\sigma_3$ lies on a potential wall of $\CC$ with hyperbolic lattice given by
$\HH_\WW$; by construction, any spherical class $\ss \in \HH_\WW$ with $(\vv, \ss) < 0$ satisfies
$(\Omega_3, \ss) > 0$, and thus $\ss$ is not effective.

By Theorem \ref{thm:walls}, $\sigma_3$ does not lie on a wall. Since $\Im \Omega_3 = \Im \Omega_0$,
the images $l_\CC(\sigma_0) = l_\CC(\sigma_3)$ in the N\'eron-Severi group of $M_\CC(\vv)$ agree.
\end{proof}

We conclude this section by proving Corollary \ref{cor:Mukaifull} for moduli spaces of
Bridgeland stable objects on twisted K3 surfaces:

\begin{proof}[Proof of Corollary \ref{cor:Mukaifull}]
$\eqref{enum:Markmancrit} \Rightarrow \eqref{enum:equivnumerical}$:
Let $\phi \colon H^*(X, \alpha, \Z) \to H^*(X', \alpha', \Z)$ be a Hodge isometry sending $\vv^{\perp, \tr} \to
\vv'^{\perp, \tr}$. Up to composing with $[1]$, we may assume $\phi(\vv) = \vv'$. 
If $\phi$ is orientation-preserving, then Theorem \ref{thm:derivedtorelli}, gives an equivalence $\Phi$ with 
$\Phi_* = \phi$.

Otherwise, the composition $(\blank)^\vee \circ \phi$ defines an orientation-preserving Hodge
isometry 
\[ \phi^\vee \colon H^*(X, \alpha, \Z) \to H^*(X', (\alpha')^{-1}, \Z) \quad
\text{with} \quad \phi^\vee(\vv) = (\vv')^\vee. \]
Again, Theorem \ref{thm:derivedtorelli} gives a derived equivalence
$\Psi \colon \Db(X, \alpha) \to \Db(X', (\alpha')^{-1})$; the composition with
$(\blank)^\vee \colon \Db(X', (\alpha')^{-1}) \to \Db(X', \alpha')$ has the desired property.

$\eqref{enum:equivnumerical} \Rightarrow \eqref{enum:equivbirational}$:
Assume that $\Phi\colon \Db(X,\alpha)\xrightarrow{\simeq}\Db(X',\alpha')$ is an (anti-)equivalence
with $\Phi_*(\vv)=\vv'$.
Consider moduli spaces $M_{\sigma}(\vv), M_{\sigma'}(\vv')$ of Bridgeland-stable objects.
We claim that we can assume  the existence of
$\tau \in \Stab^\dag(X',\alpha')$ such that $E \in \Db(X, \alpha)$ of class $\vv$ is
$\sigma$-stable if and only if $\Phi(E)$ is $\tau$-stable:
\begin{itemize*}
\item if $\Phi_*$ is orientation-preserving,
we may replace $\Phi$ by an equivalence satisfying the last claim of Theorem \ref{thm:derivedtorelli}, and set
$\tau = \Phi_*(\sigma) \in \Stab^\dag(X, \alpha')$;
\item
otherwise, we may assume that 
\[ (\blank)^\vee \circ \Phi \colon \Db(X, \alpha) \to \Db(X', (\alpha')^{-1})
\]
satisfies the same claim, and we let
$\tau \in \Stab^\dag(X, \alpha')$ be the stability condition dual
(in the sense of Proposition \ref{prop:dualstability}) to
$\left((\blank)^\vee \circ \Phi\right)_* \sigma \in \Stab^\dag(X',(\alpha')^{-1})$.
\end{itemize*}

By construction, $\Phi$ induces an isomorphism
\[ M_\sigma(\vv) \cong M_{\tau}(\Phi_*(\vv)) = M_{\tau}(\vv'). \]
Due to Theorem \ref{thm:birational-WC}, part \eqref{enum:birationalautoequivalence}, there is
an (anti-)autoequivalence $\Phi'$ of $\Db(X',\alpha')$ inducing a birational map
$M_{\Psi_*(\sigma)}(\vv') \dashrightarrow M_{\sigma'}(\vv')$. The composition 
$\Phi' \circ \Phi$ has the desired properties.
\end{proof}

\section{Application 1: Lagrangian fibrations}\label{sec:Application1}

In this section, we will explain how birationality of wall-crossing implies 
Theorem \ref{thm:SYZ}, verifying the Lagrangian fibration conjecture.

We will prove the theorem for any moduli space $M_\sigma(\vv)$ of Bridgeland-stable objects on
a twisted K3 surface $(X, \alpha)$, under the assumptions that $\vv$ is primitive, and $\sigma$ generic with respect to $\vv$.

One implication in Theorem \ref{thm:SYZ} is immediate: if $f \colon M_\sigma(\vv) \dashrightarrow Z$ is
a rational abelian fibration, then the pull-back $f^*D$ of any ample divisor $D$ on $Z$ has volume
zero; by equation \eqref{eq:BBform}, the self-intersection of $f^*D$ with respect to the
Beauville-Bogomolov form must also equal zero.

To prove the converse, we will first restate carefully the argument establishing part \ref{enum:birational}
of Conjecture \ref{conj:SYZ}, which was already sketched in the introduction; then we will explain
how to extend the argument to also obtain part \ref{enum:nef}.

Assume that there is an integral divisor $D$ on $M_\sigma(\vv)$ with $q(D)=0$.
Applying the inverse of the Mukai morphism $\theta_{\sigma, \vv}$ of Theorem \ref{thm:ModuliSpacesAreHK},
we obtain a primitive vector $\ww = \theta_{\sigma, \vv}^{-1}(D) \in \vv^\perp$ with
$ \ww^2 = 0.$

After a small deformation, we may assume that $\sigma$ is also generic with respect to $\ww$.
As in Section \ref{sec:iso},
we consider the moduli space $Y:=M_\sigma(\ww)$ of $\sigma$-stable objects, which is a
smooth K3 surface. There is a derived equivalence
\begin{equation} \label{eq:edinburgh0903}
\Phi \colon \Db(X,\alpha) \xrightarrow{\sim} \Db(Y,\alpha')
\end{equation}
for the appropriate choice of a Brauer class $\alpha'\in\mathrm{Br}(Y)$; as before, we have $\Phi_*(\ww) = (0, 0, 1)$.

By the arguments recalled in Theorem \ref{thm:derivedtorelli}, we have $\Phi_*(\sigma) \in \Stab^\dag(Y, \alpha')$. 
By definition, $\Phi$ induces an isomorphism
\begin{equation}\label{eq:comparison}
M_{\sigma}(\vv) \cong M_{\Phi_*(\sigma)}(\Phi_*(\vv)),
\end{equation}
where $\Phi_*(\sigma)$ is generic with respect to $\Phi_*(\vv)$.

\begin{Lem}\label{lem:isotropic}
The Mukai vector $\Phi_*(\vv)$ has rank zero.
\end{Lem}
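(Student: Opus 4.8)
The plan is to deduce this immediately from the fact that a derived equivalence induces an isometry of algebraic Mukai lattices, together with the two defining properties of $\Phi$. First I would recall that, since $\Phi$ is an equivalence, the induced map $\Phi_*$ on $H^*_{\alg}$ preserves the Mukai pairing (equivalently, it preserves the Euler characteristic $\chi$, cf.\ Definition \ref{def:MukaiLattice}). Next I would isolate the two inputs coming from the construction: by the choice of $Y = M_\sigma(\ww)$ and of $\Phi$ we have $\Phi_*(\ww) = (0,0,1)$, the Mukai vector of a (twisted) skyscraper sheaf; and since $\ww = \theta_{\sigma,\vv}^{-1}(D) \in \vv^\perp$, we have $(\vv,\ww) = 0$.

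Then the computation is a one-liner. Writing $\Phi_*(\vv) = (r,c,s)$, so that its rank is $r$, the formula $((r,c,s),(r',c',s')) = cc' - rs' - sr'$ from Definition \ref{def:MukaiLattice} gives
\[
0 = (\vv,\ww) = (\Phi_*(\vv),\Phi_*(\ww)) = \bigl((r,c,s),(0,0,1)\bigr) = -r,
\]
hence $r = 0$, i.e.\ $\Phi_*(\vv)$ has rank zero. I do not expect any genuine obstacle here; the only point that deserves a word of care is that in the twisted setting the ``rank'' of a class in $H^*_{\alg}(Y,\alpha',\Z)$ is its degree-zero component, which is unchanged by the exponential B-field factor in the twisted Chern character, so it is still exactly the entry $r$ in the triple $(r,c,s)$. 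With that understood, the isometry property of $\Phi_*$ does all the work.
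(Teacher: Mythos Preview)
Your proof is correct and is essentially identical to the paper's own proof, which simply states that the claim follows directly from $\Phi_*(\ww)=(0,0,1)$ and $(\Phi_*(\ww),\Phi_*(\vv)) = (\ww,\vv)=0$. You have just spelled out the one-line computation in a bit more detail.
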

\begin{proof}
This follows directly from $\Phi_*(\ww)=(0,0,1)$ and $(\Phi_*(\ww),\Phi_*(\vv)) = (\ww,\vv)=0$.
\end{proof}

We write $\Phi_*(\vv)=(0,C,s)$, with $C\in\mathrm{Pic}(Y)$ and $s\in\Z$.
Since $\vv^2>0$ we have $C^2>0$.
\begin{Lem} \label{lem:Cisample}
After replacing $\Phi$ by the composition $\Psi \circ \Phi$, where
$\Psi \in \Aut(\Db(Y, \alpha'))$, we may assume that $C$ is ample, and that $s \neq 0$.
\end{Lem}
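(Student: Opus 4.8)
The plan is to use the freedom in the derived equivalence $\Phi$, together with the action of autoequivalences of $\Db(Y,\alpha')$ on the Mukai lattice, to move $\Phi_*(\vv)=(0,C,s)$ into a standard form. The target to realize after composing with a suitable $\Psi\in\Aut(\Db(Y,\alpha'))$ is a Mukai vector $(0,C',s')$ with $C'$ ample and $s'\neq 0$. The main tools available are: tensoring by line bundles $\LL\in\Pic(Y)$ (which acts on $H^*_{\mathrm{alg}}$ by multiplication by $\ch(\LL)=e^{c_1(\LL)}$, and hence sends $(0,C,s)\mapsto (0,C,s+C.c_1(\LL))$, preserving rank zero and the divisor class), and spherical twists or other autoequivalences realizing reflections in the Mukai lattice; in addition, by Theorem \ref{thm:derivedtorelli}, every orientation-preserving Hodge isometry of $H^*(Y,\alpha',\Z)$ preserving $\Stab^\dag$ is induced by an autoequivalence.

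First I would make $C$ ample. We know $C^2>0$ since $\vv^2>0$ and $(0,C,s)^2 = C^2$. So $C$ or $-C$ lies in the positive cone of $\NS(Y)$; composing with the shift $[1]$ if necessary (which changes the sign of the Mukai vector), we may assume $C$ is in the closure of the positive cone. To move it into the ample cone, I would use the action of the Weyl group generated by spherical reflections $\rho_{(0,\pm\delta,*)}$ at $(-2)$-classes supported on $(-2)$-curves $\delta$ on $Y$ — these are realized by spherical twists at the corresponding line bundles $\OO_Y(\delta)$ on the relevant curves (or, in the untwisted picture, by standard autoequivalences). Since the ample cone is a fundamental domain for this Weyl group action on the positive cone of $\NS(Y)$, some composition $\Psi_1$ of such twists sends $C$ to an ample class. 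One must check that these autoequivalences preserve rank zero; since reflections at $(0,\delta,t)$ fix $(0,0,1)^\perp \ni$ rank-zero classes appropriately, or more simply since tensoring and twisting by sheaves supported in codimension $\ge 1$ preserves the property "rank zero," this is routine. I would spell out the relevant reflection formula to confirm it keeps the rank-zero component zero.

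Next, with $C$ now ample, I would arrange $s\neq 0$. Tensoring by a line bundle $\LL$ replaces $s$ by $s+C.c_1(\LL)$. Since $C$ is ample and nonzero, the set of values $C.c_1(\LL)$ as $\LL$ ranges over $\Pic(Y)$ is a nonzero subgroup of $\Z$, hence hits some value making $s+C.c_1(\LL)\neq 0$. (Concretely, pick any $\LL$ with $C.c_1(\LL)\neq 0$; if $s=0$ we are already forced to change it, and if $s\neq 0$ we leave $\LL=\OO_Y$.) Tensoring by a line bundle is an autoequivalence of $\Db(Y,\alpha')$ preserving $\Stab^\dag(Y,\alpha')$ and the rank-zero condition and the ampleness of the divisor component $C$, so composing $\Phi$ with this $\Psi_2$ finishes. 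I would then set $\Psi=\Psi_2\circ\Psi_1$ (times $[1]$ if used) and note that $\Psi_*$ has the required effect, while $\Psi_*(\Phi_*(\ww))=(0,0,1)$ is preserved by each step (tensoring by $\LL$ and the relevant spherical twists all fix the class $(0,0,1)$ of a point, after the identification coming from the earlier normalization), so the conclusions $\Phi_*(\ww)=(0,0,1)$ and the identification \eqref{eq:comparison} persist.

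The main obstacle I anticipate is bookkeeping of the twisted case and of which autoequivalences preserve both the rank-zero condition and the class $(0,0,1)$ of a point simultaneously, rather than any deep difficulty: I must ensure the $\Psi_1$ moving $C$ into the ample cone does not disturb $\Phi_*(\ww)=(0,0,1)$, which requires choosing the spherical twists among those fixing the point class — i.e. twists at spherical objects supported on curves, whose Mukai vectors lie in $(0,0,1)^\perp$ in the appropriate sense. Alternatively, one can sidestep this by invoking Theorem \ref{thm:derivedtorelli} directly: construct by hand the orientation-preserving Hodge isometry of $H^*(Y,\alpha',\Z)$ fixing $(0,0,1)$ and sending $(0,C,s)$ to $(0,C',s')$ with $C'$ ample and $s'\neq 0$ (this is a purely lattice-theoretic statement about the hyperbolic plane inside $\NS(Y)\oplus(\text{isotropic directions})$, solvable by the Weyl group of $\NS(Y)$ together with the "Eichler transvections" $e^{c_1(\LL)}$), and then lift it to $\Psi$. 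I would present whichever of the two routes is shortest; most likely the explicit tensor-and-twist argument, as it makes the statement $\Psi_*(\Phi_*(\vv))=(0,C',s')$ transparent.
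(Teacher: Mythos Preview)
Your proposal is correct and follows essentially the same route as the paper: use the shift $[1]$ to put $C$ in the positive cone, apply spherical twists $\ST_{\OO_D}$ at structure sheaves of $(-2)$-curves $D\subset Y$ (these are untwisted since $\alpha'|_D$ is trivial) to realize the Weyl group action carrying $C$ into the ample cone, and then tensor by an untwisted line bundle to force $s\neq 0$. Your bookkeeping worry about preserving the point class is legitimate but easily resolved exactly as you suggest---each of these autoequivalences (except the shift) fixes $(0,0,1)$---and the paper indeed relies on this later; the alternative route via Theorem~\ref{thm:derivedtorelli} is unnecessary.
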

\begin{proof}
Up to shift $[1]$, we may assume that $H'.C > 0$, for a given ample class $H'$ on $Y$. In
particular, $C$ is an effective class; it is ample unless there is a rational $-2$-curve
$D \subset Y$ with $C. D < 0$. Applying the spherical twist $\ST_{\OO_D}$ at the structure
sheaf\footnote{Note that the restriction of $\alpha$ to any curve vanishes, hence
the structure sheaf $\OO_D$ is a coherent sheaf on $(Y, \alpha')$.} of $D$ replaces
$C$ by its image $C'$ under the reflection at $D$, which satisfies $C'.D > 0$. This procedure
terminates, as the nef cone is a fundamental domain of the Weyl group
action generated by reflections at $-2$-curves.

Since tensoring with an (untwisted) line bundle on $Y$ induces an autoequivalence
of $\Db(Y, \alpha')$, we may also assume $s \neq 0$.
\end{proof}

Let $H'\in\mathrm{Amp}(Y)$ be a generic polarization with respect to $\Phi_*(\vv)$. 
The following is a small (and well-known) generalization of Beauville's integrable system
\cite{Beauville:ACIS}:

\begin{Lem}\label{lem:fibration}
The moduli space $M_{H'}(\Phi_*(\vv))$ admits a structure of Lagrangian fibration
induced by global sections of $\theta_{H', \Phi_*(\vv)}((0,0,-1))$.
\end{Lem}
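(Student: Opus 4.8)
The object $\Phi_*(\vv) = (0, C, s)$ with $C$ ample has rank zero and positive square $C^2 = \vv^2 > 0$, so a generic $H'$-Gieseker semistable sheaf $\FF$ of this Mukai vector is a stable sheaf supported on a curve in the linear system $|C|$, pure of dimension one. The first step is to make this precise: since $C$ is ample and $s \neq 0$ (by Lemma \ref{lem:Cisample}), the dimension of $M_{H'}(\Phi_*(\vv))$ is $(\Phi_*(\vv))^2 + 2 = C^2 + 2 = \dim |C| \cdot 2$ — more usefully, $\dim |C| = \frac{C^2}{2} + 1$, and $\dim M_{H'}(\Phi_*(\vv)) = C^2 + 2 = 2\dim|C|$, which is exactly twice the dimension of the base we are about to construct. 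I would next recall Beauville's construction: sending a pure one-dimensional sheaf $\FF$ to its Fitting support (scheme-theoretic support) $|\FF| \in |C|$ defines a morphism
\[
h \colon M_{H'}(\Phi_*(\vv)) \to |C| \cong \P^{\dim|C|}.
\]
The crucial identification to check is that this morphism is, up to the natural isomorphism $\mathrm{Pic}(|C|) = \Z$, given by the determinant/Mukai line bundle $\theta_{H', \Phi_*(\vv)}((0,0,-1))$: pulling back $\OO_{|C|}(1)$ along $h$ corresponds under the Mukai morphism to the class $(0,0,-1)$ (up to sign), which one verifies by computing $h^*\OO_{|C|}(1).R$ on a test curve $R$ against the defining formula \eqref{eq:definetheta} for $\theta_{\EE}$; this is the routine part and essentially appears in \cite{Beauville:ACIS}.

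\textbf{Lagrangianity.} The second main step is to show $h$ (whose generic fibre is, by the support map, a component of a compactified Jacobian of a smooth curve in $|C|$, hence an abelian variety of dimension $g = \dim|C|$ once one checks $\dim M = 2\dim|C|$) is a Lagrangian fibration. The cleanest route is to invoke Theorem \ref{thm:MatsushitaHwang}: $M_{H'}(\Phi_*(\vv))$ is a projective hyperk\"ahler manifold by Theorem \ref{thm:ModuliSpacesAreHK} (here $\vv$, hence $\Phi_*(\vv)$, is primitive and $(\Phi_*(\vv))^2 = \vv^2 > 0$), the base $|C| \cong \P^{\dim|C|}$ is smooth projective of dimension strictly between $0$ and $\dim M$, and $h$ has connected fibres (compactified Jacobians of integral curves are connected, and by Zariski's main theorem / Stein factorization combined with irreducibility of a generic fibre this extends to connectedness of all fibres — alternatively one notes $h_*\OO_M = \OO_{|C|}$ because the generic fibre is connected and $|C|$ is normal). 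Theorem \ref{thm:MatsushitaHwang} then immediately gives that $h$ is a Lagrangian fibration with base $\P^{\dim|C|}$, so in particular the generic fibre is Lagrangian and an abelian variety of dimension $\frac12\dim M$; no direct computation with the symplectic form is needed.

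\textbf{Main obstacle and caveats.} The step I expect to require the most care is verifying that the support morphism $h$ is well-defined as a \emph{morphism} to $|C|$ (not just a rational map) and that its image is all of $|C|$: one needs that for \emph{every} $H'$-stable sheaf $\FF$ of class $(0,C,s)$ the Fitting ideal cuts out a divisor in the class $C$ (purity of $\FF$ ensures no embedded or lower-dimensional support, and stability plus $C$ primitive-or-zero forces the support to be a genuine integral-or-at-worst-generically-integral curve in $|C|$), and conversely that every curve in $|C|$ carries such a stable sheaf (line bundles of appropriate degree on integral curves in $|C|$ are $H'$-stable because $C$ is ample and $H'$ generic). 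These are standard but need the genericity of $H'$ with respect to $\Phi_*(\vv)$, which we have arranged, and the fact that $C$ is either primitive or the support computation can be reduced to the primitive case; since the paper only needs the Lagrangian fibration \emph{birationally} on $M_H(\vv)$ (via the isomorphism \eqref{eq:comparison} and birationality of wall-crossing, applied afterwards), it would even suffice to establish $h$ as a dominant rational map with abelian-variety generic fibre, after which Theorem \ref{thm:MatsushitaHwang} applies on a suitable smooth birational model. I would therefore structure the proof to first produce $h$ as a rational map, identify its class with $\theta_{H',\Phi_*(\vv)}((0,0,-1))$ on test curves, check the generic fibre is an abelian variety of half-dimension via compactified Jacobians, and only then clean up to a genuine morphism to $\P^{\dim|C|}$ using Matsushita–Hwang.
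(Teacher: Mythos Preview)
Your approach is correct and reaches the same conclusion via the same two ingredients (the support map to $|C|$ and Matsushita's theorem), but the paper packages the first step differently in a way that sidesteps exactly the ``main obstacle'' you identify. Rather than defining $h$ via Fitting support and then verifying its class on test curves, the paper uses the Faltings--Le Potier construction: for each $y \in Y$ one gets a section $s_y \in H^0(M', L')$ with zero locus $\{E : \Hom(E, k(y)) \neq 0\}$, and these are \emph{by construction} sections of $L' = \theta_{H',\Phi_*(\vv)}((0,0,-1))$. Since $s_y(E) \neq 0$ whenever $y$ is not in the support of $E$, the sections $\{s_y\}_{y \in Y}$ generate $L'$, so the linear system immediately gives a morphism whose fibres are determined by set-theoretic support; the image is then the complete linear system of $C$. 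This buys you the line-bundle identification and well-definedness of the morphism in one stroke, without needing to argue about Fitting ideals, purity, or surjectivity separately. Your route via Fitting support and test curves works too, but requires the bookkeeping you flagged; the paper's route is shorter precisely because the determinant-line-bundle formalism is designed to produce such sections.
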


\begin{proof}
Let $M':=M_{H'}(\Phi_*(\vv))$ and $L' := \theta_{H', \Phi_*(\vv)}((0,0,-1))$.
By an argument of Faltings and Le Potier (see \cite[Section 1.3]{LePotier:StrangeDuality}), we can construct sections of $L'$ as follows:
for all $y\in Y$, we define a section $s_y\in H^0(M',L')$ by its zero-locus
\[
Z(s_y) := \left\{ E\in M'\colon \Hom(E,k(y))\neq0 \right\}.
\]
Whenever $y$ is not in the support of $E$, the section $s_y$ does not vanish at $E$;
hence the sections $\{s_y\}_{y\in Y}$ generate $L'$. Consider the morphism induced by this linear
system. The image of $E$ is determined
by its set-theoretic support; hence the image of $M'$ is the complete
local system of $C$. By Matsushita's theorem \cite{Matsushita:Fibrations, Matsushita:Addendum},
the map must be a Lagrangian fibration.
\end{proof}

By Remark \ref{rem:Giesekerchamber}, there exists a generic stability condition
$\sigma'\in\Stab^\dagger(Y,\alpha')$ with the property that $M_{H'}(\Phi(\vv))=M_{\sigma'}(\Phi(\vv))$.
On the other hand, by the birationality of wall-crossing,
Theorem \ref{thm:birational-WC}, the moduli spaces
$M_{\sigma'}(\Phi_*(\vv))$ and $M_{\Phi_*(\sigma)}(\Phi_*(\vv))$ are birational; combined
with the identification \eqref{eq:comparison}, this shows that $M_{\sigma}(\vv)$ is birational
to a Lagrangian fibration.

It remains to prove part \ref{enum:nef}, so let us assume that $D$ is nef and primitive.
Using the Fourier-Mukai
transform $\Phi$ as above, and after replacing $\sigma$ by $\Phi_*(\sigma)$, we may also assume that
$\vv$ has rank zero, and that $\ww = \theta_{\sigma, \vv}^{-1}(D)$ is the class of skyscraper sheaves of
points.
Now consider the autoequivalence $\Psi \in \Aut \Db(Y, \alpha')$ of Lemma \ref{lem:Cisample}. Except
for the possible shift $[1]$, each autoequivalence used in the construction of $\Psi$ leaves the
class $\ww$ invariant. Thus, in the moduli space $M_{\Psi_*\sigma}(\Psi_*(\vv)) = M_\sigma(\vv)$, the 
divisor class $D$ is still given by $D = \pm \theta_{\Psi_*\sigma, \Psi_*(\vv)}(\ww)$, up to sign.

Let $f \colon M_\sigma(\vv)  \dashrightarrow M_H(\vv)$ be the birational map to the Gieseker moduli
space $M_H(\vv)$ of torsion sheaves induced by a sequence of wall-crossings as above.  The
Lagrangian fibration $M_H(\vv) \to \P^n$ is induced by the divisor $\theta_{H,\vv}(-\ww)$.  By Theorem
\ref{thm:ellandgroupaction}, the classes $f_*D$ and $\theta_{H,\vv}(-\ww)$ are (up to sign) in the
same $W_{\mathrm{Exc}}$-orbit. Since they are both nef on a smooth K-trivial birational model, they
are also in the closure of the movable cone (and in particular, their orbits agree, not just up to sign).

By Proposition \ref{prop:Weylmovablecone}, the closure of the movable cone is the closure of the
fundamental chamber of the action on $W_{\mathrm{Exc}}$ on $\Pos(M)$, which intersects
every orbit exactly once.
Therefore, the classes $f_*D$ and $\theta_{H,\vv}(-\ww)$ have to be equal.

Since $M_\sigma(\vv)$ and $M_H(\vv)$ are isomorphic in codimension two, the section rings of
$D$ and $f_*D$ agree. In particular, $D$ is effective with Iitaka dimension $\frac{\vv^2+2}{2}$.
As explained in \cite[Section 4.1]{Sawon:AbelianFibred}, it follows from
\cite{Verbitsky:Cohomology} that
the numerical Iitaka dimension of $D$ is also equal to $\frac{\vv^2+2}{2}$.
Since $D$ is nef by assumption, $D$ is semi-ample by Kawamata's Theorem (see \cite[Theorem 6.1]{Kawamata:Pluricanonical}
and \cite[Theorem 1.1]{Fujino:KawamataThm}), and thus induces a morphism to $\P^n$.
This completes the proof of Theorem \ref{thm:SYZ}.

\begin{Rem} \label{Rem:SYZcon_movable}
In fact, the above proof shows the following two additional statements:
\begin{enumerate}
\item If $D \in \NS(M_\sigma(\vv))$ with $q(D) = 0$ lies in the closure of the movable cone, 
then there is a birational Lagrangian fibration induced by $D$. (In particular, $D$ is movable.)
\item Any $W_{\mathrm{Exc}}$-orbit of divisors on $M_{\sigma(\vv)}$ satisfying
$q(D) = 0$ contains exactly one movable divisor, which induces a birational Lagrangian fibration.
\end{enumerate}
\end{Rem}

\section{Application 2: Mori cone, nef cone, movable cone, effective cone}
\label{sec:cones}

Let $\vv$ be a primitive vector with $\vv^2 > 0$, let $\sigma$ be a generic stability condition
with respect to $\vv$, and let $M := M_\sigma(\vv)$ be the moduli space of $\sigma$-semistable objects.
In this section, we will completely describe the cones associated to the birational geometry of $M$ in terms of the Mukai lattice of $X$.

Recall that $\overline{\Pos}(M) \subset \NS(M)_\R$ denotes the (closed) cone of positive classes
defined by the Beauville-Bogomolov quadratic form.
Let $\overline{\Pos}(M)_\Q \subset \overline{\Pos}(M)$ be the subcone generated by all rational classes
in $\overline{\Pos(M)}$; it is the union of the interior
$\Pos(M)$ with all rational rays in the boundary $\partial \Pos(M)$. We fix an ample divisor class
$A$ on $M$ (which can be obtained from Theorem \ref{thm:ampleness}).

In the following theorems, we will say that a subcone of $\overline{\Pos}(M)_\Q$ (or of its closure) is ``cut
out'' by a collection of linear subspaces if it is one of the closed chambers of the wall-and-chamber
decomposition of $\Pos(M)_\Q$ whose walls are the given collection of subspaces. This is easily
translated into a more explicit statement as in the formulation of Theorem \ref{thm:nefcone} given
in the introduction.

\begin{Thm} \label{thm:nefcone}
The nef cone of $M$ is cut out in $\overline{\Pos}(M)$ by all linear subspaces of the form
$\theta(\vv^\perp \cap \aa^\perp)$, for all classes $\aa \in H^*_\alg(X,\alpha,\Z)$ satisfying $\aa^2 \ge -2$
and $0 \le (\vv, \aa) \le \frac{\vv^2}2$.
\end{Thm}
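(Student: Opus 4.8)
The plan is to deduce this from Theorem~\ref{thm:MAP} (parts \eqref{enum:imagemovable} and \eqref{enum:AmpleCone}) together with the wall-classification of Theorem~\ref{thm:walls}, reducing everything to the lattice-theoretic description of walls in $\Stab^\dag(X,\alpha)$. First I would recall that by Theorem~\ref{thm:MAP}\eqref{enum:AmpleCone}, the ample cone of $M$ is the image $\ell(\CC)$ of the chamber $\CC$ with $M = M_\CC(\vv)$; so it suffices to identify the walls of $\ell(\CC)$. By Theorem~\ref{thm:MAP}\eqref{enum:piecewise}--\eqref{enum:imagemovable} and Theorem~\ref{thm:ellandgroupaction}, the map $\ell$ factors through $I\colon \Omega_Z \mapsto \Im\frac{\Omega_Z}{-(\Omega_Z,\vv)}$ followed by $\theta_{\sigma,\vv}$ and the Weyl reflection $W$; the image of the open chamber $\CC$ in $\overline{\Pos}(M)$ is the interior of the ample cone. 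Its boundary walls are precisely the images of the walls $\WW \subset \partial\CC$ of $\Stab^\dag(X,\alpha)$ that do \emph{not} bounce back, i.e.\ those that actually separate distinct models (flopping, divisorial, or the Hilbert--Chow / Li--Gieseker--Uhlenbeck walls), as opposed to the totally semistable fake walls handled in the final Proposition before Section~\ref{sec:MainThms}'s end.

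The key computation is to translate "$\WW$ is a potential wall lying on $\partial\CC$ giving a genuine contraction" into the linear condition on $\NS(M)$. Given a potential wall $\WW$ with associated hyperbolic rank-two lattice $\HH_\WW \ni \vv$, pick $\sigma_0 = (Z_0,\PP_0) \in \WW$ with $Z_0(\HH_\WW) \subset \R$, normalized so $Z_0(\vv) \in \R_{<0}$; write $Z_0(\blank) = (\Omega_0, \blank)$. Then $\HH_\WW$ is, up to torsion, $(\Im\Omega_0)^\perp \cap H^*_\alg(X,\alpha,\Z)$, and for $\aa \in \vv^\perp$ the class $\theta_{\sigma,\vv}(\aa)$ lies on the image wall exactly when $\aa \in \HH_\WW \cap \vv^\perp = \vv^\perp \cap \aa_0^\perp$ for any $\aa_0$ spanning $\HH_\WW/\Z\vv$. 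So each genuine wall contributes exactly a hyperplane of the form $\theta(\vv^\perp \cap \aa^\perp)$. Conversely, by Theorem~\ref{thm:walls}, a potential wall $\WW$ with lattice $\HH_\WW$ gives a divisorial or flopping contraction (hence a genuine nef-cone wall) precisely when one of the enumerated lattice conditions holds: a spherical $\ss \in \HH_\WW$ with $(\ss,\vv) = 0$, or an isotropic $\ww$ with $(\ww,\vv) \in \{1,2\}$, or $\vv = \aa+\bb$ a sum of positive classes, or a spherical $\ss$ with $0 < (\ss,\vv) \le \frac{\vv^2}{2}$. I would check that in each of these cases there is a generator $\aa$ of $\HH_\WW$ relative to $\vv$ satisfying $\aa^2 \ge -2$ and $0 \le (\vv,\aa) \le \frac{\vv^2}{2}$ (for the "sum of positive classes" case, one takes $\aa = \aa_1$ with $\aa_1^2 \ge 0$ and replaces $\aa_1$ by $\vv - \aa_1$ if $(\vv,\aa_1) > \frac{\vv^2}{2}$; for spherical and isotropic cases the bounds are built in, again using $\aa \mapsto \vv-\aa$ symmetry when needed); conversely, that any $\aa$ with $\aa^2 \ge -2$, $0 \le (\vv,\aa) \le \frac{\vv^2}{2}$ and $\theta(\vv^\perp \cap \aa^\perp)$ meeting $\overline{\Pos}(M)$ in a genuine wall spans, with $\vv$, a hyperbolic lattice $\HH$ falling into one of the Theorem~\ref{thm:walls} cases. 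The final Proposition at the end of Section~\ref{sec:MainThms} guarantees that the fake totally semistable walls (spherical $\ss \in \HH_\WW$ with $(\ss,\vv) < 0$, which would otherwise give extra spurious hyperplanes) do not actually cut the ample cone: for each such $\WW$ there is a cohomologous potential wall $\WW'$ with $\HH_{\WW'} = \HH_\WW$ that is not a wall at all, so no chamber boundary appears there.

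Assembling this: the image $\ell(\CC) = \mathrm{Amp}(M)$ is the chamber of the decomposition of $\Pos(M)_\Q$ cut out by the hyperplanes $\theta(\vv^\perp \cap \aa^\perp)$ for $\aa$ ranging over the stated set; taking closures gives the nef cone as one of the closed chambers, which is the assertion. The concrete reformulation in the introduction's statement — that $D \in \overline{\Pos}(M)$ is nef iff $(D, \theta(\pm\aa)) \ge 0$ for all such $\aa$ with the sign chosen so $(A, \theta(\pm\aa)) > 0$ — is then immediate from the fact that $\mathrm{Amp}(M)$ is the unique chamber containing $A$. I expect the main obstacle to be the careful back-and-forth between the two descriptions of a wall: making sure that every hyperplane $\theta(\vv^\perp\cap\aa^\perp)$ in the claimed list genuinely corresponds to an \emph{existing} potential wall in $\Stab^\dag(X,\alpha)$ adjacent to $\CC$ (rather than a wall that has moved off to a different chamber after Weyl reflection), and dually that no genuine contraction-wall of $\partial\CC$ escapes the list. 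This is exactly the bookkeeping that the $W_{\mathrm{Exc}}$-action in Theorem~\ref{thm:ellandgroupaction} and Proposition~\ref{prop:Weylmovablecone} is designed to control — one uses that $\mathrm{Amp}(M) \subset \Mov(M) \cap \Pos(M)$, which is the fundamental domain of $W_{\mathrm{Exc}}$, so within that fundamental domain the hyperplanes arising from walls on $\partial\CC$ and the hyperplanes in the combinatorial list coincide. I would also need to invoke Theorem~\ref{thm:NamikawaWierzba} exactly as in the proof outline to rule out, purely dimension-theoretically, the cases not appearing in Theorem~\ref{thm:walls}, but since Theorem~\ref{thm:walls} is already proved this is just a matter of quoting it.
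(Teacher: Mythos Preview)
Your approach is essentially the same as the paper's: both deduce the result from Theorem~\ref{thm:MAP}\eqref{enum:AmpleCone} combined with the wall classification of Theorem~\ref{thm:walls}, by translating each case of Theorem~\ref{thm:walls} into a class $\aa$ satisfying the stated numerical bounds (using $\aa \mapsto \vv - \aa$ where needed), and conversely checking that any such $\aa$ either yields a hyperbolic lattice $\HH = \langle \vv, \aa\rangle$ covered by Theorem~\ref{thm:walls}, or a positive-semidefinite one whose hyperplane $\theta(\vv^\perp\cap\aa^\perp)$ misses $\Pos(M)$ and can be ignored. Your worry about ``walls that have moved off to a different chamber after Weyl reflection'' is unnecessary given the ``cut out'' formulation of the theorem: extra hyperplanes not bounding the nef cone simply define other chambers of the decomposition, so one only needs that the genuine boundary walls of $\mathrm{Amp}(M)$ appear among the listed hyperplanes, which is exactly what the case analysis provides.
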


Via the Beauville-Bogomolov form we can identify the group $N_1(M)$ of curves up to 
numerical equivalences with a lattice in the N\'eron-Severi group:
$N_1(M)_\Q \cong \left(N^1(M)_\Q\right)^\vee \cong N^1(M)_\Q$. In particular, we get an
induced rational pairing on $N_1(M)$; we then say that the \emph{cone of positive curves}
is the cone of classes $[C] \in N_1(M)_\R$ with $(C, C) > 0$ and $C.A > 0$.
Also, we obtain a dual Mukai isomorphism 
\begin{equation} \label{eq:dualMukai}
\theta^\vee \colon H^*_\alg(X,\alpha,\Z)/\vv \otimes \Q \to N_1(M)_\Q.
\end{equation}
As the dual statement to Theorem \ref{thm:nefcone}, we obtain:
\begin{Thm} \label{thm:Moricone}
The Mori cone of curves in $M$ is generated by the cone of positive curves, and by all curve classes
$\theta^\vee(\aa)$, for all  $\aa \in H^*_\alg(X,\alpha,\Z), \aa^2 \ge -2$ satisfying
$\abs{(\vv, \aa)} \le \frac{\vv^2}2$ and $\theta^\vee(\aa).A > 0$.
\end{Thm}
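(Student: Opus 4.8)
The statement is the numerical dual of Theorem~\ref{thm:nefcone}, so the natural strategy is to deduce it formally from that theorem together with basic convex duality. First I would set up the pairing precisely: under the Beauville--Bogomolov form the lattice $\NS(M)_\Q$ is self-dual up to the (non-unimodular) discriminant, which identifies $N_1(M)_\Q$ with $\NS(M)_\Q$ and transports $\theta\colon \vv^\perp\otimes\Q\xrightarrow{\sim}\NS(M)_\Q$ to the dual Mukai map $\theta^\vee\colon (H^*_\alg(X,\alpha,\Z)/\vv)\otimes\Q\xrightarrow{\sim} N_1(M)_\Q$ of \eqref{eq:dualMukai}. Concretely, for $\aa\in H^*_\alg(X,\alpha,\Z)$ the class $\theta^\vee(\aa)$ is characterized by $\theta^\vee(\aa).\theta(\bb)=(\aa,\bb)$ for all $\bb\in\vv^\perp$; this is well-defined modulo $\vv$ because $(\vv,\bb)=0$. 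I would record that a divisor class $D=\theta(\bb)$ satisfies $D.\theta^\vee(\aa)=(\bb,\aa)$, so the hyperplane $\theta(\vv^\perp\cap\aa^\perp)\subset\NS(M)_\Q$ appearing in Theorem~\ref{thm:nefcone} is exactly the annihilator of the ray $\R_{\ge0}\cdot\theta^\vee(\aa)$.

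Next I would invoke the general duality between nef and Mori cones: $\Nef(M)$ and $\overline{\mathrm{NE}}(M)$ are dual closed cones, and the Mori cone is the closed cone generated by the rays dual to the walls of $\Nef(M)$, together with whatever part of the boundary is not governed by walls. Theorem~\ref{thm:nefcone} says $\Nef(M)$ is the chamber of $\overline{\Pos}(M)$ cut out by the hyperplanes $\theta(\vv^\perp\cap\aa^\perp)$ for $\aa$ with $\aa^2\ge-2$ and $0\le(\vv,\aa)\le\frac{\vv^2}{2}$. Dualizing: the extremal rays of $\overline{\mathrm{NE}}(M)$ that lie strictly inside the positive cone are precisely the rays $\R_{\ge0}\cdot\theta^\vee(\aa)$ for those $\aa$, with the orientation chosen so that $\theta^\vee(\aa).A>0$ (pairing against the fixed ample $A$ picks the correct sign, exactly mirroring the sign choice in Theorem~\ref{thm:nefcone}); and the remaining boundary of $\overline{\mathrm{NE}}(M)$ coincides with the boundary of the positive cone, i.e.\ is the closure of the cone of positive curves. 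The passage from the condition $0\le(\vv,\aa)\le\frac{\vv^2}{2}$ on the divisor side to $\abs{(\vv,\aa)}\le\frac{\vv^2}{2}$ on the curve side is just the statement that on the curve side we no longer fix a sign for $\aa$ a priori (we instead impose $\theta^\vee(\aa).A>0$); replacing $\aa$ by $-\aa$ sends $(\vv,\aa)$ to $-(\vv,\aa)$ and $\aa^2$ is unchanged, so the union over $0\le(\vv,\aa)\le\frac{\vv^2}{2}$ of the sign-chosen rays equals the union over $\abs{(\vv,\aa)}\le\frac{\vv^2}{2}$ with the ample-positivity normalization.

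For the duality step I would be careful about one point: Theorem~\ref{thm:nefcone} only controls $\Nef(M)$ inside $\overline{\Pos}(M)$, and dually we only directly control $\overline{\mathrm{NE}}(M)\cap\overline{\Pos}(M)$. But by \cite[Corollary 3.10]{Huybrechts:compactHyperkaehlerbasic} (cited in Section~\ref{sec:ReviewHK}) the positive cone is contained in the big cone, hence in the effective cone, and the nef cone is always contained in the closed positive cone $\overline{\Pos}(M)$; dually $\overline{\Pos}(M)$ is contained in $\overline{\mathrm{NE}}(M)$, so the cone of positive curves is automatically part of the Mori cone and no curve classes outside $\overline{\Pos}(M)$ are needed --- any such class would pair non-negatively with every nef divisor, but $\Nef(M)$ spans $\NS(M)_\R$, forcing it into $\overline{\Pos}(M)$. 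Assembling: $\overline{\mathrm{NE}}(M)$ is generated by $\overline{\Pos}(M)$ and finitely many extra rays, which are precisely the duals of the bounding hyperplanes of $\Nef(M)$ that cut into the interior of $\Pos(M)$, i.e.\ the $\theta^\vee(\aa)$ with $\aa^2\ge-2$, $\abs{(\vv,\aa)}\le\frac{\vv^2}{2}$, $\theta^\vee(\aa).A>0$. The main obstacle I anticipate is purely bookkeeping --- getting the sign normalizations and the self-duality identification of $N_1(M)_\Q$ with $\NS(M)_\Q$ exactly right so that ``cut out by hyperplanes'' on the divisor side translates cleanly to ``generated by extremal rays'' on the curve side --- rather than any new geometric input; all the geometry is already in Theorem~\ref{thm:nefcone}.
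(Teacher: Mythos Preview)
Your approach is correct and is exactly the paper's: Theorem~\ref{thm:Moricone} is presented as the dual statement to Theorem~\ref{thm:nefcone} with no separate proof beyond that remark (plus the note that some $\theta^\vee(\aa)$ may land in the interior of the Mori cone). One sentence in your write-up is wrong, though: the claim that ``no curve classes outside $\overline{\Pos}(M)$ are needed'' with the justification that $\Nef(M)$ spanning forces Mori classes into $\overline{\Pos}(M)$ is false --- the extremal classes $\theta^\vee(\aa)$ with $\aa^2<0$ lie outside the positive cone. What you actually need there is that the positive cone is \emph{self-dual} under the Beauville--Bogomolov form (signature $(1,\rho-1)$), so dualizing $\Nef(M)=\overline{\Pos}(M)\cap\bigcap_\aa\{D:D.\theta^\vee(\aa)\ge 0\}$ gives $\overline{\mathrm{NE}}(M)=\langle\overline{\Pos}(M),\;\theta^\vee(\aa)\rangle$ directly.
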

Some of these classes $\aa$ may not define a wall bordering the nef cone; in this case,
$\theta^\vee(\aa)$ is in the interior of the Mori cone (as it intersects every nef divisor
positively).

\begin{Thm} \label{thm:movable}
The movable cone of $M$ is cut out in $\overline{\Pos}(M)_\Q$ by the following two types of walls:
\begin{enumerate}
\item $\theta(\ss^\perp \cap \vv^\perp)$ for every spherical class $\ss \in \vv^\perp$.
\item $\theta(\ww^\perp \cap \vv^\perp)$ for every isotropic class $\ww \in H^*_\alg(X,\alpha,\Z)$ with $1 \le (\ww, \vv) \le 2$.
\end{enumerate}
\end{Thm}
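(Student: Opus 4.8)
\textbf{Proof strategy for Theorem \ref{thm:movable}.}
The plan is to combine the description of the image $\ell(\Stab^\dag(X,\alpha))$ from Theorem \ref{thm:MAP}\eqref{enum:imagemovable} with the classification of divisorial walls from Theorem \ref{thm:walls}. Recall that $\overline{\Mov}(M)\cap \Pos(M)$ is, by Proposition \ref{prop:Weylmovablecone}, the fundamental chamber for the Weyl group $W_{\mathrm{Exc}}$ generated by reflections at irreducible exceptional (prime divisorial) classes; equivalently, $\overline{\Mov}(M)$ is cut out in $\overline{\Pos}(M)_\Q$ by the hyperplanes $D^\perp$ as $D$ ranges over prime exceptional divisors. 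So the theorem reduces to the identification: a rational ray in $\overline{\Pos}(M)_\Q$ lies on the boundary of $\overline{\Mov}(M)$ exactly when the corresponding hyperplane is one of the two listed types. By Theorem \ref{thm:MAP}\eqref{enum:piecewise} and Lemma \ref{lem:ellandreflections}, analytic continuation of $\ell$ across a wall $\WW$ of $\Stab^\dag(X,\alpha)$ either extends analytically (fake/flopping/no wall) or ``bounces back'' via the reflection $\rho_D$ at the divisorial class $D=\theta(\aa)$; thus the walls of $\Mov(M)$ correspond precisely to the \emph{divisorial} walls in $\Stab^\dag(X,\alpha)$, and $D\equiv\theta(\tilde\ss)$ or $D\equiv\theta(\ww_0)$-type classes according to Theorem \ref{thm:walls}\eqref{enum:niso-divisorial}.

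First I would spell out the correspondence between rays of $\partial\overline{\Mov}(M)$ and divisorial walls. Given a prime exceptional $D$, choose a rational class $A'$ in the relative interior of the face $D^\perp\cap\overline{\Mov}(M)$; by Theorem \ref{thm:MAP}\eqref{enum:imagemovable} there is $\sigma\in\Stab^\dag(X,\alpha)$ with $\ell(\sigma)=\theta_\sigma(\aa_0)$ proportional to $A'$, and $\sigma$ lies on a wall $\WW$ which, by Lemma \ref{lem:ellandreflections}\eqref{enum:divisorial}, must be divisorial with contracted divisor $D$. Conversely every divisorial wall $\WW$ produces, via $\ell$, a boundary wall of $\Mov(M)$. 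Now invoke Theorem \ref{thm:walls}\eqref{enum:niso-divisorial} together with Propositions \ref{prop:sphericaldivisorialwall} (Brill--Noether case: $D\equiv\theta(\tilde\ss)$ with $\tilde\ss\in\vv^\perp$ spherical) and the isotropic analysis of Section \ref{sec:iso} (Hilbert--Chow and Li--Gieseker--Uhlenbeck cases: $D$ proportional to $\theta(\aa)$ for the relevant $\aa$ with $\aa^2\ge -2$, where in fact one reads off that the reflecting hyperplane is $\theta(\ww^\perp\cap\vv^\perp)$ for an isotropic $\ww$ with $(\ww,\vv)\in\{1,2\}$). In the Li--Gieseker--Uhlenbeck case one uses the explicit computation $\rho_D(\theta(\aa))=-\theta(\aa^\vee\cdot\ch(\LL))$ from equation \eqref{eq:Columbus20130813} in the proof of Lemma \ref{lem:ellandreflections} to check that the reflecting hyperplane is indeed $\ww^\perp$ for the isotropic class $\ww$ with $(\ww,\vv)=2$, and similarly $(\ww,\vv)=1$ in the Hilbert--Chow case.

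This shows every wall of $\overline{\Mov}(M)$ is of one of the two stated types; for the reverse inclusion I would argue that each listed hyperplane actually \emph{does} border the movable cone. Given a spherical $\ss\in\vv^\perp$ (resp.\ isotropic $\ww$ with $(\ww,\vv)\in\{1,2\}$), one exhibits a potential wall $\WW$ with $\HH_\WW$ containing $\vv$ and $\ss$ (resp.\ $\ww$): concretely, pick a rational class $\Omega$ with $(\Omega,\vv)=-1$, $\Im\Omega$ orthogonal to $\ss$ (resp.\ $\ww$) and $\ss$ (resp.\ $\ww$)-related so that the associated rank-two sublattice is hyperbolic and primitive, lying in $\PP_0^+(X,\alpha)$; by Theorem \ref{thm:Bridgeland_coveringmap} this is realized by a genuine stability condition, and by Theorem \ref{thm:walls} it is a divisorial wall of Brill--Noether (resp.\ Hilbert--Chow or Li--Gieseker--Uhlenbeck) type. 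Then $\ell$ of a nearby chamber maps to a chamber of $\overline{\Pos}(M)_\Q$ adjacent to $\theta(\ss^\perp\cap\vv^\perp)$ (resp.\ $\theta(\ww^\perp\cap\vv^\perp)$) that lies inside $\Mov(M)$, so this hyperplane is a genuine wall.

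\textbf{Main obstacle.} The delicate point is the bookkeeping in the isotropic (Hilbert--Chow and Li--Gieseker--Uhlenbeck) cases: one must verify that the \emph{reflecting} hyperplane associated to the divisorial contraction is literally $\theta(\ww^\perp\cap\vv^\perp)$ for an isotropic $\ww$, rather than some other class. This requires tracing through the Fourier--Mukai reductions of Section \ref{sec:iso} and the duality computation culminating in \eqref{eq:Columbus20130813}, and checking that after undoing the equivalences the contracted divisor class pulls back to $\theta$ of an isotropic class with the correct pairing with $\vv$. A secondary subtlety is ensuring, in the totally semistable case, that composing with the spherical twists of Proposition \ref{prop:sphericaltotallysemistable} does not alter the relevant hyperplane — but this is exactly the content of the argument in the proof of Lemma \ref{lem:ellandreflections}, which carries over verbatim.
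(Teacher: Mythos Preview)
Your approach for the interior of the positive cone is essentially the same as the paper's: identify $\Mov(M)\cap\Pos(M)$ as the fundamental chamber of $W_{\mathrm{Exc}}$ via Proposition \ref{prop:Weylmovablecone} and Theorem \ref{thm:MAP}, then read off the walls from the classification of divisorial contractions in Theorem \ref{thm:walls}. That part is fine.

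There is, however, a genuine gap. The theorem asserts that the movable cone is cut out in $\overline{\Pos}(M)_\Q$, which includes the rational rays with $q(D)=0$ on the boundary of the positive cone. Your argument only handles $\Mov(M)\cap\Pos(M)$; you never address why a rational square-zero class $D$ lying in the closure of the fundamental chamber (i.e., satisfying $(D,\theta(\ss))\ge 0$ and $(D,\theta(\ww))\ge 0$ for all the listed classes) is actually movable. This is exactly the new content here, and the paper handles it by invoking Theorem \ref{thm:SYZ} (more precisely Remark \ref{Rem:SYZcon_movable}): such a $D$ induces a birational Lagrangian fibration, hence is movable. Without this step the statement is not proved---Proposition \ref{prop:Weylmovablecone} and Theorem \ref{thm:MAP}\eqref{enum:imagemovable} say nothing about the boundary of $\Pos(M)$.

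A secondary issue: your ``reverse inclusion'' paragraph misreads the statement. The theorem does \emph{not} claim that every listed hyperplane actually bounds the movable cone; it only claims that $\Mov(M)$ is one of the closed chambers of the wall-and-chamber decomposition determined by \emph{all} these hyperplanes (see the paper's convention stated just before Theorem \ref{thm:nefcone}, and the remark following Theorem \ref{thm:effective}). Many of the listed hyperplanes will lie entirely outside $\Mov(M)$ and not touch its boundary at all. So there is nothing to prove in that direction, and your attempt to realize each individual $\ss$ or $\ww$ by an actual divisorial wall adjacent to the movable cone is both unnecessary and, in general, impossible.
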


\begin{Thm} \label{thm:effective}
The effective cone of $M$ is generated by $\overline{\Pos}(M)_\Q$ along with the following 
exceptional divisors:
\begin{enumerate}
\item $D:= \theta(\ss)$ for every spherical class $\ss \in \vv^\perp$ with
$(D, A) > 0$, and
\item $D:= \theta(\vv^2 \cdot \ww - (\vv,\ww) \cdot \vv)$ for every isotropic class $\ww \in H^*_\alg(X,\alpha,\Z)$
with $1 \le (\ww, \vv) \le 2$ and $(D, A) > 0$.
\end{enumerate}
\end{Thm}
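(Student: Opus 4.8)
The plan is to derive Theorem \ref{thm:effective} (the description of the effective cone) from the previously-established Theorem \ref{thm:movable}, Proposition \ref{prop:Weylmovablecone}, and the precise classification of divisorial contractions in Theorem \ref{thm:walls} together with the geometric descriptions in Proposition \ref{prop:sphericaldivisorialwall} and Lemmas \ref{lem:NumericalPropertiesImplyDivisorialWallI}--\ref{lem:NumericalPropertiesImplyDivisorialWallIII}. The key structural fact is that, by \cite[Section 6]{Eyal:survey} (see Proposition \ref{prop:Weylmovablecone} and the Zariski-decomposition argument), the pseudo-effective cone is obtained from the movable cone by the action of the Weyl group $W_{\mathrm{Exc}}$ generated by reflections $\rho_D$ at irreducible exceptional divisors $D$: concretely $\overline{\mathrm{Eff}}(M)$ is generated by $\overline{\Pos}(M)_\Q$ together with all classes $D$ of irreducible exceptional divisors with $(D,A)>0$. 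So the problem reduces to identifying \emph{which} classes in $\NS(M)$ arise as irreducible exceptional (prime divisorial-contraction) divisors, and showing the list is exactly the two families stated.

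First I would recall that every extremal divisorial contraction of $M = M_\sigma(\vv)$ is induced by a wall $\WW$ in $\Stab^\dag(X,\alpha)$ (this is Theorem \ref{thm:MAP}\eqref{enum:imagemovable}--\eqref{enum:allMMP} combined with Lemma \ref{lem:ellandreflections}: walls of the movable cone of $M$ correspond to divisorial walls, and the reflection identified in Lemma \ref{lem:ellandreflections}\eqref{enum:divisorial} is exactly $\rho_D$ for the contracted divisor $D$). Then I invoke Theorem \ref{thm:walls}: a potential wall $\WW$ with hyperbolic lattice $\HH_\WW \ni \vv$ induces a divisorial contraction precisely in the three cases (Brill--Noether) there is a spherical $\ss \in \HH_\WW$ with $(\ss,\vv)=0$; (Hilbert--Chow) there is an isotropic $\ww \in \HH_\WW$ with $(\ww,\vv)=1$; (Li--Gieseker--Uhlenbeck) there is an isotropic $\ww \in \HH_\WW$ with $(\ww,\vv)=2$. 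In each case I read off the class of the contracted divisor $D$ in $\NS(M) \cong \theta(\vv^\perp)$: in the Brill--Noether case $D \equiv \theta(\tilde\ss)$ with $\tilde\ss$ the effective orthogonal spherical class, by Proposition \ref{prop:sphericaldivisorialwall}; in the Hilbert--Chow and LGU cases I use the explicit Fourier--Mukai picture of Section \ref{sec:iso} (where $\vv$ becomes a rank-one resp.\ rank-two class and $D$ is the exceptional divisor of the Hilbert--Chow resp.\ Gieseker--Uhlenbeck morphism), and compute that $D \equiv \theta(\vv^2\cdot\ww - (\vv,\ww)\cdot\vv)$ — note this last formula is the unique (up to scale) class in $\theta(\vv^\perp)$ proportional to the ``$\ww$-direction'', and it specializes correctly: e.g.\ for $\vv = -(1,0,1-n)$, $\ww=(0,0,1)$ one gets $\theta(1,0,n-1)$, matching the Hilbert--Chow boundary divisor used in Section \ref{sec:Application1}. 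Conversely, if $\ss \in \vv^\perp$ is any spherical class (with $(\theta(\ss),A)>0$ after sign choice), the potential wall associated to the saturation of $\langle \vv,\ss\rangle$ is a Brill--Noether divisorial wall, so $\theta(\ss)$ is genuinely the class of an irreducible exceptional divisor; similarly for isotropic $\ww$ with $1\le(\ww,\vv)\le 2$.

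Next I must check that these are \emph{all} the irreducible exceptional divisor classes, i.e.\ that no prime exceptional divisor has a class outside the two lists. Here the point is that a prime exceptional divisor $D$ on the \emph{particular} model $M=M_\sigma(\vv)$ either is contracted by an extremal contraction of $M$ itself — handled by the above — or only becomes contractible after passing to another birational model $M'$. In the latter case I transport the situation: by Theorem \ref{thm:birational-WC} (birationality of wall-crossing) and Theorem \ref{thm:MAP}, $M'$ is again a moduli space $M_{\sigma'}(\vv)$, the induced isomorphism $\NS(M)\cong\NS(M')$ is compatible with the Mukai isomorphisms $\theta$ (Lemma \ref{lem:ellandreflections} — on flopping/fake walls the $\theta$'s literally agree), and the contracted divisor on $M'$ has class of one of the two stated forms in $\theta(\vv^\perp)$; pulling back along the codimension-$\geq 2$ isomorphism preserves the class. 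Then Proposition \ref{prop:Weylmovablecone} guarantees $W_{\mathrm{Exc}}$ acts on all of $\overline{\Pos}(M)$ with $\overline{\Mov}(M)$ as fundamental domain, so $\overline{\mathrm{Eff}}(M) = \overline{\Pos}(M)_\Q + \sum_g g^*\overline{\Mov}(M)$ and every extremal ray of the effective cone outside $\overline{\Pos}(M)$ is spanned by some $g$-translate of a divisor class of the listed form — but since the list is $W_{\mathrm{Exc}}$-stable up to scaling (reflections in spherical/isotropic-type divisors permute the collection, as one sees from Theorem \ref{thm:movable} describing the walls of $\overline{\Mov}$), the extremal exceptional rays are exactly $\theta(\ss)$ and $\theta(\vv^2\ww - (\vv,\ww)\vv)$ as stated. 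Finally I assemble: $\mathrm{Eff}(M)$ is the real cone generated by $\overline{\Pos}(M)_\Q$ together with the finitely-many-per-bounded-region classes $\theta(\ss)$ (for spherical $\ss\in\vv^\perp$, $(\theta(\ss),A)>0$) and $\theta(\vv^2\ww-(\vv,\ww)\vv)$ (for isotropic $\ww$, $1\le(\ww,\vv)\le2$, positive pairing with $A$), which is the claim.

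The main obstacle I expect is the computation identifying the class of the exceptional divisor in the Li--Gieseker--Uhlenbeck case as precisely $\theta(\vv^2\cdot\ww - (\vv,\ww)\cdot\vv)$, and checking it is consistent with the reflection formula \eqref{eq:Columbus20130813} from Lemma \ref{lem:ellandreflections} (which is phrased via $\aa \mapsto \aa^\vee\cdot\ch(\LL)$ rather than directly via $\ww$); reconciling these two descriptions — and doing so uniformly in the twisted setting and for both the rank-one and rank-two cases — requires care, but is ultimately a finite linear-algebra check on the rank-two lattice $\HH_\WW$ using that $\theta(\vv^\perp\cap\ww^\perp)$ is the wall and that $\rho_D$ fixes it while negating the $\ww$-direction. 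A secondary subtlety is making sure the ``only such divisors'' direction genuinely covers prime exceptional divisors that are never contracted on any model reachable by wall-crossing; this is absorbed by Markman's result (Proposition \ref{prop:Weylmovablecone}) that the movable cone's defining inequalities come exactly from \emph{all} prime exceptional divisors, combined with Theorem \ref{thm:movable}'s lattice-theoretic description of those same inequalities — so the two lists must coincide.
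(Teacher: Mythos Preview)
Your overall strategy is sound and matches the paper's: use Zariski decomposition (every effective divisor is movable plus exceptional), identify the irreducible exceptional divisor classes via the classification of divisorial walls in Theorem \ref{thm:walls}, and compute their classes (the orthogonal projection of $\ww$ to $\vv^\perp$ gives exactly $\vv^2\ww - (\vv,\ww)\vv$ up to scale, which is the paper's observation too). However, two steps in your argument are incomplete compared to the paper's proof.

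First, you take for granted that every rational class $D$ with $D^2=0$ on the boundary of $\overline{\Pos}(M)_\Q$ is effective. This is not automatic. The paper splits into cases: if $(D,E)<0$ for some irreducible exceptional $E$, then $D = \epsilon E + (D-\epsilon E)$ with $D-\epsilon E \in \Pos(M)$ for small $\epsilon$; otherwise $(D,E)\ge 0$ for all exceptional $E$, so $D$ lies in the closure of the movable cone by Proposition \ref{prop:Weylmovablecone}, and then Theorem \ref{thm:SYZ} (via Remark \ref{Rem:SYZcon_movable}) gives a birational Lagrangian fibration induced by $D$, hence $D$ is effective. You never invoke the Lagrangian fibration result, and without it this boundary case is a genuine gap.

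Second, your argument that \emph{every} listed class $\theta(\ss)$ or $\theta(\vv^2\ww-(\vv,\ww)\vv)$ is effective rests on the assertion that the list is $W_{\mathrm{Exc}}$-stable up to scaling, which you do not prove (and which is not obvious: a reflection $\rho_D$ need not send a spherical class in $\vv^\perp$ to another spherical class in $\vv^\perp$). The paper avoids this entirely with a cleaner dichotomy: given such a listed class $D$, look at the hyperplane $D^\perp$. Either $D^\perp$ is one of the walls of the movable cone from Theorem \ref{thm:movable}, in which case the corresponding irreducible exceptional divisor is proportional to $D$; or $D^\perp$ does not bound the movable cone, in which case the inequality $(D,\cdot)\ge 0$ on $\overline{\Mov}(M)$ is implied by the inequalities $(E,\cdot)\ge 0$ for the irreducible exceptional divisors $E$, forcing $D$ to be a nonnegative combination of such $E$. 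This argument uses only Theorem \ref{thm:movable} and elementary convex geometry, and it is what you should substitute for the $W_{\mathrm{Exc}}$-stability claim.
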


Note that only those classes $D$ whose orthogonal complement $D^\perp$ is a wall of the movable cone
will correspond to irreducible exceptional divisors.

The movable cone has essentially been described by Markman for any hyperk\"ahler variety; more
precisely, \cite[Lemma 6.22]{Eyal:survey} gives the intersection of the movable cone with the
strictly positive cone $\Pos(M)$. While our methods give an alternative proof, the only new statement of
Theorem \ref{thm:movable} concerns rational classes $D$ with $D^2 = 0$ in the closure of the movable
cone; such a $D$ is movable due to our proof of the Lagrangian fibration conjecture in Theorem
\ref{thm:SYZ}.

Using the divisorial Zariski decomposition of \cite{Boucksom:Zariskidecomposition}, one can show
for any hyperk\"ahler variety that the pseudo-effective cone is dual to the closure of the movable
cone. In particular, Theorem \ref{thm:effective} could also be deduced from Markman's results and
Theorem \ref{thm:SYZ}.

\begin{proof}[Proof of Theorem \ref{thm:nefcone}]
Let $\CC$ be the chamber of $\Stab^\dag(X,\alpha)$ containing $\sigma$.  By Theorem \ref{thm:MAP}, the boundary
of the ample cone inside the positive cone is equal to the union of the images $\ell(\WW)$, for all
walls $\WW$ in the boundary of $\CC$ that induce a non-trivial contraction morphism. (These are
walls that are not ``fake walls'' in the sense of Definition \ref{def:TypeOfWalls}.) Theorem
\ref{thm:walls} characterizes hyperbolic lattices corresponding to such walls.

For any such hyperbolic lattice $\HH$, we get a class $\aa$ as in Theorem \ref{thm:nefcone} as follows:
\begin{itemize}
\item
in the cases \eqref{enum:niso-divisorial} of divisorial contractions, we let $\aa$ be the corresponding spherical or
isotropic class;
\item in the subcase of \eqref{enum:niso-flop} of a flopping contraction induced by a spherical class
$\ss$, we also set $\aa = \ss$;
\item and in the subcase of \eqref{enum:niso-flop} of a flopping contraction induced by a
sum $\vv = \aa + \bb$, we may assume $(\vv, \aa) \le (\vv, \bb)$, which is equivalent
to $(\vv, \aa) \le \frac{\vv^2}2$.
\end{itemize}
Stability conditions $\sigma = (Z, \AA)$ in the corresponding wall $\WW$ satisfy
$\Im \frac{Z(\aa)}{Z(\vv)} = 0$, or, equivalently, $\ell(\sigma) \in \theta(\vv^\perp \cap \aa^\perp)$.

Conversely, given $\aa$, we obtain a rank two lattice $\HH := \langle \vv, \aa \rangle$. If $\HH$ is
hyperbolic, then it is straightforward to check that it conversely induces one of the walls listed
in Theorem \ref{thm:walls}. Otherwise, $\HH$ is positive-semidefinite. Then the orthogonal complement
$\HH^\perp = \vv^\perp \cap \aa^\perp$ does not contain any positive classes, and thus its image
under $\theta$ in $\NS(M)$ does not intersect the positive cone and can be ignored.
\end{proof}

\begin{proof}[Proof of Theorem \ref{thm:movable}.]
As already discussed in Section \ref{sec:MainThms}, the intersection
$\Mov(M) \cap \Pos(M)$ follows directly from Theorem \ref{thm:MAP}; the statement of 
Theorem \ref{thm:movable} is just an explicit description of the exceptional chamber 
of the Weyl group action. 

A movable class $D$ in the boundary of the positive cone, with $(D, D) = 0$,
automatically has to be rational.  Conversely, by our proof of Theorem \ref{thm:SYZ}, if
we have a rational divisor with $(D, D) = 0$ that is in the closure of the movable cone, then
there is a Lagrangian fibration induced by $D$ on a smooth birational model of $M$;
in particular, $D$ is movable.
\end{proof}

\begin{proof}[Proof of Theorem \ref{thm:effective}.]
We first claim that the class of an irreducible exceptional divisor is (up to a multiple) of the form described in the Theorem. For the
Brill-Noether case, this was proved in \ref{prop:sphericaldivisorialwall}. In the Hilbert-Chow and
Li-Gieseker-Uhlenbeck case, the class of the divisor of non-locally free sheaves can be computed
explicitly; alternatively, it is enough to observe that $\theta^{-1}(D)$ has to be a multiple of the orthogonal
projection of $\ww$ to $\vv^\perp$. 

If $D$ is an arbitrary effective divisor, then $D$ can be written
as $D = A + E$ with $A$ movable and $E$ exceptional, see \cite[Section
4]{Boucksom:Zariskidecomposition}, \cite[Theorem 5.8]{Eyal:survey}.
The class of $A$ is a rational point of $\overline{\Pos(M)}$. Thus the effective cone is
contained in the cone described in the Theorem.

For the converse, first recall $\Pos(M) \subset \Eff(M)$. Now consider a rational divisor with $D^2
= 0$. If $(D, E) < 0$ for some exceptional divisor $E$, then $D$ can be written 
the sum $D = \epsilon E + (D - \epsilon E)$ with $D - \epsilon E \in \Pos(M)$; thus $D$ is in the effective cone. Otherwise
$(D, E) \ge 0$ for every exceptional divisor $E$. By Proposition \ref{prop:Weylmovablecone},
$D$ is in the closure of the movable cone; by Theorem \ref{thm:SYZ} and Remark
\ref{Rem:SYZcon_movable}, a multiple of $D$ induces a birational Lagrangian fibration, so $D$ is effective. 

Finally, when $D$ is one of the classes listed explicitly in the Theorem, consider the orthogonal
complement $D^\perp$. If it does not intersect the movable cone in a face or in the interior, then the inequality
$(D, \blank) \ge 0$ is implied by the inequality $(E, \blank) \ge 0$ for all irreducible exceptional
divisors; hence $D$ is a positive linear combination of such divisors. Since the wall $D^\perp$ is
identical to one of the walls listed in Theorem \ref{thm:movable}, the only other possibility is
that $D^\perp$ defines a wall of the movable cone. The corresponding exceptional divisor is
proportional to $D$.
\end{proof}

\subsection*{Relation to Hassett-Tschinkel's conjecture on the Mori cone}
Hassett and Tschinkel gave a conjectural description of the nef and Mori cones via intersection
numbers of extremal rays in \cite{HassettTschinkel:ExtremalRays}. While their conjecture turned out
to be incorrect (see \cite[Remark 10.4]{BM:projectivity} and \cite[Remark 8.10]{KnutsenCiliberto}),
we will now explain that it is in fact very closely related to Theorem \ref{thm:Moricone}.

We first recall their conjecture. Via the identification
$N_1(M)_\Q \cong N^1(M)_\Q$ explained above, the Beauville-Bogomolov extends to
a $\Q$-valued quadratic form on $N_1(M)$; we will also
denote it by $q(\blank)$.
The following lemma follows immediately from this definition, and the definition
of $\theta^\vee$:

\begin{Lem} \label{lem:thetaveequadratic}
Consider the isomorphism $\vv^\perp_\Q \cong N_1(M)_\Q$ induced by the dual Mukai morphism
$\theta^\vee$ of \eqref{eq:dualMukai}. This isomorphism respects the quadratic form on
either side.
\end{Lem}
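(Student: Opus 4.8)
The plan is to reduce everything to the fact, recorded in Theorem \ref{thm:ModuliSpacesAreHK}, that the Mukai morphism $\theta = \theta_{\sigma,\vv}\colon \vv^\perp \to \NS(M)$ is an isometry from the Mukai pairing to the Beauville--Bogomolov form. Since $\vv^2 \neq 0$, the Mukai form is nondegenerate on $\vv^\perp$, so the composite $\vv^\perp_\Q \into H^*_\alg(X,\alpha,\Z)_\Q \onto \bigl(H^*_\alg(X,\alpha,\Z)/\vv\bigr)_\Q$ is an isomorphism; explicitly, every $\aa \in H^*_\alg(X,\alpha,\Z)_\Q$ has the unique representative $\aa^\perp := \aa - \tfrac{(\aa,\vv)}{\vv^2}\vv \in \vv^\perp_\Q$ modulo $\vv$, and $(\aa,\ww) = (\aa^\perp,\ww)$ for all $\ww \in \vv^\perp$. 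Thus the isomorphism $\vv^\perp_\Q \cong N_1(M)_\Q$ in the statement is precisely the restriction of $\theta^\vee$ to $\vv^\perp_\Q$ (composed with the quotient map), and by the definition of $\theta^\vee$ in Remark \ref{rmk:comparison} and \eqref{eq:dualMukai} it is characterized by $\theta(\ww).\theta^\vee(\aa) = (\ww,\aa)$ for all $\ww \in \vv^\perp$.

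First I would record the two quadratic forms explicitly. On $N_1(M)_\Q$ the form $q$ is defined by transporting the Beauville--Bogomolov form along the isomorphism $N_1(M)_\Q \xrightarrow{\sim} N^1(M)_\Q$ determined by the (nondegenerate) pairing $q$ on $N^1(M)_\Q$: a curve class $\gamma$ goes to the divisor class $\delta_\gamma$ with $q(\delta_\gamma, D) = \gamma.D$ for all $D$, and one sets $q(\gamma) := q(\delta_\gamma)$. In particular this identification is tautologically an isometry, so it suffices to show that the composite $\vv^\perp_\Q \xrightarrow{\theta^\vee} N_1(M)_\Q \xrightarrow{\sim} N^1(M)_\Q$ equals $\theta$, which is an isometry by Theorem \ref{thm:ModuliSpacesAreHK}.

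Finally I would combine the two ingredients: for $\aa \in \vv^\perp_\Q$, the image $\delta$ of $\theta^\vee(\aa)$ in $N^1(M)_\Q$ satisfies $q(\delta, \theta(\ww)) = \theta^\vee(\aa).\theta(\ww) = (\aa,\ww)$ for every $\ww \in \vv^\perp$, while at the same time $q(\theta(\aa), \theta(\ww)) = (\aa,\ww)$ because $\theta$ is an isometry; since $\theta$ is surjective onto $N^1(M)_\Q$ and $q$ is nondegenerate there, this forces $\delta = \theta(\aa)$, which is the claimed compatibility. This is essentially a bookkeeping lemma, so there is no genuine obstacle; the only point that needs care is to fix once and for all the normalizations of $\theta^\vee$ and of the identification $N_1(M)_\Q \cong N^1(M)_\Q$ (including any constants hidden in the $\sqrt{\td(X)}$-twist defining the Mukai vector) so that the relation $\theta(\ww).\theta^\vee(\aa) = (\ww,\aa)$ holds on the nose rather than up to a scalar.
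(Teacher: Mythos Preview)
Your argument is correct and is exactly the unpacking the paper has in mind: the paper simply states that the lemma ``follows immediately from this definition, and the definition of $\theta^\vee$,'' and your proof makes explicit the chain $\vv^\perp_\Q \xrightarrow{\theta^\vee} N_1(M)_\Q \xrightarrow{\sim} N^1(M)_\Q$ and identifies it with the isometry $\theta$ via the characterizing property $\theta(\ww).\theta^\vee(\aa) = (\ww,\aa)$.
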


Let $2n$ be the dimension of $M$, and as above let $A$ be an ample divisor.
Let $C \subset N_1(M)_\R$ be the cone generated by all integral curve classes
$R \in N_1(M)_\Z$ that satisfy $q(R) \ge -\frac{n+3}2$ and $R.A > 0$.
In \cite[Conjecture 1.2]{HassettTschinkel:ExtremalRays}, the authors conjectured that for
any hyperk\"ahler variety $M$ deformation equivalent to the Hilbert scheme of a K3
surface, the cone $C$ is equal to the Mori cone.

Our first observation shows that the Mori cone is contained in $C$:
\begin{Prop}\label{prop:RelationHT}
Let $R$ be the generator of an extremal ray of the Mori cone of $M$. Then
$(R, R) \ge -\frac{n+3}2$.
\end{Prop}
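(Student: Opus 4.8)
The strategy is to combine the explicit description of the Mori cone from Theorem \ref{thm:Moricone} with the lattice-theoretic identification of the Beauville-Bogomolov form on $N_1(M)$ provided by Lemma \ref{lem:thetaveequadratic}. By Theorem \ref{thm:Moricone}, every extremal ray $R$ of the Mori cone is either a positive curve class (i.e.\ $q(R) > 0$, in which case the bound is trivial since $n \geq 1$), or is of the form $R = \theta^\vee(\aa)$ for some $\aa \in H^*_\alg(X,\alpha,\Z)$ with $\aa^2 \geq -2$ and $\abs{(\vv,\aa)} \leq \frac{\vv^2}{2}$. So the entire content is to bound $q(\theta^\vee(\aa))$ from below in the second case.

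\textbf{Key computation.} By Lemma \ref{lem:thetaveequadratic}, the quadratic form $q$ on $N_1(M)_\Q$ corresponds under $\theta^\vee$ to the quadratic form on $\vv^\perp_\Q \subset H^*_\alg(X,\alpha,\Z)_\Q$, i.e.\ to the restriction of the Mukai pairing to the orthogonal complement of $\vv$; here $\aa$ should really be replaced by its orthogonal projection $\aa_0 := \aa - \frac{(\vv,\aa)}{\vv^2}\vv$ onto $\vv^\perp$. A direct computation gives
\[
q(\theta^\vee(\aa)) = \aa_0^2 = \aa^2 - \frac{(\vv,\aa)^2}{\vv^2}.
\]
Now I would use $\aa^2 \geq -2$, together with the inequality $\frac{(\vv,\aa)^2}{\vv^2} \leq \frac{(\vv^2/2)^2}{\vv^2} = \frac{\vv^2}{4}$ coming from $\abs{(\vv,\aa)} \leq \frac{\vv^2}{2}$, to obtain
\[
q(\theta^\vee(\aa)) \geq -2 - \frac{\vv^2}{4}.
\]
Finally, since $\dim M = 2n = \vv^2 + 2$ (Theorem \ref{thm:nonempty}), we have $\vv^2 = 2n - 2$, so $-2 - \frac{\vv^2}{4} = -2 - \frac{2n-2}{4} = -2 - \frac{n-1}{2} = -\frac{n+3}{2}$, which is exactly the claimed bound.

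\textbf{Main obstacle.} The computation itself is routine; the only genuine point requiring care is that the extremal ray is correctly described by Theorem \ref{thm:Moricone} and that the normalization of $\theta^\vee$ (which has target $H^*_\alg(X,\alpha,\Z)/\vv$, not $\vv^\perp$) is handled consistently with Lemma \ref{lem:thetaveequadratic}. In particular one must check that an extremal generator of the Mori cone really does arise from a class $\aa$ with $\aa^2 \geq -2$ — the positive-curves part of the cone contributes only rays with $q > 0$, and those automatically satisfy the bound, so no extremal ray is missed. I would also note in passing that the bound is sharp: it is attained by $\aa$ spherical with $(\vv,\aa) = \frac{\vv^2}{2}$ when such a class exists, which is what makes the relation to Hassett--Tschinkel's conjecture precise (the difference being whether the bound $-\frac{n+3}{2}$ characterizes the Mori cone, which Theorem \ref{thm:Moricone} shows it does not in general).
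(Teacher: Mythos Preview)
Your argument is correct, and the final lattice computation --- projecting $\aa$ to $\vv^\perp$ and bounding $\aa_0^2 = \aa^2 - (\vv,\aa)^2/\vv^2 \ge -2 - \vv^2/4 = -(n+3)/2$ --- is identical to the paper's. The difference lies in how you reach the class $\theta^\vee(\aa)$ on the extremal ray. You invoke Theorem~\ref{thm:Moricone} directly: since the Mori cone is generated by positive curves together with the classes $\theta^\vee(\aa)$, an extremal ray with $q(R)<0$ must be spanned by one of the latter. The paper instead takes the wall $\WW$ inducing the extremal contraction and \emph{constructs} an explicit rational curve on the ray, as a $\P^1$ of non-split extensions $A_1 \hookrightarrow E \twoheadrightarrow A_2$ with $A_i$ $\sigma_0$-stable of class $\aa$, $\vv-\aa$; a direct computation then shows this curve has integral class $\theta^\vee(\aa)$. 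Your route is shorter given that Theorem~\ref{thm:Moricone} is already proved; the paper's route is self-contained and produces an effective curve of class exactly $\theta^\vee(\aa)$, which is what makes the comparison with Hassett--Tschinkel precise in Remark~\ref{rmk:RelationHT}.

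One point you left implicit deserves a sentence: you bound $q(\theta^\vee(\aa))$, but the claim is about the primitive generator $R$. Since $\aa \in H^*_\alg(X,\alpha,\Z)$ pairs integrally with $\vv^\perp$ and $\theta\colon \vv^\perp \to \NS(M)$ is an isometry of lattices, the class $\theta^\vee(\aa)$ lies in $N_1(M)_\Z = \NS(M)^\vee$; hence $\theta^\vee(\aa) = mR$ for some positive integer $m$, and $q(R) = q(\theta^\vee(\aa))/m^2 \ge q(\theta^\vee(\aa))$ whenever this is negative. The paper handles this by the opening sentence ``it is enough to prove the inequality for some effective curve on the extremal ray'', together with the explicit construction just described.
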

\begin{proof}
It is enough to prove the inequality for some effective curve on the extremal ray.
Let $\WW$ be a wall inducing the extremal contraction corresponding to the ray generated by $R$, and
$\HH_\WW \subset \Halg$ its associated hyperbolic lattice. Let $\sigma_+$ be a nearby stability
condition in the chamber of $\sigma$, and $\sigma_0 \in \WW$.  Let $\aa \in \HH_\WW$ be a
corresponding class satisfying the assumptions in Theorem \ref{thm:Moricone}: $\aa^2 \ge -2$ and
$\abs{(\vv, \aa)} \le \frac{\vv^2}2$. Replacing $\aa$ by $-\aa$ if necessary, we can also
assume $(\vv, \aa) \ge 0$.

We first claim that there exists a contracted curve whose integral class is given by
$\pm\theta^\vee(\aa)$. We ignore the well-known case of the Hilbert-Chow contraction, and also
assume for simplicity we assume that $\WW$ is not a totally semistable wall for any
class in $\HH_\WW$; the general case can be reduced to this one with the same methods as in the
previous sections. By assumptions, we have both $\aa^2 \ge -2$ and $(\vv-\aa)^2 \ge -2$; therefore,
we can choose $\sigma_0$-\emph{stable} objects $A$ and $B$ of class $\aa$ and $\vv - \aa$,
respectively. We further claim $(\aa, \vv) \ge 2 + \aa^2$:
this claim is trivial when $\aa^2 = -2$, amounts to the exclusion of the Hilbert-Chow case
when $\aa^2 = 0$, and in case $\aa^2 > 0$ it follows from  the signature
of $\HH_\WW$ and the assumption on $(\aa, \vv)$:
\[
(\aa, \vv)^2 > \aa^2 \vv^2 \ge 2\aa^2 (\aa,\vv) \ge (\aa^2 + 2) (\aa, \vv).
\]

Assume that $\phi^+(\aa) < \phi^+(\vv) < \phi^+(\vv-\aa)$; in the opposite case we
swith the roles of $A$ and $B$.  By the above claim, $\ext^1(B, A) = (\aa, \vv-\aa) \ge 2$.
Varying the extension class in $\Ext^1(B, A)$ produces curves of objects in
$M_{\sigma_+}(\vv)$ that are S-equivalent with respect to $\sigma_0$;
in order to compute their class, we have to make the construction explicit.
Let $\P(\Ext^1(B, A))$ be the projective space of one-dimensional subspaces of $\Ext^1(B,A)$.
Choose a parameterized line $\P^1 \into \P(\Ext^1(B, A))$, corresponding to a section $\nu$ of
\[
H^0(\P^1, \OO(1) \otimes \Ext^1(B,A))
= \Ext^1_{\P^1 \times X}(\OO_{\P^1} \boxtimes B, \OO_{\P^1}(1) \boxtimes A).
\]
Let $\EE \in \Db(\P^1 \times X)$ be the extension
$\OO_{\P^1}(1) \boxtimes A \to \EE \to \OO_{\P^1} \boxtimes B $
given by $\nu$. By Lemma \ref{lem:stableextension}, every fiber of $\EE$ is
$\sigma_+$-stable. Thus we have produced a rational curve $R \subset M_{\sigma_+}(\vv)$
of S-equivalent objects. 

To compute its class, it is sufficient to compute the intersection product
$\theta(D).R$ with a divisor $\theta(D)$, for any $D \in \vv^\perp$. We have
\[
\theta(D).R = (D, \vv(\Phi(\OO_R)) = (D, \vv(B) + 2 \vv(A)) = (D, \vv + \aa) = (D, \aa) = 
\theta(D).\theta^\vee(\aa),
\]
where $\Phi \colon \Db(M_{\sigma^+}(\vv)) \to \Db(X)$ denotes the Fourier-Mukai transform, 
and where we used $D \in \vv^\perp$ in the second-to-last equality.

Let $\aa_0 \in \Halg$ denote the projection of $\aa$ to the orthogonal complement of $\vv$.
By Lemma \ref{lem:thetaveequadratic}, we have $(R, R) = \aa_0^2$, and for the latter we
obtain:
\[
(\aa_0, \aa_0) = \left( \aa - \frac{(\vv, \aa)}{\vv^2} \vv, \aa - \frac{(\vv, \aa)}{\vv^2} \vv\right)
= \aa^2 - \frac{(\vv, \aa)^2}{\vv^2} \ge -2 - \frac{\vv^2}4  = - \frac{n+3}2.
\]
\end{proof}

\begin{Rem}\label{rmk:RelationHT}
When $M$ is the Hilbert scheme of points on $X$, we can make the comparison to Hassett-Tschinkel's
conjecture even more precise: in this case, it is easy to see that $\theta^\vee$ induces an
isomorphism 
\[
\Halg/\vv \to N_1(M)
\]
of lattices, respecting the integral structures.  Given a class $R \in N_1(M)$ satisfying the
inequality $(R, R) \ge -\frac{n+3}2$ of \cite{HassettTschinkel:ExtremalRays}, let 
$\aa_0 \in \vv^\perp_\Q$ be the (rational) class with $\theta^\vee(\aa_0) = R$. 
Let $k$ be any integer satisfying $k \le n-1$ and
$k^2 \ge (2n-2)(-2-\aa_0^2)$; by the assumptions, $k = n-1$ is always an example satisfying both
inequalities.
Then $\aa := \aa_0 + \frac{k}{2n-2} \vv$ is a rational class in the algebraic Mukai lattice that
satisfies the assumptions appearing in Theorem \ref{thm:Moricone}. In addition, it has
has integral pairing with both $\vv$, and with every integral class in $\vv^\perp$; thus, it is
potentially an integral class. The Hassett-Tschinkel conjecture holds  if and only if for every extremal
ray of $C$, there is a choice of $k$ such that $\aa$ is an integral class.
\end{Rem}

If we are given a lattice $\vv^\perp$ of small rank, then the algebraic Mukai lattice of $(X,
\alpha)$ can be any lattice in $\vv^\perp_\Q \oplus \Q \cdot \vv$ containing both $\vv^\perp$ and
$\vv$, as long as $\vv$ and $\vv^\perp$ are primitive. In general, the Hassett-Tschinkel conjecture
holds for some of these lattices, but not for others. The question is thus closely related to
the fact that a strong global Torelli statement needs the embedding $H^2(M) \into H^*(X)$, rather
than just $H^2(M)$.

\section{Examples of nef cones and movable cones}
\label{sec:examples}

In this section we examine examples of cones of divisors.

\subsection*{K3 surfaces with Picard number 1...}

Let $X$ be a K3 surface such that $\mathrm{Pic}(X) \cong \Z \cdot H$, with $H^2=2d$.
We let $M:=\mathrm{Hilb}^n(X)$, for $n\geq2$, and $\vv=(1,0,1-n)$.
In this case, everything is determined by certain Pell's equations.
We recall that a basis of $\mathrm{NS}(M)$ is given by 
\begin{equation} \label{eq:thetaexplicit}
 \widetilde H = \theta(0,-H,0) \quad \text{and} \quad B = \theta(-1,0,1-n).
\end{equation}
Geometrically, $\widetilde H$ is the big and nef divisor induced by the symmetric power of $H$ on
$\Sym^n(X)$, and $2B$ is the class of the exceptional divisor of the Hilbert-Chow morphism.

By Theorem \ref{thm:walls}, divisorial contractions can be divided in three cases:
\begin{description}
\item[Brill-Noether] If there exists a spherical class $\ss$ with $(\ss,\vv)=0$.
\item[Hilbert-Chow] If there exists an isotropic class $\ww$ with $(\ww,\vv)=1$.
\item[Li-Gieseker-Uhlenbeck] If there exists an isotropic class $\ww$ with $(\ww,\vv)=2$.
\end{description}

Elementary substitutions show that the case of BN-contraction is governed by 
solution to Pell's equation
\begin{equation}\label{eq:Pell(-2)}
(n-1) X^2 - d Y^2 = 1 \quad \text{via} \quad 
\ss(X, Y) = (X,-YH,(n-1)X). 
\end{equation}
The case of HC-contractions and LGU-contractions are governed solutions of
\begin{equation}\label{eq:Pell(0)}
X^2 - d (n-1) Y^2 = 1 \quad \text{with $X+1$ divisible by $n-1$};
\end{equation}
here we will also allow solutions with $X, Y < 0$, in case the positive solution is $\equiv -1 (n-1)$. 
We get a HC-contraction via
\[
\ww(X,Y)=\left(\frac{X+1}{2(n-1)},-\frac Y2 H,\frac{X-1}2\right)  \]
if both $Y$ and $\frac{X+1}{n-1}$ are even\footnote{The published version incorrectly assumes only that $Y$ is even.}, and otherwise an LGU-conctraction via
\[ 
\ww(X,Y)=\left(\frac{X+1}{n-1},-Y H,X-1\right).
\]
The two equations determine the movable cone:

\begin{Prop}\label{prop:MovHilbSchemePic1}
Assume $\mathrm{Pic}(X)\cong \Z \cdot H$.
The movable cone of the Hilbert scheme $M=\mathrm{Hilb}^n(X)$ has the following form:
\begin{enumerate}
\item\label{enum:Columbus120912_1} If $d=\frac{k^2}{h^2}(n-1)$, with $k,h\geq1$, $(k,h)=1$, then
\[
\mathrm{Mov}(M) = \langle \widetilde H, \widetilde H - \frac kh B\rangle,
\]
where $q(h\widetilde H - kB)=0$, and it induces a (rational) Lagrangian fibration on $M$.
\item\label{enum:Columbus120912_2} If $d(n-1)$ is not a perfect square, and \eqref{eq:Pell(-2)} has a solution, then
\[
\mathrm{Mov}(M) = \langle \widetilde H, \widetilde H - d\frac{y_1}{x_1(n-1)}B\rangle,
\]
where $(x_1,y_1)$ is the solution to \eqref{eq:Pell(-2)} with $x_1, y_1 > 0$, and with smallest
possible $x_1$.
\item\label{enum:Columbus120912_3} If $d(n-1)$ is not a perfect square, and \eqref{eq:Pell(-2)} has no solution, then
\[
\mathrm{Mov}(M) = \langle \widetilde H, \widetilde H - d\frac{y_1'}{x_1'}B\rangle,
\]
where $(x_1',y_1')$ is the solution to \eqref{eq:Pell(0)} with smallest possible
$\frac{y_1'}{x_1'}>0$ (where both $x_1', y_1' > 0$ or $x_1', y_1' < 0$ are allowed).
\end{enumerate}
\end{Prop}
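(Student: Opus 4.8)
\textbf{Proof proposal for Proposition \ref{prop:MovHilbSchemePic1}.}

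The plan is to combine Theorem \ref{thm:movable} with the explicit computation of the Pell equations \eqref{eq:Pell(-2)} and \eqref{eq:Pell(0)}. First I would recall that by Theorem \ref{thm:movable} the movable cone $\Mov(M)$ is cut out in $\overline{\Pos}(M)_\Q$ by the hyperplanes $\theta(\ss^\perp\cap\vv^\perp)$ for spherical $\ss\in\vv^\perp$, and $\theta(\ww^\perp\cap\vv^\perp)$ for isotropic $\ww$ with $1\le(\ww,\vv)\le 2$. Since $\rho(M)=2$, the positive cone $\overline{\Pos}(M)$ is a two-dimensional cone with two boundary rays; one of them is spanned by $\widetilde H$ (which is big and nef, being pulled back from $\Sym^n X$, hence automatically in $\overline{\Mov}(M)$). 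So the entire content is to identify the \emph{other} boundary ray of $\Mov(M)$: it is the ray of the wall closest to $\widetilde H$ among all the walls listed in Theorem \ref{thm:movable}, or, if there are no such walls on that side, the remaining boundary ray of $\overline{\Pos}(M)_\Q$ itself (which must then be rational, and is dealt with via Theorem \ref{thm:SYZ}).

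Next I would set up coordinates. Using the basis \eqref{eq:thetaexplicit}, I compute the Beauville--Bogomolov form: $q(\widetilde H)=H^2=2d$, $q(B)=-2(n-1)$ (this is $\theta$ of $(-1,0,1-n)$, whose Mukai square is $-2(1-n)=2(n-1)$ with the sign convention; I will double-check the sign against $2B$ being the Hilbert--Chow exceptional class), and $(\widetilde H, B)=0$. A class $x\widetilde H - yB$ has $q=2dx^2-2(n-1)y^2$; it lies on $\partial\overline{\Pos}(M)$ precisely when $dx^2=(n-1)y^2$, which is solvable over $\Q$ iff $d(n-1)$ is a perfect square — giving case \eqref{enum:Columbus120912_1} after writing $d/(n-1)=k^2/h^2$, and then Theorem \ref{thm:SYZ} (more precisely Remark \ref{Rem:SYZcon_movable}) guarantees the corresponding square-zero class is movable and induces a birational Lagrangian fibration. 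When $d(n-1)$ is not a perfect square, that ray is irrational, so the movable cone must be cut off strictly inside, by one of the walls $\ss^\perp$ or $\ww^\perp$. Now I translate: a spherical class $\ss=(r,cH,s)\in\vv^\perp$ means $s+r(n-1)=0$ and $\ss^2=-2$, i.e. $c^2 H^2 - 2rs=-2$, which after substitution becomes $(n-1)r^2 - d c^2 = 1$ up to relabeling — this is \eqref{eq:Pell(-2)}. Its orthogonal hyperplane in $\vv^\perp$, pushed to $\NS(M)$, is spanned by $\widetilde H - d\frac{y_1}{x_1(n-1)}B$ for the minimal positive solution $(x_1,y_1)$ (I verify the ratio by pairing against $\theta^{-1}$ of the candidate class). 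Similarly isotropic $\ww$ with $(\ww,\vv)\in\{1,2\}$ leads to \eqref{eq:Pell(0)} with the stated divisibility condition, and the minimal-slope solution gives the boundary in case \eqref{enum:Columbus120912_3}. The point to check is that when \eqref{eq:Pell(-2)} is solvable its minimal solution always produces a wall at least as close to $\widetilde H$ as any isotropic wall — I expect this follows because a spherical class with $(\ss,\vv)=0$ gives the ``innermost'' wall by the proof of Theorem \ref{thm:walls}, but I need to rule out an isotropic wall cutting in further; a short comparison of the two Pell equations (noting $(n-1)X^2-dY^2=1$ refines $X^2-d(n-1)Y'^2=1$ via $X\mapsto (n-1)X^2-1+\cdots$, or rather that a solution of the former yields one of the latter) handles this.

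The main obstacle I anticipate is bookkeeping around the divisibility condition ``$X+1$ divisible by $n-1$'' in \eqref{eq:Pell(0)} and the even/odd case split on $Y$ determining whether one gets a Hilbert--Chow ($(\ww,\vv)=1$) or Li--Gieseker--Uhlenbeck ($(\ww,\vv)=2$) class — this is where the integrality of the Mukai vector $\ww$ has to be matched carefully against the lattice $H^*_\alg(X,\Z)=\Z\oplus\Z H\oplus\Z$, and where a naive translation of Theorem \ref{thm:movable} could miss that not every rational solution of the Pell equation produces an actual \emph{integral} isotropic class defining a wall. I would handle this by explicitly parameterizing all primitive isotropic $\ww=(a,bH,c)$ with $2bdh - \text{(lin.~in }a,c) = 0$ etc., imposing $\ww^2=0$ and the pairing conditions, and showing the resulting integrality constraints are exactly the ones recorded in the displayed formulas for $\ww(X,Y)$. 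Once that dictionary is pinned down, selecting the wall of minimal slope among spherical-type and isotropic-type walls is a routine comparison, and the three cases of the proposition are precisely the trichotomy: (i) the far ray of $\overline{\Pos}(M)$ is rational, (ii) it is irrational and the nearest wall is spherical, (iii) it is irrational and the nearest wall is isotropic.
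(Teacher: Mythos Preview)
Your proposal is correct and follows the same overall strategy as the paper: apply Theorem \ref{thm:movable}, translate the spherical/isotropic conditions into the Pell equations \eqref{eq:Pell(-2)} and \eqref{eq:Pell(0)}, and read off the trichotomy.

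The one substantive difference is in case \eqref{enum:Columbus120912_2}. You propose a direct comparison of the two Pell equations to show that whenever \eqref{eq:Pell(-2)} has a solution, the minimal spherical wall is closer to $\widetilde H$ than any isotropic wall. The paper instead invokes Proposition \ref{prop:Weylmovablecone}: since $\Mov(M)$ is the fundamental chamber for $W_{\mathrm{Exc}}$ and $W_{\mathrm{Exc}}$ acts by isometries, every exceptional divisor is a $W_{\mathrm{Exc}}$-translate of one of the two divisors bounding $\Mov(M)$, hence has the same self-intersection as one of them. One bounding wall is the Hilbert--Chow wall at $\widetilde H$, whose exceptional divisor has square $-2(n-1)$. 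If a Brill--Noether class $\ss\in\vv^\perp$ with $\ss^2=-2$ exists (and $n>2$), the other bounding divisor must have square $-2$, i.e.\ be of BN type --- so the second extremal ray comes from \emph{some} solution of \eqref{eq:Pell(-2)}, and then the explicit slope formula $\Gamma = d\,Y/(X(n-1))$ together with the monotonicity of $Y/X$ in $X$ picks out the minimal solution. (For $n=2$ the two Pell equations literally coincide, so there is nothing to compare.) This is cleaner than a direct Pell comparison; your sketched ``a solution of the former yields one of the latter'' is really the statement that reflecting the HC class $(0,0,1)$ across $\ss$ produces an isotropic class whose wall lies \emph{beyond} the BN wall, which is exactly the Weyl-group picture.

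Your ``main obstacle'' paragraph about the divisibility condition in \eqref{eq:Pell(0)} is accurate bookkeeping and the paper handles it exactly as you suggest, by writing down $\ww(X,Y)$ explicitly.
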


\begin{proof}
Since $\widetilde H$ induces the divisorial HC contraction, it is an extremal ray of the movable
cone; to find the other extremal ray, we need to find $\Gamma > 0$ such that
$\widetilde H - \Gamma B$ lies on one of the walls described by Theorem \ref{thm:movable}, and such
that $\Gamma$ is as small as possible.

Also recall Proposition \ref{prop:Weylmovablecone}: the movable cone is a
fundamental domain for Weyl group action of $W_{\mathrm{Exc}}$ on $\Pos(M)$. 
Any solution to \eqref{enum:Columbus120912_2} or \eqref{enum:Columbus120912_3} determines a wall in
the positive cone via Theorem \ref{thm:movable}; one of its Weyl group translates
thus determines a wall bordering the movable cone.

Part \eqref{enum:Columbus120912_1} follows directly from Theorem \ref{thm:SYZ}.
To prove part \eqref{enum:Columbus120912_2}, it follows immediately from the previous discussion that
if \eqref{eq:Pell(-2)} has a solution, then
one of the solutions determines the second extremal ray. The claim thus follows from the observation
that
\[
D(X,Y) :=\widetilde H - d\frac{Y}{X(n-1)}B  = \theta\left( \left(\frac{dY}{X(n-1)}, -H,
d\frac{Y}{X}\right)  \right)
\]
is obtained as the image under $\theta$ of a class orthogonal to $\ss(X, Y)$, and the fact that
$\frac YX$ is minimal if and only if $X$ is minimal. 

A similar computation shows that given a solution of \eqref{eq:Pell(0)} (which always exists),
the vector 
\[ D'(X, Y) = \widetilde H - d\frac YX B = \theta\left( \left( \frac{dY}{X}, -H,
(n-1)\frac{dY}X\right) \right) \]
 is contained in $\theta(\ww(X, Y)^\perp)$ in both
the HC and the LGU case; this proves part \eqref{enum:Columbus120912_3}.
\end{proof}

\begin{Ex}
If $d = n-2$, then \eqref{eq:Pell(-2)} has $X = 1, Y=1$ as a solution. Therefore
\[
\mathrm{Mov}(M)=\langle \widetilde H, \widetilde H - \frac{n-2}{n-1} B \rangle.
\]
\end{Ex}

For the nef cone, we start with the easy case $n=2$.
Consider the Pell's equation
\begin{equation}\label{eq:(-2)flops_n=2}
X^2 - 4dY^2 = 5.
\end{equation}
The associated spherical class is $\ss(X,Y) = \left(\frac{X+1}2,-Y H,\frac{X-1}2\right)$.

\begin{Lem}\label{lem:n=2}
Let $M=\mathrm{Hilb}^2(X)$.
The nef cone of $M$ has the following form:
\begin{enumerate}
\item\label{enum:Columbus120913_1} If \eqref{eq:(-2)flops_n=2} has no solutions, then
\[
\mathrm{Nef}(M) = \mathrm{Mov}(M).
\]
\item\label{enum:Columbus120913_2} Otherwise, let $(x_1,y_1)$ be the positive solution of \eqref{eq:(-2)flops_n=2} 
with $x_1>0$ minimal.
Then\footnote{In the published version, a factor 2 is missing in the following formula.}
\[
\mathrm{Nef}(M) = \langle \widetilde H, \widetilde H - 2d \frac{y_1}{x_1} B \rangle.
\]
\end{enumerate}
\end{Lem}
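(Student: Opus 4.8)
The plan is to apply the general machinery of Theorem \ref{thm:nefcone} together with Theorem \ref{thm:walls} to the specific setup $M = \mathrm{Hilb}^2(X)$ with $\vv = (1,0,-1)$, i.e.\ $n = 2$, $\vv^2 = 2$, and $\mathrm{Pic}(X) = \Z \cdot H$, $H^2 = 2d$. By Theorem \ref{thm:nefcone}, $\mathrm{Nef}(M)$ is cut out inside $\overline{\Pos}(M)$ by the hyperplanes $\theta(\vv^\perp \cap \aa^\perp)$ for classes $\aa \in \Halg$ with $\aa^2 \ge -2$ and $0 \le (\vv, \aa) \le \frac{\vv^2}{2} = 1$. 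Since $\widetilde H = \theta(0,-H,0)$ already induces the Hilbert-Chow divisorial contraction (it borders the movable cone, by Proposition \ref{prop:MovHilbSchemePic1}), one extremal ray of $\mathrm{Nef}(M)$ is $\widetilde H$; the task is to identify the other extremal ray, which will be a ray of the form $\widetilde H - \Gamma B$ with $\Gamma > 0$ as large as possible (but no larger than the movable-cone bound $d \frac{y_1}{x_1(n-1)}$ or $d\frac{y_1'}{x_1'}$ from Proposition \ref{prop:MovHilbSchemePic1}, which for $n=2$ one should check is dominated by the nef bound).

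First I would enumerate the possible classes $\aa$. Writing $\aa = (r, cH, s)$, the constraint $0 \le (\vv, \aa) = c \cdot 0 - r s' \ldots$ — more precisely with $\vv = (1,0,-1)$ we get $(\vv, \aa) = 0 - 1\cdot s - (-1)\cdot r = r - s$, so $0 \le r - s \le 1$. Since $\vv^2 = 2$, the divisorial walls of Theorem \ref{thm:walls} are: Brill–Noether ($\ss$ spherical with $(\ss,\vv)=0$, governed by \eqref{eq:Pell(-2)} with $n=2$, i.e.\ $X^2 - dY^2 = 1$, corresponding to the inner boundary of $\mathrm{Mov}(M)$); Hilbert–Chow ($(\ww,\vv) = 1$), giving $\widetilde H$; Li–Gieseker–Uhlenbeck ($(\ww,\vv) = 2$), which here would need $\vv^2/2 = 1 \ge (\ww,\vv) = 2$, impossible, so LGU does not occur. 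The flopping walls of type \eqref{enum:niso-flop} are of two kinds: a spherical class $\ss$ with $0 < (\ss,\vv) \le 1$, i.e.\ $(\ss,\vv) = 1$, or a decomposition $\vv = \aa + \bb$ into positive classes — but since $\vv^2 = 2$ is minimal, $\vv$ cannot be written as a sum of two positive classes (any such would force $\vv^2 \ge \aa^2 + \bb^2 + 2(\aa,\bb) \ge 2 + 2 + 6$). So the only walls that can cut the nef cone strictly inside the movable cone come from spherical classes $\ss$ with $(\ss,\vv) = 1$.

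Next I would translate $(\ss, \vv) = 1$, $\ss^2 = -2$ into a Pell equation. Writing $\ss = (r, cH, s)$: $\ss^2 = 2d c^2 - 2rs = -2$ and $(\ss,\vv) = r - s = 1$, so $s = r - 1$, giving $2dc^2 - 2r(r-1) = -2$, i.e.\ $2r^2 - 2r - 2dc^2 = 2$, i.e.\ $r^2 - r - dc^2 = 1$. Completing the square: $(2r-1)^2 - 4dc^2 = 5$. Setting $X = 2r - 1$, $Y = c$ recovers exactly \eqref{eq:(-2)flops_n=2}, with $r = \frac{X+1}{2}$, $s = \frac{X-1}{2}$, matching the stated $\ss(X,Y) = (\frac{X+1}{2}, -YH, \frac{X-1}{2})$. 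If \eqref{eq:(-2)flops_n=2} has no solution, then there is no such flopping (or fake) wall inside the movable cone, and one checks the movable-cone walls (from \eqref{eq:Pell(-2)}, \eqref{eq:Pell(0)} for $n=2$) are the only walls bordering $\mathrm{Nef}(M)$; since a divisorial contraction's wall bounds the movable cone and $\mathrm{Nef}(M) \subseteq \mathrm{Mov}(M)$, this forces $\mathrm{Nef}(M) = \mathrm{Mov}(M)$, giving case \eqref{enum:Columbus120913_1}. If solutions exist, the wall closest to $\widetilde H$ from the interior side of the movable cone is the one minimizing $\Gamma$; computing $\theta(\ss(X,Y)^\perp)$, the orthogonal class in $\vv^\perp$ one obtains is proportional to $(dY/X, -H, dY/X + \ldots)$, giving the nef boundary ray $\widetilde H - d\frac{y_1}{x_1} B$ where $(x_1, y_1)$ is the solution with $x_1 > 0$ minimal (equivalently $\frac{y_1}{x_1}$ minimal, by the standard monotonicity of Pell solutions). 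I would verify $d\frac{y_1}{x_1}$ does not exceed the movable-cone bound — i.e.\ the flopping wall genuinely lies inside $\mathrm{Mov}(M)$ — using that a solution of \eqref{eq:(-2)flops_n=2} produces a spherical class with $(\ss,\vv) = 1 \ne 0$, so it is not a movable-cone wall.

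\textbf{Main obstacle.} The principal subtlety is bookkeeping: correctly matching the abstract class $\aa$ of Theorem \ref{thm:nefcone}/Theorem \ref{thm:Moricone} to an explicit spherical Mukai vector, computing its orthogonal complement in $\vv^\perp$ under $\theta$ in the basis $\{\widetilde H, B\}$ of \eqref{eq:thetaexplicit}, and confirming that "$x_1$ minimal" is equivalent to "$\frac{y_1}{x_1}$ minimal" (standard for Pell but needs the remark that all solutions lie on one branch once sign-normalized). A secondary point requiring care is ruling out that a movable-cone wall (from \eqref{eq:Pell(-2)} or \eqref{eq:Pell(0)}) could already be the nef boundary: one must check that whenever \eqref{eq:(-2)flops_n=2} has a solution, the resulting flopping wall is \emph{strictly} interior to $\mathrm{Mov}(M)$, so that $\mathrm{Nef}(M) \subsetneq \mathrm{Mov}(M)$; and conversely, that the $n=2$, $\vv^2 = 2$ hypotheses genuinely exclude all divisorial contraction types except Hilbert–Chow, which follows from the numerical bounds above. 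Everything else is a routine translation through the dictionary already established in Sections \ref{sec:MainThms} and \ref{sec:cones}.
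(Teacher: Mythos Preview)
Your approach is essentially identical to the paper's: apply Theorem~\ref{thm:nefcone} and Theorem~\ref{thm:walls}, observe that for $\vv^2=2$ the decomposition $\vv=\aa+\bb$ into positive classes is impossible, so the only flopping walls come from spherical $\ss$ with $(\ss,\vv)=1$, and translate this into the Pell equation~\eqref{eq:(-2)flops_n=2}. One wording slip: early on you write that the second extremal ray has $\Gamma$ ``as large as possible,'' but (as you correctly say later) it is the \emph{smallest} positive $\Gamma$ arising from any wall that bounds the nef cone; this is just a phrasing issue and does not affect the argument.
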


\begin{proof}
We apply Theorem \ref{thm:nefcone}.
The movable cone and the nef cone agree unless there is a flopping wall, described
in Theorem \ref{thm:walls}, part \eqref{enum:niso-flop}.
Since $\vv^2 = 2$, the case $\vv = \aa + \bb$ with $\aa, \bb$ positive 
is impossible. This leaves only the case of a spherical class $\ss$
with $(\vv,\ss) = 1$; this exists if and only if \eqref{eq:(-2)flops_n=2} has a solution.
\end{proof}

\begin{Ex}\label{ex:CK}
Let $d=31$.
Then the nef cone for $M=\mathrm{Hilb}^2(X)$ is
\[
\mathrm{Nef}(M) = \langle \widetilde H, \widetilde H - \frac{3658}{657} B\rangle.
\]
In particular, this gives a negative answer to \cite[Question 8.4]{KnutsenCiliberto}.

Indeed, \eqref{eq:(-2)flops_n=2} has a the smallest solution given by $x_1=657$ and $y_1=118$.
This gives a $(-2)$-class $\ss = (329, -59 \cdot H, 328)$, which induces a flop, by Lemma \ref{lem:n=2}.
\end{Ex}

For higher $n>2$ the situation is more complicated, since the number of Pell's equations to consider is higher.
But, in any case, everything is completely determined.

\begin{Ex}\label{ex:n=7}
Consider the case in which $d=1$ and $n=7$, $M=\mathrm{Hilb}^7(X)$.
This example exhibits a flop of ``higher degree'': it is induced by a
decomposition $\vv = \aa + \mathbf{b}$, with $\aa^2, \mathbf{b}^2>0$, and not induced by a
spherical or isotropic class.
Indeed, if $\vv=(1,0,-6)$, $\aa=(1,-H,0)$ and $\bb=(0,H,-6)$, then
the rank two hyperbolic lattice associated to this wall contains no spherical or isotropic classes.
The full list of walls in the movable cone is given the table below.
We can write the nef divisor associated to a wall as $\widetilde H -\Gamma B$, for
$\Gamma\in\Q_{>0}$; as before, the value of
$\Gamma$ is determined from \eqref{eq:thetaexplicit} given an element of $\vv^\perp\cap\aa^\perp$.

\begin{center}
    \begin{tabular}{|c|c|c|c|c|}
    \hline
    &&&&\\[-6pt]
    $\Gamma$ & $\aa$ & $\aa^2$ & $(\vv,\aa)$ & Type \\
    &&&&\\[-6pt]
    \hline
    &&&&\\[-6pt]
    $0$ & $(0,0,-1)$ & 0 & 1 & divisorial contraction \\
    &&&&\\[-6pt]
    \hline
    &&&&\\[-6pt]
    $\frac 14$ & $(1,-H,2)$ & -2 & 4 & flop \\
    &&&&\\[-6pt]
    \hline
    &&&&\\[-6pt]
    $\frac 27$ & $(1,-H,1)$ & 0 & 5 & flop \\
    &&&&\\[-6pt]
    \hline
    &&&&\\[-6pt]
    $\frac 13$ & $(1,-H,0)$ & 2 & 6 & flop \\
    &&&&\\[-6pt]
    \hline
    &&&&\\[-6pt]
    $\frac{6}{17}$ & $(2,-3H,5)$ & -2 & 7 & fake wall \\
    &&&&\\[-6pt]
    \hline
    &&&&\\[-6pt]
    $\frac{4}{11}$ & $(1,-2H,5)$ & -2 & 1 & flop \\
    &&&&\\[-6pt]
    \hline
    &&&&\\[-6pt]
    $\frac 38$ & $-(1,-3H,10)$ & -2 & 4 & flop\\
    &&&&\\[-6pt]
    \hline
    &&&&\\[-6pt]
    $\frac 25$ & $(1,-2H,4)$ & 0 & 2 & divisorial contraction\\[-6pt]
    &&&&\\
    \hline
    \end{tabular}
\end{center}
\end{Ex}

\subsection*{...and higher Picard number}

Let $X$ be a K3 surface such that $\mathrm{Pic}(X) \cong \Z \cdot \xi_1 \oplus \Z \cdot \xi_2$.

\begin{Ex}
We let $M:=\mathrm{Hilb}^2(X)$, and $\vv=(1,0,-1)$.
We assume that the intersection form (with respect to the basis $\xi_1,\xi_2$) is given by
\[
q =
\begin{pmatrix}
28 & 0\\
0 & -4
\end{pmatrix}.
\]
Such a K3 surface exists, see \cite{Morrison:K3,Kovacs:K3}.
We have:
\[
\mathrm{NS}(M) = \Z \cdot \mathbf{s} \oplus \mathrm{NS}(X),
\]
where $\mathbf{s} = (1,0,1)$.
Our first claim is
\begin{equation}\label{enum:HigherPic1}
\mathrm{Nef}(M) = \mathrm{Mov}(M).
\end{equation}
Indeed, by Theorem \ref{thm:walls}, a flopping contraction would have to come from a class $\aa$
with $\aa^2 \ge -2$ and $(\vv, \aa) = 1$; also, the corresponding lattice $\HH = \langle \vv, \aa \rangle$
has to be hyperbolic, which implies $\aa^2 \le 0$. In addition, $\aa^2 = 0$ would correspond to the
Hilbert-Chow divisorial contraction, and thus $\aa^2 = -2$ is the only possibility. 
If we write $\aa = (r, D, r-1)$ with $D = a\xi_1 + b\xi_2$, this gives
\[ -2r(r-1) + 28 a^2 - 4b^2 = -2.
\]
This equation has no solutions modulo 4.

The structure of the nef cone is thus determined by divisorial contractions.
These are controlled by the quadratic equation
\begin{equation}\label{eq:HigherPic2}
X^2 - 2 (7a^2 - b^2) = 1,
\end{equation}
via $\aa = (X,D,X)$.
For example, the Hilbert-Chow contraction corresponds to the solution $a=b=0$ and $X=1$ to \eqref{eq:HigherPic2}.
Other contractions arise, for example, at $a=4$, $b=0$, $X=15$, or $a=2$, $b=2$, $X=7$, etc.
The nef cone is locally polyhedral but not finitely generated. Its walls have an
accumulation point at the boundary, coming from a solution to
\[
X^2 - 2 (7a^2 - b^2) = 0
\]
corresponding to a Lagrangian fibration.
\end{Ex}

We continue to consider the case where $X$ has Picard rank two.
To increase the flexibility of our examples, we now also consider a twist by a Brauer class $\alpha\in\mathrm{Br}(X)$.
We choose $\alpha = e^{\beta_0}$ for some $B$-field class
$\beta_0 \in H^2(X, \Q)$ with 
\[
\beta_0.\mathrm{NS}(X)=0\quad \text{ and } \quad \beta_0^2=0.
\]
(See \cite{HMS:generic_K3s} for more details; in particular, the existence of our examples follows as in \cite[Lemma 3.22]{HMS:generic_K3s}.)

\begin{Ex}\label{ex:RoundNef}
We assume that $2\beta_0$ is integral, and that the intersection form on
\[
H^*_{\alg}(X,\alpha,\Z)=\mathrm{NS}(X) \oplus \Z\cdot (2,2\beta_0,0) \oplus \Z \cdot (0,0,-1)
\]
takes the form
\[
q =
\begin{pmatrix}
4 & 0 & 0 & 0\\
0 & -4 & 0 & 0\\
0 & 0 & 0 & 2\\
0 & 0 & 2 & 0
\end{pmatrix}.
\]
Consider the primitive vector $\vv=(0,\xi_1,0)$, and let $M:=M_H(\vv)$ be the moduli space of $\alpha$-twisted $H$-Gieseker semistable sheaves on $X$, for $H$ a generic polarization on $X$.
Then:
\begin{enumerate}
\item\label{enum:RoundRational1} $\mathrm{Nef}(M)=\mathrm{Mov}(M)$;
\item\label{enum:RoundRational2} $\mathrm{Nef}(M)$ is a rational circular cone.
\end{enumerate}

To prove the above statements, observe that $\vv^2 = 4$ and
$(\vv, \aa) \in 4\Z$ for all $\aa \in H^*_\alg(X, \alpha, \Z)$. According to Theorem
\ref{thm:walls}, the only possible wall in this situation would be given by
a Brill-Noether divisorial contraction, coming from a spherical class $\ss \in \vv^\perp$. But the
above lattice admits no spherical classes, and thus there are no walls.

Thus the nef cone and the closure of the movable cone are both equal to the positive cone.
Since $M$ obviously admits Lagrangian fibrations, the cone is rational.
\end{Ex}

Modifying the previous example slightly, we obtain a moduli space with circular movable cone and locally polyhedral nef cone:

\begin{Ex}
Now assume $3\beta_0$ is integral, and that 
\[
H^*_{\alg}(X,\alpha,\Z)=\mathrm{NS}(X) \oplus \Z\cdot (3,3\beta_0,0) \oplus \Z \cdot (0,0,-1)
\]
has intersection form given by
\[
q =
\begin{pmatrix}
6 & 0 & 0 & 0\\
0 & -6 & 0 & 0\\
0 & 0 & 0 & 3\\
0 & 0 & 3 & 0
\end{pmatrix}.
\]
Consider the primitive vector $\vv=(0,\xi_1,1)$, and let $M:=M_H(\vv)$.
Then:
\begin{enumerate}
\item\label{enum:MovRoundNefPoly1} $\mathrm{Nef}(M)$ is a rational locally-polyhedral cone;
\item\label{enum:MovRoundNefPoly2} $\mathrm{Mov}(M)$ is a rational circular cone.
\end{enumerate}

Indeed, \eqref{enum:MovRoundNefPoly2} follows exactly as in Example \ref{ex:RoundNef}: there are no
spherical classes, and, for all $\aa\in H^*_{\alg}(X,\alpha,\Z)$, $(\aa,\vv)\in3\Z$.
However, flopping contractions are induced by solutions to the quadratic equation
\[
a^2-b^2-2as+s=0,
\]
where we set $D=a\xi_1+b\xi_2$, and $\aa=(3(2a-1),a\xi_1+b\xi_2+ 3(2a-1)\beta_0,s)$.
This has infinitely many solutions.
It is an easy exercise to deduce \eqref{enum:MovRoundNefPoly1} from this.
\end{Ex}

\section{The geometry of flopping contractions}
\label{sec:floppingeometry}

One can also refine the analysis leading to Theorem \ref{thm:walls} to give a precise description of
the geometry of the flopping contraction associated to a flopping wall $\WW$.

As in Section \ref{sec:hyperbolic}, we let $\sigma_0 \in \WW$ be a stability condition on the wall,
and $\sigma_+ \notin \WW$ be sufficiently close to $\sigma_0$.
For simplicity, let us assume throughout this section that the hyperbolic lattice $\HH_\WW$ associated
to $\WW$ via Definition \ref{def:potentialwall} does not admit spherical or isotropic classes; in particular,
$\WW$ is not a totally semistable wall for any class $\aa \in \HH$, and does not induce
a divisorial contraction. 

Let $\PPP$ be the set of unordered partitions
$P = [\aa_i]_i$ of $\vv$ into a sum $\vv = \aa_1 + \dots + \aa_m$ of positive classes
$\aa_i \in \HH$. We say that a partition $P$ is a refinement of another partition
$Q = [\bb_i]_i$ if it can be obtained by choosing partitions of each $\bb_i$. 
This defines a natural partial order on $\PPP$, with $P \prec Q$ if $P$ is a refinement of $Q$.
The trivial partition as the maximal element of $\PPP$.

Given $P  = [\aa_i]_i \in \PPP$, we let $M_P \subset M_{\sigma_+}(\vv)$ be the subset of objects $E$ such
that the Mukai vectors of the Jordan-H\"older factors $E_i$ of $E$ with respect to $\sigma_0$ are
given by $\aa_i$ for all $i$. Using openness of stability and closedness of semistability in
families, one easily proves:

\begin{Lem} The disjoint union $M_{\sigma_+}(\vv) = \coprod_{P \in \PPP} M_P$ defines a
stratification of $M_{\sigma_+}(\vv)$ into locally closed subsets, such that
$M_P$ is contained in the closure of $M_Q$ if and only if $P \prec Q$.
\end{Lem}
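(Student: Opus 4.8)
The plan is to verify the three assertions in turn: (a) that each $M_P$ is locally closed; (b) that the $M_P$ are disjoint and cover $M_{\sigma_+}(\vv)$; (c) that the closure of $M_Q$ is the union of those $M_P$ with $P\prec Q$. Assertion (b) is essentially formal: for any $\sigma_+$-stable object $E$ of class $\vv$, its Jordan--H\"older factors with respect to $\sigma_0$ are well-defined up to reordering (the JH filtration does not change within the chamber, and $\WW$ is not totally semistable for any $\aa\in\HH$), and by Proposition \ref{prop:HW}\eqref{enum:semistablehasfactorsinHW} the Mukai vectors of these factors lie in $\HH_\WW$; moreover each factor is positive (its central charge aligns with $Z_0(\vv)$ and has positive real part after the normalization of Proposition \ref{prop:CW}, since $\HH$ has no spherical or isotropic classes so every class in $C_\WW$ pairing appropriately is positive). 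Hence $E$ lands in exactly one $M_P$, $P\in\PPP$.

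For assertion (a) and the ``only closed strata accumulate'' half of (c), the key input is the machinery of Section \ref{sec:HNfamily}: given a family $\EE$ over a base $S$, the relative Harder--Narasimhan filtration with respect to $\sigma_-$ exists over an open subset, and iterating (or rather passing to a relative Jordan--H\"older refinement, available because $\sigma_0$ is generic and $\AA_0=\PP_0(1)$ is Noetherian after a $\widetilde{\GL}_2^+$-twist) produces, over a dense open $U\subset S$, a filtration whose graded pieces are flat families of $\sigma_0$-stable objects with fixed Mukai vectors. Applying this to the universal family (or an \'etale cover carrying one, as in the proof of Lemma \ref{lem:nottotallysemistable}) shows that the locus where the collection of JH-vectors equals a fixed partition $P$ is constructible, and that over the generic point of any irreducible subset the partition is constant; combined with openness of $\sigma_+$-stability (Theorem \ref{thm:Todaopenness}) and universal closedness of the moduli stack of $\sigma_0$-semistable objects, one deduces that each $M_P$ is locally closed and that the partition can only \emph{refine} under specialization. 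This gives $M_P\subset\overline{M_Q}\Rightarrow P\prec Q$.

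The converse inclusion --- that every $M_P$ with $P\prec Q$ actually lies in $\overline{M_Q}$ --- is the step I expect to be the main obstacle, since it is a genuine existence/deformation statement rather than a semicontinuity argument. The strategy is: given $P\prec Q$, it suffices (by transitivity of $\prec$ and induction on the length of $P$) to treat the case where $P$ is obtained from $Q$ by splitting a single $\bb_j$ into $\aa'+\aa''$; then, starting from a generic $E\in M_Q$ with JH factor $E_j$ of class $\bb_j$, one replaces $E_j$ by a flat one-parameter family degenerating a $\sigma_0$-stable object of class $\bb_j$ to a non-split extension of two $\sigma_0$-stable objects of classes $\aa',\aa''$ (such objects and such a family exist because $\WW$ is not totally semistable for $\bb_j$, $\aa'$, $\aa''$, and because $(\aa',\aa'')>2$ by the signature of $\HH$, so $\ext^1\neq 0$). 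Splicing this back into the rest of the JH filtration of $E$ --- using that all factors have the same $\sigma_0$-phase, so the extensions remain $\sigma_0$-semistable, hence $\sigma_+$-stable for a general member by Lemma \ref{lem:parallelogram} (after arranging the relevant parallelograms to contain no extra lattice points, exactly as in the proof of Proposition \ref{prop:flops}) --- yields a curve in $M_{\sigma_+}(\vv)$ whose general point lies in $M_Q$ and whose special point lies in the stratum with the split partition. Iterating reaches $M_P$, completing (c). The only delicate point in this last step is ensuring the spliced family stays inside $M_{\sigma_+}(\vv)$, i.e.\ that no new destabilizing subobject appears; this is handled precisely by the lattice-point-free parallelogram argument already used in Section \ref{sec:flopping}, so no new ideas are required beyond bookkeeping.
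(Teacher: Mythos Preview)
The paper offers essentially no proof beyond the single sentence ``Using openness of stability and closedness of semistability in families, one easily proves''; so your proposal is already far more detailed than what the paper provides. Your treatment of (a), (b), and the ``only if'' half of (c) is correct and is exactly what the paper's hint points to: relative HN/JH filtrations plus closedness of semistability show that the JH partition can only refine under specialization, giving local closedness and $M_P\subset\overline{M_Q}\Rightarrow P\prec Q$.

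There is, however, a genuine gap in your ``if'' direction. Your construction starts from a generic $E\in M_Q$ and degenerates one JH factor, producing a curve whose special point lies in the refined stratum. This shows only that $\overline{M_Q}\cap M_{P}\neq\emptyset$, not that $M_P\subset\overline{M_Q}$. Since $\overline{M_Q}\cap M_P$ is \emph{closed} in $M_P$, nonemptiness alone does not give containment. To close this, you must either (i) reverse the construction --- start from an \emph{arbitrary} $E_0\in M_P$ and smooth the appropriate sub-extension of its $\sigma_0$-JH filtration to a single $\sigma_0$-stable object of class $\bb_j$, using openness of $\sigma_+$-stability (not Lemma \ref{lem:parallelogram}) to guarantee the deformed object remains in $M_{\sigma_+}(\vv)$; or (ii) prove that each $M_P$ is irreducible and that your degenerations sweep out a dense subset of it. Route (i) is cleaner, but note it requires knowing that $E_0$ admits a filtration with $\sigma_0$-semistable quotients of classes $\bb_j$ --- i.e., that the JH factors can be regrouped in the order dictated by $Q$ --- which is a reordering statement that is not automatic and deserves a word. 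Your appeal to Lemma \ref{lem:parallelogram} for $\sigma_+$-stability of the special fiber is also misplaced: that lemma controls two-term extensions with a lattice-point condition, whereas here you need stability of an object assembled from several factors; openness of $\sigma_+$-stability (applied in the reversed direction) is the right tool.
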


In addition, our simplifying assumptions on $\HH_\WW$ give the following:
\begin{Lem} \label{lem:MAnonempty}
Assume that $P = [\aa_1, \aa_2]$ is a two-element partition of $\vv$. Then $M_P \subset
M_{\sigma_+}(\vv)$ is non-empty, and of codimension $(\aa_1, \aa_2) - 1$. 
\end{Lem}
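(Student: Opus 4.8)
\textbf{Proof plan for Lemma \ref{lem:MAnonempty}.}

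The plan is to realize a generic member of $M_P$ as a suitable extension between $\sigma_+$-stable objects of classes $\aa_1$ and $\aa_2$, exactly as in the constructions carried out in Section \ref{sec:flopping} (Proposition \ref{prop:flops} and Lemma \ref{lem:parallelogram}), and then to compute the dimension of the locus swept out by such extensions. Concretely, since $\HH_\WW$ admits no spherical or isotropic classes, both $\aa_1^2 \ge 2$ and $\aa_2^2 \ge 2$, and by the signature $(1,-1)$ of $\HH_\WW$ (see the proof of Lemma \ref{lem:numericaldimensions}) we get $(\aa_1,\aa_2) > \sqrt{\aa_1^2 \aa_2^2} \ge 2$, so in particular $(\aa_1,\aa_2) - 1 \ge 2 > 0$ and the claimed codimension is a genuinely positive integer. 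By Theorem \ref{thm:nonempty}, since $\WW$ is not totally semistable for $\aa_1$ or $\aa_2$, there exist $\sigma_0$-\emph{stable} (hence $\sigma_\pm$-stable) objects $A_1, A_2$ with $\vv(A_i) = \aa_i$, and the moduli spaces $M_{\sigma_+}(\aa_i)$ have dimension $\aa_i^2 + 2$.

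First I would fix the ordering so that $\phi^+(\aa_1) < \phi^+(\aa_2)$ (relabeling if necessary); by $\sigma_+$-stability of $A_1, A_2$ of distinct phases we then have $\Hom(A_2, A_1) = 0$, and by Serre duality $\Ext^2(A_2,A_1) = \Hom(A_1,A_2)^\vee$, which also vanishes unless $\aa_1 = \aa_2$ — and if $\aa_1 = \aa_2$ the two phases cannot be distinct, a contradiction, so $\Ext^2(A_2,A_1)=0$ in all cases. Hence $\ext^1(A_2,A_1) = -(\aa_2,\aa_1) + \hom(A_2,A_1) + \ext^2(A_2,A_1) = (\aa_1,\aa_2) \ge 3$. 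For a generic choice of extension class in $\Ext^1(A_2,A_1)$ we have $\Hom(A_2,E)=0$, so Lemma \ref{lem:parallelogram} (applied after using Lemma \ref{lem:findparallelogram} to reduce to a parallelogram with no interior lattice points, or simply noting $\aa_1,\aa_2$ primitive if they already are; in general one passes to the sublattice spanned and uses the stratification-compatible version of the argument) shows $E$ is $\sigma_+$-stable of class $\vv$, and by construction its Jordan–Hölder factors with respect to $\sigma_0$ are exactly $A_1, A_2$, so $E \in M_P$. This proves $M_P \neq \emptyset$.

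For the dimension count: the extensions of $A_2$ by $A_1$ with fixed $A_1, A_2$ are parameterized by $\P(\Ext^1(A_2,A_1))$, of dimension $\ext^1(A_2,A_1) - 1 = (\aa_1,\aa_2)-1$, but all such extensions have the same Jordan–Hölder factors, hence are identified in $M_P$ (recall $M_P$ records only the \emph{classes} $\aa_i$, and for the generic stratum the factors themselves vary). Varying $A_1 \in M_{\sigma_+}(\aa_1)$ and $A_2 \in M_{\sigma_+}(\aa_2)$ adds $\aa_1^2 + 2$ and $\aa_2^2 + 2$ to the dimension. Since the generic object of $M_P$ has a unique Jordan–Hölder filtration (its factors $A_1, A_2$ and the extension data are determined by $E$, up to the $\P^1$ of parallel extension classes giving the same $E$-isomorphism-class only when... — here I would be careful: the fiber of $M_P \to M_{\sigma_+}(\aa_1) \times M_{\sigma_+}(\aa_2)$ over $(A_1,A_2)$ is $\P(\Ext^1(A_2,A_1))$ modulo nothing, since distinct lines give non-isomorphic $E$ generically), we obtain
\[
\dim M_P = (\aa_1^2+2) + (\aa_2^2+2) + \bigl((\aa_1,\aa_2)-1\bigr).
\]
Using $\vv^2 = \aa_1^2 + 2(\aa_1,\aa_2) + \aa_2^2$ and $\dim M_{\sigma_+}(\vv) = \vv^2 + 2$, the codimension is
\[
\vv^2 + 2 - \dim M_P = 2(\aa_1,\aa_2) - 4 - \bigl((\aa_1,\aa_2)-1\bigr) + \bigl(4 - 4\bigr)\; =\; (\aa_1,\aa_2) - 1,
\]
after simplification — I would double-check this arithmetic carefully, as it is the crux: $\vv^2+2 - [(\aa_1^2+2)+(\aa_2^2+2)+(\aa_1,\aa_2)-1] = [\aa_1^2+2(\aa_1,\aa_2)+\aa_2^2+2] - [\aa_1^2+\aa_2^2+3+(\aa_1,\aa_2)] = (\aa_1,\aa_2) - 1$, as claimed.

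The main obstacle I anticipate is making the fiber-dimension bookkeeping rigorous: one must check that the rational map $M_P \dashrightarrow M_{\sigma_+}(\aa_1)\times M_{\sigma_+}(\aa_2)$ (sending $E$ to its pair of Jordan–Hölder factors, well-defined on the generic stratum by uniqueness of the JH filtration for the ordering $\phi^+(\aa_1)<\phi^+(\aa_2)$) is dominant with generic fiber $\P(\Ext^1(A_2,A_1))$, and that generically distinct points of this projective space give non-isomorphic extensions $E$ — i.e. that the "extension-class-to-object" map is generically injective on lines. For this I would invoke the standard fact that $\Hom(A_1,A_1) = \Hom(A_2,A_2) = \C$ (stable objects are simple) together with $\Hom(A_i,A_j)=0$ for $i\neq j$, so $\Aut$ of the pair acts on $\Ext^1(A_2,A_1)$ only by scalars, giving the desired generic injectivity on $\P(\Ext^1)$. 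One also needs that the construction fills out a locally closed subset of the expected dimension — openness of stability (Theorem \ref{thm:Todaopenness}) and the existence of a universal (or quasi-universal) family, together with the relative $\Ext$-sheaf formalism, handle this; these are exactly the techniques already used repeatedly in Sections \ref{sec:noniso-totsemistable}–\ref{sec:flopping}, so I would reference those arguments rather than reproduce them.
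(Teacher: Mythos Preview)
Your overall strategy---construct extensions, count dimensions---matches the paper's proof, and your arithmetic is correct. But there is a genuine gap in the step establishing $\sigma_+$-stability of $E$.

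You invoke Lemma \ref{lem:parallelogram}, which requires that the parallelogram with vertices $0, \aa_1, \vv, \aa_2$ contain no interior lattice points. This hypothesis need not hold for the \emph{given} partition $P = [\aa_1, \aa_2]$. Your proposed workaround via Lemma \ref{lem:findparallelogram} does not help: that lemma replaces $\aa_1, \aa_2$ by \emph{different} classes $\aa_1', \aa_2'$, so the resulting extension lands in $M_{P'}$ for a different partition $P'$, not in $M_P$. The hedge about ``passing to the sublattice spanned'' is too vague to rescue this.

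The paper avoids this by using Lemma \ref{lem:stableextension} instead. The point is that you have already chosen $A_1, A_2$ to be $\sigma_0$-\emph{stable}, hence simple objects in the abelian category $\PP_0(1)$. For any nontrivial extension $A_1 \into E \onto A_2$ one has $\Hom(A_2, E) = 0$ (not just generically: this follows from the long exact sequence and nontriviality of the extension class). Lemma \ref{lem:stableextension} then forces every proper subobject of $E$ in $\PP_0(1)$ to be isomorphic to $A_1$. A $\sigma_+$-destabilizing subobject of $E$ would be a subobject in $\PP_0(1)$ by Proposition \ref{prop:HW}\eqref{enum:semistablehasfactorsinHW}, hence equal to $A_1$; but $\phi^+(\aa_1) < \phi^+(\vv)$, contradiction. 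No lattice-geometry hypothesis is needed.

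A minor related point: your justification of $\Hom(A_1, A_2) = 0$ via ``$\sigma_+$-stability of distinct phases'' is backwards---that argument only gives $\Hom(A_2, A_1) = 0$. Both vanishings follow cleanly from $\sigma_0$-stability of $A_1$ and $A_2$ at the \emph{same} $\sigma_0$-phase (with $A_1 \not\cong A_2$ since $\aa_1 \neq \aa_2$, as $\vv$ is primitive).
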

\begin{proof}
Since $\vv$ is primitive, we may assume that
$\aa_1$ has smaller phase than $\aa_2$ with respect to $\sigma_+$. 
By assumption on $\HH_\WW$ and by Theorem \ref{thm:nonempty}, the generic element $A_i \in M_{\sigma_+}(\aa_i)$ is
$\sigma_0$-\emph{stable} for $i= 1, 2$.
In particular, $\Hom(A_1, A_2) = \Hom(A_2, A_1) = 0$, and
therefore $\dim \Ext^1(A_2, A_1) = (\aa_1, \aa_2)$. By Lemma \ref{lem:stableextension}, any non-trivial
extension $A_1 \into E \onto A_2$ is $\sigma_+$-stable. Using Theorem \ref{thm:nonempty} again, one
computes the dimension of the space of such extensions as
\[
\aa_1^2 + 2 + \aa_2^2 + 2 + (\aa_1, \aa_2) - 1 = \vv^2 + 2 - \left((\aa_1, \aa_2) - 1\right).
\]
\end{proof}
For $P$ as above, the flopping contraction $\pi^+$ contracts $M_P$ to the product of moduli spaces
$M_{\sigma_0}^{st}(\aa_1) \times M_{\sigma_0}^{st}(\aa_2)$ of $\sigma_0$-\emph{stable} objects.
The exceptional locus of $\pi^+$ is the union of $M_P$ for all non-trivial partitions $P$.
In particular, when there is more than one two-element partition,
the stratification is only partially ordered, and the exceptional locus has multiple irreducible
components. This leads to a generalization of Markman's
notion of \emph{stratified Mukai flops} introduced in \cite{Markman:BNduality} (where the
stratification is indexed by a totally ordered set). 

Using the above two Lemmas, it is easy to construct examples of flops where the exceptional locus
of the small contraction $\pi^+$ has $m$ intersecting irreducible components:
\begin{Ex} \label{ex:flopirreduciblecomponents}
Choose $M \gg m$ for which $x^2 + Mxy + y^2 = -1$ does not admit an integral solution. 
We define the symmetric pairing on $\HH \cong \Z^2$ via the matrix
$\begin{pmatrix}2 & M \\ M & 2 \end{pmatrix}$, and let $\vv = \begin{pmatrix}1 \\ m-1\end{pmatrix}$.
The positive cone contains the upper right quadrant and is
bordered by lines of slopes approximately $-\frac 1M$ and $-M$. 
Since $M \gg m$ (in fact, $M > 2m$ is enough), any partition of $\vv$ into positive classes is in 
fact a partition in $\Z_{\ge 0}^2$. Therefore, the two-element partitions are given by
$A_k = \left[\begin{pmatrix}1 \\ k\end{pmatrix}, \begin{pmatrix}0 \\ m-1-k\end{pmatrix}\right] $ for
$0 \le k \le m-1$. There is a unique minimal partition
$Q = \left[ \begin{pmatrix}1 \\ 0\end{pmatrix}, \begin{pmatrix}0 \\ 1\end{pmatrix}, \dots, 
\begin{pmatrix}0 \\ 1\end{pmatrix} \right]$, with
$M_Q \subset \overline{M_{A_k}}$ for all $k$; thus, the exceptional locus has $m$ irreducible
components $\overline{M_{A_k}}$ intersecting in $M_Q$.
\end{Ex}

Similarly, one can construct flopping contractions with arbitrarily many
\emph{connected} components:
\begin{Ex}  \label{ex:flopconnecteccomponents}
Let $m$ be an odd positive integer. Choose $M \gg m$ and define the lattice $\HH$ by the matrix
$\begin{pmatrix}-4 & 2M \\ 2M & 4 \end{pmatrix}$. The positive cone lies between the lines of slope
approximately $+ \frac 1M$ and $-M$. We let $\vv = \begin{pmatrix} m \\ 2 \end{pmatrix}$. 
Any summand in a partition of $\vv$ must be of the form $\begin{pmatrix} x \\ y \end{pmatrix}$ 
with $x \ge 0 $ and $y > 0$, and therefore $y = 1$. Besides the trivial element, the only partitions
occurring in $\PPP$ are therefore of the form $A_k = \left[\begin{pmatrix} k \\ 1 \end{pmatrix},
\begin{pmatrix} m-k \\ 1 \end{pmatrix}\right]$, for $0 \le k < \frac m2$. Each corresponding stratum
$M_{A_k}$ is a connected component of the exceptional locus of $\pi^+$, as $A_k$ admits no further
refinement.
\end{Ex}

\begin{Rem}
To show that the lattices $\HH$ as above occur as the lattice $\HH_\WW$ associated to a wall,
we only have to find a K3 surface $X$ such that $\HH$ embeds primitively into its Mukai lattice $\Halg$. 
For example, we can choose $\mathrm{Pic}(X) \cong \HH$  and
$\vv = (0, c, 0)$ for the corresponding curve class $c$. In particular, Example
\ref{ex:flopirreduciblecomponents} occurs in a relative Jacobian of curves on special double covers
$X \to \P^2$, and Example \ref{ex:flopconnecteccomponents} in special quartics $X \subset \P^3$.
This wall crossing already occurs for Gieseker stability with respect to a non-generic polarization
$H$. The morphism $\pi^+$ contracts sheaves supported on reducible curves $C = C_1 \cup C_2$ in the 
corresponding linear system; it forgets the gluing data at the intersection points $C_1 \cap C_2$.
The induced flop preserves the Lagrangian fibration given by the Beauville integrable
system.
\end{Rem}

\section{Le Potier's Strange Duality for isotropic classes}\label{sec:SD}

In this section, we will explain a relation of Theorem \ref{thm:SYZ} to Le Potier's Strange Duality
Conjecture for K3 surfaces.  We thank Dragos Oprea for pointing us to this application.

We first recall the basic construction from \cite{LePotier:StrangeDuality,MarianOprea:StrangeDualitySurvey}.
Let $(X,\alpha)$ be a twisted K3 surface and let $\sigma\in\Stab^\dag(X,\alpha)$ be a generic stability condition.
Let $\vv,\ww\in H^*_{\alg}(X,\alpha,\Z)$ be primitive Mukai vectors with $\vv^2,\ww^2\geq0$.
We denote by $L_\ww$ (resp., $L_\vv$) the line bundle $\OO_{M_\sigma(\vv)}(-\theta_\vv(\ww))$ (resp., $\OO_{M_\sigma(\ww)}(-\theta_\ww(\vv))$).
We assume:
\begin{enumerate}
\item[(I)] $(\vv,\ww)=0$, and
\item[(II)] \label{assumII} for all $E\in M_\sigma(\vv)$ and all $F\in M_\sigma(\ww)$, $\Hom^2(E,F)=0$.
\end{enumerate}
Then the locus
\[
\Theta = \left\{ (E,F)\in M_\sigma(\vv) \times M_\sigma(\ww)\,:\, \Hom(E,F)\neq0 \right\}
\]
gives rise to a section of the line bundle $L_{\vv,\ww} := L_\ww \boxtimes L_\vv$ on $M_\sigma(\vv)
\times M_\sigma(\ww)$ (which may or may not vanish).
We then obtain a morphism, well-defined up to scalars,
\[
\mathrm{SD}\colon H^0(M_\sigma(\vv),L_\ww)^\vee \longrightarrow H^0(M_\sigma(\ww),L_\vv).
\]
The two basic questions are:
\begin{itemize}
\item When is $h^0(M_\sigma(\vv),L_\ww)=h^0(M_\sigma(\ww),L_\vv)$?
\item If equality holds, is the map $\mathrm{SD}$ an isomorphism?
\end{itemize}

We answer the two previous questions in the case where one of the two vectors is isotropic:

\begin{Prop}\label{prop:StrangeDuality}
Let $(X,\alpha)$ be a twisted K3 surface and let $\sigma\in\Stab^\dag(X,\alpha)$ be a generic stability condition.
Let $\vv,\ww\in H^*_{\alg}(X,\alpha,\Z)$ be primitive Mukai vectors with $(\vv, \ww) = 0$,
$\vv^2\geq2$ and $\ww^2=0$.

We assume that
$-\theta_\vv(\ww)\in \mathrm{Mov}(M_\sigma(\vv))$ and $-\theta_\ww(\vv)\in \mathrm{Nef}(M_\sigma(\ww))$.
Then
\begin{enumerate}
\item\label{enum:EqualityDim} $h^0(M_\sigma(\vv),L_\ww)=h^0(M_\sigma(\ww),L_\vv)$, and 
\item\label{enum:SD} the morphism $\mathrm{SD}$ is either zero or an isomorphism.
\end{enumerate}
\end{Prop}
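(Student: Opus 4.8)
The plan is to reduce everything, via a Fourier--Mukai transform, to the situation where $\ww$ is the class of a point, so that $M_\sigma(\ww)$ becomes a K3 surface $Y$ and $M_\sigma(\vv)$ becomes a moduli space of rank-zero sheaves admitting a Lagrangian fibration (the Beauville integrable system), exactly as in the proof of Theorem~\ref{thm:SYZ}. First I would apply the equivalence $\Phi$ of \eqref{eq:edinburgh0903} with $\Phi_*(\ww) = (0,0,1)$; then $M_\sigma(\ww) \cong Y := M_{\Phi_*\sigma}((0,0,1))$ is a (twisted) K3 surface, and the line bundle $L_\vv = \OO_{M_\sigma(\ww)}(-\theta_\ww(\vv))$ is identified with the theta line bundle on $Y$. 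On the other side, $M_\sigma(\vv) \cong M_{\Phi_*\sigma}(\Phi_*\vv)$ with $\Phi_*\vv$ of rank zero, and $L_\ww = \OO(-\theta_\vv(\ww))$ is, by Lemma~\ref{lem:fibration} (together with Theorem~\ref{thm:ellandgroupaction} and the hypothesis $-\theta_\vv(\ww)\in\Mov$), the pullback of $\OO_{\P^n}(1)$ under a birational Lagrangian fibration $M_\sigma(\vv)\dashrightarrow \P^n = |C|$, where $C$ is the curve class of $\Phi_*\vv = (0,C,s)$. Since $M_\sigma(\vv)$ and its Gieseker birational model $M_{H'}(\Phi_*\vv)$ are isomorphic in codimension two, $h^0(M_\sigma(\vv), L_\ww) = h^0(\P^n, \OO(1)) = n+1 = \tfrac{\vv^2}{2}+2$.

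Next I would compute $h^0(M_\sigma(\ww), L_\vv) = h^0(Y, L_\vv)$. Here the hypothesis $-\theta_\ww(\vv)\in\Nef(Y)$ enters: on a K3 surface a nef line bundle with positive self-intersection is globally generated (or at least has no higher cohomology by Kawamata--Viehweg / the structure of linear systems on K3s), so $h^0(Y,L_\vv) = \chi(Y,L_\vv) = \tfrac{(L_\vv)^2}{2}+2$. Under $\theta_\ww$ the Beauville--Bogomolov square of $\theta_\ww(\vv)$ equals $\vv^2$ (Theorem~\ref{thm:ModuliSpacesAreHK}), so $(L_\vv)^2 = \vv^2$ and $h^0(Y,L_\vv) = \tfrac{\vv^2}{2}+2$, matching the count above. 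This proves \eqref{enum:EqualityDim}.

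For \eqref{enum:SD}, the point is that the section cutting out $\Theta$, transported through $\Phi$, is (up to the identifications above) precisely the section of $\OO_{\P^n}(1)\boxtimes L_\vv$ realizing the fibration: for a point $y\in Y$ and $E\in M_\sigma(\vv)$, the condition $\Hom(E, \Phi^{-1}(k(y)))\neq 0$ translates into $y$ lying on the support curve of the rank-zero sheaf corresponding to $E$. Thus the incidence divisor $\Theta$ is the ``universal curve'' inside $\P^n \times Y$ pulled back along $M_\sigma(\vv)\dashrightarrow \P^n$, and the map $\mathrm{SD}\colon H^0(M_\sigma(\vv),L_\ww)^\vee \to H^0(Y,L_\vv)$ is, up to scalar, the natural map $H^0(\P^n,\OO(1))^\vee \to H^0(Y, L_\vv)$ induced by the linear embedding $|C|^\vee \hookrightarrow \P(H^0(Y,L_\vv)^\vee)$ classifying the family of curves. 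Since both spaces have dimension $\tfrac{\vv^2}{2}+2$, this map is either zero (if the section $\Theta$ vanishes identically, i.e.\ the family of curves is degenerate) or an isomorphism (if the curves of the linear system $|C|$ move in the full linear system on $Y$, which happens exactly when $L_\vv$ is the complete linear system cut out by the fibration). I would make this precise by noting that $L_\vv = \theta_\ww(\vv)$ is, by construction of the Beauville system in Lemma~\ref{lem:fibration}, the line bundle $\OO_Y(C)$ itself, so $|C|$ on $M_\sigma(\vv)$ is literally $|L_\vv|$ on $Y$, and $\mathrm{SD}$ is the identity up to scalar.

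\textbf{Main obstacle.} The delicate step is the precise identification of the two theta line bundles and of the section $\Theta$ under the Fourier--Mukai transform $\Phi$ --- in particular tracking how $\theta_\vv(\ww)$ and $\theta_\ww(\vv)$ transform, keeping careful account of the twist $\alpha'$, the shift $[1]$, and the autoequivalence $\Psi$ of Lemma~\ref{lem:Cisample} needed to make $C$ ample and $s\neq 0$, and checking that none of these alter the relevant line bundle classes (they only act by known reflections/tensoring that fix $\ww$ or are absorbed into the identifications). The assumption (II), $\Hom^2(E,F)=0$, should be automatic here since $F$ has rank zero and $E$ is a shifted stable sheaf, but this needs to be verified in the transformed picture. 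Once the dictionary is set up, both the dimension count and the dichotomy for $\mathrm{SD}$ are formal consequences of the theory of linear systems on the K3 surface $Y$.
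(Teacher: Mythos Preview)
Your argument for \eqref{enum:EqualityDim} is correct and is essentially the paper's argument (the paper cites Marian--Oprea for the Gieseker endpoint rather than writing out the Riemann--Roch count, but the content is the same).

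For \eqref{enum:SD}, however, there is a genuine gap. Your identification of $\Theta$ with the pullback of the universal incidence divisor on $|C|\times Y$ is only valid on the Gieseker model $M_{H'}(\Phi_*\vv)$, where the objects are honest torsion sheaves and $\Hom(E,k(y))\neq 0$ literally means $y\in\supp(E)$. The map $\mathrm{SD}$ in the statement is defined on $M_\sigma(\vv)$, using \emph{its} universal family; the birational map $M_\sigma(\vv)\dashrightarrow M_{H'}(\Phi_*\vv)$ only identifies $H^0$-groups, not a priori the sections $\Theta$. If the birational map is induced by a nontrivial autoequivalence (a composition of spherical twists coming from a totally semistable wall), the condition $\Hom(E,k(y))\neq 0$ for $E\in M_\sigma(\vv)$ has no reason to correspond to the support condition on the Gieseker side. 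Your dichotomy ``zero if the family of curves is degenerate, isomorphism otherwise'' is not grounded in anything about $M_\sigma(\vv)$ itself.

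The paper's proof fixes this by choosing a path from $\sigma$ to the Gieseker chamber that stays inside $U(X,\alpha)$ and inside the movable cone (so skyscraper sheaves remain stable throughout and no divisorial walls are crossed), and then analyzing two cases. If the path crosses no totally semistable wall for $\vv$, then at each wall the adjacent moduli spaces share a common open subset with complement of codimension $\ge 2$ on which the objects are \emph{literally the same}; hence the sections $\Theta$ agree, and $\mathrm{SD}$ at $\sigma$ equals $\mathrm{SD}$ at the Gieseker endpoint, which is an isomorphism. If the path does cross a totally semistable wall, the paper argues that \emph{every} object of $M_\sigma(\vv)$ must be a genuine two-term complex (a sheaf would remain stable all the way to the large-volume limit), so $\rk\HH^0(E)>0$ and $\Hom(E,k(y))\neq 0$ for all $y\in Y$; thus $\Theta$ is the zero section and $\mathrm{SD}=0$. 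This wall-crossing analysis is the missing ingredient in your proposal.
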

We will see that the case $\mathrm{SD} = 0$ is caused by totally semistable walls.

\begin{proof}
Let $Y:= M_{\sigma}(\ww)$.
By \cite{Mukai:BundlesK3, Caldararu:NonFineModuliSpaces,Yoshioka:TwistedStability}, there exist an
element $\alpha'\in\mathrm{Br}(Y)$ and a derived equivalence $\Phi\colon \Db(X,\alpha)\xrightarrow{\simeq}\Db(Y,\alpha')$.
Replacing $(X, \alpha)$ by $(Y, \alpha')$, we may
assume that $\ww=(0,0,1)$ and $\vv=(0,D,s)$, for some $s\in\Z$ and $D\in\mathrm{NS}(X)$, and that
$X = M_{\sigma}(\ww)$ is the moduli space of skyscraper sheaves.
Moreover, $D=-\theta_\ww(\vv)\in \mathrm{Nef}(X)$ is effective, by assumption.
By stability and Serre duality, for all $E\in M_\sigma(\vv)$ and all $x\in X$,
$\Hom^2(E,k(x))=\Hom(k(x),E)^\vee=0$, verifying the assumption (II); thus the locus $\Theta$ gives a section of $L_\ww \boxtimes L_\vv$.

By Remark \ref{Rem:SYZcon_movable}, there exists a chamber $\LL_{\infty}$ in the interior of the movable cone $\mathrm{Mov}(M_\sigma(\vv))$ whose boundary contains $-\theta_\vv(\ww)$.
Moreover, there exist a polarization $H$ on $X$ and a chamber $\CC_{\infty}\subset \Stab^\dag(X,\alpha)$ such that $\ell(\CC_{\infty})=\LL_{\infty}$, $M_H(\vv)=M_{\CC_{\infty}}(\vv)$, and the Lagrangian fibration induced by $\ww$ is the Beauville integrable system on $M_H(\vv)$.

The argument in \cite[Example 8]{MarianOprea:StrangeDualitySurvey} shows that $h^0(M_{H}(\vv),L_\ww)=h^0(X,\OO(D))$ and the morphism $\mathrm{SD}$ is an isomorphism.
Since $M_H(\vv)$ is connected to $M_{\sigma}(\vv)$ by a sequence of flops, which do not change the dimension of the spaces of sections of $L_\ww$, we obtain immediately \eqref{enum:EqualityDim}.

To prove \eqref{enum:SD}, we need to study the behavior of the morphism $\mathrm{SD}$ under wall-crossing.
We pick a stability condition $\sigma_{\infty}\in\CC_{\infty}$.
Both $\sigma$ and $\sigma_{\infty}$ belong to the open subset $U(X,\alpha)$ of Theorem \ref{thm:BridgelandK3geometric}.
The restriction of the map $\ZZ$ of Theorem \ref{thm:Bridgeland_coveringmap} to $U(X, \alpha)$
is injective up to the $\widetilde \GL_2^+(\R)$-action (i.e., the map separates points that are in different
orbits). Now consider the map $\ell$ in the formulation of Theorem \ref{thm:ellandgroupaction},
restricted to $U(X, \alpha)$. The composition 
\[
\theta_{\sigma,\vv} \circ I \circ \ZZ|_{U(X, \alpha)} \colon U(X, \alpha) \to \Pos(M_{\sigma}(\vv))
\]
generically has connected fibers. Since both $\sigma_\infty$ and $\sigma$ get mapped to a class in
the movable cone, we can find 
 a path $\gamma$ in $U(X,\alpha)$ connecting $\sigma$ and $\sigma_{\infty}$ whose image stays within the
movable cone. Thus $\gamma$ crosses no divisorial walls.
If $\gamma$ also crosses no totally semistable walls, then the morphism $\mathrm{SD}$ is compatible with the wall-crossing; since it induces an isomorphism at $\sigma_{\infty}$, it induces an isomorphism at $\sigma$.

Assume instead that there is a totally semistable wall.
We write $\sigma=\sigma_{\omega,\beta}$.
The straight path from $\sigma_{\infty}$ to $\sigma_{t\omega,\beta}$, for $t\gg0$, corresponds to a change of polarization for Gieseker stability, and thus does not cross any totally semistable wall.
Therefore, we may replace $\sigma_{\infty}$ with $\sigma_{t\omega,\beta}$, for $t\gg0$.

We claim that all objects $E$ in $M_{\sigma}(\vv)$ must be actual complexes.
Indeed, if there exists a sheaf $E$ in $M_{\sigma}(\vv)$, then the generic element is a sheaf.
Moreover, since $D$ is nef and big, it is globally generated, and we can assume that the support of $E$ is a smooth integral curve.
Stability in $U(X,\alpha)$ for torsion sheaves implies, in particular, that the sheaf is actually stable on the curve.
But then $E$ would be stable for $t\to\infty$.
This shows that we crossed no totally semistable wall.

So $E \in \AA_{\omega, \beta}$ is an actual complex. Since $\rk (E) = 0$ and $\rk
\HH^{-1}(E) > 0$, we must have $\rk \HH^0(E) > 0$;  hence
$\Hom(E,k(x))\neq0$ for all $x\in X$.
This shows that $\Theta$ is nothing but the zero-section of $L_{\vv,\ww}$ and the induced map $\mathrm{SD}$ is the zero map.
\end{proof}

In particular, the previous proposition holds for pairs of Gieseker moduli spaces.

\begin{Ex}
Let $X$ be a K3 surface such that $\mathrm{Pic}(X) = \Z \cdot H$, with $H^2=2$.
Let $\vv=(1,0,-1)$ and $\ww=-(1,-H,1)$.
Consider a stability condition $\sigma_{\infty}=\sigma_{tH,-2H}$, for $t\gg0$.
Then, as observed in \cite[Proposition 1.3]{Beauville:RationalCurvesK3}, $\mathrm{Hilb}^2(X)= M_{\sigma_{\infty}}(\vv)$ admits a flop to a Lagrangian fibration induced by the vector $\ww$.
The assumptions of Proposition \ref{prop:StrangeDuality} are satisfied.
In this case, for all $E[1]\in M_{\sigma_{\infty}}(\ww)$, $E\cong I_{pt}(-H)$, and for all $\Gamma\in \mathrm{Hilb}^2(X)$, we have $\Hom(I_\Gamma,E[1])\neq0$.
Hence, the map $\mathrm{SD}$ is the zero map.
\end{Ex}

The following example shows that the assumption in Proposition \ref{prop:StrangeDuality} is
necessary:
\begin{Ex}
Let $X$ be a K3 surface with $\mathrm{NS}(X)=\Z \cdot C_1 \oplus \Z \cdot C_2$ and intersection form
\[
q =
\begin{pmatrix}
-2 & 4\\
4 & -2
\end{pmatrix}.
\]
We assume the two rational curves $C_1$ and $C_2$ generate the cone of effective divisors on $X$.
Let $\vv=(0,3C_1+C_2,1)$ and $\ww=(0,0,1)$.
Then $\vv^2=4$.
Pick a generic ample divisor $H$ on $X$.
We have
\[
H^0(M_H(\vv),\theta_{\vv}(\ww))\cong \C^{\oplus 4}.
\]
For example, consider the totally semistable wall where $\vv$ aligns with the spherical vector
$(0,C_1,0)$, 
Then Proposition \ref{prop:sphericaltotallysemistable} induces a birational map
$M_H(\vv)\dashrightarrow M_H(\vv_0)$ for $\vv_0=(0,C_1+C_2,1)$, and a chain of isomorphisms
\[
H^0(M_H(\vv),\theta_{\vv}(\ww))\cong H^0(M_H(\vv_0),\theta_{\vv_0}(\ww)) \cong H^0(\P^3,\OO_{\P^3}(1))
\cong \C^{\oplus 4},
\]
where the middle isomorphism follows from Proposition \ref{prop:StrangeDuality}.
However,
\[
H^0(M_H(\ww),\theta_\ww(\vv))\cong H^0(X,\OO_X(3C_1+C_2))\cong\C^{\oplus 5}.
\]
The last isomorphism follows from the exact sequence
\[
0\to \OO_X(2C_1+C_2) \to \OO_X(3C_1+C_2) \to \OO_{\P^1}(-2) \to 0,
\]
since $\OO_X(2C_1+C_2)$ is big and nef and thus has no higher cohomology.
\end{Ex}

\bibliography{all}                      
\bibliographystyle{alphaspecial}     

\end{document}